\newcommand{\Z}{\mathbb{Z}}
\newcommand{\N}{\mathbb{N}}
\newcommand{\R}{\mathbb{R}}
\newcommand{\C}{\mathbb{C}}
\newcommand{\T}{\mathbb{T}}
\newcommand{\A}{\mathcal{A}}
\newcommand{\leb}{\mathrm{Leb}}
\newcommand{\ld}{\mathrm{L}}
\newcommand{\esp}{(X,\mu)}
\newcommand{\espalg}{(X,\A,\mu)}
\newcommand{\espy}{(Y,\nu)}
\newcommand{\esplebalg}{([0,1],\mathcal{B}([0,1]),\leb)}
\newcommand{\aut}{\mathrm{Aut}(X,\mu)}
\newcommand{\autalg}{\mathrm{Aut}(X,\A,\mu)}
\newcommand{\auty}{\mathrm{Aut}(Y,\nu)}
\newcommand{\orb}{\mathrm{Orb}}
\newcommand{\sn}{\zeta}
\newcommand{\fc}{\mathcal{F}_{\mathcal{C}}}
\newtheorem{theorem}{Theorem}[section]
\newtheorem*{theorem*}{Theorem}
\newtheorem{corollary}[theorem]{Corollary}
\newtheorem*{corollary*}{Corollary}
\newtheorem{proposition}[theorem]{Proposition}
\newtheorem{lemma}[theorem]{Lemma}
\newtheorem*{claim*}{Claim}
\newtheorem{theoremletter}{Theorem}
\theoremstyle{definition}
\newtheorem{definition}[theorem]{Definition}
\newtheorem{remark}[theorem]{Remark}
\newtheorem{question}[theorem]{Question}
\newenvironment{manualremark}[1]{%
	\IfBlankTF{#1}
	{}
	{}%
	\manualremarknumber
}{\endmanualremarknumber}
\title{Rank-one systems, flexible classes and Shannon orbit equivalence}
\author{Corentin Correia}
\date{October 19, 2024}
\begin{document}
	
	\maketitle
	
	\begin{abstract}
		We build a Shannon orbit equivalence between the universal odometer and a variety of rank-one systems. This is done in a unified manner, using what we call flexible classes of rank-one transformations. Our main result is that every flexible class contains an element which is Shannon orbit equivalent to the universal odometer. Since a typical example of flexible class is $\{T\}$ when $T$ is an odometer, our work generalizes a recent result by Kerr and Li, stating that every odometer is Shannon orbit equivalent to the universal odometer.\par
		When the flexible class is a singleton, the rank-one transformation given by the main result is explicit. This applies to odometers and Chacon's map. We also prove that strongly mixing systems, systems with a given eigenvalue, or irrational rotations whose angle belongs to any fixed nonempty open subset of the real line form flexible classes. In particular, strong mixing, rationality or irrationality of the eigenvalues are not preserved under Shannon orbit equivalence.
	\end{abstract}
	
	\setcounter{tocdepth}{2}
	\tableofcontents
	
	\section{Introduction}
	
	At the level of ergodic probability measure-preserving bijections, \emph{quantitative orbit equivalence} aims at bridging the gap between the well-studied but very complicated relation of \emph{conjugacy}, and the trivial relation of \emph{orbit equivalence}, which is equality of orbits up to conjugacy.\par
	To be more precise, given two ergodic probability measure-preserving bijections $S$ and $T$ on a standard atomless probability space $\espalg$, if $S$ and some system $\Psi^{-1}T\Psi$ conjugate to $T$ have the same orbits, then $S$ and $T$ are said to be \textit{orbit equivalent} and the probability measure-preserving bijection $\Psi\colon X\to X$ is called an \textit{orbit equivalence} between $T$ and $S$. Dye's theorem~\cite{dyeGroupsMeasurePreserving1959} states that, if $S$ and $T$ are ergodic, then they are orbit equivalent.\par
	To get an interesting theory, let us define the \textit{cocycles} associated to $\Psi$, these are the integer-valued functions $c_S$ and $c_T$ defined by $Sx=\Psi^{-1}T^{c_S(x)}\Psi(x)$ and $Tx=\Psi S^{c_T(x)}\Psi^{-1}(x)$. \emph{Shannon orbit equivalence} requires that there exists an orbit equivalence whose cocycles are Shannon, meaning that the partitions associated to $c_S$ and $c_T$ are both of finite entropy. For $\varphi$\emph{-integrable orbit equivalence} we ask that both integrals $\int_X{\varphi(|c_S(x)|)\mathrm{d}\mu(x)}$ and $\int_X{\varphi(|c_T(x)|)\mathrm{d}\mu(x)}$ are finite. In the particular case of a linear map $\varphi$, $\varphi$-integrable orbit equivalence exactly requires the integrability of the cocycles, and is simply called integrable orbit equivalence.\par
	Belinskaya's theorem \cite{belinskayaPartitionsLebesgueSpace1969} implies that integrable orbit equivalence is exactly flip-conjugacy ($S$ and $T$ are flip-conjugate if $S$ is conjugate to $T$ or $T^{-1}$). In fact it only requires that one of the two cocycles is integrable. Carderi, Joseph, Le Maître and Tessera \cite{carderiBelinskayaTheoremOptimal2023} proved that this result is optimal, meaning that $\varphi$-integrable orbit equivalence never implies flip-conjugacy for a sublinear map $\varphi$. Moreover, $\varphi$-integrable orbit equivalence implies Shannon orbit equivalence when $\varphi$ is asymptotically greater than $\log$. An impressive result of Kerr and Li \cite{kerrEntropyVirtualAbelianness2024} guarantees that these relations are not trivial: entropy is preserved under Shannon orbit equivalence (and this is the only invariant that we know of). As a consequence, two transformations with different entropies can neither be Shannon orbit equivalent nor $\varphi$-integrably orbit equivalent for any $\varphi$ greater than $\log$.
	
	\begin{figure}[ht]
		\centering
		\includegraphics[width=0.9\linewidth]{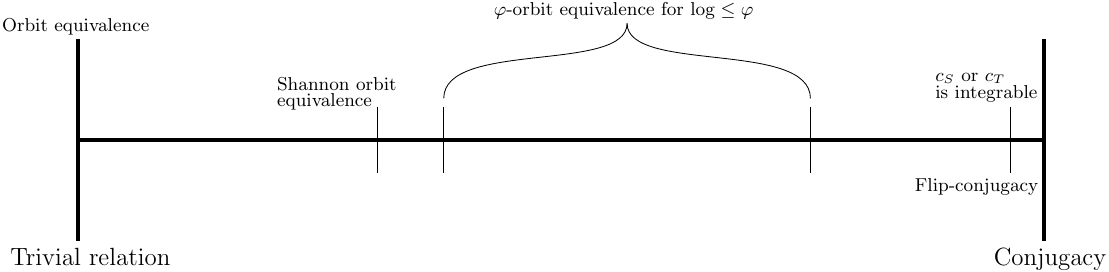}
		\caption{Here is a schematic view of the interplay between the relations on ergodic bijections we have seen so far.}
	\end{figure}
	
	Historically, the question of preservation of entropy in quantitative orbit equivalence was asked in the more general setting of group actions. We do not give any definition in this setting, as the paper is only about probability measure-preserving bijections $S$, which can be seen as $\mathbb{Z}$-actions via $(n,x)\in \Z\times X\mapsto S^nx$. Austin \cite{austinBehaviourEntropyBounded2016a} showed that integrable orbit equivalence between actions of infinite finitely generated amenable groups preserves entropy. Kerr and Li \cite{kerrEntropyShannonOrbit2021, kerrEntropyVirtualAbelianness2024} then generalized this result, replacing integrable orbit equivalence by Shannon orbit equivalence, and going beyond the amenable case using sofic entropy.
	
	\paragraph{The universal odometer and a theorem of Kerr and Li \cite{kerrEntropyVirtualAbelianness2024}.}
	In \cite{carderiBelinskayaTheoremOptimal2023}, the statement about $\varphi$-integrable orbit equivalence in the sublinear case is the following. This gives a result on Shannon orbit equivalence since this is implied by $\varphi$-integrable orbit equivalence for $\varphi$ greater than $\log$.
	
	\begin{theorem*}[Carderi, Joseph, Le Maître, Tessera \cite{carderiBelinskayaTheoremOptimal2023}]\label{cjlt}
		Let $\varphi\colon\R_+\to\R_+$ be a sublinear function. Let $S$ be an ergodic probability measure-preserving transformation and assume that $S^n$ is ergodic
		for some $n\geq 2$. Then there is another ergodic probability measure-preserving transformation
		$T$ such that $S$ and $T$ are $\varphi$-integrably orbit equivalent but not flip-conjugate.
	\end{theorem*}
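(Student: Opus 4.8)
The plan is to leave $S$ untouched, to write down an explicit ergodic $T$ that is orbit equivalent to $S$ \emph{via the identity} (equivalently, $T$ lies in the full group of $S$: $Tx\in\orb_S(x)$ for a.e.\ $x$), and to spend all the effort on choosing the orbit equivalence so that both cocycles become $\varphi$-integrable. First I would distil the hypothesis. Fix a prime $p$ dividing $n$. Since $S^n$ is ergodic, no eigenvalue of $S$ is a nontrivial $n$-th root of unity, hence none is of order $p$; as the eigenvalue group is a group, it then contains no element of order $p^k$ for any $k$, i.e.\ $S^{p^k}$ is ergodic for every $k\geq 1$. Only the ergodicity of $S^p$ will be used below.

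The candidate is $T:=S\times R_p$, where $R_p$ is the rotation by $1$ on $\Z/p\Z$ with its uniform measure (transport this product to $\espalg$ via any isomorphism of probability spaces). This $T$ is ergodic precisely because $S$ has no eigenvalue of order $p$ — the product of an ergodic transformation with an ergodic transformation of discrete spectrum is ergodic iff their point spectra meet only at $1$ — and $h(T)=h(S)+h(R_p)=h(S)$, so there is no conflict with the invariance of entropy. On the other hand $T^p=S^p\times\mathrm{Id}$ is \emph{not} ergodic while $(S^{\pm1})^p$ \emph{is} ergodic, and ergodicity of powers is a flip-conjugacy invariant; hence $T$ is not flip-conjugate to $S$. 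By Dye's theorem $T$ is orbit equivalent to $S$ for \emph{some} orbit equivalence; the whole content is to pick one — equivalently, to pick the conjugate of $T$ lying in the full group of $S$ — with both cocycles $\varphi$-integrable. Note the hypothesis is used only here, to make $T$ ergodic and genuinely different (up to flip) from $S$: for $S$ the universal odometer one has $T\cong S$ and indeed no contradiction with Belinskaya.

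For the quantitative step I would realize both relations through refining Kakutani--Rokhlin towers: using Rokhlin's lemma, present $\mathcal R_S$ and $\mathcal R_T$ as increasing unions of finite subrelations attached to towers of a common, freely chosen sequence of heights $h_1\mid h_2\mid\cdots$ (with $p\mid h_1$), and build the conjugating map stage by stage by matching the $k$-th towers on the two sides. At stage $k$ the discrepancy between moving up the $S$-tower and moving up the $T$-tower (the latter also cycling the $\Z/p\Z$-coordinate) forces the cocycle to take values of size at most $\asymp h_k$, and only on a set whose measure shrinks with the Rokhlin error sets, hence can be made as small as one likes. Since $\varphi$ is sublinear, $\varphi(t)/t\to 0$, so the heights can be chosen to grow fast enough that $\varphi(h_k)/h_k\leq 2^{-k}$; then $\int_X\varphi(|c_S|)\,d\mu\lesssim\sum_k\varphi(h_k)\cdot\mu(\text{stage-}k\text{ discrepancy set})<\infty$, and symmetric bookkeeping handles $c_T$. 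This is exactly the window that Belinskaya's theorem closes for linear $\varphi$, so some genuine sublinearity is unavoidable.

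I expect the main obstacle to be precisely this last step: the quantitative sharpening of Dye's theorem, i.e.\ running the tower-by-tower matching so that the \emph{two} cocycles are simultaneously dominated by one summable series, and in particular checking that the extra $\Z/p\Z$-coordinate of $T$ can be folded into the orbits of $S$ at a cost that is $\varphi$-summable rather than merely finite on average. The spectral normalization, the choice $T=S\times R_p$, and the failure of flip-conjugacy are all soft by comparison.
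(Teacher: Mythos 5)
This theorem is not proved in the paper you are being compared against: it is quoted verbatim from Carderi--Joseph--Le Ma\^itre--Tessera \cite{carderiBelinskayaTheoremOptimal2023}, and the paper only records that their proof is constructive and yields a $T$ whose $n$-th power is not ergodic. Your soft analysis is consistent with that route and is correct as far as it goes: if $p\mid n$ is prime, ergodicity of $S^n$ forbids eigenvalues of order $p$, so $T=S\times R_p$ is ergodic, $T^p=S^p\times\mathrm{Id}$ is not ergodic while $(S^{\pm 1})^p$ is, and ergodicity of powers is a flip-conjugacy invariant, so $T$ is not flip-conjugate to $S$. Choosing such a product as the target is indeed the right kind of candidate.

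The genuine gap is that everything after that — the entire quantitative content of the theorem — is only sketched, and the sketch asserts exactly the estimates that constitute the difficulty. You never construct the stage-by-stage matching: you do not show that a nested sequence of Kakutani--Rokhlin towers for $S$ can be matched consistently with towers for $S\times R_p$ so that the partial maps converge to a single measure-preserving bijection; you do not prove that the limiting map gives \emph{equality} of orbits (the reverse inclusion is the delicate point in all such constructions, cf.\ the ``capture'' issue in Remark~\ref{capture} of this paper); and the key claim that at stage $k$ the cocycles are of size $\asymp h_k$ \emph{only} on a discrepancy set whose measure can be summably controlled is precisely the estimate one must prove, not assume — the wrap-around regions and the folding of the $\Z/p\Z$ coordinate into the $S$-orbits produce jumps of order $h_k$ on sets of measure of order $1/h_k$, which is why the resulting cocycles are $\varphi$-integrable for sublinear $\varphi$ but not integrable (as Belinskaya forces), and making this bookkeeping work simultaneously for $c_S$ and $c_T$, consistently across stages, is the substance of \cite{carderiBelinskayaTheoremOptimal2023}. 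As you yourself acknowledge, the ``main obstacle'' is left unresolved, so the proposal is a plausible plan rather than a proof: it reduces the statement to its hardest step and stops there.
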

	
	\begin{corollary*}[Carderi, Joseph, Le Maître, Tessera \cite{carderiBelinskayaTheoremOptimal2023}]\label{cjlt2}
		Let $S$ be an ergodic probability measure-preserving transformation and assume that $S^n$ is ergodic
		for some $n\geq 2$. Then there is another ergodic probability measure-preserving transformation
		$T$ such that $S$ and $T$ are Shannon orbit equivalent but not flip-conjugate.
	\end{corollary*}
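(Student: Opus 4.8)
The corollary follows formally from the theorem together with the implication, recalled in the introduction, that $\varphi$-integrable orbit equivalence implies Shannon orbit equivalence whenever $\varphi$ is asymptotically greater than $\log$. The plan is therefore simply to pick a function $\varphi$ lying in both relevant regimes and feed it to the theorem.

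First I would fix a function $\varphi\colon\R_+\to\R_+$ that is simultaneously \emph{sublinear}, i.e.\ $\varphi(t)/t\to 0$ as $t\to\infty$, and \emph{asymptotically greater than} $\log$, i.e.\ $\varphi(t)/\log t\to\infty$ as $t\to\infty$. Concretely one may take $\varphi(t)=(\log(1+t))^2$, which is increasing, concave for large $t$, and plainly satisfies both requirements; an even simpler admissible choice is $\varphi(t)=\sqrt{t}$. Staying close to $\log$ is only aesthetic here: the smaller $\varphi$ is, the stronger the resulting $\varphi$-integrable orbit equivalence, but for the present statement any admissible $\varphi$ suffices.

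Next I would apply the theorem to this $\varphi$ and to the given $S$ (ergodic, with $S^n$ ergodic for some $n\geq 2$): it produces an ergodic probability measure-preserving transformation $T$ such that $S$ and $T$ are $\varphi$-integrably orbit equivalent but not flip-conjugate. Finally, since $\varphi$ is asymptotically greater than $\log$, the recalled implication upgrades the $\varphi$-integrable orbit equivalence into a Shannon orbit equivalence between $S$ and $T$, while the non flip-conjugacy is inherited verbatim. Hence $T$ is the desired transformation.

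There is no genuine obstacle: all the content sits in the theorem, and the corollary is a one-line deduction. The only point requiring mild care is to take $\varphi$ inside whatever regularity class (increasing, concave, unbounded, say) the ``$\varphi$-integrable $\Rightarrow$ Shannon'' implication is formulated for; the explicit choices above lie in that class.
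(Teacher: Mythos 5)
Your proposal is correct and matches the paper's own deduction: the paper states the corollary as an immediate consequence of the theorem, applied with a sublinear $\varphi$ asymptotically greater than $\log$, combined with the recalled fact that $\varphi$-integrable orbit equivalence for such $\varphi$ implies Shannon orbit equivalence. Your explicit choices of $\varphi$ (e.g.\ $(\log(1+t))^2$) are exactly the kind of function the paper has in mind.
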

	
	The proof is constructive and the resulting transformation $T$ is built so that $T^n$ is not ergodic. It is natural to wonder whether this statement holds for systems $T$ without ergodic non-trivial powers. A well-known example of such a system is the universal odometer.
	
	\begin{question}
		Which systems are Shannon orbit equivalent to the universal odometer?
	\end{question}
	
	A first answer is given by Kerr and Li.
	
	\begin{theorem*}[Kerr, Li \cite{kerrEntropyVirtualAbelianness2024}]
		Every odometer is Shannon orbit equivalent to the universal odometer.
	\end{theorem*}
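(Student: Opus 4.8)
The plan is to produce the orbit equivalence explicitly, by realizing both systems by cutting and stacking and aligning their Kakutani--Rokhlin towers along a carefully chosen, very sparse sequence of scales.

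\emph{Set-up.} I would realize the given odometer $T$ as the rank-one transformation with cutting sequence equal to its base $(a_n)$ and no spacers, so that at stage $n$ it is a single tower $\mathcal{T}_n$ of height $h_n=a_1\cdots a_n$; and I would realize the universal odometer $\mathcal{O}$ as the rank-one transformation whose cutting sequence is $(a_n)$ with extra factors $q_1,q_2,\dots$ inserted at stages $n_1<n_2<\cdots$, the $q_k$'s chosen so that together they contain every prime infinitely often (so $\mathcal{O}$ is isomorphic to the universal odometer) and the $n_k$'s chosen extremely sparsely. At stage $n_k$ the tower $\mathcal{S}_k$ of $\mathcal{O}$ then has height $g_k=Q_kh_{n_k}$ with $Q_k=q_1\cdots q_k$, and the sparsity requirement is $\sum_k h_{n_{k-1}}^{-1}\log g_k<\infty$; this is always achievable because a given prime need only be inserted once the current height $h_{n_k}$ has been made astronomically large relative to everything chosen before, and heights only increase.

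\emph{The orbit equivalence as a limit.} I would build $\Psi\colon X\to Y$ as an a.e.\ limit of measure isomorphisms $\Psi_k$, where $\Psi_k$ identifies $\mathcal{T}_{n_k}$ with $\mathcal{S}_k$ by sending the $j$-th level of $\mathcal{T}_{n_k}$ onto the union of the $Q_k$ levels of $\mathcal{S}_k$ of index $\equiv j \pmod{h_{n_k}}$, chosen compatibly along the tower. Since any two points of a single Kakutani--Rokhlin tower of $\mathcal{O}$ lie on one $\mathcal{O}$-orbit, each $\Psi_k$ conjugates, away from the top levels of $\mathcal{T}_{n_k}$, the part of $T$ acting inside $\mathcal{T}_{n_k}$ with a part of $\mathcal{O}$ acting inside $\mathcal{S}_k$, so it carries $T$-orbits into $\mathcal{O}$-orbits at that scale, and in the limit $\Psi$ is an orbit equivalence. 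Passing from scale $n_k$ to scale $n_{k+1}$, the two matchings disagree only where the wrapping of the tower at scale $n_k$ differs from the wrapping at scale $n_{k+1}$, i.e.\ on a set $E_k$ essentially contained in the top levels of $\mathcal{T}_{n_k}$, with $\mu(E_k)\lesssim 1/h_{n_k}$; since $\sum_k\mu(E_k)<\infty$, the limit exists and is a measure isomorphism.

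\emph{The cocycle is Shannon --- the crux.} Off $\bigcup_k E_k$ the cocycle of $\Psi$ equals $1$, because away from the tops of the towers one step of $T$ moves a point one level up inside $\mathcal{T}_{n_k}$ and $\Psi_k$ sends this to one step up inside $\mathcal{S}_k$; on $E_k$ the cocycle is bounded in absolute value by $g_k$, since both $\Psi(x)$ and $\Psi(Tx)$ then lie in the tower $\mathcal{S}_k$ of height $g_k$. Hence the partition of $X$ by the value of the cocycle is refined by $\{E_{k-1}\setminus E_k\}_{k\ge 1}$ (with $E_0:=X$) together with the complement of $\bigcup_kE_k$, so its entropy is at most a convergent error term plus $\sum_{k}\big(-\mu(E_{k-1})\log\mu(E_{k-1})+\mu(E_{k-1})\log(2g_k+1)\big)$, which converges because $\mu(E_{k-1})\lesssim 1/h_{n_{k-1}}$ and $\log g_k=\log Q_k+\log h_{n_k}$ --- exactly the sparsity condition of the set-up. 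The same estimate applies to $\Psi^{-1}$, so $\Psi$ is a Shannon orbit equivalence between $T$ and $\mathcal{O}$, hence between $T$ and the universal odometer. (In the language of the paper this is the case of the singleton flexible class $\{T\}$, and the transformation above is the explicit one it produces.)

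I expect essentially all the difficulty to lie in carrying out the limit construction and the cocycle estimate together: arranging the successive corrections $\Psi_k\to\Psi_{k+1}$ so that they simultaneously converge, preserve the orbit equivalence in the limit, and keep every large cocycle value trapped in the exponentially small sets $E_k$ at only a logarithmic price. It is precisely the tension between these three requirements that forces one to have the freedom to choose both the comparison scales $n_k$ and the inserted factors $q_k$, and that makes the bookkeeping of the corrections the delicate point; for odometers, on the other hand, every quantity involved is governed by the cutting parameters, which is why the resulting orbit equivalence can be written down explicitly.
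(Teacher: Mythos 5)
Your overall scheme (match the towers of $T$ at sparse scales $n_k$ with the towers of a universal odometer obtained by inserting extra prime factors, and pay for the mismatch only on exceptional sets of summable measure at a logarithmic entropy cost) has the right flavour, and the final entropy estimate is the standard one; but there is a genuine gap at the central step, namely the existence of the maps $\Psi_k$ with $\mu(\{\Psi_{k+1}\neq\Psi_k\})\lesssim 1/h_{n_k}$. Each $\Psi_k$ is only required to be compatible with the scale-$n_k$ tower matching, and such a map is highly non-unique: it is determined by an arbitrary measure isomorphism from the base of $\mathcal{T}_{n_k}$ onto the union of the $Q_k$ base-congruent levels of $\mathcal{S}_k$. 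If these isomorphisms are chosen independently at each scale (which is all your description specifies), then $\Psi_k$ scrambles the scale-$n_{k+1}$ structure completely: the image under $\Psi_k$ of a level of $\mathcal{T}_{n_{k+1}}$ is spread over all residues modulo $h_{n_{k+1}}$ inside its target, so only a fraction of order $h_{n_k}/h_{n_{k+1}}$ of it lies where $\Psi_{k+1}$ is allowed to send it, and $\Psi_{k+1}$ must then differ from $\Psi_k$ on a set of measure close to $1$, not $\lesssim 1/h_{n_k}$. To make the disagreement sets small, the choice at scale $n_k$ must already anticipate all later scales: which of the admissible stage-$(k+1)$ targets a point is sent to has to be encoded by its finer $T$-coordinates. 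Organizing this deferred encoding coherently, and then proving that it still yields equality of orbits and controlled cocycles, is precisely the content of the Kerr--Li construction that this paper generalizes in Section~\ref{theconstruction} (the nested bricks $B'_{n-1,i}(m)$, the choice $t_{n,n}=1$ discussed in Remark~\ref{capture}, and the quantitative Lemmas~\ref{Enm}, \ref{Kn} and~\ref{Dnm}); none of this is supplied or replaced by your argument.

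Two further points are asserted rather than proved. The bound $|c|\le g_k$ on $E_k$ does not follow from ``both $\Psi(x)$ and $\Psi(Tx)$ lie in the tower $\mathcal{S}_k$'': that tower is the whole space, and two points of one orbit inside it can differ by an arbitrarily large power of $\mathcal{O}$; one needs to know that the orbit segment joining the two images does not wind through the tower many times, which again requires the detailed structure of the construction (this is what the three-step argument proving Lemma~\ref{Kn} does, yielding a bound of the form $4(h_{n-1}+Z_{n-1})(h'_{n-1})^2$ rather than a single tower height). Likewise ``the same estimate applies to $\Psi^{-1}$'' is not automatic: in the actual proof the two cocycles are handled by different decompositions and different bounds (the sets $K_n$ versus the sets $D_n(m)$ of Lemma~\ref{Dnm}), because one step of the universal odometer corresponds to a $T$-jump governed by the multi-scale brick structure. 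So as it stands the proposal records the expected shape of the estimates but omits the construction that makes them true.
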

	
	Odometers are exactly probability measure-preserving bijections admitting a nested sequence of partitions of the space, each of them being a Rokhlin tower, and increasing to the $\sigma$-algebra $\A$, see Figure~\ref{copie1} (we refer the reader to the end of Section~\ref{cutsta} for concrete examples with adding machines). Kerr and Li use this combinatorial specificity of these bijections to build an orbit equivalence between them.
	
	\begin{figure}[ht]
		\centering
		\includegraphics[width=0.6\linewidth]{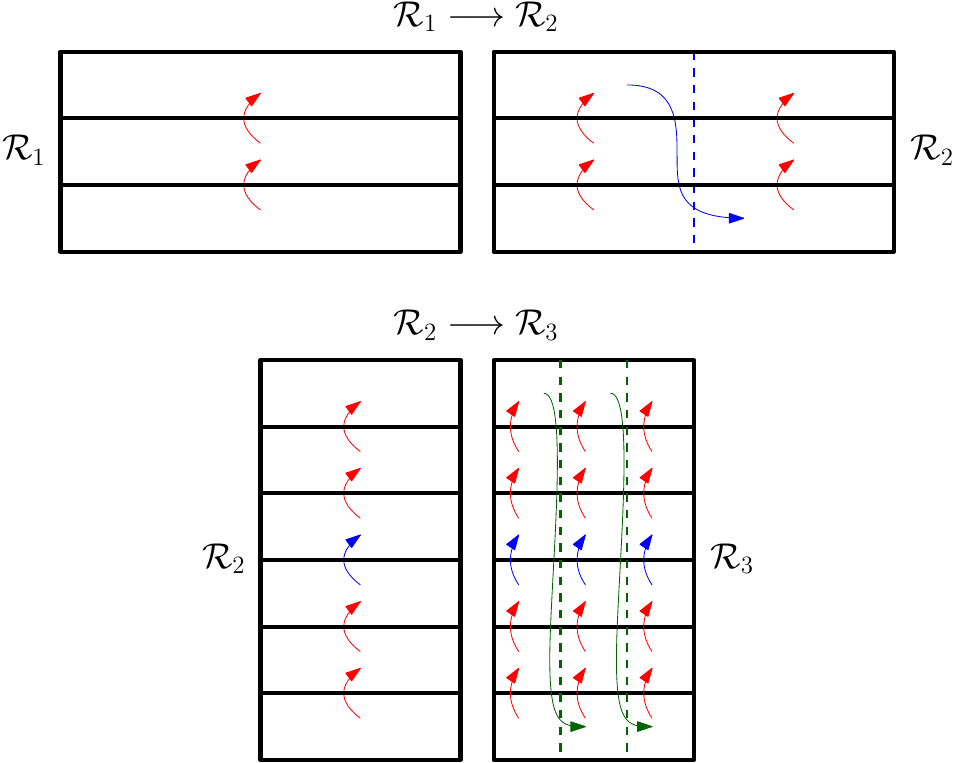}
		\caption{In this example, $(\mathcal{R}_n)$ denotes the nested sequence of Rokhlin towers defining an odometer. Dividing $\mathcal{R}_1$ in two sub-towers and stacking them, this gives the next tower $\mathcal{R}_2$. From $\mathcal{R}_2$, $\mathcal{R}_3$ is defined by dividing in three sub-towers and stacking them.}
		\label{copie1}
	\end{figure}
	
	\paragraph{Rank-one systems.}
	The aim of the paper is to extend Kerr and Li's result to \textit{rank-one bijections}. These are more general transformations admitting a nested sequence of Rokhlin towers increasing to the $\sigma$-algebra $\A$ but the towers do not necessarily partition the space. This means that from a tower to the next one, we need to add some parts of the space which are not covered by the previous tower, called \textit{spacers}, so that the measure of the subset covered by the $n$-th tower tends to $1$ as $n$ goes to $+\infty$. As illustrated in Figure~\ref{copie2}, to get the next tower, the current one is subdivided in sub-towers which are stacked with optional spacers between them. The number of sub-towers is called the cutting parameter and the number of consecutive spacers between these sub-towers are the spacing parameters (see Definition~\ref{defr1}). For example, an odometer admits a cutting-and-stacking construction with spacing parameters equal to zero at each step.
	
	\begin{figure}[ht]
		\centering
		\includegraphics[width=0.5\linewidth]{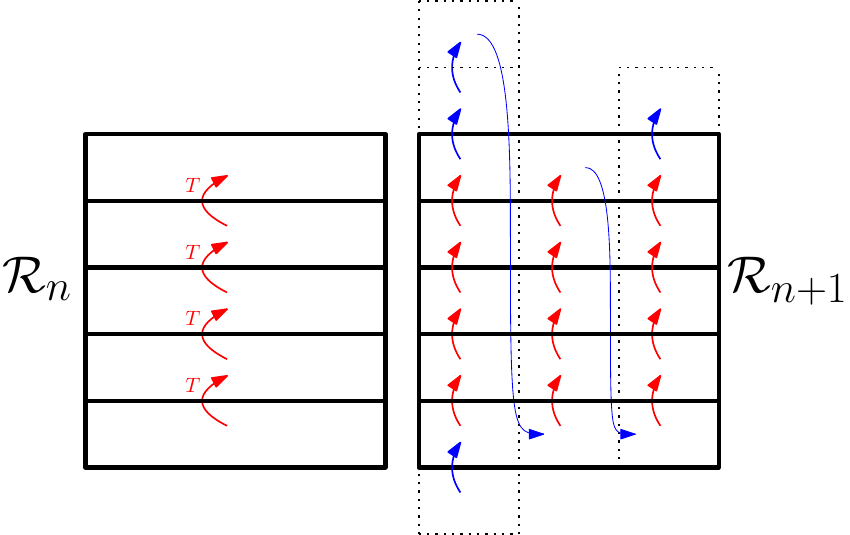}
		\caption{In this example, there are four spacers and the cutting parameter is three.}
		\label{copie2}
	\end{figure}
	
	Rank-one systems all have entropy zero. They include systems with \textit{discrete spectrum} (\cite{juncoTransformationsDiscreteSpectrum1976}), also called \textit{compact} systems. Such systems are not weakly mixing and are completely classified up to conjugacy by their point spectrum (\cite{halmosOperatorMethodsClassical1942}). Examples include odometers and irrational rotations.\par
	The family of rank-one systems is much richer than its subclass of discrete spectrum systems. Indeed, the latter are not weakly mixing whereas there exist strongly mixing systems of rank one, and also rank-one systems which are weakly mixing but not strongly mixing (Chacon's map was the first example of such a system and opened the study of rank-one systems). Rank-one systems can have irrational eigenvalues (i.e.~of the form $\exp{(2i\pi\theta)}$ with irrational numbers $\theta$), it is the case of irrational rotations, whereas odometers only have rational eigenvalues. The reader may refer to the complete survey of Ferenczi \cite{ferencziSystemsFiniteRank1997} about rank-one systems and more generally systems of finite rank.\par
	The combinatorial structure of a general rank-one system does not differ too much from the structure of an odometer but the systems can have completely different properties, thus this class may extend the result of Kerr and Li and provide interesting flexibility results about Shannon orbit equivalence.
	
	\paragraph{A first extension of Kerr and Li's theorem.}
	The construction of an orbit equivalence between the universal odometer $S$ and any rank-one system $T$ is a natural generalization of Kerr and Li's method for the universal odometer and any odometer (see Remark~\ref{oe}). The difficulty is to quantify the cocycles.\par
	At the beginning of our work, we first proved that the Shannon orbit equivalence established by Kerr and Li in \cite{kerrEntropyVirtualAbelianness2024} is actually a $\varphi$-integrable orbit equivalence for any $\varphi\colon\R_+\to\R_+$ with $\varphi(t)=o(t^{1/3})$. We then generalized this to rank-one systems called BSP, for "bounded-spacing-parameter", see Definition~\ref{cspbsp}. This notion of BSP systems was already introduced by Gao and Ziegler in \cite{gaoTopologicalMixingProperties2019a}, using the symbolic definition of rank-one systems (in this paper we will only consider the cutting-and-stacking definition of rank-one systems, which is often more appropriate for constructions in a measure-theoretic setting).
	
	\begin{theoremletter}\label{thetheorem}
		Every BSP rank-one system is $\varphi$-integrably orbit equivalent to the universal odometer for any $\varphi\colon\R_+\to \R_+$ satisfying \mbox{$\varphi(t)\underset{t\to +\infty}{=}o\left (t^{1/3}\right )$}.
	\end{theoremletter}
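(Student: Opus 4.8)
The plan is to follow Kerr and Li's combinatorial strategy for odometers, but to handle the spacers that a general rank-one system carries. Recall that a rank-one system $T$ is given by cutting parameters $q_n$ and spacing parameters, producing a nested sequence of Rokhlin towers $(\mathcal{R}_n)$ whose union increases to full measure. The key point of the BSP hypothesis is that the number of consecutive spacers at each stage is bounded by some fixed constant $C$, independently of $n$; this is what will let us control the cocycles. First I would set up, for each $n$, an explicit orbit equivalence-type matching between the $T$-tower $\mathcal{R}_n$ and a comparable Rokhlin tower for an odometer built with the same cutting sequence $(q_n)$, padding the odometer's levels with dummy levels to absorb the bounded blocks of spacers. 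Passing to the limit along $n$ (using that both towers exhaust the $\sigma$-algebra and that the matchings are coherent up to a small-measure error that is summable) yields an orbit equivalence $\Psi$ between $T$ and a concrete odometer $S_0$; since all odometers are Shannon (indeed $\varphi$-integrably, by the refinement mentioned in the excerpt) orbit equivalent to the universal odometer, and since $\varphi$-integrable orbit equivalence composes well enough for sublinear $\varphi$ (one bounds $\varphi(|c_S \circ c|)$ using subadditivity/monotonicity of $\varphi$), it suffices to treat $T$ versus $S_0$.

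The heart of the matter is the quantitative estimate on the cocycles $c_S$ and $c_T$ of $\Psi$. Here the geometry is: a point $x$ sitting in level $j$ of the tower $\mathcal{R}_n$ but not yet "settled" at stage $n$ gets moved by the orbit equivalence a number of steps comparable to the discrepancy between where $x$ lands in $\mathcal{R}_{n+1}$ under $T$ versus under $S_0$, and this discrepancy is governed by the spacers inserted at stage $n$. With at most $C$ spacers between consecutive sub-columns and $q_n$ sub-columns, the total number of spacers added to build $\mathcal{R}_{n+1}$ from $\mathcal{R}_n$ is at most $C q_n$, so the height $h_{n+1}$ of $\mathcal{R}_{n+1}$ satisfies $q_n h_n \le h_{n+1} \le q_n h_n + C q_n \le (C+1) q_n h_n$, and a point whose "decision" about which odometer/rank-one column it belongs to is only resolved at stage $n$ contributes a cocycle value of order $O(h_n)$ on a set of measure $O(1/h_n)$ — roughly, the levels above a given level in $\mathcal{R}_n$ outside $\mathcal{R}_{n-1}$, or the spacer stack. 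Summing $\varphi(O(h_n)) \cdot O(1/h_n)$ over $n$ is what needs to converge. The bound $h_{n+1} \le (C+1)q_n h_n$ and the product formula $\mu(\mathcal{R}_n^{(j)}) \sim 1/h_n$ should make $\sum_n \varphi(h_n)/h_n$ converge whenever $\varphi(t)=o(t^{1/3})$ — in fact $o(t)$ would suffice if the measure of the "bad" set at stage $n$ decayed like $h_n^{-1-\varepsilon}$; the exponent $1/3$ is the margin one actually gets after accounting for the fact that the matching at stage $n$ may also need correcting because of the stages $n-1, n-2$, giving a geometric-type but not fully geometric series, so one splits the cocycle as a telescoping sum of stagewise corrections and bounds each term.

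More concretely, I would carry out the following steps. (1) Fix a cutting-and-stacking presentation of $T$ with bounded spacing parameter $C$; build the auxiliary odometer $S_0$ on the same sequence $(q_n)$. (2) Define $\Psi$ as the almost-everywhere limit of the stagewise identifications, checking it is a well-defined measure-preserving bijection intertwining orbits (this is Remark~\ref{oe}, assumed available). (3) For $\mu$-a.e.\ $x$, express $c_S(x)$ (and symmetrically $c_T$) as $\sum_{n \ge N(x)} \delta_n(x)$ where $\delta_n(x)$ is the correction introduced at stage $n$, with $|\delta_n(x)| \le \mathrm{const}\cdot h_n$ and $\delta_n(x)=0$ unless $x$ lies in a set $E_n$ of measure $O(h_n^{-1})$ (the top portion of $\mathcal{R}_n$ together with the stage-$n$ spacer stack). (4) Estimate $\int_X \varphi(|c_S(x)|)\,d\mu(x)$: using monotonicity of $\varphi$ and the fact that on $E_n \setminus E_{n+1}$ the cocycle is $O(h_n)$, bound the integral by $\sum_n \mu(E_n)\,\varphi(O(h_n)) \le \mathrm{const}\sum_n \varphi(h_n)/h_n < \infty$ for $\varphi(t)=o(t^{1/3})$, where the gap between the trivial exponent $1$ and the obtained exponent $1/3$ comes from the fact that several previous stages can contribute simultaneously, forcing a union bound that costs us. (5) Conclude by composing with the known $\varphi$-integrable orbit equivalence between $S_0$ and the universal odometer. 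The main obstacle I expect is step (4): getting the exponent right requires carefully bookkeeping how corrections from stages $n-1, n-2, \dots$ can pile up on the same point and ensuring the resulting series still converges; a naive estimate gives only $\varphi(t)=o(\log t)$ or worse, and squeezing out $o(t^{1/3})$ is where the real work lies.
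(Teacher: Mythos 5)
Your outline has two genuine gaps, and they are exactly the points the paper is organized around. First, the final step relies on composing a $\varphi$-integrable orbit equivalence between $T$ and an auxiliary odometer $S_0$ with one between $S_0$ and the universal odometer. But $\varphi$-integrable (and Shannon) orbit equivalence is not known to be transitive, and your appeal to subadditivity does not repair this: if $c_1$ is the cocycle of the first equivalence and $c_2$ of the second, the composed cocycle at $x$ is an ergodic sum of $c_2$ along an $S_0$-orbit segment of length $|c_1(x)|$, so subadditivity only gives $\varphi(|c(x)|)\leq\sum_{j}\varphi(|c_2(S_0^j\Psi x)|)$ with $|c_1(x)|$ terms; integrating this requires control of $\int |c_1|\,\mathrm{d}\mu$ (i.e.\ genuine $\mathrm{L}^1$ bounds, which sublinear $\varphi$-integrability does not provide, and which by Belinskaya's theorem would put you in flip-conjugacy territory). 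The paper avoids composition altogether: the universal odometer $S$ is built directly inside $T$, by selecting ``bricks'' among the levels of the towers of $T$ (the Kerr--Li scheme generalized in Section~\ref{theconstruction}), so only one orbit equivalence is ever quantified.

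Second, working with ``the same cutting sequence $(q_n)$'' of the given BSP presentation cannot work, and this hides where the real work is. A fixed BSP system can have bounded cutting parameters (Chacon's map has $q_n=3$), in which case the construction is not even well-defined (it needs $q_n>\max(p_n,q'_0,\ldots,q'_{n-1})$, and universality needs every prime to divide infinitely many $q'_n$), and the quantitative estimates fail: the correct cocycle bounds are of order $(h_n+Z_n)(h'_n)^2\sim h_n^3$ on sets of measure of order $(H'_n+p_nh'_n)/h_{n+1}$ plus $\varepsilon_n$ --- this cubic growth is the actual source of the exponent $1/3$, not a union bound over past stages --- and the resulting series only converge if each $q_n$ is chosen recursively, much larger than quantities depending on all previous stages. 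For BSP systems this acceleration is legitimate precisely because skipping steps in the cutting-and-stacking construction keeps the spacing parameters bounded by the same constant (Lemma~\ref{skip}), which is why BSP systems form a flexible class and why Theorem~\ref{thfc} applied to the singleton $\{T\}$ returns $T$ itself. Your estimates $|\delta_n|=O(h_n)$ on a set of measure $O(1/h_n)$, and the series $\sum_n\varphi(h_n)/h_n$, are too optimistic and would in any case converge for any $\varphi(t)=o(t)$ since $h_n$ grows geometrically, so they cannot be the mechanism that produces the threshold $t^{1/3}$.
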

	
	Therefore $\varphi$-integrable orbit equivalence, for a $\varphi$ as in the above theorem, and Shannon orbit equivalence do not preserve weak mixing since Chacon's map is a BSP rank-one system.\par
	Now the goal is to get a result for systems of rank one outside the class of BSP systems. For this purpose, we find a more general framework with the notion of \textit{flexible classes}, and a general statement (Theorem~\ref{thfc0}) implying Theorem~\ref{thetheorem} and other flexibility results (Theorems~\ref{thetheoremB},~\ref{thetheoremC},~\ref{thetheoremD}). Theorem~\ref{uncountable} is a refinement of Theorem~\ref{thetheoremB}.
	
	\paragraph{A modified strategy.}
	We first have to understand why the quantification of the cocycles is more difficult to determine for general rank-one systems than for odometers (or even for BSP systems in Theorem~\ref{thetheorem}). In \cite{kerrEntropyVirtualAbelianness2024}, the quantification of the cocycles relies on a series whose terms vanish to zero as the cutting parameters get larger and larger. The key is then to get quickly increasing cutting parameters for the series to converge. In order to do so, it suffices to skip steps in the cutting-and-stacking process, i.e.~from the $n$-th Rokhlin tower, we can directly build the $(n+k)$-th Rokhlin for $k$ so big that the new cutting parameter is large enough. In other words, we can recursively choose the cutting parameters so that they increase quickly enough.\par
	When the rank-one system is not an odometer, we need an asymptotic control on the spacing parameters (recall that they are zero for an odometer) for the cocycles to be well quantified. When skipping steps in the cutting-and-stacking method, the spacing parameters may increase too quickly, preventing us from quantifying the cocycles. As we will see in Lemma~\ref{skip}, we do not have this problem with BSP rank-one systems.\par
	When the rank-one system is not BSP, skipping steps in the cutting-and-stacking construction is not relevant as it may improperly change the spacing parameters. In Section~\ref{towardsfc} (see Lemma~\ref{dependence}), we will notice that the construction of Kerr and Li enables us to build the universal odometer $S$ while we are building the rank-one system $T$, focusing only on the combinatorics behind the systems, whereas for Kerr and Li $T$ and its cutting-and-stacking settings are fixed and $S$ is built from these data. This new strategy will enable us to have a result for systems of rank one outside the class of BSP systems, with the notion of flexible class.
	
	\paragraph{Flexible classes.}
	A flexible class (see Definition~\ref{deffc}) is basically a class of rank-one systems satisfying a common property (e.g.~the set of strongly mixing rank-one systems), with the following two requirements. We first ask for a sufficient condition, given by a set $\fc$, on the first $n$ cutting and spacing parameters (for all integers $n\geq 0$) for the underlying rank-one system to be in this class. Secondly, given a sequence of $n$ cutting and spacing parameters in $\fc$ (they will be the first $n$ parameters of a cutting-and-stacking construction), we require that it can be completed in a sequence of $n+1$ parameters in $\fc$, with infinitely many choices for the $(n+1)$-th cutting parameters, and with the appropriate asymptotic control on the $(n+1)$-th spacing parameters.\par
	The idea is to inductively choose the parameters so that the cutting parameters increase fastly enough, with the appropriate asymptotics on the spacing parameters, and the underlying rank-one system has the desired property, namely the system is in the flexible class that we consider.\par
	The general statement on flexible classes is the following.
	
	\begin{theoremletter}[see Theorem~\ref{thfc}]\label{thfc0}
		Let $\varphi\colon\R_+\to \R_+$ be a map satisfying $\varphi(t)\underset{t\to +\infty}{=}o\left (t^{1/3}\right )$. If $\mathcal{C}$ is a flexible class, then there exists $T$ in $\mathcal{C}$ which is $\varphi$-integrably orbit equivalent to the universal odometer.
	\end{theoremletter}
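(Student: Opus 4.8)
The plan is to build the orbit equivalence and the universal odometer simultaneously by an inductive cutting-and-stacking construction, closely following Kerr and Li's method but exploiting the extra freedom provided by the flexible class $\mathcal{C}$ via the set $\fc$. Starting from the trivial one-level tower, I would construct, step by step, a sequence of cutting parameters $(q_n)$ and spacing parameters (vectors $a_n$ of consecutive-spacer counts) lying in $\fc$ at every finite stage. At stage $n$, having chosen the first $n$ parameters in $\fc$, the flexibility hypothesis guarantees that I can append an $(n+1)$-th pair of parameters, still in $\fc$, with infinitely many admissible choices for $q_{n+1}$ and with the required asymptotic control on the new spacing parameters. I would use this freedom to pick $q_{n+1}$ so large that the tail contributions in the series governing the cocycles are negligible — concretely, large enough that the relevant quantity controlling $\int \varphi(|c_S|)\,d\mu$ and $\int\varphi(|c_T|)\,d\mu$ at step $n+1$ is summable. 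Since the construction stays in $\fc$ at every stage, the resulting rank-one transformation $T$ belongs to $\mathcal{C}$ by the defining property of the set $\fc$; and since all spacing parameters are zero only up to the asymptotic control — in fact the companion system built alongside, with the same cutting parameters and no spacers, is by construction the universal odometer (every prime appears infinitely often among the $q_n$, which the infinitely-many-choices clause lets us arrange).

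The second ingredient is the orbit equivalence itself. In parallel with the towers $(\mathcal{R}_n)$ defining $T$, I would construct towers $(\mathcal{S}_n)$ for the universal odometer $S$ sharing the same base heights pattern, so that at each stage there is a measure-preserving bijection $\Psi_n$ matching level $j$ of $\mathcal{R}_n$ to level $j$ of $\mathcal{S}_n$ on the common part, and absorbing the spacers of $T$ into the $S$-tower in a controlled way. These $\Psi_n$ converge (in the appropriate sense) to an orbit equivalence $\Psi$ between $T$ and $S$, exactly as in Kerr–Li; the point of doing the two constructions together (Lemma~\ref{dependence}) is precisely that we are free to choose the parameters of $T$ on the fly rather than having them fixed in advance. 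I would then read off the cocycles $c_S$ and $c_T$ from the local structure of $\Psi$: on most of the space $\Psi$ preserves the $S$- and $T$-steps, and the cocycle values are governed by how a point crosses from one sub-tower to the next, which is where the cutting and spacing parameters enter. The crucial estimate, inherited from the BSP analysis and from Kerr–Li's series argument, is that the $\varphi$-integral of $|c_S|$ (and of $|c_T|$) is bounded by a series whose $n$-th term decays once $q_n$ is large, using $\varphi(t)=o(t^{1/3})$ and the asymptotic control on the spacing parameters supplied by the flexible class — this is exactly the role of the exponent $1/3$.

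Putting these together: inductively choose $(q_n, a_n)\in\fc$ with $q_{n+1}$ large enough to make the $n$-th terms of the two cocycle series summable; let $T$ be the resulting rank-one system (in $\mathcal{C}$), let $S$ be the universal odometer built alongside, and let $\Psi$ be the limit of the $\Psi_n$; then $\Psi$ is an orbit equivalence and both $\int\varphi(|c_S|)\,d\mu$ and $\int\varphi(|c_T|)\,d\mu$ are finite.

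The main obstacle I anticipate is the cocycle estimate in the non-BSP regime: unlike for odometers or BSP systems, one cannot freely skip steps, so the growth of the spacing parameters must be tamed using only the weaker asymptotic control built into the definition of a flexible class, and one must verify that this control is still enough to push the relevant series to convergence when $\varphi$ grows like $t^{1/3-\varepsilon}$. Checking that the "infinitely many choices for $q_{n+1}$" clause is genuinely compatible with simultaneously (i) making the cocycle series terms summable and (ii) ensuring the companion no-spacer system exhausts all primes (hence is the universal odometer) is the delicate bookkeeping step; everything else is a careful but routine adaptation of the Kerr–Li / BSP arguments already invoked in the excerpt.
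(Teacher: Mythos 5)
There is a genuine gap, in two places. First, your mechanism for universality does not work: you propose that the companion no-spacer system has \emph{the same} cutting parameters $q_n$ as $T$ and that the ``infinitely many choices'' clause lets you arrange every prime to appear infinitely often among the $q_n$. The definition of a flexible class only guarantees infinitely many admissible values of $q_{n+1}$, with no control whatsoever on their arithmetic (the admissible set could, say, consist only of integers of the form $2^k+1$), so you cannot force divisibility by prescribed primes. In the paper the odometer $S$ does \emph{not} share the cutting parameters of $T$: its parameters $q'_n$ are produced by the construction itself (the largest multiple of a prescribed prime $p_n$ below the count $r_{n+1,n+1}$ of available levels), and it is this choice --- not any property of the $q_n$ --- that makes $S$ universal.

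Second, the heart of the matter is the orbit equivalence and the cocycle bounds, and your proposal leaves both as a black box: you posit partial maps $\Psi_n$ matching level $j$ of the $T$-tower to level $j$ of an $S$-tower while ``absorbing the spacers in a controlled way,'' and assert that the limiting $\Psi$ is an orbit equivalence with $\varphi$-integrable cocycles. Absorbing the spacers is precisely the difficulty (the heights satisfy $h_{n+1}=q_nh_n+\sigma_n$, so the towers cannot be matched level by level), and no estimate is offered. The paper instead builds $S$ directly on the same space as $T$, defining it piecewise as powers of $T$ via maps connecting selected $m$-bricks (levels of the $T$-towers); equality of orbits then needs a genuine argument (the choice $t_{n,n}=1$ and the sets $E_{n,n}$, $K_n$), and the $\varphi$-integrals are controlled through explicit bounds such as the cocycle of each partial map being at most $h_{m-1}+Z_{m-1}$, combined with the CSP bound $Z_n\leq Ch_n$, the control $\sigma_n\leq C'q_nh_{n-1}$, and the inductive choice of large $q_n$. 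Without this construction (or an equivalent explicit mechanism), the summability of the series bounding $\int\varphi(|c_S|)\,\mathrm{d}\mu$ and $\int\varphi(|c_T|)\,\mathrm{d}\mu$ --- and indeed the very existence of the orbit equivalence --- cannot be verified, so the proposal as written does not yet constitute a proof.
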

	
	A very interesting phenomenon is when a rank-one system $T$ is flexible, meaning that $\{T\}$ is a flexible class. This first means that given the parameters of a cutting-and-stacking construction of $T$, it is possible to change the $(n+1)$-th parameters so that they have the desired asymptotic control, and to inductively do so for every $n$ so that the underlying rank-one system is again $T$. We do not know if every rank-one system is flexible. Secondly, Theorem~\ref{thfc0} is an existence result and when a flexible class is a singleton $\{T\}$, this statement provides a concrete example of rank-one system which is $\varphi$-integrably orbit equivalent to the universal odometer.\par
	The following proposition gives examples of flexible classes.
	
	\begin{proposition}[see Proposition~\ref{fc}]\label{fc0}
		\begin{enumerate}
			\item Every BSP rank-one system is flexible.
			\item For every nonempty open subset $\mathcal{V}$ of $\R$, the set $\{R_{\theta}\mid\theta\in\mathcal{V}\cap (\R\setminus\mathbb{Q})\}$ is a flexible class.
			\item For every irrational number $\theta$, the class of rank-one systems which have $e^{2i\pi\theta}$ as an eigenvalue is flexible.
			\item The class of strongly mixing rank-one systems is flexible.
		\end{enumerate}
	\end{proposition}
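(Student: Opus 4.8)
All four items will be handled by the same recipe: for each class $\mathcal{C}$ I exhibit a set $\fc$ of finite sequences of cutting and spacing parameters and check the two clauses of Definition~\ref{deffc}, namely (a) if all the initial segments of the parameter sequence of a cutting-and-stacking construction belong to $\fc$, then the resulting rank-one system lies in $\mathcal{C}$, and (b) the completion clause: each sequence in $\fc$ can be prolonged into a longer sequence of $\fc$, with infinitely many admissible values — in particular arbitrarily large ones — for the new cutting parameter, and with the new spacing parameters satisfying the asymptotic control demanded by Definition~\ref{deffc}. In every case (a) will be a known combinatorial sufficient condition for membership in $\mathcal{C}$, while (b) is where the work lies, the delicate point being to keep an unbounded set of choices available for the next cutting parameter — which is what, via Theorem~\ref{thfc0}, will make the relevant series quantifying the cocycles converge — while keeping the new spacers small.

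\noindent\emph{Item 1.} Let $T$ be BSP; fix a cutting-and-stacking construction of $T$ whose spacing parameters are bounded by some $M$, and let $\fc$ be the set of finite parameter sequences obtained from this fixed construction by \emph{skipping steps}, i.e. by replacing blocks of consecutive steps by the composite steps they induce. Any parameter sequence all of whose initial segments lie in $\fc$ re-builds exactly $T$, which gives (a). For (b), from a sequence of $\fc$ ending at the original step $m$, grouping the original steps $m+1,\dots,m+k$ provides, for each $k\geq 1$, an admissible next step whose cutting parameter is a product of $k$ of the original cutting parameters; letting $k$ vary yields infinitely many, arbitrarily large, choices, and Lemma~\ref{skip} is precisely the statement that — because $T$ is BSP — the spacing parameters of such a composite step obey the required asymptotic bound.

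\noindent\emph{Items 2 and 3.} Here $\fc$ is defined by a summability condition on the parameters. For item~3 I use the classical criterion expressing that $e^{2i\pi\theta}$ is an eigenvalue of a rank-one system as the convergence of a series whose $n$-th term is governed by the distances to the integers of $\theta$ times the heights occurring at the $n$-th step; $\fc$ consists of the finite parameter sequences along which the corresponding partial sums stay suitably small, which gives (a), and the completion clause is checked by choosing the new spacing parameters so as to round the relevant heights close to the arithmetic progression prescribed by $\theta$, thereby making the new term of the series small — the spacers needed for this rounding being $o(h_n)$, hence compatible with the asymptotic control, while the next cutting parameter is left free to be any sufficiently large integer. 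For item~2 one works instead with the classical cutting-and-stacking description of the rotation $R_\theta$ coming from the continued fraction expansion of $\theta$: $\fc$ encodes both the well-controlled spacers of that construction and the constraint that the initial segment of the continued fraction expansion keeps the associated cylinder inside $\mathcal{V}$. Since $\mathcal{V}$ is open and nonempty, once the index is large there are infinitely many admissible next continued fraction coefficients (equivalently, next cutting parameters), including arbitrarily large ones, and the resulting $\theta$ is automatically irrational and in $\mathcal{V}$, giving both (a) and (b).

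\noindent\emph{Item 4.} For $\fc$ I take the finite parameter sequences that are initial segments of a staircase-type construction (in the sense of Adams, or alternatively of Ornstein's random-spacer construction), for which strong mixing follows from a purely combinatorial condition — essentially the staircase shape $s_{n,i}=i$ of the spacers together with $r_n^2=o(h_n)$ — which gives (a). For (b), a sequence of $\fc$ is prolonged by appending a staircase step whose cutting parameter $r_{n+1}$ is chosen anywhere in, say, $[2,\lfloor\sqrt{h_n}\rfloor]$: this leaves infinitely many, arbitrarily large, choices, keeps $r_{n+1}^2=o(h_{n+1})$, and makes the staircase spacers $s_{n+1,i}=i<r_{n+1}=o(h_n)$ satisfy the asymptotic control. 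I expect item~4 — and, to a lesser extent, item~3 — to be the main obstacle: there the defining property of $\mathcal{C}$ itself forces the spacers to be reasonably large (enough spacers to produce mixing, resp. spacers solving an inhomogeneous approximation problem attached to $\theta$), and this must be reconciled, step after step, both with the upper bound on the spacing parameters built into the notion of flexible class and with the requirement that the cutting parameters be allowed to grow without bound.
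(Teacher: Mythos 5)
Your items 1 and 2 follow essentially the paper's route: for BSP systems one takes $\fc$ to be the parameter sequences obtained by skipping steps and invokes Lemma~\ref{skip}, and for rotations one uses the continued-fraction cutting-and-stacking construction (Drillick et al.), pinning $\theta$ in $\mathcal{V}$ by freezing an initial block of partial quotients. One technicality you pass over in item 2: partial quotients may equal $1$, which is not allowed as a cutting parameter (Definition~\ref{defparam} needs $q\geq 2$, and Definition~\ref{deffc} needs $q_0\geq 3$), and the frozen initial block forced by $\mathcal{V}$ may contain $1$'s; the paper handles this by merging the first $n_0+1$ steps into a single composite step with $q_0=Q_0\cdots Q_{n_0}\geq 3$ and restricting the later quotients to be $\geq 2$. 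Item 3 is in the spirit of the paper (Danilenko--Vieprik: choose spacers that round the intermediate heights so that $\lambda$ stays an approximate eigenvalue), but the decisive bookkeeping is missing: the rounding spacers at step $n$ have size of order $n^2/\delta_{n^2}$, which has nothing to do with $h_n$ a priori; they satisfy $Z_n\leq Ch_n$ and, crucially, $\sigma_n\leq C'q_nh_{n-1}$ for \emph{arbitrarily large} $q_n$ only because an auxiliary lower bound on the height was imposed one step earlier (the paper strengthens the auxiliary condition to~\eqref{danilenkovieprik2prime}, involving both $n$ and $n+1$, precisely for this). Simply asserting the spacers are $o(h_n)$ does not give clause 3 of Definition~\ref{deffc}.

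The genuine gap is in item 4. Your concrete completion step appends a staircase column with $r_{n+1}\in[2,\lfloor\sqrt{h_n}\rfloor]$ so as to keep Adams' mixing condition $r_n^2=o(h_n)$. But for a \emph{fixed} prefix the height $h_n$ is fixed, so this is a finite set of admissible next cutting parameters; "arbitrarily large" only holds as $n\to\infty$, not at a given step. Definition~\ref{deffc} requires infinitely many admissible values of $q_n$ for each fixed prefix, and this is not a formality: in the proof of Theorem~\ref{thfc} the next cutting parameter must exceed a threshold $\kappa_n$ determined by $\varphi$, $h'_n$, $H'_n$, $p_n$ (Inequalities~\eqref{critpr1}--\eqref{critpr10}), which can vastly exceed $\sqrt{h_n}$ (e.g.\ already~\eqref{critpr4bis} can force $q_n$ super-exponential in $h_n(h'_n)^2$ for $\varphi(t)=t^{1/3}/\log t$). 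So a staircase-type $\fc$ with the constraint $r_n^2=o(h_n)$ cannot witness flexibility: the defining property caps the cutting parameter by the current height. This is exactly why the paper uses Ornstein's construction instead: Lemma~\ref{ornstein} produces, for every $N$, a cutting parameter $m>N$ together with signed spacers $a_i$ whose partial sums are bounded by $K=h_{n-1}$, so that $\sigma_{n,i}=a_i+h_{n-1}$ gives a CSP construction with $C=2$, $\sigma_n\leq 2q_nh_{n-1}$, and an unbounded supply of admissible $q_n$ at every step. Your parenthetical "or alternatively Ornstein's random-spacer construction" points at the right fix, but as written the argument for item 4 does not establish clause 3 of Definition~\ref{deffc}.
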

	
	Proving that a BSP system is flexible is not difficult and we rely on the fact that bounded spacing parameters already have the desired asymptotics even though we skip steps in the cutting-and-stacking process for the cutting parameters to increase quickly enough (see Section~\ref{proofBSP}). We use a construction by Drillick, Espinosa-Dominguez, Jones-Baro, Leng, Mandelshtam and Silva \cite{drillickNonrigidRankoneInfinite2023} to prove Proposition~\ref{fc0} for irrational rotations (see Section~\ref{proofirra}). We also consider a construction by Danilenko and Vieprik~\cite{danilenkoExplicitRank1Constructions2023} for the rank-one systems with a given eigenvalue (see Section~\ref{proofeigenvalue}). Finally, Ornstein~\cite{ornsteinRootProblemErgodic1972} gives the first example of strongly mixing rank-one systems and the fact that these systems form a flexible class follows from his construction (see Section~\ref{proofstrmix}).\par
	Combined with Proposition~\ref{fc0}, Theorem~\ref{thfc0} provides four flexibility results. The first one is Theorem~\ref{thetheorem} stated above, this is a generalization of Kerr and Li's theorem. The second one is another result with almost explicit examples of systems which are $\varphi$-integrably orbit equivalent to the universal odometer.
	
	\begin{theoremletter}\label{thetheoremB}
		Let $\varphi\colon\R_+\to \R_+$ be a map satisfying \mbox{$\varphi(t)\underset{t\to +\infty}{=}o\left (t^{1/3}\right )$}. The set of irrational numbers $\theta$ whose associated irrational rotation is $\varphi$-integrably orbit equivalent to the universal odometer is dense in $\R$.
	\end{theoremletter}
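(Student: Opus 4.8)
The plan is to deduce Theorem~\ref{thetheoremB} immediately from Theorem~\ref{thfc0} together with the second item of Proposition~\ref{fc0}. The only point to keep in mind is the standard reformulation of density: a subset $D\subseteq\R$ is dense in $\R$ if and only if $D$ meets every nonempty open subset of $\R$. So fix once and for all an arbitrary nonempty open set $\mathcal{V}\subseteq\R$; it will be enough to exhibit an irrational number $\theta\in\mathcal{V}$ whose rotation $R_\theta$ is $\varphi$-integrably orbit equivalent to the universal odometer.

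First I would apply Proposition~\ref{fc0}(2) to $\mathcal{V}$. Since $\mathcal{V}$ is nonempty and open it contains an interval, hence $\mathcal{V}\cap(\R\setminus\mathbb{Q})$ is nonempty (indeed uncountable), so the class $\mathcal{C}_{\mathcal V}:=\{R_\theta\mid\theta\in\mathcal{V}\cap(\R\setminus\mathbb{Q})\}$ is a well-defined nonempty flexible class of rank-one systems. Then, applying Theorem~\ref{thfc0} to the flexible class $\mathcal{C}_{\mathcal V}$ and to the given map $\varphi$ (which by hypothesis satisfies $\varphi(t)=o(t^{1/3})$ as $t\to+\infty$), we obtain some $T\in\mathcal{C}_{\mathcal V}$ that is $\varphi$-integrably orbit equivalent to the universal odometer. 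By the very definition of $\mathcal{C}_{\mathcal V}$ we have $T=R_\theta$ for some $\theta\in\mathcal{V}\cap(\R\setminus\mathbb{Q})$, which is precisely the irrational angle in $\mathcal{V}$ we were after.

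Finally, since $\mathcal{V}$ was an arbitrary nonempty open subset of $\R$, the set of irrational numbers $\theta$ such that $R_\theta$ is $\varphi$-integrably orbit equivalent to the universal odometer intersects every nonempty open subset of $\R$, hence is dense in $\R$, which is the assertion of Theorem~\ref{thetheoremB}.

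As for the main obstacle: at the level of this deduction there is essentially none — Theorem~\ref{thetheoremB} is a direct corollary once Theorem~\ref{thfc0} and Proposition~\ref{fc0}(2) are available. All of the genuine difficulty is concentrated in those two statements: constructing the orbit equivalence with the universal odometer and quantifying its cocycles under the $o(t^{1/3})$ constraint (Theorem~\ref{thfc0}), and checking that, for a prescribed open set of angles, the irrational rotations with angle in that set satisfy the two closure conditions in the definition of a flexible class (Proposition~\ref{fc0}(2)). The latter is where the cutting-and-stacking description of irrational rotations from \cite{drillickNonrigidRankoneInfinite2023} enters, together with a verification that one can keep the spacing parameters within the required asymptotic control while simultaneously keeping the rotation angle inside $\mathcal{V}$ and retaining infinitely many admissible choices of cutting parameter at each step.
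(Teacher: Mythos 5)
Your proposal is correct and is exactly the paper's argument: Theorem~\ref{thetheoremB} is deduced by applying Theorem~\ref{thfc0} (Theorem~\ref{thfc}) to the flexible class $\{R_{\theta}\mid\theta\in\mathcal{V}\cap(\R\setminus\mathbb{Q})\}$ given by Proposition~\ref{fc0}(2) for an arbitrary nonempty open $\mathcal{V}\subseteq\R$, and then invoking the standard characterization of density. You also correctly identify that all the real work lies in Theorem~\ref{thfc0} and Proposition~\ref{fc0}(2), not in this deduction.
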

	
	The point spectrum of $R_{\theta}$ is exactly the circle subgroup generated by $\exp{(2i \pi\theta)}$ and the eigenvalues of the universal odometer are rational, so Theorem~\ref{thetheoremB} implies that there exist two Shannon orbit equivalent systems (more specifically $\varphi$-integrably orbit equivalent with \mbox{$\varphi(t)\underset{t\to +\infty}{=}o\left (t^{1/3}\right )$}), with non-trivial point spectrums and such that $1$ is the only common eigenvalue.\par
	The way we prove Theorem~\ref{thfc0} will enable us to get the following refinement, its proof is written at the end of the paper.
	
	\begin{theoremletter}\label{uncountable}
		For every map $\varphi\colon\R_+\to \R_+$ satisfying \mbox{$\varphi(t)\underset{t\to +\infty}{=}o\left (t^{1/3}\right )$}, and for every nonempty open subset $\mathcal{V}$ of $\R$, the set of irrational numbers $\theta\in\mathcal{V}$ whose associated irrational rotation is $\varphi$-integrably orbit equivalent to the universal odometer is uncountable.
	\end{theoremletter}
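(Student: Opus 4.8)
The plan is to reopen the proof of Theorem~\ref{thfc0} for one particular flexible class and to upgrade the single system it produces into a Cantor-scheme-indexed family. Fix a bounded open interval $\mathcal{V}_0\subseteq\mathcal{V}$; by item~(2) of Proposition~\ref{fc0} the set $\mathcal{C}_{\mathcal{V}_0}=\{R_\theta\mid\theta\in\mathcal{V}_0\cap(\R\setminus\mathbb{Q})\}$ is a flexible class. The key observation is that the inductive construction in the proof of Theorem~\ref{thfc0} builds the cutting and spacing parameters of $T$ block by block, and that at each stage $n$ the definition of a flexible class (Definition~\ref{deffc}) leaves \emph{infinitely many} admissible values for the $(n+1)$-th cutting parameter, the only additional requirements being a lower bound on it (ensuring that the series controlling the cocycles converges) and the prescribed asymptotics on the new spacing parameters; all of these constraints are \emph{one-sided}, i.e.\ met by every sufficiently large admissible choice. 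So there is room to branch.

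First I would record, from the cutting-and-stacking realization of irrational rotations used to prove item~(2) of Proposition~\ref{fc0} (the construction of Drillick, Espinosa-Dominguez, Jones-Baro, Leng, Mandelshtam and Silva~\cite{drillickNonrigidRankoneInfinite2023}), the dependence of the angle $\theta$ of the resulting rotation on the sequence of parameters: $\theta$ is the limit of an explicit sequence of rationals determined by those parameters, and, up to the book-keeping of the construction, the successive cutting parameters encode the partial quotients of the continued fraction expansion of $\theta$. Fixing finitely many initial parameters so that the corresponding continued fraction cylinder lies inside $\mathcal{V}_0$, every completion of the construction then yields an angle in $\mathcal{V}_0\subseteq\mathcal{V}$. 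I would then run the inductive scheme of Theorem~\ref{thfc0} while branching at every step: construct a monotone map $\sigma\mapsto\pi_\sigma$ from the binary tree $\{0,1\}^{<\mathbb{N}}$ into $\fc$ such that $\pi_{\sigma0}$ and $\pi_{\sigma1}$ are two distinct admissible one-block extensions of $\pi_\sigma$, both meeting the growth lower bound and the spacing-parameter asymptotics demanded in that proof, and differing in the value of the newly added cutting parameter. For each $x\in\{0,1\}^{\mathbb{N}}$, the increasing union of the $\pi_\sigma$ over prefixes $\sigma$ of $x$ is an admissible parameter sequence satisfying all hypotheses of the proof of Theorem~\ref{thfc0}, hence defines a rank-one system $R_{\theta_x}$ with $\theta_x\in\mathcal{V}\cap(\R\setminus\mathbb{Q})$ that is $\varphi$-integrably orbit equivalent to the universal odometer. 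Finally, by the correspondence just recorded and the uniqueness of continued fraction expansions of irrationals, two branches that first diverge at a node $\sigma$ end up with different partial quotients, so $x\mapsto\theta_x$ is injective; since $\{0,1\}^{\mathbb{N}}$ is uncountable, so is $\{\theta_x\mid x\in\{0,1\}^{\mathbb{N}}\}$, which is the assertion of the theorem.

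The main obstacle is precisely this last step: one must be sure that distinct branches yield genuinely distinct \emph{angles}, not merely distinct combinatorial data, since very different cutting-and-stacking parameters can describe isomorphic systems. This is why the argument cannot be purely formal from Theorem~\ref{thfc0} and must go into the structure of the construction in~\cite{drillickNonrigidRankoneInfinite2023} — the quantitative link between cutting parameters and continued fraction partial quotients, together with the fact that prescribing arbitrarily large partial quotients is compatible with the fast-growth requirement inherited from Theorem~\ref{thfc0}. A complementary way to close this gap, should tracking that link precisely be awkward, is to use that two irrational rotations are metrically isomorphic only when their angles agree modulo $\mathbb{Z}$ up to sign, so each isomorphism class accounts for only finitely many angles in the bounded interval $\mathcal{V}_0$; it then suffices to arrange that the tree produces uncountably many pairwise non-isomorphic rank-one systems, e.g.\ by making the essential cutting parameters along different branches eventually disagree in a manner detected by a spectral or rank-one isomorphism invariant.
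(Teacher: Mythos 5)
Your proposal is correct and takes essentially the same route as the paper: the actual proof also branches the inductive choice of cutting parameters from the proof of Theorem~\ref{thfc} along $\{0,1\}^{\N}$ (setting $q_n=\mathrm{K}_{\varphi}(\bm{Q},q_1,\ldots,q_{n-1})+\varepsilon_n$), working inside the parameter set $\mathcal{F}(\bm{Q})$ of Section~\ref{proofirra}, where the spacing parameters are determined by the cutting parameters and the cutting parameters are exactly the continued fraction partial quotients of the angle, so distinct branches give distinct angles in $\mathcal{V}$ by uniqueness of continued fraction expansions. The obstacle you flag (distinct combinatorial data versus distinct angles) is resolved in the paper precisely by this correspondence, so the Halmos--von Neumann fallback is not needed.
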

	
	\begin{question}
		Let us consider the set of irrational numbers $\theta$ whose associated irrational rotation is $\varphi$-integrably orbit equivalent to the universal odometer. Is this set conull with respect to the Lebesgue measure? equal to the set of irrational numbers?
	\end{question}
	
	Finally we get the following corollaries, providing implicit examples.
	
	\begin{theoremletter}\label{thetheoremC} For every map $\varphi\colon\R_+\to \R_+$ satisfying \mbox{$\varphi(t)\underset{t\to +\infty}{=}o\left (t^{1/3}\right )$}, and for every irrational number $\theta$, there exists a rank-one system which has $e^{2i\pi\theta}$ as an eigenvalue and which is $\varphi$-integrably orbit equivalent to the universal odometer.
	\end{theoremletter}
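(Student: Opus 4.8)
The plan is to read off Theorem~\ref{thetheoremC} as an immediate consequence of the two results already in place: the general existence statement for flexible classes, Theorem~\ref{thfc0}, and the catalogue of flexible classes, Proposition~\ref{fc0}. Fix an irrational $\theta$ and a map $\varphi\colon\R_+\to\R_+$ with $\varphi(t)=o(t^{1/3})$, and let $\mathcal{C}$ be the class of all rank-one transformations that have $e^{2i\pi\theta}$ as an eigenvalue. By Proposition~\ref{fc0}(3), $\mathcal{C}$ is a flexible class, so Theorem~\ref{thfc0} applies to $\mathcal{C}$ and $\varphi$ and yields some $T\in\mathcal{C}$ that is $\varphi$-integrably orbit equivalent to the universal odometer. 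By the very definition of $\mathcal{C}$, this $T$ is a rank-one system admitting $e^{2i\pi\theta}$ as an eigenvalue, which is exactly the assertion. Nothing beyond invoking the two cited statements is needed here; Theorems~\ref{thetheorem}, \ref{thetheoremB} and \ref{uncountable} are obtained in the same mechanical way from the other clauses of Proposition~\ref{fc0} (and, for \ref{uncountable}, from a refinement of the proof of Theorem~\ref{thfc0}).

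The genuine work therefore sits upstream, in the two ingredients I am invoking. On the one hand, Theorem~\ref{thfc0} requires interleaving the cutting-and-stacking construction of the target system $T$ with Kerr and Li's cutting-and-stacking construction of the universal odometer, and then quantifying the two cocycles: one shows that the relevant series has terms which decay once the cutting parameters grow fast enough, provided the spacing parameters stay under the asymptotic bound imposed by the definition of a flexible class, and the exponent $1/3$ in $\varphi(t)=o(t^{1/3})$ is precisely what the resulting estimate absorbs. On the other hand, Proposition~\ref{fc0}(3) must exhibit, for each fixed irrational $\theta$, a set $\fc$ of finite strings of cutting and spacing parameters such that (a) every string in $\fc$ forces $e^{2i\pi\theta}$ into the point spectrum of the underlying rank-one system, and (b) any admissible length-$n$ string extends to an admissible length-$(n+1)$ string, with infinitely many choices for the new cutting parameter and with the new spacing parameters respecting the required asymptotics. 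For (a) one uses the explicit rank-one constructions of Danilenko and Vieprik~\cite{danilenkoExplicitRank1Constructions2023}, whose parameters are designed precisely so that a prescribed $e^{2i\pi\theta}$ is an eigenvalue.

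If I had to establish clause (b) by hand, I would use the standard description of the eigenvalues of a rank-one cutting-and-stacking system in terms of a convergent series built from the tower heights, the partial products of the cutting parameters and the partial sums of the spacer counts: $e^{2i\pi\theta}$ is an eigenvalue as soon as a certain weighted series of distances-to-the-integers converges, and this can be kept true while choosing each successive cutting parameter arbitrarily large, since one only has to correct the next block of spacers by an amount that is summable relative to the heights. The one real tension is between this correction — spacers inserted to keep the eigenvalue approximation on track — and the cocycle estimate of Theorem~\ref{thfc0}, which wants the spacing parameters small; I expect it to be resolved exactly because the eigenvalue condition demands only a summable, hence per-step negligible, perturbation, so the same strings can meet both constraints. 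That reconciliation is the point I would expect to be the main obstacle; but once Proposition~\ref{fc0} and Theorem~\ref{thfc0} are in hand, the proof of Theorem~\ref{thetheoremC} is the three-line argument of the first paragraph.
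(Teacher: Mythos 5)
Your proposal matches the paper's own proof: Theorem~\ref{thetheoremC} is obtained there exactly as you say, by applying Theorem~\ref{thfc} (Theorem~\ref{thfc0}) to the class of rank-one systems with eigenvalue $e^{2i\pi\theta}$, which Proposition~\ref{fc} (Proposition~\ref{fc0}(3)) shows is flexible via the Danilenko--Vieprik construction. Your additional remarks on how one might verify flexibility directly are only commentary and do not affect the correctness of the deduction.
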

	
	\begin{theoremletter}\label{thetheoremD} For every map $\varphi\colon\R_+\to \R_+$ satisfying \mbox{$\varphi(t)\underset{t\to +\infty}{=}o\left (t^{1/3}\right )$}, there exists a strongly mixing rank-one system which is $\varphi$-integrably orbit equivalent to the universal odometer.
	\end{theoremletter}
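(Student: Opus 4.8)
The plan is to derive Theorem~\ref{thetheoremD} directly from the general flexible-class statement Theorem~\ref{thfc0}: it suffices to exhibit a flexible class all of whose members are strongly mixing, and the natural candidate is the class $\mathcal{C}$ of \emph{all} strongly mixing rank-one systems. Thus the whole content lies in Proposition~\ref{fc0}(4), that $\mathcal{C}$ is flexible; once that is granted, applying Theorem~\ref{thfc0} with the given $\varphi$ (whose hypothesis $\varphi(t)=o(t^{1/3})$ is exactly the standing assumption) yields a system $T\in\mathcal{C}$ which is $\varphi$-integrably orbit equivalent to the universal odometer, which is the assertion.

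To prove that $\mathcal{C}$ is flexible I would revisit Ornstein's construction of strongly mixing rank-one maps \cite{ornsteinRootProblemErgodic1972}. Its mechanism: at stage $n$, with the $n$-th tower of height $h_n$, one picks a cutting parameter $q_{n+1}$ and then chooses the spacing parameters $a_{n,1},\dots,a_{n,q_{n+1}}$ independently and essentially uniformly in an interval whose length is comparable to $h_n$; a Borel--Cantelli argument over this randomness shows that almost every choice produces a strongly mixing system. I would package this into a set $\fc$ of admissible finite parameter words --- roughly, the prefixes for which the relevant partial mixing estimates of Ornstein's scheme hold --- and verify the two defining conditions of Definition~\ref{deffc}. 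For membership: any rank-one system all of whose parameter prefixes lie in $\fc$ is strongly mixing, by Ornstein's theorem. For extensibility: given a word in $\fc$, the next cutting parameter $q_{n+1}$ may be taken arbitrarily large, since the randomized mixing estimates only improve as $q_{n+1}$ grows, so infinitely many values are admissible; and because the window for the spacers scales with $h_n$, one can pick the new spacing parameters obeying the asymptotic control required in Definition~\ref{deffc} while keeping the probability of a ``good'' extension positive, whence an actual deterministic choice exists.

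The hard part will be reconciling two competing growth requirements. On one hand, Theorem~\ref{thfc0} forces the cutting parameters to increase fast enough that a certain series converges (this is what upgrades the orbit equivalence to $\varphi$-integrability for $\varphi(t)=o(t^{1/3})$); on the other hand, Ornstein's proof of strong mixing wants the spacers at stage $n$ spread over a range comparable to $h_n$, whereas the flexible-class framework caps their growth. One must check these are compatible: Ornstein's argument uses only that the spacers are ``sufficiently spread'' relative to $h_n$, so it can be run with spacers drawn from $[0,\varepsilon_n h_n]$ for $\varepsilon_n\to 0$ slowly, which still randomizes the return times modulo $h_n$ enough for strong mixing while respecting the cap demanded by the flexible class. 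Carrying this out so that the mixing estimates and the quantitative cocycle bounds hold simultaneously is the technical core hidden in Section~\ref{proofstrmix}; the flexible-class formalism is precisely what makes it tractable, since it lets one build $T$ together with the universal odometer $S$ rather than fixing $T$ in advance.
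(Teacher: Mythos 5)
Your top-level route is exactly the paper's: Theorem~\ref{thetheoremD} is obtained by combining Theorem~\ref{thfc} with Proposition~\ref{fc}(4), and the flexibility of the class of strongly mixing rank-one systems is established through Ornstein's construction, with $\fc$ consisting of the admissible finite parameter words and the freedom in Ornstein's scheme providing infinitely many choices of the next cutting parameter.

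However, the ``technical core'' you isolate in your last paragraph rests on a misreading of Ornstein's construction, and the fix you propose would not work as stated. In Ornstein's scheme the spacers added at stage $n$ are controlled at the scale $h_{n-1}$, not $h_n$: the paper (Section~\ref{proofstrmix}) applies Lemma~\ref{ornstein} with $K=h_{n-1}$ and sets $q_n=m$, $\sigma_{n,i}=a_i+h_{n-1}$, so that $0\leq\sigma_{n,i}\leq 2h_{n-1}$, whence $Z_n\leq 2h_n$ and $\sigma_n\leq 2q_nh_{n-1}$, i.e.\ the CSP condition and the third point of Definition~\ref{deffc} hold with $C=C'=2$, and infinitely many values of $q_n$ are admissible because $N$ in Lemma~\ref{ornstein} is arbitrary. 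Spacers of average size comparable to $h_n$ are in fact impossible for a finite measure-preserving rank-one system, since they would make $\sum_n\sigma_n/h_{n+1}$ diverge, contradicting condition~\eqref{finite}; so there is no tension between Ornstein's mixing argument and the flexible-class caps, and no modification of his proof is needed. Conversely, your proposed window $[0,\varepsilon_n h_n]$ with $\varepsilon_n\to 0$ ``slowly'' is incompatible with Definition~\ref{deffc}: its third point demands $\sigma_n\leq C'q_nh_{n-1}$ for a fixed constant $C'$, which forces $\varepsilon_n\lesssim C'h_{n-1}/h_n\approx C'/q_{n-1}$, while the proof of Theorem~\ref{thfc} requires the cutting parameters to be chosen recursively and very large; so $\varepsilon_n$ would have to decay at least as fast as $1/q_{n-1}$, bringing you back precisely to the scale $h_{n-1}$ that Ornstein's lemma already provides. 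With that correction, the rest of your outline coincides with the paper's proof.
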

	
	As $\exp{(2i\pi\theta)}$ is an eigenvalue of the irrational rotation of angle $\theta$, and as we do not know if Theorem~\ref{thetheoremB} holds for every irrational number $\theta$, Theorem~\ref{thetheoremC} then completes this statement with a weaker result for the remaining $\theta$.\par
	Theorem~\ref{thetheoremD} implies that $\varphi$-integrable orbit equivalence, with \mbox{$\varphi(t)\underset{t\to +\infty}{=}o\left (t^{1/3}\right )$}, and Shannon orbit equivalence do not preserve strong mixing. This is also a consequence of the result from~\cite{carderiBelinskayaTheoremOptimal2023}. Indeed if $S$ is strongly mixing, then all its non-trivial powers are ergodic and the statements give some $T$ with a non-trivial power which is not ergodic, so $T$ is not strongly mixing. Here Theorem~\ref{thetheoremD} gives an example starting from a very non-strongly mixing system $S$ (the universal odometer). Finally, note that strongly mixing systems are not BSP. This is a consequence of Theorem~1.3 in \cite{gaoTopologicalMixingProperties2019a}: BSP rank-one systems are not topologically mixing, therefore they are not measure-theoretically strongly mixing.
	
	\paragraph{Further comments.}
	As they both preserve entropy, we may wonder whether there is a connection between Shannon orbit equivalence (or more generally $\varphi$-integrable orbit equivalence for $\varphi$ greater than $\log$) and \textit{even Kakutani equivalence}. Two ergodic probability measure-preserving bijections $S$ and $T$, respectively acting on $\esp$ and $\espy$, are evenly Kakutani equivalent if there exist measurable subsets $A\subseteq X$ and $B\subseteq Y$ with equal measure, i.e.~$\mu(A)=\nu(B)$, such that the induced maps $S_A$ and $T_B$ are conjugate. Even Kakutani equivalence is an equivalence relation, contrarily to Shannon orbit equivalence and $\varphi$-integrable orbit equivalence a priori (except for linear maps $\varphi$, by Belinskaya's theorem). The theory of Ornstein, Rudolph and Weiss \cite{ornsteinEquivalenceMeasurePreserving1982} gives a complete classification up to even Kakutani equivalence among loosely Bernoulli (LB) systems and entropy is a complete invariant. Moreover the class of LB systems is closed by even Kakutani equivalence, meaning that if $S$ is LB and equivalent to $T$, then $T$ is also LB.\par
	Rank-one systems are zero-entropy and LB, and by Theorems~\ref{thetheorem},~\ref{thetheoremB},~\ref{thetheoremC} and~\ref{thetheoremD}, some of them are Shannon orbit equivalent to the universal odometer.
	
	\begin{question}
		Does even Kakutani equivalence imply Shannon orbit equivalence or $\varphi$-integrable orbit equivalence for some $\varphi$?
	\end{question}
	
	In a forthcoming paper we will provide a new construction of orbit equivalence in order to prove that the converse is false: for every $\varepsilon>0$, there exists a non-LB system which is $(x\mapsto x^{\frac{1}{2}-\varepsilon})$-integrably orbit equivalent to the dyadic odometer. So $(x\mapsto x^{\frac{1}{2}-\varepsilon})$-integrable orbit equivalence and Shannon orbit equivalence do not imply even Kakutani equivalence.
	
	\paragraph{Outline of the paper.}
	
	After a few preliminaries in Section~\ref{preliminaries}, rank-one systems are defined in Section~\ref{sectionr1} using the cutting-and-stacking method. We also define the central notion of flexible classes of rank-one transformations. In Section~\ref{sectionproofprop}, we prove Proposition~\ref{fc0} (Proposition~\ref{fc} in Section~\ref{sectionr1}), i.e.~we show that the classes mentionned in Theorem~\ref{thfc0} (Theorem~\ref{thfc} in Section~\ref{sectionr1}) are flexible. It remains to show that every flexible class admits an element which is $\varphi$-integrably orbit equivalent to the universal odometer (Theorem~\ref{thfc}). In Section~\ref{sectionconstr}, we will describe the construction of Kerr and Li, generalized to rank-one systems, and establish that this is an orbit equivalence with some important properties preparing for the proof of Theorem~\ref{thfc}. Theorems~\ref{thetheorem},~\ref{thetheoremB},~\ref{thetheoremC} and~\ref{thetheoremD} directly follows from Proposition~\ref{fc} and Theorem~\ref{thfc}. We prove Theorem~\ref{uncountable} at the end of the paper.
	
	\paragraph{Acknowledgements.}

	I am very grateful to my advisors François Le Maître and Romain Tessera for their support, fruitful discussions and valuable advice. I also wish to thank Mathieu Da Silva, Victor Dubach and Fabien Hoareau for their useful comments on the paper. Finally, I am very grateful to the referee for their careful reading, for many corrections and for pointing out mistakes in the proof of an earlier version of Proposition~\ref{fc0}.
	
	\section{Preliminaries}\label{preliminaries}
	
	\paragraph{Basics of ergodic theory.}
	
	The probability space $\espalg$ is assumed to be standard and atomless. Such a space is isomorphic to $\esplebalg$, i.e.~there exists a bimeasurable bijection $\Psi\colon X\to [0,1]$ (defined almost everywhere) such that $\Psi_{\star}\mu=\mathrm{Leb}$, where $\Psi_{\star}\mu$ is defined by $\Psi_{\star}\mu(A)=\mu(\Psi^{-1}(A))$ for every measurable set $A$. We consider maps $T\colon X\to X$ acting on this space and which are bijective, bimeasurable and \textbf{probability measure-preserving} (\textbf{p.m.p.}), meaning that $\mu(T^{-1}(A))=\mu(A)$ for all measurable sets $A\subseteq X$, and the set of these transformations is denoted by $\autalg$, or simply $\aut$, two such maps being identified if they coincide on a measurable set of full measure. In this paper, elements of $\aut$ are called \textbf{transformations} or (\textbf{dynamical}) \textbf{systems}.\par
	A measurable set $A\subseteq X$ is $T$\textbf{-invariant} if $\mu(T^{-1}(A)\Delta A)=0$, where $\Delta$ denotes the symmetric difference. A transformation $T\in\aut$ is said to be \textbf{ergodic} if every $T$-invariant set is of measure $0$ or $1$. If $T$ is ergodic, then $T$ is \textbf{aperiodic}, i.e.~$T^n(x)\not =x$ for almost every $x\in X$ and for every $n\in\Z\setminus\{0\}$, or equivalently the $T$\textbf{-orbit} of $x$, denoted by $\orb_T(x)\coloneq \{T^n(x)\mid n\in\Z\}$, is infinite for almost every $x\in X$.\par
	$T$ is \textbf{weakly mixing} if
	$$\frac{1}{n}\sum_{k=0}^{n}{\left |\mu(A\cap T^{-n}(B))-\mu(A)\mu(B)\right |}\underset{n\to +\infty}{\to}0$$
	for every measurable sets $A,B$.
	$T$ is \textbf{strongly mixing} if
	$$\left |\mu(A\cap T^{-n}(B))-\mu(A)\mu(B)\right |\underset{n\to +\infty}{\to}0$$
	for every measurable sets $A,B$. It is not difficult to prove that strong mixing implies weak mixing and that the latter implies ergodicity.\par
	The notions of weak mixing and ergodicity can be translated in terms of eigenvalues. Denoting by $\ld^2\espalg$ the space of complex-valued and square-integrable functions defined on $X$, a complex number $\lambda$ is an \textbf{eigenvalue} of $T$ if there exists $f\in\ld^2\espalg\setminus\{0\}$ such that $f\circ T=\lambda f$ almost everywhere ($f$ is then called an eigenfunction). An eigenvalue $\lambda$ is automatically an element of the unit circle $\T\coloneq \{z\in\C\mid |z|=1\}$. The \textbf{point spectrum} of $T$ is then the set of all its eigenvalues. Notice that $\lambda=1$ is always an eigenvalue since the constant functions are in its eigenspace. Finally $T$ is ergodic if and only if the constant functions are the only eigenfunctions with eigenvalue one, in other words the eigenspace of $\lambda=1$ is the line of constant functions (we say that it is a simple eigenvalue). If $T$ is ergodic, it is weakly mixing if and only if the only eigenvalue of $T$ is $1$. For a complete survey on spectral theory for dynamical systems, the reader may refer to \cite{vianaFoundationsErgodicTheory2016}.\par
	All the properties that we have introduced are preserved under conjugacy. Two transformations $S\in\aut$ and $T\in\mathrm{Aut}(Y,\nu)$ are \textbf{conjugate} if there exists a bimeasurable bijection $\Psi\colon X\to Y$ such that $\Psi_{\star}\mu=\nu$ and $\Psi\circ S=T\circ\Psi$ almost everywhere. Some classes of transformations have been classified up to conjugacy, the two examples to keep in mind are the following. By Ornstein \cite{ornsteinBernoulliShiftsSame1970}, entropy is a total invariant of conjugacy among Bernoulli shifts, and Ornstein and Weiss \cite{ornsteinEntropyIsomorphismTheorems1987} generalized this result for Bernoulli shifts of amenable groups. For more details about entropy, see \cite{downarowiczEntropyDynamicalSystems2011} for non necessarily invertible transformations $T\colon X\to X$, and \cite{kerrErgodicTheoryIndependence2016} more generally for actions of amenable groups. Finally Halmos and von Neumann \cite{halmosOperatorMethodsClassical1942} showed that two ergodic systems with discrete spectrums are conjugate if and only if they have equal point spectrums (a system has discrete spectrum if the span of all its eigenfunctions is dense in $\ld^2\espalg$).
	
	\paragraph{Quantitative orbit equivalence.}
	
	The conjugacy problem in full generality is very complicated (see \cite{foremanConjugacyProblemErgodic2011}). We now give the formal definition of orbit equivalence, which is a weakening of the conjugacy problem.
	
	\begin{definition}
		Two aperiodic transformations $S\in \aut$ and $T\in \auty$ are \textbf{orbit equivalent} if there exist a bimeasurable bijection $\Psi\colon X\to Y$ satisfying $\Psi_{\star}\mu=\nu$, such that $\mathrm{Orb}_S(x)=\mathrm{Orb}_{\Psi^{-1}T\Psi}(x)$ for almost every $x\in X$. The map $\Psi$ is called an \textbf{orbit equivalence} between $S$ and $T$.\par
		We can then define the \textbf{cocycles} associated to this orbit equivalence. These are measurable functions $c_S\colon X\to\mathbb{Z}$ and $c_T\colon Y\to\mathbb{Z}$ defined almost everywhere by
		$$Sx=\Psi^{-1}T^{c_S(x)}\Psi(x)\text{ and }Ty=\Psi S^{c_T(y)}\Psi^{-1}(y)$$
		($c_S(x)$ and $c_T(y)$ are uniquely defined by aperiodicity).
	\end{definition}
	
	Given a function $\varphi\colon\R_+\to\R_+$, a measurable function $f\colon X\to\Z$ is said to be $\varphi$\textbf{-integrable} if
	$$\int_{X}{\varphi(|f(x)|)\mathrm{d}\mu}<+\infty.$$
	For example integrability is exactly $\varphi$-integrability when $\varphi$ is non-zero and linear. Then a weaker quantification on cocycles is the notion of $\varphi$-integrability for a \textit{sublinear} map $\varphi$, meaning that $\lim_{t\to +\infty}{\varphi(t)/t}=0$. Two transformations in $\aut$ are said to be $\varphi$\textbf{-integrably orbit equivalent} if there exists an orbit equivalence between them whose associated cocycles are $\varphi$-integrable. Another form of quantitative orbit equivalence is Shannon orbit equivalence. We say that a measurable function $f\colon X\to\Z$ is \textbf{Shannon} if the associated partition $\{f^{-1}(n)\mid n\in\Z\}$ of $X$ has finite entropy. Two transformations in $\aut$ are \textbf{Shannon orbit equivalent} if there exists an orbit equivalence between them whose associated cocycles are Shannon.
	
	\section{Rank-one systems}\label{sectionr1}
	
	\subsection{The cutting-and-stacking method}\label{cutsta}
	
	Before the definition of a rank-one system (Definition~\ref{defr1}), and for the definition of flexible classes (Definition~\ref{deffc}), we need to define sequences of integers which will provide the combinatorial data of a rank-one system, namely the cutting and spacing parameters.
	
	\begin{definition}\label{defparam}
		By a \textbf{cutting and spacing parameter}, we mean a tuple of the form
		$$(q,(\sigma_{.,0},\ldots,\sigma_{.,q}))$$
		with an integer $q\geq 2$ (the \textbf{cutting parameter}) and non-negative integers $\sigma_{.,0},\ldots,\sigma_{.,q}$ (the \textbf{spacing parameters}), and we denote by $\mathcal{P}$ the set of all cutting and spacing parameters. We also define the set of finite sequences of cutting and spacing parameters:
		$$\mathcal{P}^{*}\coloneq \bigcup_{n\in\N}{\mathcal{P}^n}.$$
		Given a sequence of cutting and spacing parameters $\bm{p}=(q_k,(\sigma_{k,0},\ldots,\sigma_{k,q_k}))_{k\geq 0}\in\mathcal{P}^{\N}$ and an integer $n\geq 0$, the tuple $(q_n,(\sigma_{n,0},\ldots,\sigma_{n,q_n}))$ in $\mathcal{P}$ is the $n$\textbf{-th cutting and spacing parameter} of $\bm{p}$, and the tuple $(q_k,(\sigma_{k,0},\ldots,\sigma_{k,q_k}))_{0\leq k\leq n}$ is the \textbf{projection} of $\bm{p}$ on $\mathcal{P}^{n+1}$ (it gives the first $n+1$ cutting and spacing parameters). From $\bm{p}$, we also define three sequences:
		\begin{itemize}
			\item $(h_n)_{n\geq 0}$ the \textbf{height sequence} of $\bm{p}$, inductively defined by
			$\left\{\begin{array}{l}
				h_0=1,\\
				h_{n+1}=q_nh_n+\sigma_{n}
			\end{array}\right.$,
			$h_n$ is called the \textbf{height of the }$n$\textbf{-th tower};
			\item $(\sigma_n)_{n\geq 0}$, with $\sigma_n\coloneq \sum_{i=0}^{q_n}{\sigma_{n,i}}$ (the number of new spacers at step $n$);
			\item $(Z_n)_{n\geq 0}$, with $Z_n\coloneq \max{\{\sigma_{j,i}\mid 0\leq j\leq n, 0\leq i\leq q_{j}\}}$,
		\end{itemize}
		and it is also possible to consider the finite sequences $(h_k)_{0\leq k\leq n+1}$, $(\sigma_k)_{0\leq k\leq n}$ and $(Z_k)_{0\leq k\leq n}$ associated to a finite sequence of cutting and spacing parameters in $\mathcal{P}^{n+1}$.
	\end{definition}
	
	The terminology "cutting", "spacing", "tower", "height", etc, is justified by Definition~\ref{defr1} and Figure~\ref{figrg1}. There are many definitions of rank-one systems (see \cite{ferencziSystemsFiniteRank1997} for a complete survey and various facts in this section). In this paper the goal is to use the combinatorial structure given by the cutting-and-stacking method (see Figure~\ref{figrg1}).
	
	\begin{definition}\label{defr1}
		A transformation $T\in\aut$ is of \textbf{rank one} if there exist
		\begin{enumerate}
			\item a sequence of cutting and spacing parameters $\bm{p}=(q_n,(\sigma_{n,0},\ldots,\sigma_{n,q_n}))_{n\geq 0}\in\mathcal{P}^{\N}$ satisfying
			\begin{equation}\tag{F}\label{finite}
				\displaystyle\sum_{n=0}^{+\infty}{\frac{\sigma_n}{h_{n+1}}}<+\infty,
			\end{equation}
			where $(h_n)$ and $(\sigma_n)$ are the sequences associated to $\bm{p}$, as described in Definition~\ref{defparam};
			\item measurable subsets of $X$, denoted by $B_n$ (for every $n\geq 0$), $B_{n,i}$ (for every $n\geq 0$ and $0\leq i\leq q_n-1$), and $\Sigma_{n,i,j}$ (for every $n\geq 0$, $0\leq i\leq q_n$ and $1\leq j\leq \sigma_{n,i}$; if $\sigma_{n,i}=0$, then there are no $\Sigma_{n,i,j}$) such that for all $n\geq 0$
			\begin{enumerate}
				\item $B_n,\ldots ,T^{h_n-1}(B_n)$ are pairwise disjoint;
				\item $(B_{n,0}, B_{n,1}, \ldots, B_{n,q_n-1})$ is a partition of $B_n$;
				\item $T^{h_n}(B_{n,i})=
				\left\{\begin{array}{ll}
					\Sigma_{n,i+1,1}&\text{ if }\sigma_{n,i}>0\\
					B_{n,i+1}&\text{  if }\sigma_{n,i}=0\text{  and }i<q_n-1
				\end{array}\right.$;
				\item if $\sigma_{n,i}>0$, then $T(\Sigma_{n,i,j})=
				\left\{\begin{array}{ll}
					\Sigma_{n,i,j+1}&\text{ if }j<\sigma_{n,i}\\
					B_{n,i}&\text{ if }j=\sigma_{n,i}\text{ and }i\leq q_n-1
				\end{array}\right.$;
				\item $B_{n+1}=
				\left\{\begin{array}{ll}
					\Sigma_{n,0,1}&\text{  if }\sigma_{n,0}>0\\
					B_{n,1}&\text{  if }\sigma_{n,0}=0
				\end{array}\right.$;
			\end{enumerate}
		\end{enumerate}
		and if the Rokhlin towers $\mathcal{R}_n\coloneq (T^k(B_n))_{0\leq k\leq h_n-1}$
		are increasing to the $\sigma$-algebra $\A$, meaning that the $\sigma$-algebra generated by $\{T^k(B_n)\mid n\in\N,\ 0\leq k\leq h_n-1\}$ is $\A$ up to null sets (since $\A$ is standard, this also means that $\{T^k(B_n)\mid n\in\N,\ 0\leq k\leq h_n-1\}$ separates the points). Note that $\mathcal{R}_0$ is the tower with only one level $B_0$. The sets $\Sigma_{n,i,j}$ are called the \textbf{spacers}. In this paper we will usually write
		\begin{itemize}
			\item $X_n\coloneq B_n\sqcup\ldots\sqcup T^{h_n-1}(B_n)$ the subset covered by the $n$-th tower $\mathcal{R}_n$;
			\item $\varepsilon_n\coloneq \mu((X_n)^c)$ where $(X_n)^c$ denotes the complement of the subset $X_n$ of $X$.
		\end{itemize}
	\end{definition}
	
	\begin{figure}[ht]
		\centering
		\includegraphics[width=1\linewidth]{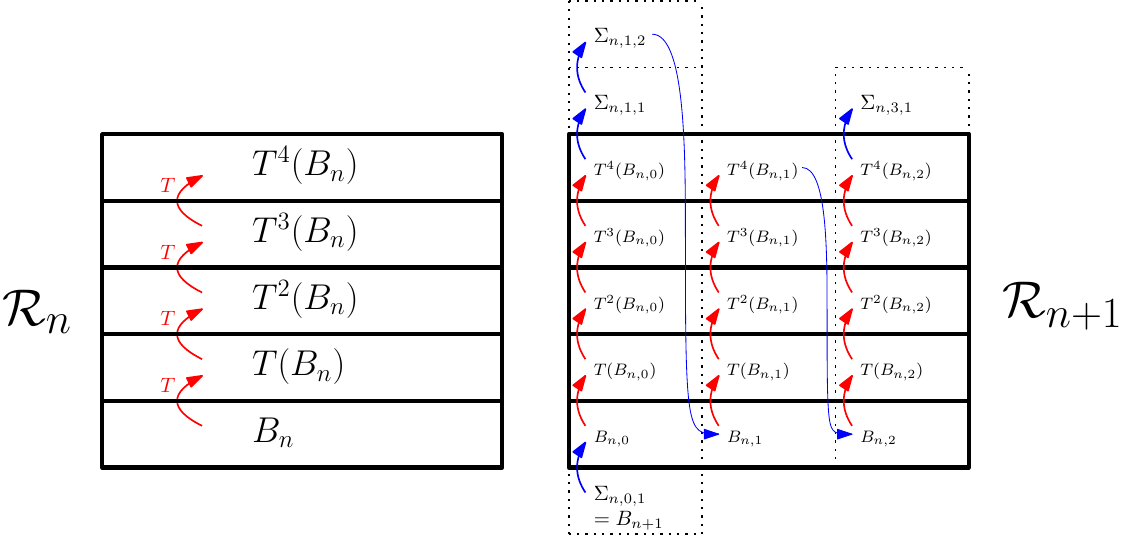}
		\caption{An example of cutting-and-stacking construction with $h_n=5$, $q_n=3$, $\sigma_{n,0}=1$, $\sigma_{n,1}=2$, $\sigma_{n,2}=0$, $\sigma_{n,3}=1$. We then have $h_{n+1}=19$.}
		\label{figrg1}
	\end{figure}
	
	Since $X_n$ is increasing and $\mathcal{R}_n$ increases to the atomless $\sigma$-algebra $\A$, we have \mbox{$\mu(X_n)\underset{n\to +\infty}{\to}1$}. In other words $\varepsilon_n$ tends to $0$.\par
	Before giving examples, the following lemmas give some easy properties on these systems in order to understand their combinatorial structure and the hypotheses required in the definition.
	
	\begin{lemma}\label{etagerg1}
		Let $(h_n)$ and $(\sigma_n)$ be the sequences associated to $(q_n,(\sigma_{n,0},\ldots,\sigma_{n,q_n}))_n\in\mathcal{P}^{\N}$ (see Definition~\ref{defparam}). The following assertions are equivalent:
		\begin{enumerate}
			\item the series $\displaystyle \sum{\frac{\sigma_n}{h_{n+1}}}$ converges (condition~\eqref{finite} in Definition~\ref{defr1});
			\item the series $\displaystyle \sum{\frac{\sigma_n}{q_0\ldots q_n}}$ converges;
			\item there exists a constant $M_0\leq 1$ such that $\displaystyle h_{n+1}\underset{n\to +\infty}{\sim}\frac{q_0\ldots q_n}{M_0}$,
		\end{enumerate}
		and if one of these equivalent assertions is true, then $\sum_{n\geq 0}{\frac{\sigma_n}{q_0\ldots q_n}}=\frac{1}{M_0}-1$.
	\end{lemma}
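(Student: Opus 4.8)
The plan is to linearise the recursion $h_{n+1}=q_nh_n+\sigma_n$. Set $P_n\coloneq q_0q_1\cdots q_n$ with $P_{-1}\coloneq 1$, and introduce the auxiliary sequence $u_n\coloneq h_n/P_{n-1}$, so that $u_0=1$. Dividing the recursion by $P_n=q_nP_{n-1}$ gives the telescoping identity
\[
u_{n+1}=u_n+\frac{\sigma_n}{q_0\cdots q_n},\qquad\text{hence}\qquad u_{n+1}=1+\sum_{k=0}^{n}\frac{\sigma_k}{q_0\cdots q_k}.
\]
Since all the added terms are non-negative, $(u_n)$ is non-decreasing, so it converges (to a finite limit) if and only if the series $\sum\sigma_n/(q_0\cdots q_n)$ converges. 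This already yields the equivalence of assertions (2) and (3): assertion (3) is, by definition of $u_n$, exactly the statement that $u_{n+1}\to 1/M_0$, and the bound $u_{n+1}\geq u_0=1$ forces the limit to be $\geq 1$, i.e. $0<M_0\leq 1$. Passing to the limit in the displayed formula then gives $1/M_0=1+\sum_{n\geq 0}\sigma_n/(q_0\cdots q_n)$, which is the claimed identity $\sum_{n\geq 0}\sigma_n/(q_0\cdots q_n)=\frac{1}{M_0}-1$.

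It remains to prove that (1) and (2) are equivalent. The key observation is the product identity
\[
\prod_{k=0}^{n-1}\frac{q_kh_k}{h_{k+1}}=\frac{P_{n-1}h_0}{h_n}=\frac{1}{u_n},
\]
obtained by telescoping (using $h_0=1$), together with the fact that each factor equals $1-\sigma_k/h_{k+1}\in(0,1]$ (here $q_k\geq 2$ and $h_k\geq 1$ guarantee positivity). For the easy direction (2) $\Rightarrow$ (1), just note that $h_{n+1}=u_{n+1}P_n\geq P_n$, whence $\sigma_n/h_{n+1}\leq \sigma_n/(q_0\cdots q_n)$ and summability transfers. For (1) $\Rightarrow$ (2): assuming $\sum\sigma_n/h_{n+1}<\infty$ forces $\sigma_n/h_{n+1}\to 0$, so $\sigma_n/h_{n+1}\leq 1/2$ for all but finitely many $n$, and there one may use the elementary comparison $-\log(1-x)\leq 2x$ valid on $[0,1/2]$. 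Summing $-\log(1-\sigma_k/h_{k+1})$ over $k$ then shows $\log u_n=\sum_{k=0}^{n-1}-\log(1-\sigma_k/h_{k+1})$ stays bounded, so $(u_n)$ is bounded, which is (2).

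I do not anticipate a genuine difficulty; the only point requiring care is the implication (1) $\Rightarrow$ (2), where one must exclude the possibility that $\sigma_n/h_{n+1}$ is summable merely because $h_{n+1}$ grows much faster than $q_0\cdots q_n$. The product identity $\prod_{k<n} q_kh_k/h_{k+1}=1/u_n$ is precisely the device that converts the size of $h_{n+1}$ relative to $q_0\cdots q_n$ into a summability statement, and the $\log$‑versus‑linear comparison finishes the job. I would therefore organise the write-up in three steps: (i) set up $P_n$, $u_n$ and derive the telescoping recursion; (ii) read off (2) $\Leftrightarrow$ (3) and the final formula; (iii) establish (1) $\Leftrightarrow$ (2) via the product identity and the comparison of $-\log(1-x)$ with $x$.
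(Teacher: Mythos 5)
Your proof is correct and follows essentially the same route as the paper: the telescoping identity $h_{n+1}/(q_0\cdots q_n)=1+\sum_{k\le n}\sigma_k/(q_0\cdots q_k)$, the comparison $h_{n+1}\ge q_0\cdots q_n$, and the identification $\sigma_n/h_{n+1}=1-q_nh_n/h_{n+1}$ linking condition (1) to the convergence of the product $\prod q_nh_n/h_{n+1}$. The only cosmetic difference is that you prove the convergence of that infinite product by hand (via $-\log(1-x)\le 2x$) where the paper invokes the standard criterion, and you organise the implications as (2)$\Leftrightarrow$(3), (1)$\Leftrightarrow$(2) rather than as a cycle.
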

	
	\begin{proof}[Proof of Lemma~\ref{etagerg1}]
		If the series $\sum{\frac{\sigma_n}{q_0\ldots q_n}}$ converges, so does the series $\sum{\frac{\sigma_n}{h_{n+1}}}$ since $h_{n+1}$ is greater or equal to $q_0\ldots q_n$. Now assume that the series $\sum{\frac{\sigma_n}{h_{n+1}}}$ converges. Notice that we have
		$$\frac{\sigma_n}{h_{n+1}}=\frac{h_{n+1}-q_nh_{n}}{h_{n+1}}=1-q_n\frac{h_n}{h_{n+1}}$$
		and since the series is convergent, the product $\prod{q_n\frac{h_n}{h_{n+1}}}$ converges to some $M_0>0$, i.e.~$q_0\ldots q_n/h_{n+1}\to M_0$. The constant $M_0$ is less than or equal to $1$ since we have $h_{n+1}\geq q_nh_n$ for every $n\geq 0$. Finally let us assume $q_0\ldots q_n/h_{n+1}\to M_0$. Notice that we have
		$$\frac{\sigma_n}{q_0\ldots q_n}=\frac{h_{n+1}-q_nh_n}{q_0\ldots q_n}=\frac{h_{n+1}}{q_0\ldots q_n}-\frac{h_n}{q_0\ldots q_{n-1}},$$
		so by telescoping consecutive terms, we get $\sum_{n\geq 0}{\frac{\sigma_n}{q_0\ldots q_n}}=\lim_{n\to\infty}{\frac{h_{n+1}}{q_0\ldots q_n}}-h_0=\frac{1}{M_0}-1$ and we are done for the equivalence between the three assumptions.
	\end{proof}
	
	\begin{lemma}\label{etagerg1bis}
		Let $T\colon X\to X$ be a bimeasurable bijection. Assume that $T$ preserves a non-zero measure $\mu$ and it admits a sequence of Rokhlin towers as in Definition~\ref{defr1}. The following hold:
		\begin{enumerate}
			\item the levels $T^k(B_n)$ of the $n$-th Rokhlin tower $\mathcal{R}_n$ have $\mu$-measure $\displaystyle \frac{\mu(B_0)}{q_0\ldots q_{n-1}}$;
			\item $\mu$ is finite if and only if the condition~\eqref{finite} is satisfied. Furthermore, if $\mu$ is a probability measure (this implies that $T$ is a rank-one system), then $\mu(B_0)=M_0$ and $\displaystyle h_{n+1}\leq\frac{q_0\ldots q_n}{M_0}$, where $M_0$ is given by Lemma~\ref{etagerg1}.
		\end{enumerate}
	\end{lemma}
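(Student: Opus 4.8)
The plan is to work directly from the set-theoretic relations (a)--(e) in Definition~\ref{defr1}, using the fact that $T$ is measure-preserving at each stage. First I would establish assertion (1) by induction on $n$. The base case is trivial: $\mathcal{R}_0$ has the single level $B_0$ of measure $\mu(B_0)$. For the inductive step, note that since $T$ is measure-preserving and $B_n,T(B_n),\ldots,T^{h_n-1}(B_n)$ are pairwise disjoint, all levels $T^k(B_n)$ of $\mathcal{R}_n$ have the common measure $\mu(B_n)=\mu(B_0)/(q_0\cdots q_{n-1})$ by the induction hypothesis. Relation (b) says $(B_{n,0},\ldots,B_{n,q_n-1})$ partitions $B_n$, and relations (c)--(e) identify $B_{n+1}$ with one of the $B_{n,i}$ (or with a spacer $\Sigma_{n,0,1}$ which is a $T$-image of $B_{n,0}$); in either case $\mu(B_{n+1})=\mu(B_{n,i})$ for some $i$. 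To pin down \emph{which} measure this is, I would observe that the construction implies the $q_n$ sets $B_{n,i}$ all have the same measure: indeed, relation (c) shows $T^{h_n}$ maps $B_{n,i}$ bijectively (up to a chain of spacers, each a $T$-translate of the previous one by (d)) onto $B_{n,i+1}$, so all $B_{n,i}$ are $T$-translates of one another and hence equimeasurable; since they partition $B_n$, each has measure $\mu(B_n)/q_n=\mu(B_0)/(q_0\cdots q_n)$. This gives $\mu(B_{n+1})=\mu(B_0)/(q_0\cdots q_n)$, completing the induction.

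Next, for assertion (2), I would compute $\mu(X_{n+1})$ explicitly. The set $X_{n+1}$ covered by $\mathcal{R}_{n+1}$ consists of the $q_n$ sub-columns over $B_n$ (which together reconstitute $X_n$, since each level $T^k(B_n)$ is partitioned into the $q_n$ pieces $T^k(B_{n,i})$) together with all the new spacers $\Sigma_{n,i,j}$ introduced at step $n$. There are $\sigma_{n,i}$ spacers sitting above the column over $B_{n,i}$, each of measure equal to the common level-measure $\mu(B_0)/(q_0\cdots q_n)$ (being a $T$-image of $B_{n,i}$ or of a previous spacer), so the total measure of new spacers at step $n$ is $\sigma_n\cdot\mu(B_0)/(q_0\cdots q_n)$, where $\sigma_n=\sum_i\sigma_{n,i}$. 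Hence
\[
\mu(X_{n+1})=\mu(X_n)+\frac{\sigma_n\,\mu(B_0)}{q_0\cdots q_n},
\]
and by telescoping from $X_0=B_0$,
\[
\mu(X_n)=\mu(B_0)\left(1+\sum_{k=0}^{n-1}\frac{\sigma_k}{q_0\cdots q_k}\right).
\]
Since the $X_n$ increase to $X$ (the towers generate $\A$), $\mu(X_n)\to\mu(X)$, so $\mu(X)<+\infty$ if and only if $\sum_k \sigma_k/(q_0\cdots q_k)$ converges, which by Lemma~\ref{etagerg1} is equivalent to condition~\eqref{finite}.

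Finally, in the case where $\mu$ is a probability measure: passing to the limit in the telescoped formula and using Lemma~\ref{etagerg1} (which gives $\sum_{k\geq 0}\sigma_k/(q_0\cdots q_k)=1/M_0-1$), we get $1=\mu(X)=\mu(B_0)\cdot(1/M_0)$, hence $\mu(B_0)=M_0$. The inequality $h_{n+1}\leq q_0\cdots q_n/M_0$ then follows because the levels of $\mathcal{R}_{n+1}$ are $h_{n+1}$ pairwise disjoint sets each of measure $\mu(B_0)/(q_0\cdots q_n)=M_0/(q_0\cdots q_n)$, so $h_{n+1}\cdot M_0/(q_0\cdots q_n)=\mu(X_{n+1})\leq 1$. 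The only genuinely delicate point is the bookkeeping in the second paragraph: one must be careful that the spacers $\Sigma_{n,i,j}$ are disjoint from $X_n$ and from each other and that they all carry the common level-measure — this is exactly what relations (c)--(e) encode, and checking it cleanly (rather than the limiting arguments, which are routine) is where the work lies.
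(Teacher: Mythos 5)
Your proof is correct and takes essentially the same route as the paper: equality of all level measures by $T$-invariance plus induction using the partition of $B_n$ into the $q_n$ sets $B_{n,i}$, then the telescoping identity $\mu(X_{n+1})=\mu(X_n)+\sigma_n\,\mu(B_0)/(q_0\cdots q_n)$ combined with Lemma~\ref{etagerg1} to get the finiteness criterion, $\mu(B_0)=M_0$, and the bound $h_{n+1}\leq q_0\cdots q_n/M_0$. The only differences are cosmetic: the paper obtains equimeasurability of the $B_{n,i}$ by noting they are levels of $\mathcal{R}_{n+1}$, whereas you translate them onto one another by powers of $T$ (and your aside that $\Sigma_{n,0,1}$ is a $T$-image of $B_{n,0}$ has the direction reversed --- it is a $T$-preimage --- which is immaterial since $T$ is an invertible measure-preserving bijection).
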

	
	\begin{proof}[Proof of Lemma~\ref{etagerg1bis}]
		For a fixed $n$, the levels of $\mathcal{R}_n$ have the same measure by $T$-invariance of the measure $\mu$. Moreover the first level $B_n$ is a disjoint union of $q_n$ levels $B_{n,0}, \ldots, B_{n,q_n-1}$ of $\mathcal{R}_{n+1}$. Then it is clear by induction that a level of $\mathcal{R}_n$ has measure $\frac{\mu(B_0)}{q_0\ldots q_{n-1}}$. Since the sequence $(X_n)_{n\geq 0}$ is increasing to $X$, and $X_{n+1}$ is obtained from $X_n$ by adding $\sigma_n$ spacers, which are levels of $\mathcal{R}_{n+1}$, we get
		\begin{equation}\label{finitemeasure}
			\mu(X)=\mu(X_0)+\sum_{n\geq 0}{\mu(X_{n+1}\setminus X_n)}=\mu(B_0)+\sum_{n\geq 0}{\frac{\mu(B_0)\sigma_n}{q_0\ldots q_n}},
		\end{equation}
		so $\mu(B_0)$ is non-zero, and $\mu(X)$ is finite if and only if the sum $\sum_{n\geq 0}{\frac{\sigma_n}{q_0\ldots q_n}}$ is finite. Finally, let us assume that $\mu$ is a probability measure. This implies $\sum_{n\geq 0}{\frac{\sigma_n}{q_0\ldots q_n}}=\frac{1}{M_0}-1$ and, using \eqref{finitemeasure}, we get $\mu(B_0)=M_0$. The measurable set $X_n$ is the disjoint union of $h_n$ levels of $\mathcal{R}_n$, so the inequality $h_n\leq \frac{q_0\ldots q_{n-1}}{M_1}$ follows from the fact that $\mu$ is a probability measure.
	\end{proof}
	
	It is possible to build a finite measure-preserving transformation $T$ of rank one with a given combinatorial setting $(q_n,(\sigma_{n,0},\ldots,\sigma_{n,q_n}))_{n\geq 0}\in\mathcal{P}^{\N}$ satisfying the hypothesis~\eqref{finite}. For instance it suffices to build $(X_n)$ as an increasing sequence of intervals of $\R_+$, with $B_{n,i}$ and $\Sigma_{n,i,j}$ being subintervals of equal length and disjoint (for a fixed $n$), each on which $T$ is defined as an affine map, and with $B_{0}=[0,M_0]$. The convergence of the series $\sum{\frac{\sigma_n}{h_{n+1}}}$ and Lemma~\ref{etagerg1} ensure that $X\coloneq \bigcup{X_n}$ is equal to $[0,1]$ (up to a null set), so the Lebesgue measure on $[0,1]$ is a probability measure preserved by $T$. Notice that if the series is divergent, we can set $B_0=[0,1]$ and this defines $T$ on the set of positive real numbers endowed with the Lebesgue measure, so this is an infinite measure-preserving transformation.\par
	Therefore for every $(q_n,(\sigma_{n,0},\ldots,\sigma_{n,q_n}))_{n\geq 0}\in\mathcal{P}^{\N}$ satisfying the condition~\eqref{finite}, there exists a rank-one system having a cutting-and-stacking construction with these cutting and spacing parameters, this fact will be used in this paper since it enables us to only take into account the combinatorics behind the systems.\par
	The hypothesis on the Rokhlin towers $\mathcal{R}_n$ aims not only to have $\varepsilon_n\to 0$ but also to define two isomorphic systems when they admit cutting-and-stacking constructions with the same cutting and spacing parameters. Moreover if $T$ admits such a construction with Rokhlin towers increasing to a sub-$\sigma$-algebra $\mathcal{B}$ of $\A$, then $T$, seen as an element of $\autalg$, is not necessarily a rank-one system but admits a rank-one system ($T$ on the sub-$\sigma$-algebra $\mathcal{B}$) as a factor.\par
	Two different families of cutting and spacing parameters do not necessarily define non-isomorphic systems. Indeed in a construction of a rank-one system with parameters $q_n$ and $\sigma_{n,i}$, one can decide to only consider a subsequence $\mathcal{R}_{n_k}$ of Rokhlin towers. For example, the new cutting parameters will be $q_{n_k}q_{n_k+1}\ldots q_{n_{k+1}-1}$ for $k\geq 0$.
	\newline
	
	The rank-one systems form a class of ergodic and zero entropy systems. The easiest examples of rank-one systems are the \textbf{irrational rotations}
	$$R_{\theta}\colon z\in\T\mapsto e^{2i\pi\theta}z\in\T$$
	for every irrational numbers $\theta$, where $\T$ is the unit circle endowed with its Haar measure. These systems are not weakly mixing. Moreover they have discrete spectrum and the point spectrum of $R_{\theta}$ is $\{e^{in\theta}\mid n\in\Z\}$, so by the Halmos-von Neumann Theorem \cite{halmosOperatorMethodsClassical1942}, $R_{\theta}$ and $R_{\theta'}$ are isomorphic if and only if $\theta=\theta'\text{ mod }\Z$ or $\theta=-\theta'\text{ mod }\Z$.\par
	The \textbf{odometers} are rank-one. These are exactly the rank-one systems without spacers (i.e.~$\sigma_{n,i}=0$), so the Rokhlin towers are partitions of the space. Such a system is isomorphic to the adding machine $S$ in the space $\prod_{n\geq 0}{\{0,1,\ldots ,q_n-1\}}$, namely the addition by $(1,0,0,0,\ldots)$ with carry over to the right, defined for every $x\in\prod_{n\geq 0}{\{0,1,\ldots ,q_n-1\}}$ by
	$$Sx=\left\{\begin{array}{ll}
		(0,\ldots,0,x_i+1,x_{i+1},\ldots)&\text{if }i\coloneq \min{\{j\geq 0\mid x_j\not=q_j-1\}}\text{ is finite}\\
		(0,0,0,\ldots)&\text{if }x=(q_0-1,q_1-1,q_2-1,\ldots)
	\end{array}\right.$$
	and it preserves the product of uniform probability measures on each finite set $\{0,1,\ldots,q_n-1\}$. Denote the cylinders of length $k$ by
	$$[x_0,\ldots ,x_{k-1}]_k\coloneq \left \{y\in\prod_{n\geq 0}{\{0,1,\ldots ,q_n-1\}}\mid y_0=x_0,\ldots ,y_{k-1}=x_{k-1}\right \}.$$
	If $S$ is the odometer on the space $\prod_{n\geq 0}{\{0,1,\ldots ,q_n-1\}}$, we can also set a partially defined map
	$$\sn_n\colon X\setminus [\bullet,\ldots,\bullet, q_{n-1}-1]_{n}\to X\setminus [\bullet,\ldots,\bullet,0]_{n}$$
	(the symbol $\bullet$ means that there is no requirement on the value at some coordinate) which is the addition by
	$$(\underbrace{0,\ldots ,0}_{n-1\text{ times}},1,0,0,\ldots)$$
	(so $S$ and $\sn_1$ coincide on $X\setminus [q_0-1]_1$). Then we have
	$$B_n=[\underbrace{0,\ldots ,0}_{n\text{ times}}]_{n},$$
	$$B_{n,i}=[\underbrace{0,\ldots ,0}_{n\text{ times}},i]_{n+1}$$
	and $B_{n,i}=\sn_{n+1}^i(B_{n,0})$ for every $0\leq i\leq q_n-1$, so it provides a scale in $B_n$. Note that it is possible to recover the odometer $S$ from these partially defined maps $\sn_n$ (see Figure~\ref{odometer}). In Section~\ref{theconstruction}, the strategy will be to build $S$ from partially defined maps $\sn_n$.
	
	\begin{figure}[ht]
		\centering
		\includegraphics[width=1\linewidth]{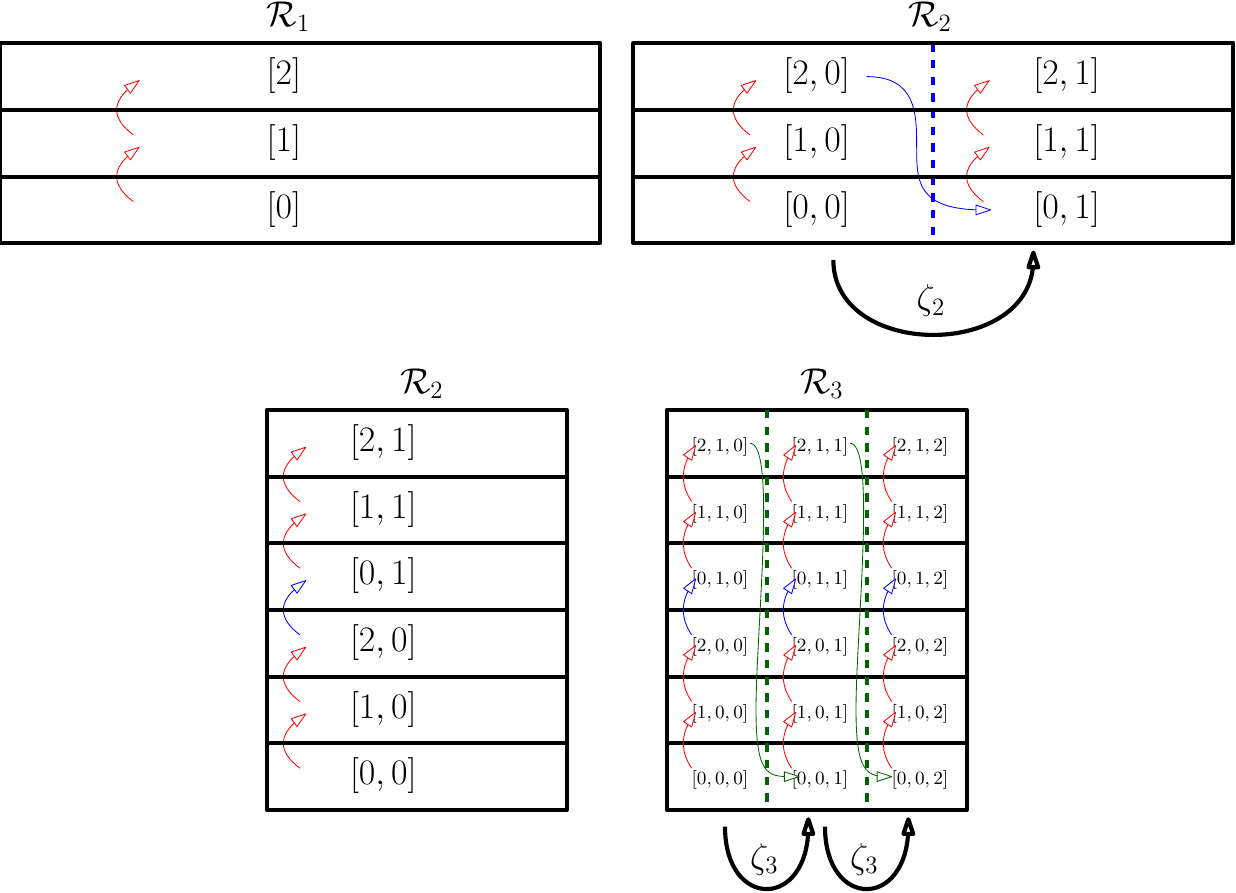}
		\caption{Example of odometer with $q_0=3$, $q_1=2$, $q_2=3$.}
		\label{odometer}
	\end{figure}
	
	In the class of odometers, the number of occurrences of every prime factors in the set $\{q_n\mid n\geq 0\}$ form a total invariant of conjugacy. As for irrational rotations, it is a consequence of the Halmos-von Neumann Theorem since odometers have discrete spectrum and their eigenvalues are given by these occurrences. In particular odometers have eigenvalues non-equal to $1$ and are not weakly mixing, moreover odometers and irrational rotations are not isomorphic. Notice that the Halmos-von Neumann Theorem implies that the conjugacy classes among ergodic systems with discrete spectrum coincide with the flip-conjugacy classes since the point spectrum of a system is a subgroup of $\T$. If every prime number has infinite multiplicity in the set $\{q_n\mid n\geq 0\}$, then the odometer is said to be \textbf{universal}. An odometer is \textbf{dyadic} if $2$ is the only prime factor.\par
	\textbf{Chacon's map} is the first example of weakly mixing system which is not strongly mixing \cite{chaconWeaklyMixingTransformations1969a} and was the starting point for the notion of rank-one systems. It is a rank-one transformation defined with cutting and spacing parameters $q_n=3$, $\sigma_{n,0}=\sigma_{n,1}=\sigma_{n,3}=0$, $\sigma_{n,2}=1$.
	
	\subsection{Flexible classes}
	
	Now we introduce classes of rank-one systems to which the main result of this paper applies. First let us consider cutting-and-stacking constructions whose spacing parameters have controlled asymptotics. Recall that $\mathcal{P}^{\N}$ is the set of sequences of cutting and spacing parameters. As introduced in Definition~\ref{defparam}, $(h_n)$, $(\sigma_n)$ and $(Z_n)$ denotes the sequences associated to a sequence in $\mathcal{P}^{\N}$: $h_n$ is the height of the $n$-th tower, $\sigma_n$ the number of new spacers at step $n$ and $Z_n$ is the maximum number of spacers between two consecutive towers, over the first $n$ steps.
	
	\begin{definition}\label{cspbsp}
		A construction by cutting and stacking with cutting and spacing parameters $(q_n, (\sigma_{n,0},\ldots,\sigma_{n,q_n}))_{n\geq 0}\in\mathcal{P}^{\N}$ is said \textbf{CSP} ("\textbf{controlled-spacing-parameter}") if there exists a constant $C>0$ such that $Z_n\leq Ch_{n}$ for all $n$. It is furthermore \textbf{BSP} ("\textbf{bounded-spacing-parameter}") if $Z_n\leq C$ and $\sigma_{n,0}=\sigma_{n,q_n}=0$ for all $n$. A rank-one system $T$ is \textbf{BSP} if it admits a BSP cutting-and-stacking construction.
	\end{definition}
	
	Odometers and Chacon's map are examples of BSP rank-one systems. Moreover BSP implies CSP. The interest in the BSP property is its stability after skipping steps in the cutting-and-stacking process, as stated in the following lemma.
	
	\begin{figure}[ht]
		\centering
		\includegraphics[width=0.9\linewidth]{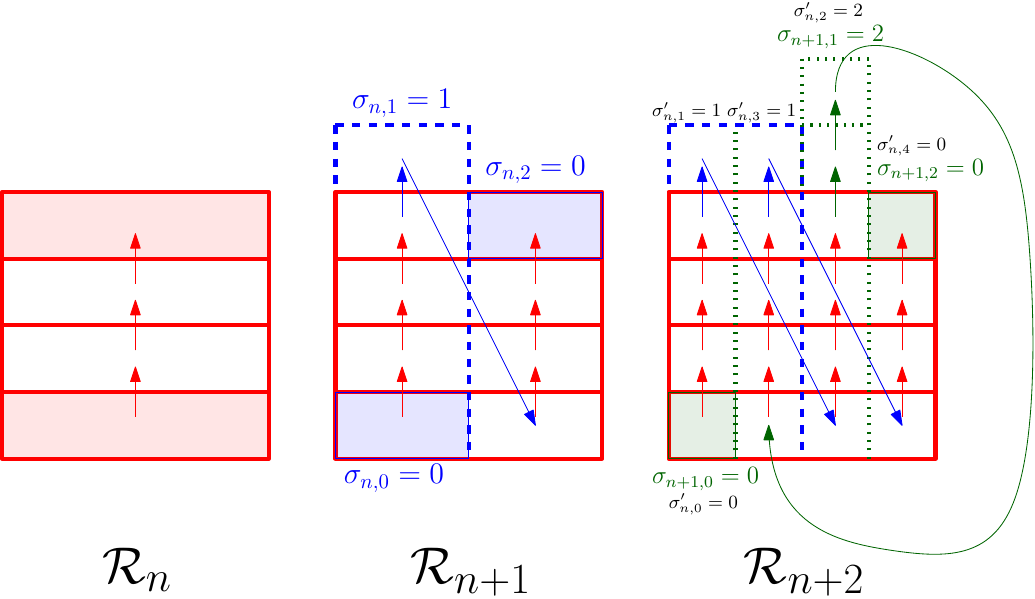}
		\caption{Illustration of the proof of Lemma~\ref{skip}, spacing parameters from $\mathcal{R}_n$ to $\mathcal{R}_{n+2}$ with $q_n=q_{n+1}=2$ (the coloured levels are the base and the roof of the towers).}
		\label{figproofskip}
	\end{figure}
	
	\begin{lemma}\label{skip}
		Given a BSP cutting-and-stacking construction, any subsequence of its Rokhlin towers still provides a BSP construction with the same constant $C$.
	\end{lemma}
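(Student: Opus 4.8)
The plan is to work directly from the cutting-and-stacking description. Start with a BSP construction with Rokhlin towers $(\mathcal{R}_n)_{n\geq 0}$, cutting parameters $q_n$ and spacing parameters $\sigma_{n,i}$, together with the constant $C$ such that $Z_n\leq C$ and $\sigma_{n,0}=\sigma_{n,q_n}=0$ for all $n$. Fix a subsequence $\mathcal{R}_{n_0},\mathcal{R}_{n_1},\mathcal{R}_{n_2},\ldots$ (with $n_0<n_1<\cdots$; we may as well take $n_0=0$ after relabelling, or simply observe that passing to a subsequence of Rokhlin towers is itself a cutting-and-stacking construction, as noted in the discussion after Definition~\ref{defr1}). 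The first step is to identify the new cutting and spacing parameters. The new $k$-th cutting parameter is $\widetilde q_k=q_{n_k}q_{n_k+1}\cdots q_{n_{k+1}-1}$. The new spacers inserted between two consecutive copies of $\mathcal{R}_{n_{k+1}}$ inside $\mathcal{R}_{n_{k+1}}$ are, going from $\mathcal{R}_{n_k}$ up to $\mathcal{R}_{n_{k+1}}$, certain partial sums of the old spacing parameters $\sigma_{j,i}$ for $n_k\leq j<n_{k+1}$; in particular each new spacing parameter $\widetilde\sigma_{k,i}$ is a sum of at most $n_{k+1}-n_k$ old spacing parameters, hence it is certainly bounded, but the bound one gets this way depends on the gap $n_{k+1}-n_k$, which is not good enough. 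The figure (Figure~\ref{figproofskip}) indicates the right way to see it: when one stacks the sub-towers of $\mathcal{R}_{n_{k+1}-1}$ to form $\mathcal{R}_{n_{k+1}}$, the spacers that land strictly between two consecutive copies of the new base come from a \emph{single} step of the original construction.

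Concretely, the key step is the following observation. In the original construction, each level added as a spacer at step $j$ (for any $j$) is a block of at most $Z_j\leq C$ consecutive spacers $\Sigma_{j,i,1},\ldots,\Sigma_{j,i,\sigma_{j,i}}$ sitting between two consecutive sub-towers of $\mathcal{R}_j$ inside $\mathcal{R}_{j+1}$. When we build $\mathcal{R}_{n_{k+1}}$ directly from $\mathcal{R}_{n_k}$, a maximal run of consecutive spacer levels appearing strictly between two consecutive copies of $B_{n_{k+1}}=\widetilde B_{k+1}$ corresponds to a maximal run of consecutive spacer levels between two consecutive copies of $B_{n_{k+1}-1}$ in the \emph{last} stacking step (from $\mathcal{R}_{n_{k+1}-1}$ to $\mathcal{R}_{n_{k+1}}$), because the copies of $B_{n_{k+1}-1}$ that are not preceded by a copy of $\widetilde B_{k+1}$ are immediately preceded by another copy of $B_{n_{k+1}-1}$ (no spacer inserted there unless a spacer was inserted at step $n_{k+1}-1$). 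Hence every new spacing parameter $\widetilde\sigma_{k,i}$ equals one of the old $\sigma_{n_{k+1}-1,\ell}$ with $1\leq \ell\leq q_{n_{k+1}-1}-1$, so $\widetilde\sigma_{k,i}\leq Z_{n_{k+1}-1}\leq C$. This needs the hypothesis $\sigma_{j,0}=\sigma_{j,q_j}=0$: it guarantees that no spacers are added before the first sub-tower or after the last, so the copies of $B_j$ that begin a copy of $B_{j+1}$ are never shifted by spacers of step $j$, and the induction that "a copy of the new base is preceded either by a new spacer run coming entirely from step $n_{k+1}-1$, or by nothing" goes through. One also checks $\widetilde\sigma_{k,0}=\widetilde\sigma_{k,\widetilde q_k}=0$: the new base $\widetilde B_k$ is $B_{n_k}$, which by the original condition (e) is the first sub-column of $\mathcal{R}_{n_k-1}$ with no spacer before it at any intermediate step, and similarly nothing is stacked after the top of $\mathcal{R}_{n_{k+1}}$.

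Once this is established, it follows that $\widetilde Z_k=\max\{\widetilde\sigma_{\ell,i}\mid 0\leq\ell\leq k\}\leq C$ and $\widetilde\sigma_{k,0}=\widetilde\sigma_{k,\widetilde q_k}=0$ for all $k$, which is exactly the BSP condition with the same constant $C$. It also remains to remark that the subsequence construction genuinely defines a rank-one system: the new Rokhlin towers $(\widetilde{\mathcal{R}}_k)=(\mathcal{R}_{n_k})$ still increase to $\A$ (a cofinal subsequence of a generating increasing sequence of $\sigma$-algebras generates the same $\sigma$-algebra), the cutting parameters $\widetilde q_k\geq 2$, and condition~\eqref{finite} is inherited because $\sum_k \widetilde\sigma_k/\widetilde h_{k+1}$ is a sub-sum of a rearrangement of $\sum_n \sigma_n/h_{n+1}$ (equivalently, use the criterion of Lemma~\ref{etagerg1}: the product $\prod_k \widetilde q_k \widetilde h_k/\widetilde h_{k+1}$ is a sub-product of $\prod_n q_n h_n/h_{n+1}$ with the same limit $M_0>0$). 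The main obstacle is the bookkeeping in the previous paragraph — pinning down exactly which old spacers become the new spacer runs and why each such run sits at a single original step rather than accumulating across the skipped steps; this is where the two-sided condition $\sigma_{j,0}=\sigma_{j,q_j}=0$ is essential, and the rest is routine.
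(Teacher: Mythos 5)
Your overall strategy is the same as the paper's (identify the new cutting and spacing parameters of the skipped construction, check that each new spacing parameter is a single old one, hence bounded by $C$, and that the boundary parameters stay zero), but the central bookkeeping claim you make is false as stated. You assert that every maximal run of spacers between two consecutive copies of $\mathcal{R}_{n_k}$ inside $\mathcal{R}_{n_{k+1}}$ comes from the \emph{last} intermediate step, so that every new spacing parameter equals some $\sigma_{n_{k+1}-1,\ell}$. That is not what happens: the new spacing parameters are the old ones from \emph{all} intermediate steps $j$ with $n_k\leq j\leq n_{k+1}-1$, interleaved. Already for $n_{k+1}=n_k+2$ the correct identification (the one in the paper, cf.\ Figure~\ref{figproofskip}) is $\sigma'_{n_k,(j-1)q_{n_k}+i}=\sigma_{n_k,i}$ for $1\leq i\leq q_{n_k}-1$ and $\sigma'_{n_k,jq_{n_k}}=\sigma_{n_k+1,j}$: most of the new spacer runs sit \emph{inside} a copy of $\mathcal{R}_{n_k+1}$ and come from step $n_k$, not from step $n_k+1$. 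Only the runs straddling two consecutive copies of $\mathcal{R}_{n_{k+1}-1}$ come from the last step. Consequently your auxiliary "induction" statement (each copy of the base is preceded either by a spacer run coming entirely from step $n_{k+1}-1$ or by nothing) is also false, and the justification offered for it (which moreover confuses the bases $B_{n_{k+1}}$, $B_{n_{k+1}-1}$ and $B_{n_k}$) does not go through.

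The error is repairable, because the conclusion you need is only $\widetilde\sigma_{k,i}\leq Z_{n_{k+1}-1}\leq C$, and this still follows from the corrected claim that each new spacing parameter equals a single old $\sigma_{j,\ell}$ with $n_k\leq j\leq n_{k+1}-1$ and $1\leq\ell\leq q_j-1$; your appeal to the two-sided condition $\sigma_{j,0}=\sigma_{j,q_j}=0$ is indeed the right reason why spacers from different steps never merge into one longer run. The clean way to prove the corrected claim is the paper's: treat the case of skipping exactly one step by the explicit formula above (where $\sigma_{j,0}=\sigma_{j,q_j}=0$ guarantees both that no spacers accumulate at the junctions and that the new boundary parameters vanish), and then induct on the size of the gap $n_{k+1}-n_k$. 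Your additional remarks (the skipped towers still generate $\A$, $\widetilde q_k\geq 2$, condition~\eqref{finite} is inherited) are correct but peripheral; the heart of the lemma is precisely the step you got wrong.
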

	
	\begin{proof}[Proof of Lemma~\ref{skip}]
		Let $q_n$ and $\sigma_{n,i}$ be the cutting and spacing parameters of the BSP construction, $C$ the bound for the spacing parameters $\sigma_{n,i}$, $\mathcal{R}_n$ the associated towers and $\mathcal{R}_{n_k}$ a subsequence. Let $k$ be an integer and assume $n_{k+1}=n_k+2$. Denote by $q'_{n_k}$ and $\sigma'_{n_k,i}$ the new cutting and spacing parameters from $\mathcal{R}_{n_k}$ to $\mathcal{R}_{n_{k+1}}$. It is easy to show that $q'_{n_k}=q_{n_k}q_{n_k+1}$, \mbox{$\sigma'_{n_k,0}=\sigma'_{n_k,q'_{n_k}}=0$} and for every $1\leq j\leq q_{n_k+1}$, $\sigma'_{n_k,(j-1)q_{n_k}+i}$ is equal to $\sigma_{n_k,i}$ if $1\leq i\leq q_{n_k}-1$, $\sigma_{n_k+1,j}$ if $i=q_{n_k}$ (see Figure~\ref{figproofskip}). Thus the non-zero spacing parameters from $\mathcal{R}_{n_k}$ to $\mathcal{R}_{n_{k+1}}$ are of the form $\sigma_{n_k,i}$ or $\sigma_{n_k+1,i}$ and they are all bounded above by $C$. For $n_{k+1}$ bigger than $n_k+2$, the result is now clear by induction.
	\end{proof}
	
	If the parameters $\sigma_{n,q_n}$ are non-zero, then skipping steps in the cutting-and-stacking process will cause an accumulation of spacers above the last columns and the new spacing parameters will not be bounded if the subsequence is properly chosen so that the jumps $n_{k+1}-n_k$ increase quickly enough. We have the same problem for $\sigma_{n,0}$ (accumulation of spacers at the bottom of the first columns), hence the conditions $\sigma_{n,0}=\sigma_{n,q_n}=0$ in the definition of BSP.\par
	Lemma~\ref{skip} has no reason to hold for CSP construction that are not BSP. Indeed the spacing parameters from $\mathcal{R}_{n_k}$ to $\mathcal{R}_{n_{k+1}}$ have to be compared with $h_{n_k}$, the height of $\mathcal{R}_{n_k}$. The comparison is easily obtained for the spacing parameters $\sigma_{n_k,i}$, $0\leq i\leq q_{n_k}$, but for the other spacing parameters, we only know that they are bounded above by $Ch_{n_k+1}, Ch_{n_k+2}, \ldots, Ch_{n_{k+1}-1}$.
	\newline
	
	In the sequel we will see other important CSP examples by considering classes containing "nice" cutting-and-stacking constructions, meaning that we will be able to properly choose the parameters in order to have the desired quantification of the cocycles for the orbit equivalence built in Section~\ref{theconstruction}. By definition, every flexible class $\mathcal{C}$ will be associated to some subset $\fc$ of $\mathcal{P}^{*}$, which can be considered as sufficient conditions that the cutting and spacing parameters have to satisfy at each step for the underlying transformation to belong to $\mathcal{C}$. Recall that $\mathcal P^*$ denotes the set of all finite sequences of cutting and stacking parameters.
	
	\begin{definition}\label{deffc}
		A class $\mathcal{C}$ of rank-one systems is said to be \textbf{flexible} if there exists a subset $\fc$ of $\mathcal{P}^{*}$ satisfying the following properties:
		\begin{enumerate}
			\item there exists a constant $C>0$ such that for all $\left (q_n,(\sigma_{n,0},\ldots ,\sigma_{n,q_n})\right )_{n\geq 0}\in\mathcal{P}^{\N}$ satisfying the condition~\eqref{finite} in Definition~\ref{defr1}, if $\fc$ contains every projection $\left (q_k,(\sigma_{k,0},\ldots ,\sigma_{k,q_k})\right )_{0\leq k\leq n}\in\mathcal{P}^{n+1}$ for $n\geq 0$, then these parameters define a CSP construction (with the constant $C$) and the underlying rank-one transformation is in $\mathcal{C}$;
			\item there exists a cutting and spacing parameter $(q_0,(\sigma_{0,0}\ldots,\sigma_{0,q_0}))$ in $\fc$ with \mbox{$q_0\geq 3$};
			\item there is a constant $C'>0$ such that for all $n\geq 1$, if $\left (q_k,(\sigma_{k,0},\ldots ,\sigma_{k,q_k})\right )_{0\leq k\leq n-1}$ is in $\fc$, then there are infinitely many integers $q_n$ such that $\left (q_k,(\sigma_{k,0},\ldots ,\sigma_{k,q_k})\right )_{0\leq k\leq n}$ is in $\fc$ for some $\sigma_{n,0},\ldots ,\sigma_{n,q_n}$ satisfying the inequality
			$$\sigma_n\leq C'q_nh_{n-1}$$
			where $(h_{k})_{0\leq k\leq n+1}$ and $(\sigma_k)_{0\leq k\leq n}$ denote the finite sequences associated to the finite sequence $\left (q_k,(\sigma_{k,0},\ldots ,\sigma_{k,q_k})\right )_{0\leq k\leq n}$ of cutting and stacking parameters.
		\end{enumerate}
		A rank-one system $T$ is \textbf{flexible} if $\{T\}$ is a flexible class.
	\end{definition}
	
	The third point of the definition aims to recursively choose the cutting parameters (and we want them to increase quickly enough) with an asymptotic control on $(\sigma_n)_n$, while the first point guarantees that it is possible to do so for the underlying system to be in the class $\mathcal{C}$. The second point is minor, but it is required for the initialization of the recursive construction of an odometer orbit equivalent to an element of our flexible class (see Lemma~\ref{welldefined} and Remark~\ref{q0}). It also ensures that $\fc$ is not an empty set.\par
	Notice that if a construction satisfies $Z_n\leq Ch_{n-1}$ for all $n$, then it is in particular CSP and we get $\sigma_n\leq C(q_n+1)h_{n-1}\leq 2Cq_nh_{n-1}$ as in the third point of Definition~\ref{deffc}.\par
	We now give examples of flexible classes. The proof is given in Section~\ref{sectionproofprop}.
	
	\begin{proposition}\label{fc}
		\begin{enumerate}
			\item Every BSP rank-one system is flexible.
			\item For every nonempty open subset $\mathcal{V}$ of $\R$, the set $\{R_{\theta}\mid\theta\in\mathcal{V}\cap (\R\setminus\mathbb{Q})\}$ is a flexible class.
			\item For every irrational number $\theta$, the class of rank-one systems which have $e^{2i\pi\theta}$ as an eigenvalue is flexible.
			\item The class of strongly mixing rank-one systems is flexible.
		\end{enumerate}
	\end{proposition}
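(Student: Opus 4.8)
The plan is to handle the four statements by one common scheme: for each class $\mathcal C$ I would exhibit a set $\fc\subseteq\mathcal P^{*}$ and verify the three conditions of Definition~\ref{deffc}. The point of tension is always between condition~1, which forces $\fc$ to be restrictive --- every $\bm p\in\mathcal P^{\N}$ all of whose projections lie in $\fc$ should yield a CSP construction, with a single constant, of a transformation in $\mathcal C$ --- and condition~3, which forces $\fc$ to be flexible --- every finite sequence in $\fc$ should extend with infinitely many admissible cutting parameters and with $\sigma_n\leq C'q_nh_{n-1}$. For items 2, 3, 4 I would arrange that $Z_n\leq Ch_{n-1}$ holds along every allowed sequence, so that by the remark after Definition~\ref{deffc} the CSP property and the inequality of condition~3 come for free; condition~2 is always immediate (make the first retained tower tall enough). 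Thus the real work is to preserve the defining property of $\mathcal C$ while letting the cutting parameters grow as fast as one wants.

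For item~1, I would fix a BSP construction of $T$ with constant $C$ and let $\fc$ be the finite sequences of cutting and spacing parameters that occur as an initial segment of a construction obtained from the fixed one by skipping steps (passing to a subsequence of its Rokhlin towers). Since the partial products $q_0\cdots q_{m-1}$ are strictly increasing, the skip points realizing a given finite segment are unique, hence coherent across lengths, so any $\bm p$ all of whose projections lie in $\fc$ is itself a skipping of the fixed construction; its underlying transformation is then $T$, and by Lemma~\ref{skip} the construction is BSP, in particular CSP, with the same $C$ --- this is condition~1. Condition~2 holds because one skip already gives a first cutting parameter $\geq q_0q_1\geq 4\geq 3$. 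For condition~3, one extends a finite segment by choosing where the next retained tower sits; the new cutting parameter $q_{m}\cdots q_{m'-1}$ takes infinitely many values as $m'\to\infty$, and by Lemma~\ref{skip} the spacing parameters stay $\leq C$, whence $\sigma_n\leq C(q_n+1)\leq 2Cq_nh_{n-1}$.

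For items 2, 3, 4 I would repeat this scheme with a different base construction. For irrational rotations in an open set $\mathcal V$, I would use the explicit rank-one models of \cite{drillickNonrigidRankoneInfinite2023}, in which the stage-$n$ parameters encode the continued-fraction data of $\theta$ and confine $\theta$ to a nested sequence of intervals, and take $\fc$ to be the finite parameter sequences of these models for which the associated interval lies inside a fixed compact $K\subseteq\mathcal V$ and $Z_n\leq Ch_{n-1}$; then any $\bm p$ with all projections in $\fc$ builds an irrational rotation of angle in $K\subseteq\mathcal V$, and conditions~2 and 3 follow because a retained column can still be lengthened arbitrarily while keeping $\theta$ in $K$. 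For a prescribed eigenvalue $e^{2i\pi\theta}$, I would use the constructions of \cite{danilenkoExplicitRank1Constructions2023} and add to $\fc$ the requirement that the partial sums governing the spectral criterion stay summable. For strong mixing, I would use Ornstein's construction \cite{ornsteinRootProblemErgodic1972}, taking $\fc$ to encode a deterministic version of his combinatorial conditions on the spacer distribution that force strong mixing, again arranged so that $Z_n\leq Ch_{n-1}$ and the next cutting parameter stays free.

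The hard part, for items 2--4, is exactly the clash between conditions~1 and 3: one must check that the class-defining property survives arbitrarily large increments of the cutting parameter --- that the continued-fraction intervals can be kept inside $\mathcal V$, that the eigenvalue series stays summable, and, hardest of all, that Ornstein's sufficient conditions for strong mixing can be met simultaneously with the bound $Z_n\leq Ch_{n-1}$ and with unbounded cutting parameters. This strong mixing case is where one genuinely has to work inside the cited construction and distill from Ornstein's probabilistic argument a purely combinatorial recipe compatible with all the constraints of Definition~\ref{deffc}; by comparison the BSP reduction (which only uses Lemma~\ref{skip}) and the rotation reduction are fairly soft.
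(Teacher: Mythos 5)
Your overall scheme coincides with the paper's: item 1 by skipping towers and Lemma~\ref{skip} (your uniqueness-of-skip-points remark is a reasonable way to see coherence), items 2--4 by encoding the constructions of \cite{drillickNonrigidRankoneInfinite2023}, \cite{danilenkoExplicitRank1Constructions2023} and \cite{ornsteinRootProblemErgodic1972} into $\fc$. The genuine gap is in the rotation case. In the model of \cite{drillickNonrigidRankoneInfinite2023} the stage-$k$ cutting parameter is the $k$-th continued fraction digit of $\theta$, and the initial digits are \emph{forced} by the location of $\mathcal V$: they cannot be chosen, and they may equal $1$. A tuple with $q=1$ is not an element of $\mathcal P$, so ``the finite parameter sequences of these models whose interval lies inside $K$'' need not even be a subset of $\mathcal P^{*}$; and even when the forced digits are $\geq 2$, condition~2 of Definition~\ref{deffc} demands a length-one element with $q_0\geq 3$, whereas the set of irrationals with prescribed integer part $q_{-1}$ and first digit $q_0$ is the interval $\left(q_{-1}+\tfrac{1}{q_0+1},\,q_{-1}+\tfrac{1}{q_0}\right)$, which for a small $\mathcal V$ away from the points $q_{-1}+\tfrac1q$ is contained in $K$ for no $q_0$ at all. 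So the set $\fc$ you describe can be empty or fail condition~2. The paper's remedy --- which your phrase ``make the first retained tower tall enough'' gestures at but does not carry out --- is to fix a finite forced block $Q_0,\ldots,Q_{n_0}$ of digits with $Q_0\cdots Q_{n_0}\geq 3$ (via the homeomorphism between digit sequences and irrationals), collapse steps $1,\ldots,n_0$ into the first step, restrict all later digits to be $\geq 2$, and take $C=C'=\tilde h_{n_0}$; the elements of $\fc$ are then the collapsed parameter sequences, not the literal Drillick ones.

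Your assessment of where the work lies is also off. For strong mixing no ``distillation'' of Ornstein's probabilistic argument is needed: his Lemma~3.2 (Lemma~\ref{ornstein} here) is already a purely combinatorial existence statement with $m>N$ arbitrary, so infinitely many cutting parameters are available, and with $K=h_{n-1}$ the spacers $\sigma_{n,i}=a_i+h_{n-1}$ lie in $[0,2h_{n-1}]$, giving CSP and condition~3 at once with $C=C'=2$; this case is soft. For the prescribed eigenvalue the issue is not summability of a spectral series but closing the recursion: the paper strengthens the auxiliary height condition of \cite{danilenkoExplicitRank1Constructions2023} so that admissibility of the next step can be certified from the current data while leaving $q_n$ free, and the CSP bound comes from $\sigma_{n,m}=\ell^{(n)}_m j_{n^2}<2\pi h_n$. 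These verifications are exactly the content of the proposition, so deferring them leaves the proof incomplete even where your plan points in the right direction.
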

	
	Theorems~\ref{thetheorem},~\ref{thetheoremB},~\ref{thetheoremC} and~\ref{thetheoremD} follow from  Proposition~\ref{fc} and the following theorem which is the main result.
	
	\begin{theorem}\label{thfc}
		Let $\varphi\colon\R_+\to \R_+$ be a map satisfying $\varphi(t)\underset{t\to +\infty}{=}o\left (t^{1/3}\right )$. If $\mathcal{C}$ is a flexible class, then there exists $T$ in $\mathcal{C}$ which is $\varphi$-integrably orbit equivalent to the universal odometer.
	\end{theorem}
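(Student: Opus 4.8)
The strategy is to combine the combinatorial description of flexible classes with a generalization of the Kerr–Li construction, building the element $T$ of $\mathcal C$ and the universal odometer $S$ simultaneously by a recursive choice of cutting and spacing parameters. Fix the set $\fc\subseteq\mathcal P^{*}$ and the constants $C,C'$ provided by Definition~\ref{deffc}. I would build, by induction on $n$, a finite sequence of cutting and spacing parameters $(q_k,(\sigma_{k,0},\dots,\sigma_{k,q_k}))_{0\le k\le n}$ that lies in $\fc$, together with the partial data of an odometer (the cutting parameters of $S$ and the partially defined maps $\zeta_k$ from Section~\ref{cutsta}), in such a way that (i) the cutting parameters $q_n$ grow fast enough for a certain series controlling the cocycles to converge, and (ii) the spacing parameters satisfy $\sigma_n\le C'q_nh_{n-1}$, i.e.\ the CSP-type control needed to quantify the cocycles on the $T$-side. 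Property~(2) of Definition~\ref{deffc} gives the base case with $q_0\ge 3$, and property~(3) provides, at each step, infinitely many admissible $q_n$ with the right control on $\sigma_n$; from this infinite set I pick $q_n$ large enough to beat the error terms accumulated so far. By property~(1), the resulting infinite sequence defines a CSP rank-one transformation $T$ which belongs to $\mathcal C$.

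Next I would invoke the orbit equivalence construction of Section~\ref{sectionconstr} (the generalization of Kerr–Li to rank-one systems referenced as Lemma~\ref{dependence}) applied to this $T$ and the universal odometer $S$: this yields an explicit orbit equivalence $\Psi$ between them. The heart of the matter is then to estimate the cocycles $c_S$ and $c_T$. For each one, the construction expresses it (up to controlled corrections) as a sum over levels $n$ of contributions supported on small sets, whose measures are governed by $1/(q_0\cdots q_{n-1})$ and whose sizes are governed by the heights $h_n$ and the spacing parameters. Using $\varphi(t)=o(t^{1/3})$ one bounds $\varphi(|c_S|)$ and $\varphi(|c_T|)$ termwise; the CSP bound $Z_n\le Ch_n$ (equivalently $\sigma_n\le C'q_nh_{n-1}$) ensures each term is at most a constant times $q_n^{1/3}/(q_0\cdots q_{n-1})$ or similar, and the fast growth of $(q_n)$ chosen in the induction makes $\sum_n q_n^{1/3}/(q_0\cdots q_{n-1})$ converge. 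This gives $\varphi$-integrability of both cocycles, hence $\varphi$-integrable orbit equivalence between $T\in\mathcal C$ and the universal odometer.

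The main obstacle, and where most of the work lies, is the simultaneous nature of the construction: one cannot first fix $T$ with arbitrary CSP parameters and then build $S$, because skipping steps to accelerate the $q_n$ would distort the spacing parameters (the discussion after Lemma~\ref{skip} shows this fails outside the BSP case). Instead the recursion must interleave the choice of the $(n{+}1)$-th parameters of $T$ (constrained to lie in $\fc$, via Definition~\ref{deffc}(3)) with the construction of the odometer's $(n{+}1)$-th level, and one must verify that the partially defined maps $\zeta_n$ extend coherently at every stage — this is the role of Lemma~\ref{welldefined} and Remark~\ref{q0}, which is why $q_0\ge 3$ is imposed. A secondary technical point is checking that the error terms one must dominate when choosing $q_n$ depend only on the already-constructed data $(q_k,\sigma_{k,i})_{k<n}$, so that "choosing $q_n$ large enough" is legitimate; this is precisely the content of the dependence lemma (Lemma~\ref{dependence}) alluded to in the introduction. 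Once these combinatorial bookkeeping issues are handled, the analytic estimate using $\varphi(t)=o(t^{1/3})$ and the convergence of the controlling series is comparatively routine.
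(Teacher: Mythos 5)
Your proposal follows essentially the same route as the paper: interleave the recursive choice of the parameters of $T$ inside $\fc$ (base case from $q_0\geq 3$, then at each step one of the infinitely many admissible $q_n$ chosen large enough, legitimized by Lemma~\ref{dependence} and Lemma~\ref{welldefined}) with the Kerr--Li-type construction of the universal odometer $S$, then quantify both cocycles using the CSP control on the spacing parameters and the fast growth of the cutting parameters so that the controlling series converge under $\varphi(t)=o(t^{1/3})$. The only ingredient of the paper's argument not visible in your sketch is the preliminary replacement of $\varphi$ by an increasing subadditive majorant (Lemma~\ref{metriccompatible}), a minor technical reduction needed to split the termwise bounds.
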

	
	\section{Proof of Proposition~\ref{fc}}\label{sectionproofprop}
	
	In this section we prove the four statements in Proposition~\ref{fc}.
	
	\subsection{BSP systems}\label{proofBSP}
	
	Let $T$ be a BSP rank-one system, $\mathcal{C}\coloneq \{T\}$ and $q_n$, $\sigma_{n,i}$ the parameters of a BSP construction of $T$, with a constant $C>0$. For every $n\geq 0$ and $j\geq 1$, let $\sigma^{(n,n+j)}_{0},\ldots ,\sigma^{(n,n+j)}_{q_{n}\ldots q_{n+j-1}}$ be the spacing parameters from $\mathcal{R}_{n}$ to $\mathcal{R}_{n+j}$, assuming that the steps for $\mathcal{R}_{n+1},\ldots ,\mathcal{R}_{n+j-1}$ are skipped during the construction (we then have $\sigma^{(n,n+1)}_{i}=\sigma_{n,i}$ and also $\sigma^{(n,n+j)}_{i}=0$ for $i$ equal to $0$ and $q_n\ldots q_{n+j-1}$ by Lemma~\ref{skip}). The new cutting parameters are $q^{(n,n+j)}\coloneq q_n\ldots q_{n+j-1}$ and are large enough with huge jumps $j$. Now define
	$$\fc\coloneq \left\{\left (q^{(n_k,n_{k+1})},\left (\sigma^{(n_k,n_{k+1})}_{0},\ldots ,\sigma^{(n_k,n_{k+1})}_{\left (q^{(n_k,n_{k+1})}\right )}\right )\right )_{0\leq k\leq m}\Big |\ m\geq 0,\ 0=n_0<n_1<\ldots <n_{m+1}\right\}.$$
	Using Lemma~\ref{skip}, the new spacing parameters $\sigma^{(n_k,n_{k+1})}_{j}$ are bounded by $C$ and we get
	$$\sum_{1\leq j\leq q^{(n_k,n_{k+1})}}{\sigma^{(n_k,n_{k+1})}_{j}}\leq Cq^{(n_k,n_{k+1})}.$$
	The set of parameters $\fc$ thus witnesses that $\{T\}$ is flexible.
	
	\subsection{Irrational rotations}\label{proofirra}
	
	We now consider a construction from~\cite{drillickNonrigidRankoneInfinite2023}. For every irrational number $\theta$, Drillick, Espinosa-Dominguez, Jones-Baro, Leng, Mandelshtam and Silva give an explicit cutting-and-stacking construction of a transformation $T$ which is the irrational rotation of angle $\theta$ when the construction yields a finite measure-preserving system.
	
	\paragraph{The construction in~\cite{drillickNonrigidRankoneInfinite2023}.} Let $\theta$ be an irrational number and $[q_{-1};q_0,q_1,\ldots]$ its continued fraction expansion, with $q_{-1}\coloneq \lfloor\theta\rfloor$ and positive integers $q_0,q_1,\ldots$. Let us assume that there is no integer $n$ such that $q_k=1$ for every $k\geq n$. We consider the sequence $(h_n)_{n\geq 0}$ defined by $h_{-1}\coloneq 0$, $h_0\coloneq 1$ and $h_{k+1}\coloneq q_kh_k+h_{k-1}$ for every $k\geq 0$ (the integer $h_k$ is the denominator of the $k$-th convergent of the irrational number $\theta$). Finally, for every $k\geq 0$, we set $\sigma_{k,i}=0$ for every $i\in\{0,1,\ldots,q_k-1\}$, and $\sigma_{k,q_k}=h_{k-1}$. Then, the sequence of cutting and stacking parameters $(q_k,(\sigma_{k,0},\ldots,\sigma_{k,q_k}))_{k\geq 0}$ provides a rank-one system. This system is the irrational rotation of angle $\theta$ if and only if Condition~\eqref{finite} is satisfied, and this last condition holds if and only if the series $\sum_{k\geq 0}{\frac{1}{q_kq_{k+1}}}$ converges (see Theorems~3.1 and~5.1 in~\cite{drillickNonrigidRankoneInfinite2023}).
	
	\begin{remark}
		Equivalently, we can define rank-one systems with cutting parameters potentially equal to $1$, provided that there are infinitely many cutting parameters greater than or equal to $2$, but our construction of orbit equivalence is not well-defined with this weaker assumption. Therefore, in the proof of Proposition~\ref{fc} for irrational rotations, one of the main goals is to avoid cutting parameters equal to~$1$.
	\end{remark}
	
	\begin{remark}
		It is proven in~\cite{drillickNonrigidRankoneInfinite2023} that the set of irrational numbers $\theta$ such that the associated series $\sum_{k\geq 0}{\frac{1}{q_kq_{k+1}}}$ converges has measure zero.
	\end{remark}
	
	\paragraph{Proof of Proposition~\ref{fc} for these systems.}
	
	Let $\mathcal{V}$ be a nonempty open subset of $\R$ and
	$$\mathcal{C}\coloneq \{R_{\theta}\mid\theta\in\mathcal{V}\cap(\R\setminus\mathbb{Q})\}.$$
	We now prove that $\mathcal{C}$ is a flexible class.\par
	We first use the following basic fact from the theory of continued fractions: if $A$ denotes the set of sequences $(q_i)_{i\geq -1}$ of integers such that $q_0,q_1,\ldots$ are positive, and if $A$ is equipped with the induced product topology, then the map
	$$(q_i)_{i\geq -1}\in A\mapsto [q_{-1};q_0,q_1,\ldots]\in\R\setminus\mathbb{Q},$$
	is a homeomorphism (see~\cite[Lemma 3.4]{einsiedlerErgodicTheoryView2011} for instance). We can then fix integers $n_0\geq 0$ and $Q_{-1},Q_{0},\ldots,Q_{n_0}$ (where $Q_{0},\ldots,Q_{n_0}$ are positive) such that $Q_{0}\ldots Q_{n_0}$ is greater than or equal to $3$ and the following holds: for every irrational number $\theta$, if the first coefficients of its continued fraction expansion are $Q_{-1},Q_{0},\ldots,Q_{n_0}$, then $\theta$ is in $\mathcal{V}$.\par
	We write $\bm{Q}\coloneq (Q_0,\ldots,Q_{n_0})$ and we consider the set $\tilde{\mathcal{F}}(\bm{Q})$ of finite sequences $(\tilde{q}_k,(\tilde{\sigma}_{k,0},\ldots,\tilde{\sigma}_{k,\tilde{q}_k}))_{0\leq k\leq n}$ such that $n\geq n_0$ and for all $k\in\{0,\ldots,n\}$,
	
	$$\begin{array}{ll}
		\tilde{q}_k=Q_k&\text{if }k\leq n_0,\\
		\tilde{q}_k\geq 2&\text{if }k>n_0,
	\end{array}$$
	
	and
	
	$$\begin{array}{ll}
		\tilde{\sigma}_{k,i}=0&\text{for }i\in\{0,\ldots,\tilde{q}_k-1\},\\
		\tilde{\sigma}_{k,\tilde{q}_k}=\tilde{h}_{k-1}&
	\end{array}$$
	(where $(\tilde{h}_k)_{0\leq k\leq n}$ is the associated height sequence and $\tilde{h}_{-1}\coloneq 0$). The finite sequences of $\tilde{\mathcal{F}}(\bm{Q})$ may not be finite sequences of cutting and stacking parameters in the sense of Definition~\ref{defparam}, since the integers $Q_0,\ldots,Q_{n_0}$ may be equal to $1$. Moreover, even if the integers $Q_0,\ldots,Q_{n_0}$ were greater than or equal to $2$, we could not prove that $\mathcal{C}$ is a flexible class with $\fc=\tilde{\mathcal{F}}(\bm{Q})$, since the first cutting parameters $\tilde{q}_1,\ldots,\tilde{q}_{n_0}$ cannot be chosen large enough. Notice that, although we may have $\tilde{q}_k=1$ for some $k\in\{0,1,\ldots,n_0\}$, a finite sequence $(\tilde{q}_k,(\tilde{\sigma}_{k,0},\ldots,\tilde{\sigma}_{k,q_k}))_{0\leq k\leq n}$ can still define the first $(n+1)$ steps of a cutting-and-stacking construction, and we associate to it another finite sequence $(q_k,(\sigma_{k,0},\ldots,\sigma_{k,q_k}))_{0\leq k\leq n-n_0}$ corresponding to the cutting-and-stacking construction obtained from the previous one by skipping the steps $1, 2,\ldots,n_0$. We get $h_0=\tilde{h}_0=1$, $q_0=Q_0\ldots Q_{n_0}\geq 3$ and for all $\forall k\in\{1,\ldots,n-n_0\}$,
	
	$$\begin{array}{ll}
		q_k=\tilde{q}_{n_0+k}\geq 2,&\\
		h_k=\tilde{h}_{n_0+k},&\\
		\sigma_{k,i}=\tilde{\sigma}_{n_0+k,i}=0&\text{for }i\in\{0,\ldots,q_k-1\},\\
		\sigma_{k,q_k}=\tilde{\sigma}_{n_0+k,\tilde{q}_{n_0+k}}=\tilde{h}_{n_0+k-1}=h_{k-1}&\text{if }k\geq 2.
	\end{array}$$	
	
	For $k=1$, we have $\sigma_{1,q_1}=\tilde{h}_{n_0}$, where $\tilde{h}_{n_0}$ is not equal to $h_0$. Setting $C=C'\coloneq \tilde{h}_{n_0}$ (this constant only depends on $Q_0,\ldots,Q_{n_0}$), we have $Z_1\leq Ch_1$ and $\sigma_{1}\leq C'h_0$. We immediately get the inequalities $Z_k\leq Ch_k$ and $\sigma_{k}\leq C'h_{k-1}$ for $k\in\{2,\ldots,n-n_0\}$.\par
	Let $\mathcal{F}(\bm{Q})$ be the set of finite sequences $(q_k,(\sigma_{k,0},\ldots,\sigma_{k,q_k}))_{0\leq k\leq n-n_0}$ obtained from finite sequences $(\tilde{q}_k,(\tilde{\sigma}_{k,0},\ldots,\tilde{\sigma}_{k,q_k}))_{0\leq k\leq n}\in\tilde{\mathcal{F}}(\bm{Q})$. It is now easy to check that $\mathcal{C}$ is a flexible class, with the set of parameters $\fc\coloneq \mathcal{F}(\bm{Q})$ and the constants $C$ and $C'$.
	
	\subsection{Systems with a given eigenvalue}\label{proofeigenvalue}
	
	Let $\theta$ be an irrational number and $\mathcal{C}$ the class of rank-one systems which has $\lambda\coloneq e^{2i\pi\theta}$ as an eigenvalue. In \cite{danilenkoExplicitRank1Constructions2023}, Danilenko and Vieprik present an explicit cutting-and-stacking construction of a system in $\mathcal{C}$. The parameters are chosen in the following way (see the proof of Theorem~4.1 in \cite{danilenkoExplicitRank1Constructions2023}).
	
	\paragraph{The construction of Danilenko and Vieprik.}
	
	For every $n\geq 1$, we fix a number \mbox{$j_n\in\{1,\ldots ,n\}$} such that $\delta_n\coloneq |1-\lambda^{j_n}|$ is less than $2\pi/n$. Fix $n\geq 1$, assume that $\left (q_k,(\sigma_{k,0},\ldots,\sigma_{k,q_k})\right )_{0\leq k\leq n-1}$ has already been constructed with an auxiliary condition
	\begin{equation}\label{danilenkovieprik2}
		h_{n}>\frac{n^4}{\delta_{n^2}}.
	\end{equation}
	Danilenko and Vieprik show the existence of a sequence $(\ell^{(n)}_m)_{m\geq 1}$ of positive integers less than or equal to $2\pi/\delta_{n^2}$, such that for every $m\geq 1$,
	\begin{equation}\label{danilenkovieprik1}
		|1-\lambda^{mh_{n}+(\ell^{(n)}_1+\ldots +\ell^{(n)}_{m})j_{n^2}}|<\frac{2\pi}{n^2}.
	\end{equation}
	Next, let $q_n$ be an integer large enough so that the auxiliary condition~\eqref{danilenkovieprik2} holds at the next step, namely
	$$h_{n+1}\coloneq q_nh_{n}+(\ell^{(n)}_1+\ldots +\ell^{(n)}_{q_n-1})j_{n^2}>\frac{(n+1)^4}{\delta_{(n+1)^2}}$$
	(in \cite{danilenkoExplicitRank1Constructions2023}, $q_n$ is chosen as the smallest integer satisfying the property but it is not needed, so there are infinitely many choices). Finally the spacing parameters at this step are defined by $\sigma_{n,0}=\sigma_{n,q_n}=0$ and $\sigma_{n,m}=\ell^{(n)}_mj_{n^2}$ for $1\leq m\leq q_n-1$.\par
	With these parameters satisfying \eqref{danilenkovieprik2} and \eqref{danilenkovieprik1}, $\lambda$ is an eigenvalue of the underlying rank-one system (see \cite{danilenkoExplicitRank1Constructions2023}, proof of Theorem~4.1, for details).
	
	\paragraph{Proof of Proposition~\ref{fc} for these systems.}
	Let us consider the same construction as above, but with the following auxiliary condition:
	\begin{equation}\label{danilenkovieprik2prime}
		h_{n}>\max{\left (\frac{n^4}{\delta_{n^2}},\frac{(n+1)^4}{\delta_{(n+1)^2}}\right )},
	\end{equation}
	which is stronger than the previous auxiliary condition~\eqref{danilenkovieprik2}. Note that the real numbers $\delta_i$ have been fixed before setting the parameters.\par
	The subset $\fc$ of $\mathcal{P}^*$ is defined to be the set of finite sequences $\left (q_k,(\sigma_{k,0},\ldots ,\sigma_{k,q_k})\right )_{0\leq k\leq n}$ constructed in a recursive way. Any cutting and spacing parameter $(q_0,(\sigma_{0,0},\ldots,\sigma_{q_0,0}))$ is in $\fc$, and if $\left (q_k,(\sigma_{k,0},\ldots ,\sigma_{k,q_k})\right )_{0\leq k\leq n-1}$ is in $\fc$, then so is $\left (q_k,(\sigma_{k,0},\ldots ,\sigma_{k,q_k})\right )_{0\leq k\leq n}$ for every $(q_n,\sigma_{n,0},\ldots ,\sigma_{n,q_n})$ that we can obtain at the next step, as described above but with the stronger auxiliary condition~\eqref{danilenkovieprik2prime}.\par
	Let $\bm{p}\coloneq (q_n,(\sigma_{n,0},\ldots,\sigma_{n,q_n}))_{n\geq 0}$ be a sequence of cutting and spacing parameters. If all its projections are in $\fc$, then $\bm{p}$ provides a CSP construction with $C=2\pi$. Indeed, we have $\sigma_{n,m}=\ell_{m}^{(n)}j_{n^2}\leq 2\pi n^2/\delta_{n^2}<2\pi h_n$. As mentioned above, Conditions~\eqref{danilenkovieprik2} and~\eqref{danilenkovieprik1} imply that the sequence $\bm{p}$ provides rank-one systems which have $\lambda$ as an eigenvalue.\par
	Finally, if $(q_k,(\sigma_{k,0},\ldots,\sigma_{k,q_k}))_{0\leq k\leq n-1}$ is a finite sequence in $\fc$, we can choose a large enough integer $q_n$ so that the following holds at the next step:
	$$h_{n+1}>\max{\left (\frac{(n+1)^4}{\delta_{(n+1)^2}},\frac{(n+2)^4}{\delta_{(n+2)^2}}\right )}$$
	(in particular, the new auxiliary condition~\eqref{danilenkovieprik2prime} is satisfied). We use the same spacing parameters as before, namely $\sigma_{n,m}=\ell_m^{(n)}j_{n^2}$. Using $j_{n^2}\leq n^2$ and $\ell_m^{(n)}\leq\frac{2\pi}{\delta_{n^2}}$, this gives
	$$\sigma_n=(\ell^{(n)}_1+\ldots +\ell^{(n)}_{q_n-1})j_{n^2}\leq q_n\frac{2\pi n^2}{\delta_{n^2}}\leq q_n h_{n-1},$$
	so the third point of Definition~\ref{deffc} is satisfied for $C'=1$.
	
	\subsection{Strongly mixing systems}\label{proofstrmix}
	
	Let $\mathcal{C}$ be the class of strongly mixing rank-one systems. We consider the construction of Ornstein in \cite{ornsteinRootProblemErgodic1972}. The property the parameters have to satisfy at each step is given by the following lemma (Lemma~3.2 in \cite{ornsteinRootProblemErgodic1972}), proven with a probabilistic method.
	
	\begin{lemma}\label{ornstein}
		Let $N$ and $K$ be positive integers and $\varepsilon>0$, $\alpha>0$. Then there exist integers $m>N$ and $a_1,\ldots ,a_m$ such that
		\begin{itemize}
			\item $\left |\sum_{i=j}^{j+k}{a_i}\right |\leq K$ for all $1\leq j\leq j+k\leq m$;
			\item denoting by $H(\ell,k)$ the number of $j$ such that $\displaystyle\sum_{i=j}^{j+k}{a_i}=\ell$, for $1\leq j\leq j+k\leq m$, then $H(\ell,k)<\alpha \dfrac{(m-k)}{K}$ for every $k<(1-\varepsilon)m$.
		\end{itemize}
	\end{lemma}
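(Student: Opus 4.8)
The statement to prove is Lemma~\ref{ornstein}, a combinatorial/probabilistic lemma of Ornstein: for given $N$, $K$, $\varepsilon$, $\alpha$, one can find $m > N$ and integers $a_1, \ldots, a_m$ whose partial sums stay bounded by $K$ in absolute value, and such that no value $\ell$ is attained too often among the partial sums of windows of length $k$ (for windows that are not too long).

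\medskip

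The plan is to use the probabilistic method: choose the $a_i$ at random and show that with positive probability (in fact with probability close to $1$) both conditions hold. The natural model is to take $a_1, \ldots, a_m$ i.i.d.\ uniform on $\{-K, \ldots, K\}$, or perhaps on a slightly smaller symmetric range, and then \emph{condition} on or \emph{modify} to enforce the hard constraint $|\sum_{i=j}^{j+k} a_i| \le K$ for all windows. Actually the cleaner route, and the one I would pursue first, is to not take the $a_i$ independent but instead to let $S_0 = 0$ and let $(S_t)_{0 \le t \le m}$ be a random walk on $\{-K, \ldots, K\}$ (reflected or lazy at the boundary, or a Markov chain with uniform stationary distribution on this set), and set $a_i = S_i - S_{i-1}$. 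Then automatically $\sum_{i=j}^{j+k} a_i = S_{j+k} - S_{j-1} \in \{-2K, \ldots, 2K\}$; to get the bound $K$ rather than $2K$ one rescales $K$ at the start (replace $K$ by $\lfloor K/2 \rfloor$, harmless since $K$ is arbitrary), so the first bullet is satisfied \emph{deterministically}. This reduces the whole lemma to the second bullet.

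\medskip

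For the second bullet, fix $k < (1-\varepsilon)m$ and a value $\ell$. Then $H(\ell, k) = \#\{j : 1 \le j \le m-k,\ S_{j+k} - S_{j-1} = \ell\}$, so $\mathbb{E}[H(\ell,k)] = \sum_{j} \mathbb{P}(S_{j+k} - S_{j-1} = \ell)$. The key input is a \textbf{mixing estimate} for the chain: because $k$ is large (it is comparable to $m$, hence much larger than $N$ and than the mixing time of a fixed finite chain on $\{-K,\ldots,K\}$), the distribution of $S_{j+k}$ given $S_{j-1}$ is close to the uniform distribution on the $\approx K$ points of the state space, so $\mathbb{P}(S_{j+k} - S_{j-1} = \ell \mid S_{j-1}) \lesssim c/K$ for an absolute constant, uniformly in $\ell$ and the starting point. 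Summing over the at most $m-k$ choices of $j$ gives $\mathbb{E}[H(\ell,k)] \le c(m-k)/K$. This is not yet the claimed bound $\alpha(m-k)/K$ with $\alpha$ arbitrarily small, so one must amplify: either (i) run the chain on a state space of size $\gg K/\alpha$ and only afterward truncate — but truncation conflicts with the uniform-attainment we want — or, more robustly, (ii) replace the single-step mixing bound by concentration: show $H(\ell,k)$ is concentrated around its mean and then take $m$ large enough that, after a union bound over the $O(K)$ values of $\ell$ and the $O(m)$ values of $k$, the ``bad'' event (some $H(\ell,k)$ exceeding $\alpha(m-k)/K$) has probability $< 1$. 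Concentration can be obtained by a bounded-differences / Azuma–Hoeffding argument applied to the increments $a_i$ (changing one $a_i$ changes each relevant partial sum by a bounded amount and hence changes $H(\ell,k)$ by at most $O(1)$ per affected window, total $O(k)$; this is too lossy naively, so one instead exploits that $H(\ell,k)$ is a sum over $j$ of indicator functions with limited dependence range — the events for $j$ and $j'$ are independent once $|j-j'| > k$ — and uses a block/martingale decomposition). The honest version: partition $\{1,\ldots,m-k\}$ into $O(m/k)$ blocks of $j$'s at spacing $> k$; within a spaced set the indicators are independent, so a Chernoff bound gives exponential decay in $m/k \ge \varepsilon/(1-\varepsilon) \cdot (\text{something})$… this is where the parameter $\varepsilon$ is used, ensuring $m - k \ge \varepsilon m$ so there are enough terms to average over.

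\medskip

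The main obstacle I anticipate is exactly this last point: reconciling ``$\alpha$ arbitrarily small'' with the fact that the partial sums live in a set of only $\sim K$ points, so the ``typical'' count is already $\sim (m-k)/K$ and can't be pushed below that \emph{per value}; the resolution must be that $\alpha$ small is absorbed by taking $m$ (hence the number $m-k$ of windows) large while $K$ stays fixed — rereading the statement, $\alpha/K$ multiplies $(m-k)$, and $m > N$ is the only lower bound on $m$, so in fact one needs the bound $H(\ell,k) < \alpha(m-k)/K$ to follow from concentration of $H(\ell,k)$ near its mean $\Theta((m-k)/K)$ \emph{only when that mean is itself large}, i.e.\ when $(m-k)/K \gg 1$; for small windows $k$ close to $m$ where $m-k$ is small this is handled by the deterministic bound $H(\ell,k) \le m-k$ together with choosing $K$ and the range correctly, or is vacuous. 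So the delicate bookkeeping is: choose the state-space size as a function of $K$ and $\alpha$ first, then choose $m$ (as a function of $N, K, \varepsilon, \alpha$) large enough for the Chernoff/union bound to close, and verify the edge case of windows with $k$ near $(1-\varepsilon)m$. I expect Ornstein's original proof does essentially this with a hands-on second-moment or direct counting argument rather than invoking Azuma, and I would follow that template.
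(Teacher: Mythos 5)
The paper does not prove this lemma at all: it is quoted verbatim as Lemma~3.2 of Ornstein's paper \cite{ornsteinRootProblemErgodic1972}, with the remark that it is ``proven with a probabilistic method'', and it is then only \emph{used} (with the specific values $K=h_{n-1}$, $\varepsilon=10^{n-3}$, $\alpha=5/4$) to define the set $\fc$ for strongly mixing systems. So a citation was all that was expected here; there is no in-paper argument to compare yours against. Your general plan (probabilistic method, writing the window sums as telescoping differences $S_{j+k}-S_{j-1}$ of bounded quantities so that the first bullet holds deterministically, then controlling the counts $H(\ell,k)$) is indeed the flavour of Ornstein's original argument, so the approach is not off-track.

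As a proof, however, the proposal has genuine gaps. The decisive one is your treatment of ``\,$\alpha$ arbitrarily small''. Since every window sum lies in $\{-K,\ldots,K\}$ and each $j$ contributes to exactly one value of $\ell$, one has $\sum_{\ell}H(\ell,k)=m-k$, hence $\max_{\ell}H(\ell,k)\geq (m-k)/(2K+1)$ for every $k$. Consequently the conclusion $H(\ell,k)<\alpha(m-k)/K$ is simply unattainable once $\alpha\leq K/(2K+1)$, and your proposed resolution --- absorb a small $\alpha$ by taking $m$ large with $K$ fixed --- cannot work, because both the target bound and the pigeonhole lower bound scale linearly in $m-k$. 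The statement must be read (and is applied in the paper) with $\alpha$ of moderate size, e.g.\ $\alpha=5/4$, and the proof should aim to show that each count is \emph{close to} the uniform average $(m-k)/(2K+1)$, not far below it; your mixing estimate $\mathbb{E}[H(\ell,k)]\lesssim (m-k)/K$ is then already of the right order and what is missing is only concentration. That concentration step is the second gap: it is asserted but not carried out, and your own blocking scheme (residue classes mod $k+1$) degenerates precisely in the regime $k$ close to $(1-\varepsilon)m$, where each class has only $O(\varepsilon/(1-\varepsilon))$ windows, so a Chernoff bound per class does not close the union bound over $\ell$ and $k$. A correct write-up would need either a second-moment/direct counting argument along the overlapping windows (as in Ornstein) or a more careful martingale decomposition; as it stands the sketch does not constitute a proof.
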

	
	The set of parameters $\fc$ is defined in a recursive way, as in Section~\ref{proofeigenvalue}: any cutting and spacing parameter $(q_0,(\sigma_{0,0},\ldots,\sigma_{q_0,0}))$ is in $\fc$, and from a finite sequence of parameters $\left (q_k,(\sigma_{k,0},\ldots ,\sigma_{k,q_k})\right )_{0\leq k\leq n-1}$ in $\fc$, $\left (q_k,(\sigma_{k,0},\ldots ,\sigma_{k,q_k})\right )_{0\leq k\leq n}$ is also in $\fc$ if the new parameters can be written as $q_n=m$ and $\sigma_{n,i}=a_i+h_{n-1}$ where $m,a_1,\ldots, a_m$ are integers whose existence is granted by Lemma~\ref{ornstein} with $N> 10^n$, $K=h_{n-1}$, $\varepsilon=10^{n-3}$ and $\alpha=5/4$. There are infinitely many possibilities for $q_n$ as $N$ can be arbitrarily large. It is shown in \cite{ornsteinRootProblemErgodic1972} that cutting-and-stacking constructions with these parameters give strongly mixing systems, it is clear that they are CSP with $C=2$ and the third point of Definition~\ref{deffc} is satisfied for $C'=2$.
	
	\section{From flexible classes to the universal odometer}\label{sectionconstr}
	
	The goal of this section is to prove Theorem~\ref{thfc}, namely that for every $\varphi\colon\R_+\to\R_+$ satisfying $\varphi(x)=o(t^{1/3})$, any flexible class contains a rank-one system which is $\varphi$-integrably orbit equivalent to the universal odometer.
	
	\subsection{The construction}\label{theconstruction}
	
	\paragraph{Overview of the construction.}
	
	We first present a natural adaptation to the case of rank-one system of Kerr and Li's construction of an explicit orbit equivalence between the universal odometer and any other odometers. We will then see that the quantification of the cocycles becomes more complicated due to the presence of non-zero spacing parameters.\par
	Let $T\in\aut$ be a rank-one system and consider a cutting-and-stacking construction of this transformation with the same notations $q_n,\sigma_{n,i},\sigma_n,h_n,\mathcal{R}_n,\varepsilon_n,X_n$ as in Definition~\ref{defr1}. From the sequence of Rokhlin towers $\mathcal{R}_n$, new towers $\mathcal{R}'_n$ will be built as Rokhlin towers for a new system $S$. These towers $\mathcal{R}'_n$ will have no spacers, i.e. $\sigma'_{n,i}=0$, so they will be partitions of $X$. The construction will ensure that $\mathcal{R}'_n$ increases to the $\sigma$-algebra $\A$ using the fact that it is the case for $\mathcal{R}_n$, so $S$ will be an odometer. For the odometer $S$ to be universal, we fix a sequence of prime numbers $(p_n)_{n\geq 0}$ such that every prime number appears infinitely many times, and every cutting parameter $q'_n$ will be a multiple of $p_n$.\par
	We will recursively define $S$ on subsets increasing to $X$ up to a null set. More precisely if the $n$-th tower $\mathcal{R}'_n$ has been built and its base and its height are denoted by $B'_n$ and $h'_n$, then $S$ is provisionally defined on all the levels of the tower except the highest one and maps the $i$-th level to the $(i+1)$-th one. So $\mathcal{R}'_n$ is exactly $(B'_n,S(B'_n),\ldots ,S^{h'_n-1}(B'_n))$ and $S$ is defined on $X\setminus S^{h'_n-1}(B'_{n})$. To refine $S$, i.e.~to define it on a greater set, we have to build the next tower $\mathcal{R}'_{n+1}$ and define $S$ as for $\mathcal{R}'_n$. In order to do so and according to Definition~\ref{defr1}, we have to determine a subdivision of the base $B'_n$ into $q'_n$ subsets $B'_{n,0},\ldots,B'_{n,q'_n-1}$. We find a function $\sn_{n+1}$ mapping bimeasurably each $B'_{n,i}$ to $B'_{n,i+1}$ for $0\leq i\leq q'_n-2$. On the subset $D_{n+1}\coloneq \bigsqcup_{0\leq i\leq q'_n-2}{S^{h'_n-1}(B'_{n,i})}$ of the roof $S^{h'_n-1}(B_n)$ of $\mathcal{R}'_n$, $S$ will coincide with $\sn_{n+1}S^{-h'_n}$ and will be defined on $X\setminus S^{h'_{n+1}}(B'_{n+1})=D_1\sqcup\ldots\sqcup D_{n+1}$ where $B'_{n+1}=B'_{n,0}$ is the base of the new Rokhlin tower $\mathcal{R}_{n+1}$ for $S$. To sum up, $S$ is successively defined by the finite approximations obtained from the maps $\zeta_n$. Up to conjugacy, $\sn_n$ is exactly the addition by $(\underbrace{0,\ldots ,0}_{n-1\text{ times}},1,0,0,\ldots)$ with carry over to the right (as defined in Section~\ref{cutsta}), restricted to $[0,\ldots,0]_{n-1}\setminus [0,\ldots,0,q'_{n-1}-1]_n$.\par
	The construction of the maps $\zeta_n$ is by induction on $n\geq 0$. At step $n$ we will actually define 
	\[\sn_{n+1}\colon B'_{n,0}\sqcup\ldots\sqcup B'_{n,q'_n-2}\to B'_{n,1}\sqcup\ldots\sqcup B'_{n,q'_n-1}.\]
	In order to build $\zeta_{n+1}$, a second induction on a parameter $m\geq n$ is required. Actually, $B'_{n,i}$ will be the disjoint union of the $B'_{n,i}(m)$ for $m\geq n$, and this inner recursion consists in choosing $m$-bricks to define $B'_{n,i}(m)$. By definition, the $m$\textbf{-bricks} will be the $m$\textbf{-levels} (i.e.~the levels of $\mathcal{R}_m$) explicitly chosen to constitute $B'_{n,i}(m)$. Using powers of $T$, $m$-bricks of $B'_{n,i}$ are mapped to the ones of $B'_{n,i+1}$ (there will be as many in $B_{n,i}$ as in $B'_{n,i+1}$) and this gives $\sn_{n+1}$ whose orbits are included in those of $T$, implying immediately that the orbits of $S$ satisfy the same property. The reverse inclusion between the orbits will be more difficult to prove and will be due to the choice of the bricks (see Remark~\ref{capture} after the construction).
	
	\paragraph{The construction.}
	
	$T$ is a rank-one system in $\aut$. We fix one of its cutting-and-stacking construction whose parameters are denoted as in Definition~\ref{defr1}. Let $(p_n)_{n\geq 0}$ be a sequence of prime numbers such that every prime number appears infinitely many times.\par
	In the sequel, we will assume that, given the cutting parameters of $T$, some positive integers $q'_n$ and $t_{n,m}$ that we will introduce are well-defined. In Section~\ref{towardsfc} (see Lemma~\ref{welldefined}), we will give conditions on the parameters of $T$ for these quantities (and so the construction) to be well-defined.
	
	\begin{itemize}
		\item $\bm{n=1:}$ We first build $\mathcal{R}'_1$ and $\sn_1$ by an induction over $m\geq 1$. We could denote by $\mathcal{R}'_0$ the trivial tower $(X)$ with its base $B'_0\coloneq X$. At the end of step $n=1$, $S$ is not yet defined on the roof of the tower $\mathcal{R}'_1$, i.e. on its highest level, which is a Rokhlin tower of $S$.
		\begin{itemize}
			\item $\bm{m=1:}$ Let $q'_0>0$ be the largest multiple of $p_0$ such that $q'_0\leq q_0-1$.
			\begin{manualremark}{7.1}
				At this step, we simply have to assume $q_0>p_0$ for the integer $q'_0$ to be non-zero. However, for the well definition of other quantities at other steps, the conditions on the cutting parameters of $T$ get more and more technical, this is the reason why we first assume that the parameters of $T$ are chosen so that the positive quantities are well-defined and we will then state the conditions in Lemma~\ref{welldefined} (as mentioned before the beginning of the construction).
			\end{manualremark}
			For every $0\leq i\leq q'_0-1$, we define
			$$B'_{0,i}(1)\coloneq T^i(B_{1})$$
			and
			$$\sn_1(1)\colon\bigsqcup_{0\leq i\leq q'_0-2}{B'_{0,i}(1)}\to\bigsqcup_{0\leq i\leq q'_0-2}{B'_{0,i+1}(1)}$$
			coinciding with $T$ on its domain (hence every subset $B'_{0,i}(1)$ is composed of a unique $1$-brick $T^i(B_{1})$).
			
			\item $\bm{m>1:}$ Assume that the subsets $B'_{0,i}(M)$ have been built for every $1\leq M\leq m-1$ and $0\leq i\leq q'_0-1$. Let 
			$$W_{1,m}\coloneq X\setminus\bigsqcup_{1\leq M\leq m-1}{\bigsqcup_{0\leq i\leq q'_0-1}{B'_{0,i}(M)}}$$
			be the remaining piece of $X$ at the end of the $(m-1)$-th step (we could also define $W_{1,1}\coloneq X$).
			\begin{manualremark}{7.2}
				Notice that $m$-levels are either contained in $W_{1,m}$ or disjoint from it since $X\setminus W_{1,m}$ is composed of $M$-levels for $1\leq M\leq m-1$ and the Rokhlin towers are nested. This will more generally hold true for $W_{n,m}$ with $n\geq 2$.
			\end{manualremark}
			Let $r_{1,m}$ be the number of integers $j\in\llbracket 0,h_m-1\rrbracket$ such that $T^j(B_m)\subseteq W_{1,m}$, denoted by
			$$0\leq j_{1}^{(1,m)} <j_{2}^{(1,m)} <\ldots <j_{r_{1,m}}^{(1,m)} <h_m.$$
			Let $t_{1,m}$ be the positive integer such that $q'_0t_{1,m}$ is the largest multiple of $q'_0$ such that $q'_0t_{1,m}<r_{1,m}$ (we assume that we can choose the cutting parameters of $T$ for this integer to be positive, see Lemma~\ref{welldefined}). The first $q'_0t_{1,m}$ $m$-levels contained in $W_{1,m}$ are now used as $m$-bricks, they are split in $q'_0$ groups of $t_{1,m}$ $m$-bricks of the subsets $B'_{0,i}$ in the following way and the same will be done at steps $n>1$ (the fact that the inequality $q'_0t_{1,m}<r_{1,m}$ is strict, and the way we make the $q'_0$ groups will guarantee an easy control of the cocycles, see Lemma~\ref{cocycle} used for Lemmas~\ref{Kn} and~\ref{Dnm}). For every $0\leq i\leq q'_0-1$, we define
			
			$$B'_{0,i}(m)\coloneq \bigsqcup_{0\leq t\leq t_{1,m}-1}{T^{\left (j_{i+1+tq'_0}^{(1,m)}\right )}(B_m)}$$
			and $\sn_1(m)$ coinciding with $T^{\left (j_{i+2+tq'_0}^{(1,m)}\right )-\left (j_{i+1+tq'_0}^{(1,m)}\right )}$ on $T^{\left (j_{i+1+tq'_0}^{(1,m)}\right )}(B_m)$ for every $0\leq i\leq q'_0-2$ and $0\leq t\leq t_{1,m}-1$, so that each brick $T^{\left (j_{i+1+tq'_0}^{(1,m)}\right )}(B_m)$ is mapped on another $T^{\left (j_{i+2+tq'_0}^{(1,m)}\right )}(B_m)$. Thus $\sn_1(m)$ maps each $B'_{0,i}(m)$ on $B'_{0,i+1}(m)$ and this gives
			$$\sn_1(m)\colon\bigsqcup_{0\leq i\leq q'_0-2}{B'_{0,i}(m)}\to\bigsqcup_{0\leq i\leq q'_0-2}{B'_{0,i+1}(m)}.$$
		\end{itemize}
		
		\paragraph{End of Step $n=1$:} For every $0\leq i\leq q'_0-1$, we define
		$$B'_{0,i}\coloneq \bigsqcup_{m\geq 1}{B'_{0,i}(m)}$$
		(the set of its $m$-bricks for all $m\geq 1$), $B'_1\coloneq B'_{0,0}$ and
		$$\sn_1\colon\bigsqcup_{0\leq i\leq q'_0-2}{B'_{0,i}}\to\bigsqcup_{0\leq i\leq q'_0-2}{B'_{0,i+1}}$$
		coinciding with the maps $\sn_1(m)$ on their respective domain (see Figure~\ref{fignequal1}).
	\end{itemize}
	
	The universal odometer $S$ we want to build is partially defined on $X$. More precisely we define it on the domain $D_1\coloneq \bigsqcup_{0\leq i\leq q'_0-2}{B'_{0,i}}$ of $\sn_1$ so that it coincides with $\sn_1$. This gives the first Rokhlin tower $\mathcal{R}'_1\coloneq (B'_{0,0},\ldots,B'_{0,q'_1-1})=(B'_1,S(B'_1),\ldots,S^{q'_0-1}(B'_1))$. The next step will provide us a refinement $\mathcal{R}'_2$ of the tower $\mathcal{R}'_1$, allowing us to extend partially $S$ on the highest level of the $\mathcal{R}'_1$.
	\begin{figure}[ht!]
		\centering
		\includegraphics[width=0.6\linewidth]{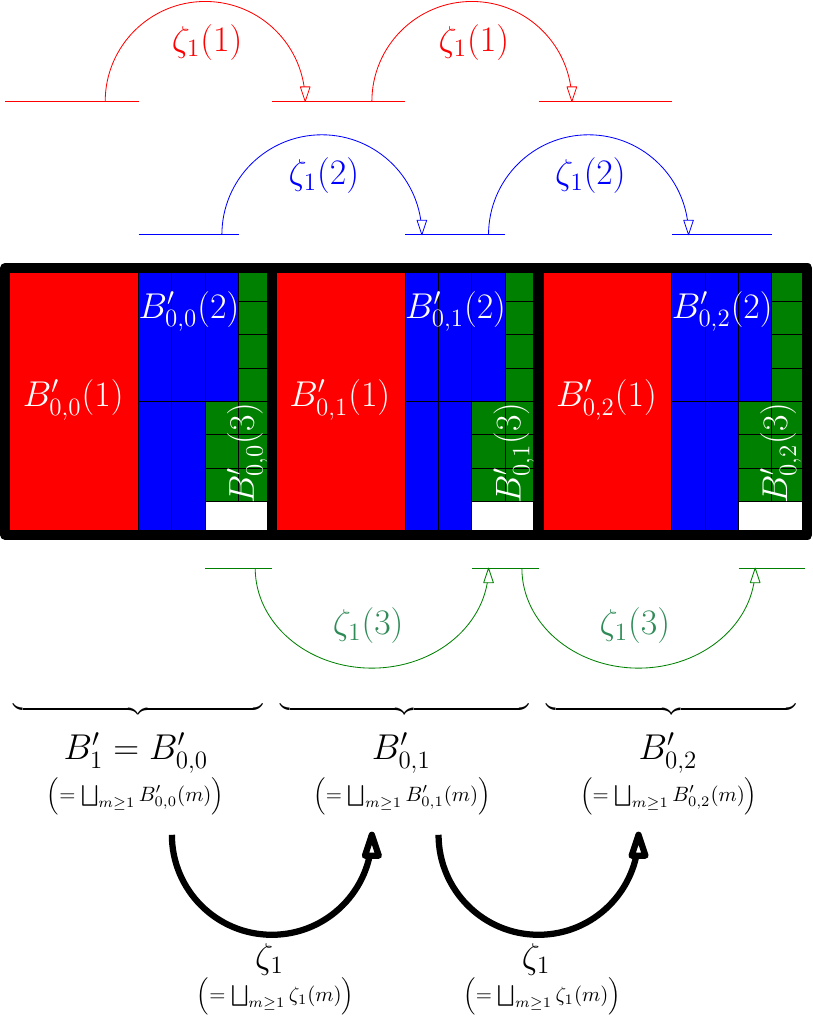}
		\caption{First step of the construction (i.e.~$n=1$).\\
			In Section~\ref{eqorbit}, we will define sets $E_{n,m}$ for every pair of integers $(n,m)$ satisfying $m\geq n\geq 1$. The set $E_{1,1}$ (resp. $E_{1,2}$; $E_{1,3}$) is the union of the red areas (resp. red and blue areas; red, blue and green areas).}
		\label{fignequal1}
	\end{figure}
	\begin{itemize}[resume]
		\item $\bm{n>1:}$ Assume that steps $1,\ldots,n-1$ have been achieved. There are nested towers $\mathcal{R}'_1,\ldots,\mathcal{R}'_{n-1}$. The $k$-th tower $\mathcal{R}'_k$ has $h'_k\coloneq q'_0\ldots q'_{k-1}$ levels and its base $B'_k$ is partitioned in $q'_k$ levels $B'_{k,0},\ldots,B'_{k,q'_k-1}$. These levels belong to $\mathcal{R}'_{k+1}$, whose base is $B'_{k+1}\coloneq B'_{k,0}$, with $\sn_{k+1}$ mapping $B'_{k,i}$ to $B'_{k,i+1}$. The map $S$ is defined on $D_1\sqcup\ldots\sqcup D_{n-1}$ using the maps $\sn_1,\ldots,\sn_{n-1}$, where $D_1\sqcup\ldots\sqcup D_{n-1}$ corresponds to the union of all the levels of $\mathcal{R}'_{n-1}$ except its roof.\par
		The map $S$ is not yet defined on the roof of $\mathcal{R}'_{n-1}$. By partitioning $B'_{n-1}$ in subsets $B'_{n-1,0},\ldots,B'_{n-1,q'_{n-1}-1}$, we will define $\mathcal{R}'_n$ which refines $\mathcal{R}'_{n-1}$ and a function $\sn_n$ mapping $B'_{n-1,i}$ to $B'_{n-1,i+1}$. The extension of $S$ will be defined on all the levels of $\mathcal{R}'_n$, except its roof (which is contained in the one of $\mathcal{R}'_{n-1}$). We will construct the subsets $B'_{n-1,i}$ as was done for the subsets $B'_{0,i}$, except that we only use the "material" in $B'_{n-1}$ to form the $m$-bricks of each $B'_{n-1,i}$. In order to do so, notice that the base $B'_{n-1}$ is exactly $B'_{n-2,0}$ (the first subset in the subdivision of $B'_{n-2}$) which is the disjoint union of subsets of the form $B'_{n-2,0}(m)$ for $m\geq n-1$. Moreover for all $n-1\leq M\leq m$, every $m$-level is contained in an $M$-level, we will then pick the new $m$-bricks in $B'_{n-2,0}(n), \ldots, B'_{n-2,0}(m)$. This motivates the definition of each set $W_{n,m}$ (the set of the remaining material to form $m$-bricks with an incremented integer $m$). We now discuss separately the following cases.
		\begin{itemize}
			\item $\bm{m=n:}$ Set
			\begin{equation}\label{Wnn}
				W_{n,n}\coloneq B'_{n-2,0}(n-1)\sqcup B'_{n-2,0}(n)
			\end{equation}
			and let $r_{n,n}$ be the number of integers $j\in\llbracket 0,h_n-1\rrbracket$ such that $T^j(B_n)\subseteq W_{n,n}$ (note that we could have defined $r_{1,1}=q_0$), denoted by
			$$0\leq j_{1}^{(n,n)} <j_{2}^{(n,n)} <\ldots <j_{r_{n,n}}^{(n,n)} <h_n.$$
			Let $q'_{n-1}$ be the largest multiple of $p_{n-1}$ such that $q'_{n-1}< r_{n,n}$ (we assume that we can choose the cutting parameters of $T$ for this integer to be positive, see Lemma~\ref{welldefined}). We then define for every \mbox{$0\leq i\leq q'_{n-1}-1$},
			$$B'_{n-1,i}(n)\coloneq T^{\left (j_{i+1}^{(n,n)}\right )}(B_n),$$
			meaning that among the $n$-levels in $W_{n,n}$, the $n$-bricks at step $(n,n)$ are exactly the first $q'_{n-1}$ ones (and set $t_{n,n}=1$ for consistency later on). Let
			$$\sn_n(n)\colon\bigsqcup_{0\leq i\leq q'_{n-1}-2}{B'_{n-1,i}(n)}\to\bigsqcup_{0\leq i\leq q'_{n-1}-2}{B'_{n-1,i+1}(n)}$$
			be the map coinciding with $T^{\left (j_{i+2}^{(n,n)}\right )-\left (j_{i+1}^{(n,n)}\right )}$ on each $B'_{n-1,i}(n)$, so that $B'_{n-1,i}(n)$ is mapped to $B'_{n-1,i+1}(n)$.
			
			\item $\bm{m>n:}$ Set
			\begin{equation}\label{Wnm}
				W_{n,m}\coloneq \left (\bigsqcup_{n-1\leq M\leq m}{B'_{n-2,0}(M)}\right )\setminus\left (\bigsqcup_{n\leq M\leq m-1}{\bigsqcup_{0\leq i\leq q'_{n-1}-1}{B'_{n-1,i}(M)}}\right )
			\end{equation}
			and let $r_{n,m}$ be the number of integers $j\in\llbracket 0,h_m-1\rrbracket$ such that $T^j(B_m)\subseteq W_{n,m}$, denoted by
			$$0\leq j_{1}^{(n,m)} <j_{2}^{(n,m)} <\ldots <j_{r_{n,m}}^{(n,m)} <h_m.$$
			Let $t_{n,m}$ be the positive integer such that $q'_{n-1}t_{n,m}$ is the largest multiple of $q'_{n-1}$ such that $q'_{n-1}t_{n,m}<r_{n,m}$ (we assume that we can choose the cutting parameters of $T$ for this integer to be positive, see Lemma~\ref{welldefined}). The first $q'_{n-1}t_{n,m}$ $m$-levels contained in $W_{n,m}$ are now used as $m$-bricks at step $(n,m)$, they are split in $q'_{n-1}$ groups of $t_{n,m}$ $m$-bricks of the subsets $B'_{n-1,i}$ in the following way. For every $0\leq i\leq q'_{n-1}-1$, we define
			$$B'_{n-1,i}(m)\coloneq \bigsqcup_{0\leq t\leq t_{n,m}-1}{T^{\left (j_{i+1+tq'_{n-1}}^{(n,m)}\right )}(B_m)}$$
			and $\sn_n(m)$ coinciding with $T^{\left (j_{i+2+tq'_{n-1}}^{(n,m)}\right )-\left (j_{i+1+tq'_{n-1}}^{(n,m)}\right )}$ on $T^{\left (j_{i+1+tq'_{n-1}}^{(n,m)}\right )}(B_m)$ for every $0\leq i\leq q'_{n-1}-2$, $0\leq t\leq t_{n,m}-1$, so that each $m$-brick $T^{\left (j_{i+1+tq'_{n-1}}^{(n,m)}\right )}(B_m)$ is mapped on another $T^{\left (j_{i+2+tq'_{n-1}}^{(n,m)}\right )}(B_m)$. Thus $\sn_n(m)$ maps each $B'_{n-1,i}(m)$ on $B'_{n-1,i+1}(m)$ and this gives
			$$\sn_n(m)\colon\bigsqcup_{0\leq i\leq q'_{n-1}-2}{B'_{n-1,i}(m)}\to\bigsqcup_{0\leq i\leq q'_{n-1}-2}{B'_{n-1,i+1}(m)}.$$
			
		\end{itemize}
		
		\paragraph{End of Step $n$:} We define for every $0\leq i\leq q'_{n-1}-1$,
		$$B'_{n-1,i}\coloneq \bigsqcup_{m\geq n}{B'_{n-1,i}(m)}$$
		(the set of its $m$-bricks for $m\geq n$), $B'_n=B'_{n-1,0}$ and
		$$\sn_n\colon\bigsqcup_{0\leq i\leq q'_{n-1}-2}{B'_{n-1,i}}\to\bigsqcup_{0\leq i\leq q'_{n-1}-2}{B'_{n-1,i+1}}$$
		coinciding with the maps $\sn_n(m)$ on their respective domain (see Figure~\ref{fign} for step $n=2$, after the first step illustrated in Figure~\ref{fignequal1}).\par
		As the base $B'_{n-1}$ of $\mathcal{R}'_{n-1}$ is partitioned in $B'_{n-1,0}\sqcup\ldots\sqcup B'_{n-1,q'_{n-1}-1}$, its highest level $S^{h'_{n-1}-1}(B'_{n-1})$ is partitioned in $S^{h'_{n-1}-1}(B'_{n-1,0})\sqcup\ldots\sqcup S^{h'_{n-1}-1}(B'_{n-1,q'_{n-1}-1})$. The map $S$ is extended in the following way. On
		$$D_n\coloneq S^{h'_{n-1}-1}(B'_{n-1,0})\sqcup\ldots\sqcup S^{h'_{n-1}-1}(B'_{n-1,q'_{n-1}-2}),$$
		it coincides with $\sn_nS^{-(h'_{n-1}-1)}$. So $S$ maps $S^{h'_{n-1}-1}(B'_{n-1,i})$ on $B'_{n-1,i+1}$. This gives a Rokhlin tower $\mathcal{R}'_n$ for $S$, nested in the previous one, of base $B'_n\coloneq B'_{n-1,0}$ and height $h'_{n}\coloneq q'_0\ldots q'_{n-1}$. Now $S$ is defined on $(D_1\sqcup\ldots\sqcup D_{n-1})\sqcup D_n$. The set $D_n$ consists in the levels of $\mathcal{R}_n$, except the highest one, which are contained in the highest level of $\mathcal{R}'_{n-1}$.
	\end{itemize}
	
	\begin{figure}[ht]
		\centering
		\includegraphics[width=0.9\linewidth]{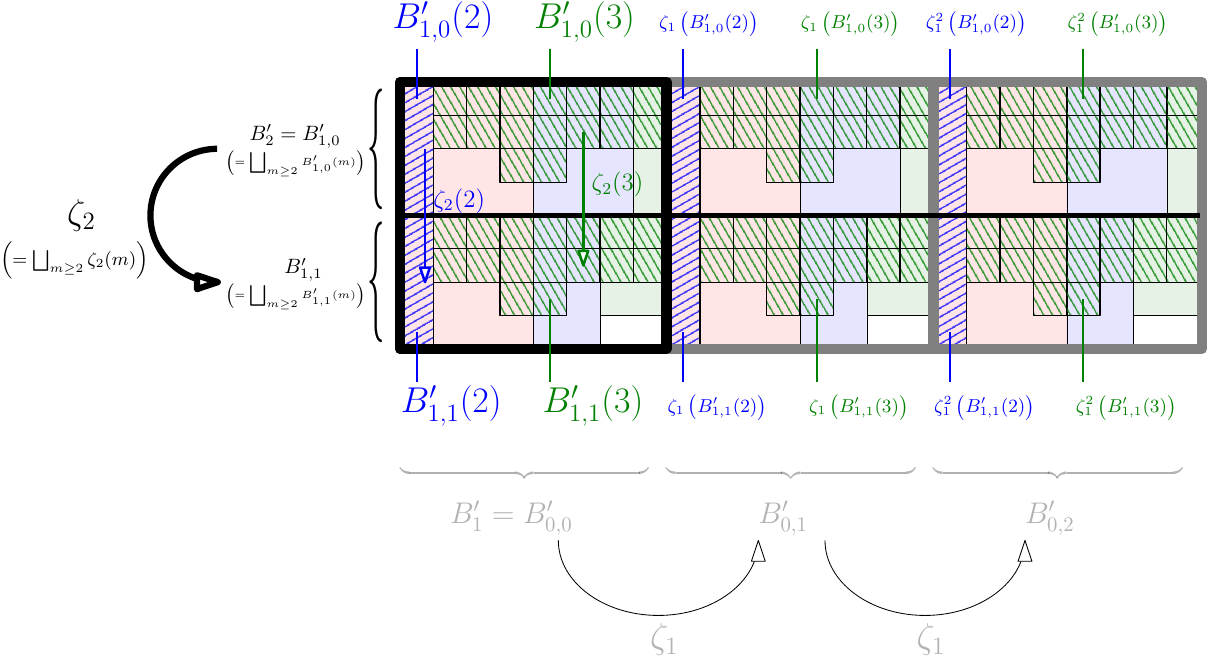}
		\caption{From the first step (illustrated in Figure~\ref{fignequal1}) to the second one.\\
			In $B'_1$, we inductively build $B'_{1,i}(2), B'_{1,i}(2), B'_{1,i}(3), \ldots$ for every $0\leq i\leq q'_1-1$ (in this example, we have $q'_1=2$). Each set $B'_{1,i}(2)$ is composed of a unique $2$-level in $B'_{0,0}(1)\sqcup B'_{0,0}(2)$ (i.e.~in pale red and pale blue areas). Then each set $B'_{1,i}(3)$ is composed of $3$-levels in $B'_{0,0}(1)\sqcup B'_{0,0}(2)\sqcup B'_{0,0}(3)$ (i.e.~in pale red, pale blue and pale green areas) and so on. The structure that we build in $B'_1=B'_{0,0}$ can be translated in $B'_{0,1}$ and $B'_{0,2}$ using the map $\zeta_1$.\\
			In Section~\ref{eqorbit}, we will define sets $E_{n,m}$ for every pair of integers $(n,m)$ satisfying $m\geq n\geq 1$. The set $E_{2,1}$ (resp. $E_{2,2}$) is the union of the areas hatched in blue (resp. in blue or green).}
		\label{fign}
	\end{figure}
	
	\begin{remark}\label{capture}
		Notice that the inclusion of the $S$-orbits in the $T$-orbits is easy since $S$ is defined from maps $\sn_n(m)$ which are "piecewise powers of $T$".\par
		The reverse inclusion will follow from the fact that we have $t_{n,n}=1$ for every $n\geq 1$ (at step $(n,n)$ we form groups of only one $n$-level). Indeed, uniqueness implies that these chosen blocks are linked by $\sn_n(n)$ and hence clearly by $S$ (on the contrary, an $m$-level, for $m>n$, of $B'_{n-1,i}$ is mapped by $\sn_n(m)$ to only one of the $t_{n,m}$ $m$-levels of $B'_{n-1,i+1}$, and not to the other). Thus ensuring that the unique $n$-brick of each $B'_{n-1,i}$ is a large part of it enables the system $S$ to capture most of the $T$-orbits.
	\end{remark}
	
	\subsection{First properties of this construction}\label{firstprop}
	
	Recall that we consider a cutting-and-stacking construction of $T$ with the same notations as in Definition~\ref{defr1} ($q_n$, $\sigma_{n,i}$, $\mathcal{R}_n$, $X_n$, $\varepsilon_n$, ...), and the sequences $(h_n)$, $(\sigma_{n})$ and $(Z_n)$ associated to the cutting and spacing parameters, and the notations $q'_n$, $\mathcal{R}'_n$, ... refer to the construction of $S$.\par
	We state some important properties preparing for further results in Section~\ref{eqorbit}. Many of them enable us to only take into account the combinatorics behind a cutting-and-stacking construction.
	We assume that all the "largest multiples" (for every $n<m$, the largest multiple $q'_{n-1}$ of $p_{n-1}$ such that $q'_{n-1}<r_{n,n}$, and the largest multiple $q'_{n-1}t_{n,m}$ of $q'_{n-1}$ such that $q'_{n-1}t_{n,m}<r_{n,m}$) are non-zero. In Section~\ref{towardsfc} (see Lemma~\ref{welldefined}), we will see how to choose the parameters for the construction to be well-defined.
	
	\begin{lemma}\label{partition}
		Every tower $\mathcal{R}'_n$ is a partition of $X$.
	\end{lemma}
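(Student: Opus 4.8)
The plan is to proceed by induction on $n$, proving simultaneously that $\mathcal{R}'_n$ is a partition of $X$ and that the levels of $\mathcal{R}'_n$ form a (finite) partition obtained from that of $\mathcal{R}'_{n-1}$ by subdividing. The base case $n=0$ is trivial since $\mathcal{R}'_0=(X)$ with $B'_0=X$, and the case $n=1$ reduces to showing that $\bigsqcup_{0\leq i\leq q'_0-1}B'_{0,i}=X$ up to a null set. For the inductive step, since $\mathcal{R}'_n$ is obtained from $\mathcal{R}'_{n-1}$ by partitioning the base $B'_{n-1}$ into $B'_{n-1,0},\ldots,B'_{n-1,q'_{n-1}-1}$ and applying powers of $S$, and since $S$ restricted to the non-roof levels of $\mathcal{R}'_{n-1}$ is a bijection onto its image, it suffices to prove that $\bigsqcup_{0\leq i\leq q'_{n-1}-1}B'_{n-1,i}=B'_{n-1}$ (again, up to a null set).

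The key step is therefore to verify, at each fixed $n$, that the sets $B'_{n-1,i}=\bigsqcup_{m\geq n}B'_{n-1,i}(m)$ for $0\leq i\leq q'_{n-1}-1$ partition $B'_{n-1}$. First I would check disjointness: the $B'_{n-1,i}(m)$ are disjoint in $i$ because they consist of distinct $m$-levels $T^{(j^{(n,m)}_{\bullet})}(B_m)$ with distinct indices (the map $(i,t)\mapsto i+1+tq'_{n-1}$ is injective on the relevant range), and they are disjoint in $m$ because an $m$-brick chosen at step $(n,m)$ is removed from the pool $W_{n,m+1}$ at the next inner step — this is exactly the role of the subtraction in the definition of $W_{n,m}$ in \eqref{Wnm}. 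For the covering, the point is that the ``remaining piece'' $W_{n,m}$ after step $(n,m)$ loses only $q'_{n-1}t_{n,m}$ of its $r_{n,m}$ many $m$-levels, leaving $r_{n,m}-q'_{n-1}t_{n,m}\leq q'_{n-1}$ of them uncovered at level $m$; these get refined into $(m+1)$-levels and reconsidered at step $(n,m+1)$, together with the newly available $(m+1)$-levels coming from $B'_{n-2,0}(m+1)$. Because $B'_{n-1}=B'_{n-2,0}=\bigsqcup_{M\geq n-1}B'_{n-2,0}(M)$ and the Rokhlin towers $\mathcal{R}_m$ increase to $\A$, every point of $B'_{n-1}$ lies in some $m$-level for all large $m$, hence is eventually assigned to some $m$-brick; the leftover at each stage has measure $\leq q'_{n-1}\cdot\mu(B_m)=q'_{n-1}\mu(B_0)/(q_0\cdots q_{m-1})$ by Lemma~\ref{etagerg1bis}, which tends to $0$. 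Thus $\bigsqcup_{m\geq n}\bigsqcup_i B'_{n-1,i}(m)$ exhausts $B'_{n-1}$ up to a null set.

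I expect the main obstacle to be the bookkeeping in the covering argument, specifically making rigorous the claim that every point of $W_{n,m}$ which is not selected as a brick at step $(n,m)$ is genuinely reconsidered at step $(n,m+1)$ — i.e.\ that $W_{n,m+1}$ contains, after refinement, all the unselected $m$-levels of $W_{n,m}$ plus the fresh material $B'_{n-2,0}(m+1)$, and nothing is accidentally lost or double-counted. This requires the observation recorded in Remark~7.2 that every $m$-level is either contained in or disjoint from each $W_{n,m}$ (so the counts $r_{n,m}$ are well defined and refining preserves the relevant inclusions), together with a careful telescoping of the defining expressions \eqref{Wnn} and \eqref{Wnm}. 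Once this is in place, the measure-theoretic exhaustion is immediate from the height estimate $\mu(B_m)\to 0$, and the induction closes.
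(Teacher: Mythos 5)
Your route is the same as the paper's: reduce the statement to showing that $B'_{n-1,0},\ldots,B'_{n-1,q'_{n-1}-1}$ partition the base $B'_{n-1}$ of $\mathcal{R}'_{n-1}$, obtain disjointness directly from the construction of the sets $W_{n,m}$, and obtain the covering from a measure estimate on what is left unselected, using $r_{n,m}-q'_{n-1}t_{n,m}\leq q'_{n-1}$ together with $\mu(B_m)\to 0$. The reduction and the disjointness part are fine.

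The exhaustion step, however, is not closed as written. First, the deduction ``every point of $B'_{n-1}$ lies in some $m$-level for all large $m$, hence is eventually assigned to some $m$-brick'' is not valid: a point can lie, at every stage, in one of the levels that are never selected, and such points are ruled out only by a measure argument, not pointwise. Second, your bound ``the leftover at each stage has measure $\leq q'_{n-1}\mu(B_m)$'' controls only the unselected $m$-levels of $W_{n,m}$; the full uncovered part of $B'_{n-1}$ after stage $m$, namely $\tilde{W}_{n,m+1}=B'_{n-1}\setminus\bigsqcup_{n\leq M\leq m}\bigsqcup_{i}B'_{n-1,i}(M)$, also contains the freshly added brick set $B'_{n-2,0}(m+1)$ and the not-yet-introduced tail $\bigsqcup_{M\geq m+2}B'_{n-2,0}(M)$, neither of which is bounded by $q'_{n-1}\mu(B_m)$. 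The paper settles exactly this point by decomposing $\tilde{W}_{n,m}=W_{n,m}\sqcup\bigsqcup_{M\geq m+1}B'_{n-2,0}(M)$, observing that the tail has measure tending to $0$ simply because $\mu$ is finite and $B'_{n-2,0}=\bigsqcup_{M}B'_{n-2,0}(M)$, while $\mu(W_{n,m})\leq q'_{n-1}\mu(B_{m-1})+\mu(B'_{n-2,0}(m))\to 0$. The same issue occurs for $n=1$, where the ``fresh material'' is $X\setminus X_m$ and the correct bound is $\mu(W_{1,m+1})\leq\varepsilon_m+q'_0\mu(B_m)$, not $q'_0\mu(B_m)$ alone. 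These are precisely the bookkeeping points you yourself flagged as the main obstacle; they are easy to repair along the lines just indicated, but your argument as stated does not resolve them.
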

	
	\begin{proof}[Proof of Lemma~\ref{partition}]
		Let $n\geq 1$. The levels of $\mathcal{R}'_n$ are pairwise disjoint by the definition of $(W_{n,m})_{m\geq n}$. It remains to show that $\mathcal{R}'_n$ covers the whole space. Recall that $X_n$ denotes the subset covered by the tower $\mathcal{R}_n$, and $\varepsilon_n$ the measure of its complement.\par
		The result holds for $n=1$ since $\mu(W_{1,m})\underset{m\to +\infty}{\to}0$. Indeed $W_{1,m+1}\cap X_{m}$ is the union of the $m$-levels which are not chosen at step $(1,m)$. By the definition of $t_{1,m}$, there are at most $q'_0$. So we have $W_{1,m+1}\leq \varepsilon_m+q'_0\mu(B_m)\to 0$.\par
		For $n>1$, it suffices to show that the levels $B'_{n-1,0},\ldots,B'_{n-1,q'_{n-1}-1}$ of $\mathcal{R}'_n$ form a partition of the base $B'_{n-1}$ of $\mathcal{R}'_{n-1}$. We have to show that the measure of
		$$\tilde{W}_{n,m}\coloneq B'_{n-1}\setminus\left (\bigsqcup_{n\leq M\leq m-1}{\bigsqcup_{0\leq i\leq q'_{n-1}-1}{B'_{n-1,i}(M)}}\right )$$
		tends $0$ as $m\to +\infty$. But this set $\tilde{W}_{n,m}$ is the disjoint union of $\bigsqcup_{M\geq m+1}{B'_{n-2,0}(M)}$ and $W_{n,m}$. It is clear that
		$$\mu\left (\bigsqcup_{M\geq m+1}{B'_{n-2,0}(M)}\right )\underset{m\to +\infty}{\to}0,$$
		since $\mu$ is a finite measure. The set $W_{n,m}$ is obtained from $W_{n,m-1}$ by adding $B'_{n-2,0}(m)$ and removing $q'_{n-1}t_{n,m-1}$ $(m-1)$-levels. Thus we have $\mu\left (W_{n,m}\right )\underset{m\to +\infty}{\to}0$ by the definition of $(t_{n,m})_{m\geq n}$. Hence we have $\mu\left (\tilde{W}_{n,m}\right )\underset{m\to +\infty}{\to}0$ and we are done.
	\end{proof}
	
	As a consequence, if $(\mathcal{R}'_n)_n$ increases to the $\sigma$-algebra $\A$ (this will be proved in Corollary~\ref{odouniv}), then $S$ is a rank-one system without spacer, so this is an odometer.
	
	\begin{lemma}\label{explicit}
		Let $n\geq 1$. On the base $B'_n$ of the $n$-th $S$-Rokhlin tower $\mathcal{R}'_n$, $S$ is defined as follows. For every $0\leq i\leq h'_n-1$, we have
		$$S^i=\sn_1^{i_0}\ldots \sn_n^{i_{n-1}}\text{ on }B'_n$$
		with \mbox{$i_{0}\in\llbracket 0,q'_{0}-1\rrbracket,\ldots,i_{n-1}\in\llbracket 0,q'_{n-1}-1\rrbracket$} such that \mbox{$i=\sum\limits_{\ell=0}^{n-1}{q'_0\ldots q'_{\ell -1}i_{\ell}}=\sum\limits_{\ell=0}^{n-1}{h'_{\ell}i_{\ell}}$}.
	\end{lemma}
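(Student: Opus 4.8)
The plan is to prove this by induction on $n$, the content being purely combinatorial bookkeeping about how the odometer $S$ was assembled in Section~\ref{theconstruction}. The base case $n=1$ is immediate: by construction $S$ coincides with $\sn_1$ on its domain $D_1=\bigsqcup_{0\leq i\leq q'_0-2}B'_{0,i}$ and $B'_1=B'_{0,0}$, so iterating gives $S^i=\sn_1^{\,i}$ on $B'_1$ for every $0\leq i\leq q'_0-1=h'_1-1$, which is exactly the claimed formula with $i=h'_0i_0=i_0$ (here $h'_0$ is the empty product $1$).

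For the inductive step I would fix $0\leq i\leq h'_n-1$ and use the Euclidean (mixed-radix) decomposition $i=h'_{n-1}\,i_{n-1}+r$ with $0\leq i_{n-1}\leq q'_{n-1}-1$ and $0\leq r\leq h'_{n-1}-1$, together with the digit expansion $r=\sum_{\ell=0}^{n-2}h'_\ell i_\ell$, so that $i=\sum_{\ell=0}^{n-1}h'_\ell i_\ell$. The load-bearing sub-step is the identity $S^{h'_{n-1}}=\sn_n$ on each sublevel $B'_{n-1,j}$ with $0\leq j\leq q'_{n-1}-2$. This follows directly from the extension rule defining $S$ across the roof of $\mathcal{R}'_{n-1}$: the map $S^{h'_{n-1}-1}$ (which is defined on the base $B'_{n-1}$ by construction of $\mathcal{R}'_{n-1}$) sends $B'_{n-1,j}$ into $S^{h'_{n-1}-1}(B'_{n-1,j})\subseteq D_n$, and on $D_n$ one has $S=\sn_nS^{-(h'_{n-1}-1)}$, whence $S^{h'_{n-1}}=S\circ S^{h'_{n-1}-1}=\sn_n$ on $B'_{n-1,j}$. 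Since $\sn_n^{\,k}$ maps $B'_n=B'_{n-1,0}$ onto $B'_{n-1,k}$, iterating this over $k=0,\ldots,q'_{n-1}-1$ (each intermediate index staying $\leq q'_{n-1}-2$, except the last, which does not need another application) yields $S^{h'_{n-1}k}=\sn_n^{\,k}$ on $B'_n$ for all $0\leq k\leq q'_{n-1}-1$.

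It then remains to splice the two halves. For $x\in B'_n$ the sub-step gives $S^{h'_{n-1}i_{n-1}}(x)=\sn_n^{\,i_{n-1}}(x)\in B'_{n-1,i_{n-1}}\subseteq B'_{n-1}$; the remaining $r$ iterations of $S$ act inside the subcolumn of $\mathcal{R}'_{n-1}$ sitting over $B'_{n-1,i_{n-1}}$ (a subset of the base $B'_{n-1}$), so $S^r$ restricted to $B'_{n-1,i_{n-1}}$ equals the restriction of $S^r$ on $B'_{n-1}$, which by the inductive hypothesis is $\sn_1^{i_0}\cdots\sn_{n-1}^{i_{n-2}}$. Composing, $S^i=\sn_1^{i_0}\cdots\sn_{n-1}^{i_{n-2}}\sn_n^{i_{n-1}}$ on $B'_n$, closing the induction. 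I do not expect a real obstacle here; the only points requiring care are that $S$ is only partially defined (one must check at each stage that the relevant iterates stay off the roofs of $\mathcal{R}'_{n-1}$ and $\mathcal{R}'_n$, which the constraints $i_{n-1}\leq q'_{n-1}-1$, $r\leq h'_{n-1}-1$ guarantee) and that the digit decomposition is organized with $i_0$ as the least significant digit, matching the order $\sn_1^{i_0}\ldots\sn_n^{i_{n-1}}$ in the statement.
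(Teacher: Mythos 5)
Your proposal is correct and follows essentially the same route as the paper's proof: induction on $n$, the mixed-radix decomposition $i=i_{n-1}h'_{n-1}+r$, and the observation that the extension rule $S=\sn_n S^{-(h'_{n-1}-1)}$ on $D_n$ gives $S^{h'_{n-1}}=\sn_n$ on the sublevels $B'_{n-1,j}$, after which the induction hypothesis applies on $B'_{n-1,i_{n-1}}\subseteq B'_{n-1}$. The only difference is that you spell out the intermediate identity $S^{h'_{n-1}k}=\sn_n^{k}$ explicitly, which the paper leaves implicit.
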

	
	\begin{proof}[Proof of Lemma~\ref{explicit}]
		By induction over $n\geq 1$. It is clear for $n=1$ since $S$ coincides with $\sn_1$ on the levels of $\mathcal{R}'_1$ except its roof. Assume that the result holds for $n\geq 1$. The tower $\mathcal{R}'_n$ is divided in $q'_n$ subcolumns whose levels are exactly the ones of $\mathcal{R}'_{n+1}$, and the $i_n$-th subcolumn ($0\leq i_n\leq h'_{n}-1$) is the $S$-Rokhlin tower of height $h'_n$ and base $B'_{n,i_n}$. Let $0\leq i\leq h'_{n+1}-1$. By the equality $B'_{n+1}=B'_{n,0}$ and by the definition of $S$ from $\sn_{n+1}$ (at the end of step $n+1$ of the construction), $S^i=S^j\sn_{n+1}^{i_n}$ on $B'_{n+1}$ for non-negative integers $i_n$ and $j$ such that $i=i_nh'_n+j$ and $j<h'_n$. The set $\sn_{n+1}^{i_n}(B'_{n,0})$ is equal to $B'_{n,i_n}$, so this is a subset of $B'_{n}$, hence the result by the induction hypothesis.
	\end{proof}
	
	Therefore the subset $D_n$ defined in the construction can be written as follows:
	\begin{equation}\label{explicit2}
		\begin{array}{lcl}
			D_n&=&\displaystyle \sn_1^{q'_0-1}\ldots \sn_{n-1}^{q'_{n-2}-1}\left (\bigsqcup_{0\leq i_n\leq q'_{n-1}-2}{B'_{n-1,i_n}}\right )\\
			&=&\displaystyle \sn_1^{q'_0-1}\ldots \sn_{n-1}^{q'_{n-2}-1}\left (\bigsqcup_{0\leq i_n\leq q'_{n-1}-2}{\sn_n^{i_n}(B'_{n-1,0})}\right )
		\end{array}
	\end{equation}
	and $S$ coincides with $\sn_n\sn_{n-1}^{-(q'_{n-2}-1)}\ldots \sn_1^{-(q'_0-1)}$ on $D_n$.\newline
	
	By the \textbf{cocycle} of $\sn_n(m)$, we mean the integer-valued map defined on the domain of $\sn_n(m)$ and which maps $x$ to the unique integer $k$ satisfying $\sn_n(m)x=T^kx$.
	
	\begin{lemma}\label{cocycle}
		The cocycle of $\sn_n(m)$ is positive and bounded above by $h_{m-1}+Z_{m-1}$.
	\end{lemma}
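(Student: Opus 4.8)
The plan is to first compute the cocycle of $\sn_n(m)$ in closed form, and then read off the bound from the way $\mathcal R_m$ is built out of $\mathcal R_{m-1}$.

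\textbf{Identifying the cocycle.} By the definition of $\sn_n(m)$, each $m$-brick $T^{j^{(n,m)}_s}(B_m)$ of its domain is sent to the $m$-brick $T^{j^{(n,m)}_{s+1}}(B_m)$, where $s$ and $s+1$ are \emph{consecutive} indices in the enumeration $j^{(n,m)}_1<j^{(n,m)}_2<\cdots<j^{(n,m)}_{r_{n,m}}$ of the heights of the $m$-levels contained in $W_{n,m}$ (this is precisely how the interleaved groups are linked; it also covers the case $m=n$, where $t_{n,n}=1$). Hence the cocycle of $\sn_n(m)$ on that brick equals $j^{(n,m)}_{s+1}-j^{(n,m)}_s$, which is $\geq 1$ since the enumeration is strictly increasing; this gives positivity immediately. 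It thus remains to show that two consecutive heights of $m$-levels of $W_{n,m}$ differ by at most $h_{m-1}+Z_{m-1}$; equivalently, that, viewed as a union of levels of $\mathcal R_m$, the set $W_{n,m}$ has no gap of length exceeding $h_{m-1}+Z_{m-1}$.

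\textbf{A full level of $\mathcal R_{m-1}$ inside $W_{n,m}$.} For $m>n$, the set $W_{n,m}$ is obtained from $W_{n,m-1}$ by deleting only $q'_{n-1}t_{n,m-1}$ of its $(m-1)$-levels and adjoining a union of $m$-levels (namely $B'_{n-2,0}(m)$, resp.\ the newly added spacers when $n=1$); since $q'_{n-1}t_{n,m-1}<r_{n,m-1}$ \emph{strictly}, at least one $(m-1)$-level of $\mathcal R_{m-1}$ survives entirely inside $W_{n,m}$ --- this is exactly the role of the strict inequalities built into the construction (cf.\ Remark~\ref{capture}). For $m=n$ one instead uses that $W_{n,n}$ contains $B'_{n-2,0}(n-1)$, which is a single level of $\mathcal R_{n-1}$. (The case $n=m=1$ is trivial: $\sn_1(1)$ coincides with $T$, so its cocycle is the constant $1\le h_0+Z_0$.)

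\textbf{The block structure gives the bound.} Recall that $\mathcal R_m$ is the stacking of $q_{m-1}$ copies of $\mathcal R_{m-1}$, each of height $h_{m-1}$, separated and flanked by $q_{m-1}+1$ groups of spacers, each group of size at most $Z_{m-1}$. Consequently a fixed level $T^\ell(B_{m-1})$ of $\mathcal R_{m-1}$ appears in $\mathcal R_m$ exactly once in each of these $q_{m-1}$ copies, always at relative height $\ell$; its occurrences therefore sit at heights whose consecutive differences equal $h_{m-1}+(\text{number of intervening spacers})\le h_{m-1}+Z_{m-1}$, the lowest one being at height $\le h_{m-1}-1+Z_{m-1}$ and the highest one at height $\ge (h_m-1)-(h_{m-1}-1+Z_{m-1})$. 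Apply this to the full $(m-1)$-level furnished above: its $q_{m-1}$ occurrences are $m$-levels of $W_{n,m}$, so within the interval they span every gap of $W_{n,m}$ is $\le h_{m-1}+Z_{m-1}$; the two intervals of $\mathcal R_m$ lying below the first occurrence and above the last one each have length $<h_{m-1}+Z_{m-1}$, so any gap of $W_{n,m}$ straddling the first or the last occurrence is that short as well; and adjoining the remaining $m$-levels of $W_{n,m}$ can only refine gaps. Hence every gap of $W_{n,m}$ is $\le h_{m-1}+Z_{m-1}$, which is the claimed bound.

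\textbf{Main obstacle.} The content is essentially careful bookkeeping: the two delicate points are (i) verifying that $W_{n,m}$ genuinely contains a full level of $\mathcal R_{m-1}$ --- which forces one to track the strict inequalities $q'_{n-1}t_{n,m}<r_{n,m}$ and to treat the base case $m=n$ (and $n=1$) apart from the generic one --- and (ii) pinning down the leading and trailing spacer groups of $\mathcal R_m$ precisely enough that the two "boundary" intervals really are shorter than $h_{m-1}+Z_{m-1}$. Notably, no induction on $n$ or $m$ is required: the estimate for $\sn_n(m)$ follows directly from the shape of $W_{n,m}$ at the stage where it is used.
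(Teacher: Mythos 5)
Your proposal is correct and follows essentially the same route as the paper: identify the cocycle as the gap between consecutive $m$-levels of $W_{n,m}$, exhibit a full $(m-1)$-level $\mathbb{B}^\star$ inside $W_{n,m}$ (via the strict inequality $q'_{n-1}t_{n,m-1}<r_{n,m-1}$ for $m>n$, and $B'_{n-2,0}(n-1)$ for $m=n$), and bound every gap using the $q_{m-1}$ occurrences of that level in $\mathcal{R}_m$ together with the spacer bound $Z_{m-1}$. The paper phrases the last step as a three-case analysis (gap before the first occurrence, between two consecutive ones, after the last), which is exactly your "block structure" computation, so the two arguments coincide in substance.
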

	
	\begin{proof}[Proof of Lemma~\ref{cocycle}]
		By definition, for fixed integers $0\leq i\leq q'_{n-1}-2$ and $0\leq t\leq t_{n,m}-1$, the cocycle on $\mathbb{B}\coloneq T^{\left (j_{i+1+tq'_{n-1}}^{(n,m)}\right )}(B_m)$ takes the value $\Delta j\coloneq j_{i+2+tq'_{n-1}}^{(n,m)}-j_{i+1+tq'_{n-1}}^{(n,m)}$. Let us recall that the integers
		$$0\leq j_{1}^{(n,m)} <j_{2}^{(n,m)} <\ldots <j_{r_{n,m}}^{(n,m)} <h_m$$
		are the set of indices $j\in\llbracket 0,h_m-1\rrbracket$ such that $T^j(B_m)\subseteq W_{n,m}$. Thus $\Delta j$ is obviously positive. Let us fix an $(m-1)$-level $\mathbb{B}^{\star}$ which is not chosen at step $(n,m-1)$, so it is contained in $W_{n,m}$. If $m$ is equal to $n$, we can choose $\mathbb{B}^{\star}=B'_{n-2,0}(n-1)$. For $m>n$, the existence of $\mathbb{B}^{\star}$ is granted by the fact that we have $q'_{n-1}t_{n,m-1}<r_{n,m-1}$. We write it as $\mathbb{B}^{\star}=T^{k_0}(B_{m-1})$, where $k_0$ is an integer in $\llbracket 0, h_{m-1}-1\rrbracket$.\par
		By definition, $\Delta j$ is the least positive integer $j$ such that $T^j(\mathbb{B})$ is in $W_{n,m}$. Moreover the $m$-levels of $\mathbb{B}^{\star}$ are in $W_{n,m}$. Therefore the consecutive $m$-levels $T(\mathbb{B}),\ldots ,T^{\Delta j-1}(\mathbb{B})$ are not in $\mathbb{B}^{\star}$.
		\paragraph{First case.} In the tower $\mathcal{R}_m$, assume that the $m$-levels $T(\mathbb{B}),\ldots ,T^{\Delta j-1}(\mathbb{B})$ are before $T^{k_0}(B_{m-1,0})$, i.e.~before the first $m$-level of $\mathbb{B}^{\star}$. Therefore the enumeration $\mathbb{B},\ldots ,T^{\Delta j}(\mathbb{B})$ is included in the enumeration
		$$\Sigma_{m-1,0,1},\ldots,\Sigma_{m-1,0,\sigma_{m-1,0}},B_{m-1,0},\ldots,T^{k_0}(B_{m-1,0}),$$
		implying that $\Delta j\leq \sigma_{m-1,0}+k_0\leq Z_{m-1}+h_{m-1}$.
		\paragraph{Second case.} Now assume that $T(\mathbb{B}),\ldots ,T^{\Delta j-1}(\mathbb{B})$ are after $T^{k_0}(B_{m-1,q_{m-1}-1})$, i.e.~after the last $m$-level of $\mathbb{B}^{\star}$. Therefore the enumeration $\mathbb{B},\ldots ,T^{\Delta j}(\mathbb{B})$ is included in the enumeration
		$$T^{k_0}(B_{m-1,q_{m-1}-1}),\ldots,T^{h_{m-1}-1}(B_{m-1,q_{m-1}-1}),\Sigma_{m-1,q_{m-1},1},\ldots,\Sigma_{m-1,q_{m-1},\sigma_{m-1,q_{m-1}}},$$
		and we get $\Delta j\leq (h_{m-1}-k_0-1)+\sigma_{m-1,q_{m-1}}\leq h_{m-1}+Z_{m-1}$.
		\paragraph{Third case.} Finally if $T(\mathbb{B}),\ldots ,T^{\Delta j-1}(\mathbb{B})$ are between $T^{k_0}(B_{m-1,i})$ and $T^{k_0}(B_{m-1,i+1})$ for some $0\leq i\leq q_{m-1}-2$, i.e.~between two consecutive $m$-levels of $\mathbb{B}^{\star}$, then the enumeration $\mathbb{B},\ldots ,T^{\Delta j}(\mathbb{B})$ is included in the enumeration
		$$T^{k_0}(B_{m-1,i}),\ldots,T^{h_{m-1}-1}(B_{m-1,i}),\Sigma_{m-1,i,1},\ldots,\Sigma_{m-1,i,\sigma_{m-1,i}},B_{m-1,i+1},\ldots,T^{k_0}(B_{m-1,i+1}),$$
		this gives $\Delta j\leq (h_{m-1}-k_0-1)+\sigma_{m-1,i}+(k_0+1)\leq h_{m-1}+Z_{m-1}$.
	\end{proof}
	
	\begin{lemma}\label{relbrick}
		An $m$-brick at step $n$ is included in an $M$-brick at step $n-1$ for some \mbox{$n-1\leq M\leq m$}.
	\end{lemma}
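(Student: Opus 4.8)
The plan is to unwind the definition of an $m$-brick at step $n$ and trace it through the sets $W_{n,m}$. Recall that the $m$-bricks at step $n$ are exactly the $m$-levels $T^j(B_m)$ selected, among those contained in $W_{n,m}$, to form the pieces $B'_{n-1,i}(m)$ for $0\leq i\leq q'_{n-1}-1$. So the first step is simply to observe that any such brick satisfies $T^j(B_m)\subseteq W_{n,m}$, and that $m\geq n$.

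Next I would read off from \eqref{Wnn} and \eqref{Wnm} that in both cases $W_{n,m}\subseteq\bigsqcup_{n-1\leq M\leq m}{B'_{n-2,0}(M)}$: for $m=n$ this is the definition, and for $m>n$ one only removes a subset, so the inclusion still holds. Since this is a disjoint union and, as $\mathcal{R}_M$ refines $\mathcal{R}_m$ for every $M\leq m$, each $m$-level is either contained in or disjoint from each $B'_{n-2,0}(M)$ (which is a union of $M$-levels), the brick $T^j(B_m)$ lies in exactly one $B'_{n-2,0}(M)$ with $n-1\leq M\leq m$.

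Finally I would recall that, by the construction carried out at step $n-1$, the set $B'_{n-2,0}(M)$ is the disjoint union of its $M$-bricks at step $n-1$ (these being $M$-levels). Applying nestedness once more, the $m$-level $T^j(B_m)$ sits inside a unique $M$-level, and since it lies in the union $B'_{n-2,0}(M)$ of the $M$-bricks at step $n-1$, that $M$-level must be one of them; hence $T^j(B_m)$ is contained in an $M$-brick at step $n-1$ with $n-1\leq M\leq m$, which is the claim. For $n=1$ the statement is trivial once one regards the trivial tower $\mathcal{R}'_0=(X)$ as carrying the single brick $B'_0=X$ at ``step $0$'', since any $m$-level is contained in $X$.

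Every logical step here is a direct appeal to the definitions of the sets $W_{n,m}$ and to the nestedness of the Rokhlin towers $(\mathcal{R}_k)$, so there is no real obstacle; the only thing requiring a little care is the bookkeeping of the two cases $m=n$ and $m>n$ in the definition of $W_{n,m}$, making sure that the range of $M$ comes out as $\llbracket n-1,m\rrbracket$ in each case.
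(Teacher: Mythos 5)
Your proof is correct and follows the same route as the paper, which simply observes that by the definition of $W_{n,m}$ (built from the sets $B'_{n-2,0}(M)$, $n-1\leq M\leq m$, i.e.\ from the $M$-bricks at step $n-1$) together with the nestedness of the towers, every $m$-level chosen at step $(n,m)$ sits inside an $M$-brick of the previous step. You merely spell out the bookkeeping that the paper leaves implicit, so there is nothing to change.
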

	
	\begin{proof}[Proof of Lemma~\ref{relbrick}]
		This follows directly from the definition of $W_{n,m}$ in the construction (see Section~\ref{theconstruction}). Indeed the "$(M)$" in "$B'_{n-2,0}(M)$" means that we only consider the $M$-bricks, at step $n-1$, composing $B'_{n-2,0}$.
	\end{proof}
	
	We now present a combinatorial description of the construction.
	
	\begin{lemma}\label{relrec}
		The quantities $r_{n,m}, q_n, q'_n, t_{n,m}, \sigma_{n}$ satisfy the following recurrence relation:
		$$\begin{array}{rl}
			
			&\ \ \ \ \displaystyle t_{0,1}\coloneq 0;\\
			
			&\\
			
			\displaystyle \text{for }m\geq 2,&\ \ \ \ \displaystyle t_{0,m}\coloneq \sigma_{m-1};\\
			
			&\\
			
			\displaystyle \text{for }m=n\geq 1,&\left\{
			\begin{array}{l}
				\displaystyle r_{n,n}=q_{n-1}+t_{n-1,n},\\
				\displaystyle q'_{n-1}=\left \lfloor \frac{r_{n,n}-1}{p_{n-1}}\right \rfloor p_{n-1},\\
				\displaystyle t_{n,n}=1;
			\end{array}
			\right.\\
			
			&\\
			
			\displaystyle \text{for }m>n\geq 1,&\left\{
			\begin{array}{l}
				\displaystyle r_{n,m}=q_{m-1}(r_{n,m-1}-q'_{n-1}t_{n,m-1})+t_{n-1,m},\\
				\displaystyle t_{n,m}=\left \lfloor \frac{r_{n,m}-1}{q'_{n-1}}\right \rfloor.
			\end{array}\right.
			
		\end{array}$$
	\end{lemma}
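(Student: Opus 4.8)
The plan is to unwind the definitions in Section~\ref{theconstruction} step by step, tracking how the $m$-levels of $T$ get consumed as $m$-bricks. The key bookkeeping identity is: at any step $(n,m)$, the number $r_{n,m}$ of available $m$-levels inside $W_{n,m}$ is determined by how many $(m-1)$-levels fed into $W_{n,m}$ (each of which splits into $q_{m-1}$ new $m$-levels, plus the $\sigma_{m-1}$ fresh spacers of $\mathcal{R}_m$), minus those already claimed. I would first record the two ``trivial'' conventions: $t_{0,1}\coloneq 0$ and, for $m\geq 2$, $t_{0,m}\coloneq\sigma_{m-1}$. These are not genuine ``steps'' of the construction but encode the fact that passing from $\mathcal{R}_{m-1}$ to $\mathcal{R}_m$ adds exactly $\sigma_{m-1}$ brand-new spacer $m$-levels; with this convention the term $t_{n-1,m}$ in the recurrences below uniformly counts ``the $m$-levels newly entering $W_{n,m}$ that were not present at level $n-1,m-1$'', whether they come from the spacers ($n=1$) or from the leftover bricks of $B'_{n-2,0}$ ($n\geq 2$).

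Next I would treat the diagonal case $m=n$. By \eqref{Wnn}, $W_{n,n}=B'_{n-2,0}(n-1)\sqcup B'_{n-2,0}(n)$. The set $B'_{n-2,0}(n-1)$ is a single $(n-1)$-level (an $(n-1)$-brick at step $n-1$, with $t_{n-1,n-1}=1$), which when cut in $\mathcal{R}_n$ yields $q_{n-1}$ of the $h_n$-many $n$-levels; the set $B'_{n-2,0}(n)$ consists of $t_{n-1,n}$ distinct $n$-levels (the $n$-bricks assigned to $B'_{n-2,0}$ at step $n-1$). Since these are disjoint collections of $n$-levels, $r_{n,n}=q_{n-1}+t_{n-1,n}$. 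The definition of $q'_{n-1}$ as the largest multiple of $p_{n-1}$ strictly below $r_{n,n}$ is exactly $\lfloor (r_{n,n}-1)/p_{n-1}\rfloor p_{n-1}$, and $t_{n,n}=1$ is set by fiat in the construction (the ``$m=n$'' bullet, with groups of a single $n$-level).

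For the off-diagonal case $m>n$, I would compute $r_{n,m}$ from the recursive structure of $W_{n,m}$ in \eqref{Wnm}. Comparing $W_{n,m}$ with $W_{n,m-1}$: we pass from $(m-1)$-levels to $m$-levels, each $(m-1)$-level in $W_{n,m-1}$ splitting into $q_{m-1}$ $m$-levels, but we have removed the $q'_{n-1}t_{n,m-1}$ $(m-1)$-levels already used as bricks at step $(n,m-1)$ — leaving $r_{n,m-1}-q'_{n-1}t_{n,m-1}$ of them, hence $q_{m-1}(r_{n,m-1}-q'_{n-1}t_{n,m-1})$ $m$-levels from this source — and we have adjoined $B'_{n-2,0}(m)$, which contributes $t_{n-1,m}$ further $m$-levels (using the convention from the first paragraph when $n=1$, where $B'_{n-2,0}(m)$ should be read as ``the new spacers of $\mathcal{R}_m$'', i.e.\ $t_{0,m}=\sigma_{m-1}$). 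This gives $r_{n,m}=q_{m-1}(r_{n,m-1}-q'_{n-1}t_{n,m-1})+t_{n-1,m}$. Finally $t_{n,m}$ is defined so that $q'_{n-1}t_{n,m}$ is the largest multiple of $q'_{n-1}$ strictly below $r_{n,m}$, i.e.\ $t_{n,m}=\lfloor (r_{n,m}-1)/q'_{n-1}\rfloor$.

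The only subtle point — the main obstacle — is making the case $n=1$ fit the same formulas as $n\geq 2$, since at $n=1$ there is no ``$B'_{n-2,0}$'': the relevant object is $W_{1,m}=X\setminus\bigsqcup_{M<m}\bigsqcup_i B'_{0,i}(M)$, whose $m$-levels come from the $m$-levels of $X_m$ not yet used, and the newly available ones at step $m$ are precisely the $\sigma_{m-1}$ spacer levels plus the images of the unused $(m-1)$-levels. Verifying that $r_{1,m}$ satisfies $r_{1,m}=q_{m-1}(r_{1,m-1}-q'_0 t_{1,m-1})+\sigma_{m-1}$ — which is the displayed recurrence once we substitute $t_{0,m}=\sigma_{m-1}$ — is a short but careful count of levels in the Rokhlin tower $\mathcal{R}_m$, using Remark~7.2 (that $m$-levels are either inside or outside $W_{1,m}$). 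I expect everything else to be an immediate transcription of the construction, so I would present the proof as: (i) state the conventions and their meaning; (ii) do $m=n$; (iii) do $m>n$ for $n\geq 2$; (iv) check $n=1$ separately and observe it is the same recurrence under the stated conventions.
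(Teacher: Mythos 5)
Your proposal is correct and follows essentially the same route as the paper's proof: a direct unwinding of the construction, counting the $m$-levels of $W_{n,m}$ (unchosen $(m-1)$-levels each splitting into $q_{m-1}$ $m$-levels, plus either the new spacers when $n=1$ or the $t_{n-1,m}$ bricks of $B'_{n-2,0}(m)$ when $n\geq 2$), with the diagonal case handled via $t_{n-1,n-1}=1$. The only difference is organizational (the paper treats $n=1$ first and then $n>1$, while you fold them together via the convention $t_{0,m}=\sigma_{m-1}$ and check $n=1$ at the end), which does not change the argument.
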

	
	During the construction, some integers have been defined for consistency ($r_{1,1}\coloneq q_0$, $t_{n,n}\coloneq 1$). Note that in this lemma, we also define the integers $t_{n,m}$ for $n=0$. This enables us to extend the relations
	$$r_{n,n}=q_{n-1}+t_{n-1,n}\text{ and }r_{n,m}=q_{m-1}(r_{n,m-1}-q'_{n-1}t_{n,m-1})+t_{n-1,m}$$
	for $n=1$.
	
	\begin{proof}[Proof of Lemma~\ref{relrec}]
		Case $n=1$. For $m=1$, the $r_{1,1}$ $1$-levels potentially chosen to be $1$-bricks are exactly the levels of $\mathcal{R}_1$, so we have $r_{1,1}=q_0+t_{0,1}$ since $t_{0,1}\coloneq 0$. We choose $q'_0$ of them, where $q'_0$ is the largest multiple of $p_0$ such that $q'_0< r_{1,1}$, so $q'_0$ is equal to $\lfloor (r_{1,1}-1)/p_0\rfloor p_0$. Finally $q'_0$ is obviously equal to $q'_0t_{1,1}$ since $t_{1,1}\coloneq 1$. For $m>1$, there are $r_{n,m}$ $m$-levels in $W_{1,m}$: some of them are in the $r_{1,m-1}-q'_{0}t_{1,m-1}$ $(m-1)$-levels which are not chosen at step $(1,m-1)$ and the other are the spacers from $\mathcal{R}_{m-1}$ to $\mathcal{R}_{m}$. So we have
		$$r_{1,m}=q_{m-1}(r_{1,m-1}-q'_{0}t_{1,m-1})+\sigma_{m-1}$$
		and we set $t_{0,m}\coloneq \sigma_{m-1}$. We choose $q'_0t_{1,m}$ of them as $m$-bricks, where $q'_0t_{1,m}$ is the largest multiple of $q'_0$ such that $q'_0t_{1,m}<r_{1,m}$, i.e.~$t_{1,m}\coloneq \lfloor (r_{1,m}-1)/q'_0\rfloor$.\par
		Case $n>1$. For $m=n$, there are $r_{n,n}$ $n$-levels in $W_{n,n}=B'_{n-2,0}(n-1)\sqcup B'_{n-2,0}(n)$. First, since we have $t_{n-1,n-1}=1$, the set $B'_{n-2,0}(n-1)$ is an $(n-1)$-brick at step $n-1$ and it contains $q_{n-1}$ $n$-levels. Secondly $B'_{n-2,0}(n)$ is the union of $t_{n-1,n}$ $n$-bricks. Hence we have $r_{n,n}=q_{n-1}+t_{n-1,n}$. By definition, $q'_{n-1}$ is equal to $\lfloor (r_{n,n}-1)/p_{n-1}\rfloor p_{n-1}$ and obviously to $q'_{n-1}t_{n,n}$ with $t_{n,n}\coloneq 1$. For $m>n$, there are $r_{n,m}$ $m$-levels in $W_{n,m}$. This set is composed of
		$$\left (\bigsqcup_{n-1\leq M\leq m-1}{B'_{n-2,0}(M)}\right )\setminus\left (\bigsqcup_{n\leq M\leq m-1}{\bigsqcup_{0\leq i\leq q'_{n-1}-1}{B'_{n-1,i}(M)}}\right )$$
		and
		$$B'_{n-2,0}(m).$$
		The first one is the union of the $r_{n,m-1}-q'_{n-1}t_{n,m-1}$ $(m-1)$-levels which are not chosen at step $(n,m-1)$, and the second one is built at step $(n-1,m)$ from its $t_{n-1,m}$ $m$-bricks. So we have
		$$r_{n,m}=q_{m-1}(r_{n,m-1}-q'_{n-1}t_{n,m-1})+t_{n-1,m}.$$
		We choose $q'_{n-1}t_{n,m}$ of these $m$-levels as $m$-bricks at this step, where $q'_{n-1}t_{n,m}$ is the largest multiple of $q'_{n-1}$ such that $q'_{n-1}t_{n,m}< r_{n,m}$, i.e.~$t_{n,m}\coloneq \lfloor (r_{n,m}-1)/q'_{n-1}\rfloor$.
	\end{proof}
	
	It will be more convenient to use the following slight modification of Lemma~\ref{relrec}:
	\begin{equation}\label{relrec2}
		\begin{array}{rl}
			
			&\ \ \ \ \displaystyle t_{0,1}=0;\\
			
			&\\
			
			\displaystyle \text{for }m\geq 2,&\ \ \ \ \displaystyle t_{0,m}=\sigma_{m-1};\\
			
			&\\
			
			\displaystyle \text{for }m=n\geq 1,&\left\{
			\begin{array}{l}
				\displaystyle r_{n,n}=q_{n-1}+t_{n-1,n},\\
				\displaystyle q'_{n-1}\leq r_{n,n}-1,\\
				\displaystyle t_{n,n}=1;
			\end{array}
			\right.\\
			
			&\\
			
			\displaystyle \text{for }m>n\geq 1,&\left\{
			\begin{array}{l}
				\displaystyle r_{n,m}\leq q_{m-1}q'_{n-1}+t_{n-1,m},\\
				\displaystyle t_{n,m}\leq\frac{r_{n,m}-1}{q'_{n-1}}.
			\end{array}\right.
			
		\end{array}
	\end{equation}
	
	This is a consequence of the inequalities $\lfloor x\rfloor\leq x$ and $r_{n,m-1}-q'_{n-1}t_{n,m-1}\leq q'_{n-1}$ (by the definition of $t_{n,m-1}$).\newline
	
	As the strategy will be to recursively choose large enough cutting parameters $q_n$ for $T$, we would like to understand the asymptotic behaviour of $q'_n$ as $q_n$ increases. Then the goal is to find bounds for $q'_n/q_n$.
	
	\begin{lemma}\label{asympt}
		For every $n\geq 0$, we have
		$$q'_{n}\geq q_{n}-(1+p_{n}).$$
	\end{lemma}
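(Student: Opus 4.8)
The plan is to read off $q'_n$ directly from the combinatorial recurrence of Lemma~\ref{relrec}. Applying that recurrence with index $n+1$ in place of $n$ gives $r_{n+1,n+1}=q_n+t_{n,n+1}$ and $q'_n=\bigl\lfloor\frac{r_{n+1,n+1}-1}{p_n}\bigr\rfloor p_n$. Since $t_{n,n+1}$ is a non-negative integer (it vanishes when $n=0$ and is positive when $n\geq 1$), we have $r_{n+1,n+1}-1\geq q_n-1$, so everything reduces to bounding $\lfloor k/p_n\rfloor\,p_n$ from below when $k\geq q_n-1$.

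The one elementary step I would isolate is the inequality $\lfloor k/p\rfloor\,p\geq k-(p-1)$, valid for all integers $k\geq 0$ and $p\geq 1$, which holds because $k-\lfloor k/p\rfloor p$ is the remainder of the Euclidean division of $k$ by $p$ and is therefore at most $p-1$. Plugging in $k=r_{n+1,n+1}-1$ and $p=p_n$ gives $q'_n\geq(r_{n+1,n+1}-1)-(p_n-1)=r_{n+1,n+1}-p_n\geq q_n-p_n$, and a fortiori $q'_n\geq q_n-(1+p_n)$, which is the assertion. In fact this argument yields the slightly sharper bound $q'_n\geq q_n-p_n$; the extra unit of slack in the statement is harmless and may just be kept for convenience in later estimates.

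I do not expect any genuine obstacle here; the only points needing care are bookkeeping. First, one should check that Lemma~\ref{relrec} is being applied correctly at the boundary $n=0$: there $r_{1,1}=q_0$ because $t_{0,1}=0$, which matches the construction's prescription that $q'_0$ be the largest multiple of $p_0$ with $q'_0\leq q_0-1$. Second, the very meaning of the quantity $q'_n$ presupposes that the relevant largest multiples are positive, which is precisely the standing assumption on the cutting parameters of $T$ that is formalized in Lemma~\ref{welldefined}. Once these are granted, the chain of inequalities above is the whole proof.
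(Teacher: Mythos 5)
Your proposal is correct and follows essentially the same route as the paper: both read $q'_n=\lfloor (r_{n+1,n+1}-1)/p_n\rfloor p_n$ and $r_{n+1,n+1}=q_n+t_{n,n+1}$ off Lemma~\ref{relrec}, use $t_{n,n+1}\geq 0$, and bound the floor from below (the paper via $\lfloor x\rfloor\geq x-1$, you via the remainder, which gives the marginally sharper $q'_n\geq q_n-p_n$). No issues.
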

	
	\begin{proof}[Proof of Lemma~\ref{asympt}]
		Using the equalities $q'_{n}=\left \lfloor \frac{r_{n+1,n+1}-1}{p_{n}}\right \rfloor p_{n}$ and $r_{n+1,n+1}=q_{n}+t_{n,n+1}$ in Lemma~\ref{relrec}, where the integer $t_{n,n+1}$ is non-negative, we get
		$$q'_{n}\geq\left (\frac{r_{n+1,n+1}-1}{p_{n}}-1\right )p_{n}\geq q_{n}-1-p_{n}$$
		and we are done.
	\end{proof}
	
	We have found a lower bound for $q'_n/q_n$ (up to some additional term $-(1+p_n)$). Let us find an upper bound. 
	
	\begin{lemma}\label{preq'_n/q_nleq}
		For every $n\geq 1$, we have
		$$q'_{n}\leq 3q_{n}+\frac{\sigma_{n}}{q'_{0}\ldots q'_{n-1}}.$$
	\end{lemma}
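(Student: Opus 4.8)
The plan is to express $q'_n$ in terms of the combinatorial quantities of Lemma~\ref{relrec} and thereby reduce everything to an estimate on the single auxiliary integer $t_{n,n+1}$. Indeed, by Lemma~\ref{relrec} the integer $q'_n$ is a multiple of $p_n$ that is strictly smaller than $r_{n+1,n+1}$, and $r_{n+1,n+1}=q_n+t_{n,n+1}$ (the case $m=n\geq 1$ of the recurrence, with $n$ replaced by $n+1$). Hence $q'_n<q_n+t_{n,n+1}$, and it suffices to prove the bound $t_{n,n+1}\leq 2q_n+\sigma_n/(q'_0\cdots q'_{n-1})$.

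To estimate $t_{n,n+1}$ I would iterate the recurrence relations in the form~\eqref{relrec2}. Fix $n$ and let the first index $k$ run over $1,\ldots,n$; since $n+1>k\geq 1$ we are in the case $m>n\geq 1$ of~\eqref{relrec2} with $m=n+1$, which yields $r_{k,n+1}\leq q_nq'_{k-1}+t_{k-1,n+1}$ and $t_{k,n+1}\leq (r_{k,n+1}-1)/q'_{k-1}$; the key point is that the cutting parameter appearing here is always $q_{(n+1)-1}=q_n$, independently of $k$. Combining the two inequalities gives the one-step bound $t_{k,n+1}\leq q_n+t_{k-1,n+1}/q'_{k-1}$. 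Iterating from $k=n$ down to $k=1$ and using the initialization $t_{0,n+1}=\sigma_n$ produces
$$t_{n,n+1}\leq q_n\sum_{j=0}^{n-1}\frac{1}{q'_{n-1}q'_{n-2}\cdots q'_{n-j}}+\frac{\sigma_n}{q'_0q'_1\cdots q'_{n-1}},$$
with the convention that the $j=0$ summand equals $q_n$ (empty product).

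Finally I would bound the geometric-type sum: each $q'_i$ is a positive multiple of the prime $p_i$, so $q'_i\geq 2$, whence $q'_{n-1}\cdots q'_{n-j}\geq 2^j$ and $\sum_{j=0}^{n-1}(q'_{n-1}\cdots q'_{n-j})^{-1}\leq\sum_{j\geq 0}2^{-j}=2$. This gives $t_{n,n+1}\leq 2q_n+\sigma_n/(q'_0\cdots q'_{n-1})$, and plugging into $q'_n<q_n+t_{n,n+1}$ yields the stated inequality (in fact with a strict sign).

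The arithmetic of the iteration is routine; the only thing that requires a little care is the index bookkeeping—in particular recognizing that the relevant cutting parameter at every step of the iteration is always $q_n$, which is exactly what makes the resulting sum genuinely geometric in the $q'_i$ with common coefficient $q_n$, rather than an unwieldy product of many distinct cutting parameters, and checking that the iteration terminates precisely at $t_{0,n+1}=\sigma_n$.
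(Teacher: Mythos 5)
Your proposal is correct and follows essentially the same route as the paper: both use $q'_n<r_{n+1,n+1}=q_n+t_{n,n+1}$ and then iterate the relations~\eqref{relrec2} (with $m=n+1$, descending in the first index) down to $t_{0,n+1}=\sigma_n$, finally bounding the resulting geometric-type sum via $q'_\ell\geq 2$. The paper merely packages the same iteration as an induction carrying $q'_n$ along, so the two arguments coincide up to bookkeeping.
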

	
	With an asymptotic control on $\sigma_n$, using flexible classes, we will be able to get $q'_n\leq 4q_n$ (see Lemma~\ref{boundusingcondfc}).
	
	\begin{proof}[Proof of Lemma~\ref{preq'_n/q_nleq}]
		By induction over $i\in\llbracket 0,n-1\rrbracket$ (with $n\geq 1$) and using \eqref{relrec2}, we show that
		$$q'_{n}\leq q_{n}\left (2+\sum_{j=1}^{i}{\prod_{k=1}^{j}{\frac{1}{q'_{n-k}}}}\right )+t_{n-1-i,n+1}\prod_{k=1}^{i+1}{\frac{1}{q'_{n-k}}}.$$
		
		For $i=0$, we have $q'_{n}< r_{n+1,n+1}= q_{n}+t_{n,n+1}$ and
		$$t_{n,n+1}\leq \frac{r_{n,n+1}-1}{q'_{n-1}}\leq\frac{1}{q'_{n-1}}\left (q_{n}q'_{n-1}+t_{n-1,n+1}\right )=q_{n}+\frac{t_{n-1,n+1}}{q'_{n-1}},$$
		so we get $q'_{n}\leq 2q_{n}+\frac{t_{n-1,n+1}}{q'_{n-1}}$. For $0\leq i\leq n-2$, we have
		$$t_{n-1-i,n+1}\leq\frac{r_{n-1-i,n+1}-1}{q'_{n-2-i}}\leq\frac{1}{q'_{n-2-i}}\left (q_{n}q'_{n-2-i}+t_{n-2-i,n+1}\right )=q_{n}+\frac{t_{n-2-i,n+1}}{q'_{n-2-i}}.$$
		
		If the result holds true for $i$, we get
		$$\begin{array}{lcl}
			\displaystyle q'_{n}&\leq &\displaystyle q_{n}\left (2+\sum_{j=1}^{i}{\prod_{k=1}^{j}{\frac{1}{q'_{n-k}}}}\right )+\left (q_{n}+\frac{t_{n-2-i,n+1}}{q'_{n-2-i}}\right )\prod_{k=1}^{i+1}{\frac{1}{q'_{n-k}}}\\
			&=&\displaystyle q_{n}\left (2+\sum_{j=1}^{i+1}{\prod_{k=1}^{j}{\frac{1}{q'_{n-k}}}}\right )+t_{n-1-(i+1),n}\prod_{k=1}^{i+2}{\frac{1}{q'_{n-k}}},
		\end{array}$$
		so the result is also true for $i+1$.
		
		Taking $i=n-1$, this gives the lemma since $q'_{\ell}\geq 2$ for every integer $\ell\geq 1$.
	\end{proof}
	
	\subsection{Towards flexible classes}\label{towardsfc}
	
	We now explain why flexible classes fit in this construction.\par
	First a condition for the construction to be well-defined needs an inductive choice of the cutting parameters $(q_n)_{n\geq 0}$ of $T$ (see Lemma~\ref{dependence}). Secondly, a control on the spacing parameters will imply useful asymptotic controls for the quantification of the cocycles (see Lemma~\ref{boundusingcondfc}). Note that, in the proof of Theorem~\ref{thfc} (see Section~\ref{proofth}), we will need other estimates to quantify the cocycles. It will be possible, again using the definition of a flexible class, to inductively build large enough cutting parameters in order to have these estimates.\par
	If the parameters are chosen according to a set $\mathcal{F}_{\mathcal{C}}\subseteq\mathcal{P}^*$ associated to a flexible class $\mathcal{C}$, the underlying rank-one system has the desired property, i.e.~it is in $\mathcal{C}$, and is orbit equivalent to the universal odometer, with some quantification guaranteed by the control of the spacing parameters and by the fact that the cutting parameters $q_n$ have been recursively chosen and large enough.
	
	\begin{lemma}\label{dependence}
		Let $T$ be a rank-one system with cutting and spacing parameters
		$$(q_n,(\sigma_{n,0},\ldots,\sigma_{n,q_n}))_{n\geq 0}$$
		such that the construction in Section~\ref{theconstruction} is well-defined. Then, for every $n\in\N$, $q'_n$ only depends on $(q_k,(\sigma_{k,0},\ldots,\sigma_{k,q_k}))_{0\leq k\leq n}$.
	\end{lemma}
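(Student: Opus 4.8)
The plan is to prove the statement by induction on $n$, tracking carefully how each quantity introduced in the construction of Section~\ref{theconstruction} depends only on the first cutting and spacing parameters of $T$. The key observation is that the combinatorial recurrence relations in Lemma~\ref{relrec} (or rather their explicit forms) express every quantity $q'_k$, $t_{n,m}$, $r_{n,m}$ in terms of the $q_j$'s, the $\sigma_j$'s (hence the $\sigma_{j,i}$'s), and previously computed such quantities, together with the fixed sequence of primes $(p_k)$; the only subtle point is to see that computing $q'_n$ never requires knowing $q_j$ or $\sigma_{j,i}$ for $j > n$.

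First I would unwind precisely what data determines $q'_n$. By Lemma~\ref{relrec}, $q'_{n} = \lfloor (r_{n+1,n+1}-1)/p_n\rfloor p_n$ and $r_{n+1,n+1} = q_n + t_{n,n+1}$, so it suffices to show that $t_{n,n+1}$ depends only on $(q_k,(\sigma_{k,0},\ldots,\sigma_{k,q_k}))_{0\le k\le n}$. More generally, I would prove by a suitable (double) induction the statement: for all $n\ge 0$ and all $m\ge n+1$, the integer $t_{n,m}$ (and $r_{n,m}$) depends only on $(q_k,(\sigma_{k,0},\ldots,\sigma_{k,q_k}))_{0\le k\le m-1}$, and $q'_{n}$ depends only on $(q_k,(\sigma_{k,0},\ldots,\sigma_{k,q_k}))_{0\le k\le n}$. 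The base cases are $t_{0,1}=0$ and, for $m\ge 2$, $t_{0,m}=\sigma_{m-1}=\sum_{i=0}^{q_{m-1}}\sigma_{m-1,i}$, which manifestly depends only on the $(m-1)$-st cutting and spacing parameter, hence on the first $m$ of them. For the inductive step, fix $n\ge 1$ and suppose the claim holds for all smaller values of the first index (and, for the fixed $n$, for all smaller values of $m$). The relation $r_{n,n} = q_{n-1} + t_{n-1,n}$ shows $r_{n,n}$ depends only on the first $n$ parameters (using the inductive hypothesis on $t_{n-1,n}$, which needs parameters up to index $n-1$); then $q'_{n-1} = \lfloor (r_{n,n}-1)/p_{n-1}\rfloor p_{n-1}$ depends only on the first $n$ parameters and on $p_{n-1}$, which is fixed once and for all. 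Next, $t_{n,n}=1$, and for $m>n$, the relation $r_{n,m} = q_{m-1}(r_{n,m-1}-q'_{n-1}t_{n,m-1}) + t_{n-1,m}$ expresses $r_{n,m}$ in terms of $q_{m-1}$, the already-treated $r_{n,m-1}$, $t_{n,m-1}$ (parameters up to index $m-2$, resp.~$m-1$ — here one uses the inner induction on $m$), $q'_{n-1}$ (parameters up to index $n\le m-1$), and $t_{n-1,m}$ (parameters up to index $m-1$ by the outer inductive hypothesis); hence $r_{n,m}$, and then $t_{n,m} = \lfloor (r_{n,m}-1)/q'_{n-1}\rfloor$, depend only on $(q_k,(\sigma_{k,0},\ldots,\sigma_{k,q_k}))_{0\le k\le m-1}$. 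In particular $t_{n,n+1}$ depends only on the first $n+1$ parameters, whence $r_{n+1,n+1}=q_n+t_{n,n+1}$ and $q'_n=\lfloor (r_{n+1,n+1}-1)/p_n\rfloor p_n$ depend only on $(q_k,(\sigma_{k,0},\ldots,\sigma_{k,q_k}))_{0\le k\le n}$, completing the induction.

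The main thing to be careful about — and really the only obstacle — is the bookkeeping of indices in the coupled recurrences: one must check that the formula for $r_{n,m}$ never reaches beyond index $m-1$ (it only ever invokes $q_{m-1}$ and $\sigma_{m-1}$ as genuinely new data, all other inputs having smaller $m$ or being $q'$'s/$t$'s already shown to depend on earlier parameters), and that $q'_{n-1}$ is available at the time it is used because it only needs parameters up to index $n-1\le m-1$. Once the induction is organized as above (outer induction on the first index, inner induction on $m$, with the $q'$-step slotted in after $r_{n,n}$), everything follows mechanically from Lemma~\ref{relrec}. One should also note explicitly that the sequence $(p_n)$ of primes is fixed beforehand and is not part of the data of $T$, so invoking it introduces no additional dependence; this is what makes the statement meaningful for the strategy described in Section~\ref{towardsfc}, where one wants to choose the $q_n$ inductively while the $q'_n$ (hence the universal odometer $S$) get determined along the way.
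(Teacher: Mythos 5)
Your proof is correct and follows exactly the paper's route: the paper's own proof of Lemma~\ref{dependence} is simply the observation that it ``directly follows from Lemma~\ref{relrec}'', and your double induction is precisely the bookkeeping implicit in that remark. The index-tracking you carry out (that $r_{n,m}$ and $t_{n,m}$ only require parameters up to index $m-1$, and that $(p_n)$ is fixed data independent of $T$) is accurate and fills in what the paper leaves to the reader.
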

	
	\begin{proof}[Proof of Lemma~\ref{dependence}]
		This directly follows from Lemma~\ref{relrec}.
	\end{proof}
	
	Then the main novelty in this paper is to build the rank-one system $T$ while we are building the universal odometer $S$. Once $(q'_0,\ldots,q'_n)$ has been built from $(q_k,(\sigma_{k,0},\ldots,\sigma_{k,q_k}))_{0\leq k\leq n}$, we are free to choose $(q_{n+1},(\sigma_{n+1,0},\ldots,\sigma_{n+1,q_{n+1}}))$ for the definition of $q'_{n+1}$. The recursive definition of the cutting parameters is one of the main ideas behind the definition of a flexible class, and it allows to find cutting parameters satisfying some assumptions, for example the assumptions of the following lemma.
	
	\begin{lemma}\label{welldefined}
		Assume that for every $n\in\N$,
		\begin{equation}\label{crit1}
			q_n>\max{(p_n,q'_0,\ldots,q'_{n-1})}.
		\end{equation}
		Then the construction is well-defined, i.e.~all the "largest multiples" are non-zero (that is, the largest multiple $q'_{n-1}$ of $p_{n-1}$ such that $q'_{n-1}<r_{n,n}$, and the largest multiple $q'_{n-1}t_{n,m}$ of $q'_{n-1}$ such that $q'_{n-1}t_{n,m}<r_{n,m}$).
	\end{lemma}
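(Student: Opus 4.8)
The plan is to establish well-definedness by a double induction: an outer induction on the step number $n\geq 1$ and, within each outer step, an inner induction on the auxiliary index $m>n$, all driven by the recurrence relations of Lemma~\ref{relrec}. Rather than merely tracking that the relevant ``largest multiples'' are nonzero, I would carry along the slightly stronger bounds $q'_{n-1}\geq p_{n-1}$ and $t_{n,m}\geq 1$ (for $m>n$), since these are what make the induction close. The only input besides hypothesis~\eqref{crit1} is the elementary remark that, once $t_{n,m-1}$ has been defined — so $q'_{n-1}t_{n,m-1}$ is the largest multiple of $q'_{n-1}$ lying \emph{strictly} below $r_{n,m-1}$ — the remainder satisfies $r_{n,m-1}-q'_{n-1}t_{n,m-1}\geq 1$; the analogous statement at the first inner step $m=n+1$ is $r_{n,n}-q'_{n-1}\geq 1$, which holds because $q'_{n-1}<r_{n,n}$ by definition and $t_{n,n}=1$.

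For the outer step at a fixed $n$, assuming steps $1,\dots,n-1$ are already dealt with so that $q'_0,\dots,q'_{n-2}$ are available, I would use $r_{n,n}=q_{n-1}+t_{n-1,n}$ and $t_{n-1,n}\geq 0$ to get $r_{n,n}\geq q_{n-1}$, and then~\eqref{crit1} gives $q_{n-1}>p_{n-1}$, hence $r_{n,n}\geq p_{n-1}+1$; consequently the largest multiple of $p_{n-1}$ strictly below $r_{n,n}$, i.e.\ $q'_{n-1}$, is at least $p_{n-1}$, in particular positive. (The base case $n=1$ is the same computation, using $t_{0,1}=0$, so $r_{1,1}=q_0>p_0$.) For the inner step at $m>n$, substituting the remainder bound into
$$r_{n,m}=q_{m-1}\bigl(r_{n,m-1}-q'_{n-1}t_{n,m-1}\bigr)+t_{n-1,m}$$
gives $r_{n,m}\geq q_{m-1}$; since $m-1\geq n$, the index $n-1$ lies in $\{0,\dots,m-2\}$, so~\eqref{crit1} yields $q_{m-1}>q'_{n-1}$, whence $r_{n,m}\geq q'_{n-1}+1$ and therefore $t_{n,m}=\lfloor(r_{n,m}-1)/q'_{n-1}\rfloor\geq 1$. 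This advances both inductions, and running the outer induction over all $n\geq 1$ shows that every ``largest multiple'' appearing in the construction of Section~\ref{theconstruction} — namely $q'_{n-1}$ at step $(n,n)$ and $q'_{n-1}t_{n,m}$ at step $(n,m)$ for $m>n$ — is positive.

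I do not anticipate a real obstacle: all the content sits in Lemma~\ref{relrec} and in the trivial remark that a ``largest multiple strictly below'' leaves remainder at least $1$. The one point that requires attention is the bookkeeping of the nested inductions — one needs $q'_0,\dots,q'_{n-2}$ in hand before treating step $n$ (so that the comparison $q_{n-1}>q'_j$ in~\eqref{crit1} is meaningful, which it is by Lemma~\ref{dependence}), and $q'_{n-1}$ together with $t_{n,n},\dots,t_{n,m-1}$ in hand before the inner step $m$ — but the recurrence of Lemma~\ref{relrec} respects exactly these dependencies, so the single hypothesis~\eqref{crit1} suffices.
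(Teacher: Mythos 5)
Your proof is correct and follows essentially the same route as the paper: both hinge on the lower bound $r_{n,m}\geq q_{m-1}$ (obtained in the paper by exhibiting an unchosen $(m-1)$-level whose $q_{m-1}$ sublevels lie in $W_{n,m}$, and in your version from the recurrence of Lemma~\ref{relrec} together with the remainder bound $r_{n,m-1}-q'_{n-1}t_{n,m-1}\geq 1$), combined with hypothesis~\eqref{crit1} to force $r_{n,n}>p_{n-1}$ and $r_{n,m}>q'_{n-1}$. Your explicit double-induction bookkeeping also correctly addresses the only delicate point, namely that the relations of Lemma~\ref{relrec} are invoked only for steps already known to be well-defined.
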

	
	\begin{remark}\label{q0}
		Without loss of generality, we can assume that $p_0$ is equal to $2$. Therefore the assumption of Lemma~\ref{welldefined} for $n=0$ requires that $q_0$ is greater than $2$, which explains the second item of Definition~\ref{deffc}.
	\end{remark}
	
	\begin{proof}[Proof of Lemma~\ref{welldefined}]
		First, let us prove this result at step $n=1$ of the outer recursion. At step $m=1$ of the inner recursion, $q_0$ is greater than $p_0$, so $q'_0$ (the largest multiple of $p_0$ such that $q'_0\leq q_0-1$) is positive. For a step $m>1$, notice that there exists an \mbox{$(m-1)$-level} which is not chosen at the previous step (as we have $r_{1,m-1}$ \mbox{$(m-1)$-levels} in $W_{1,m-1}$ and we choose $q'_0t_{1,m-1}$ of them, with $q'_0t_{1,m-1}<r_{1,m-1}$) so its $q_{m-1}$ $m$-levels are in $W_{1,m}$ and this gives $r_{1,m}\geq q_{m-1}$. Therefore we have $r_{1,m}>q'_0$ and $t_{1,m}$ is non-zero.\par
		Now consider a step $n>1$ of the outer recursion. For $m=n$, $B'_{n-2,0}(n-1)$ is an $(n-1)$-level in $W_{n,n}$, so we have $r_{n,n}\geq q_{n-1}>p_{n-1}$, hence the positivity of $q'_{n-1}$. For $m>n$, we have $r_{n,m}\geq q_{m-1}$ (same argument as for $n=1$), this implies $r_{n,m}>q'_{n-1}$ and $t_{n,m}$ is positive.
	\end{proof}
	
	The next lemma refines the estimate given by Lemma~\ref{preq'_n/q_nleq}, with assumptions which will be satisfied in the context of flexible classes.
	
	\begin{lemma}\label{boundusingcondfc}
		Let $(q_n,(\sigma_{n,0},\ldots,\sigma_{n,q_n}))_{n\geq 0}$ be the parameters of a CSP construction of $T$ with associated constant $C>0$. Assume that there exists a constant $C'>0$ such that
		$$\forall n\geq 1,\ \sigma_n\leq C'q_nh_{n-1}\text{ and }q_{n}-(1+p_{n})\geq C'h_{n}$$
		(for instance, if the third point of Definition~\ref{deffc} holds and if, given $(q_k,(\sigma_{0,k},\ldots ,\sigma_{q_k,k}))_{0\leq k\leq n-1}$, $q_n$ is chosen large enough).
		Then we get the following bound:
		$$\forall n\in\N,\ \frac{q'_n}{q_n}\leq 4.$$
	\end{lemma}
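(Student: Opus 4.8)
The plan is to combine Lemma~\ref{preq'_n/q_nleq}, which gives $q'_n \leq 3q_n + \frac{\sigma_n}{q'_0\ldots q'_{n-1}}$, with the two hypotheses so that the fractional error term $\frac{\sigma_n}{q'_0\ldots q'_{n-1}}$ is controlled by $q_n$. The key observation is that $h'_n = q'_0\ldots q'_{n-1}$ is precisely the denominator appearing in Lemma~\ref{preq'_n/q_nleq}, and the CSP hypothesis gives $\sigma_n \leq C'q_nh_{n-1}$, so it suffices to show that $h_{n-1} \leq q'_0\ldots q'_{n-1} = h'_n$, or more precisely that $\frac{h_{n-1}}{q'_0\ldots q'_{n-1}} \leq 1$. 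Then $\frac{\sigma_n}{q'_0\ldots q'_{n-1}} \leq C'q_n \cdot \frac{h_{n-1}}{q'_0\ldots q'_{n-1}} \leq C'q_n$, and this is where the second hypothesis $q_n - (1+p_n) \geq C'h_n$ has to come in — though actually the cleaner route is to bound $\frac{h_{n-1}}{q'_0\ldots q'_{n-1}}$ directly.

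First I would establish, by induction on $n$, that $h_n \leq q'_0\ldots q'_{n-1}$ (with the convention that the empty product is $1$, matching $h_0 = 1$). For the inductive step, $h_{n+1} = q_nh_n + \sigma_n$; using the hypothesis $q_n - (1+p_n) \geq C'h_n$ together with Lemma~\ref{asympt} ($q'_n \geq q_n - (1+p_n)$) gives $q'_n \geq C'h_n$, and combined with $\sigma_n \leq C'q_nh_{n-1} \leq C'q_nh_n \leq q_nh_n \cdot \frac{C'h_n}{?}$... — I need to be careful here. A cleaner approach: from $q'_n \geq q_n - (1+p_n)$ and the hypothesis, $q'_n \geq C'h_n \geq C'h_{n-1}$, hence $\sigma_n \leq C'q_nh_{n-1} \leq q'_n q_n \cdot \frac{h_{n-1}}{h_n}\cdot\frac{C'h_n}{q'_n}$ — still messy. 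The right inductive claim is likely just $h_{n-1} \leq q'_0\ldots q'_{n-1}$: assuming $h_{n-1} \leq q'_0\ldots q'_{n-2} \cdot q'_{n-1}$ is what we want at level $n$; we have $h_n = q_{n-1}h_{n-1} + \sigma_{n-1}$, and since $q'_{n-1} \geq q_{n-1} - (1+p_{n-1}) \geq C'h_{n-1}$ and also (for $C' \geq 1$, or after absorbing constants) $q'_{n-1}$ is comparable to $q_{n-1}$, one gets $h_n \leq q_{n-1}h_{n-1} + C'q_{n-1}h_{n-2} \leq q_{n-1}h_{n-1}(1 + C') \leq$ something bounded by $q'_0\ldots q'_n$ using the induction hypothesis on $h_{n-1}$ and that each $q'_k$ is large. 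I would write this induction out carefully, choosing whichever of $h_n \leq q'_0\ldots q'_{n-1}$ or $h_n \leq \frac{q'_0\ldots q'_{n-1}}{C'}$-type bound makes the recursion close cleanly.

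Once the bound $\frac{h_{n-1}}{q'_0\ldots q'_{n-1}} \leq 1$ (or $\leq 1/C'$ up to constant bookkeeping) is in hand, the conclusion is immediate: plugging into Lemma~\ref{preq'_n/q_nleq},
$$q'_n \leq 3q_n + \frac{C'q_nh_{n-1}}{q'_0\ldots q'_{n-1}} \leq 3q_n + q_n = 4q_n,$$
so $\frac{q'_n}{q_n} \leq 4$. For $n = 0$ the bound reads $q'_0 \leq q_0 - 1 < q_0 < 4q_0$ directly from the construction, so the case $n=0$ is trivial and the induction only needs to run for $n \geq 1$. I expect the main obstacle to be pinning down the exact form of the auxiliary inequality relating $h_{n-1}$ (heights of $T$) and $q'_0\ldots q'_{n-1} = h'_n$ (heights of $S$) so that the induction closes with the constant $4$ rather than some larger constant; this is essentially a bookkeeping exercise in chaining Lemma~\ref{asympt}, the CSP bound on $\sigma_n$, and the growth hypothesis $q_n - (1+p_n) \geq C'h_n$, but one must make sure the hypothesis is invoked at the right index (it controls $h_n$ in terms of $q_n$, i.e.\ it says the cutting parameters of $T$ are chosen large relative to the current height).
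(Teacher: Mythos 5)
Your overall strategy is the right one and matches the paper's first move: plug the hypothesis $\sigma_n\leq C'q_nh_{n-1}$ into Lemma~\ref{preq'_n/q_nleq} and reduce everything to showing $\frac{\sigma_n}{q'_0\ldots q'_{n-1}}\leq q_n$, with the case $n=0$ handled by $q'_0\leq q_0-1$. But there is a genuine gap: the auxiliary inequality you need is $C'h_{n-1}\leq q'_0\ldots q'_{n-1}$ (not merely $h_{n-1}\leq q'_0\ldots q'_{n-1}$ --- with the weaker bound your final display only yields $q'_n\leq (3+C')q_n$, a point you flag as ``constant bookkeeping'' but never resolve), and you never actually prove it. You propose to obtain it by an induction on the heights whose inductive step you leave unfinished (``I would write this induction out carefully, choosing whichever \ldots makes the recursion close cleanly''), and your exploratory attempts invoke the growth hypothesis at the wrong index: you use $q_n-(1+p_n)\geq C'h_n$ to get $q'_n\geq C'h_n$, but $q'_n$ does not appear in the denominator $q'_0\ldots q'_{n-1}$, so this cannot close the argument.

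In fact no induction is needed, and this is exactly how the paper concludes: apply the second hypothesis at index $n-1$ and Lemma~\ref{asympt} to get
$$C'h_{n-1}\ \leq\ q_{n-1}-(1+p_{n-1})\ \leq\ q'_{n-1}\ \leq\ q'_0\ldots q'_{n-1},$$
whence $\sigma_n\leq C'q_nh_{n-1}\leq q_nq'_{n-1}\leq q_n\,q'_0\ldots q'_{n-1}$, i.e.\ $\frac{\sigma_n}{q'_0\ldots q'_{n-1}}\leq q_n$, and Lemma~\ref{preq'_n/q_nleq} gives $q'_n\leq 4q_n$. So the missing step is a one-line consequence of the two ingredients you already had in hand (Lemma~\ref{asympt} and the hypothesis, used at index $n-1$ rather than $n$); as written, your proposal circles around this identity without landing on it, and the proof is incomplete until that inequality, with the constant $C'$ included, is established.
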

	
	\begin{proof}[Proof of Lemma~\ref{boundusingcondfc}]
		For $n=0$, this is a consequence of the inequality $q'_0\leq q_0-1$. Now let us prove the result for $n\geq 1$. Using Lemma~\ref{preq'_n/q_nleq}, it suffices to get
		$$\forall n\geq 1,\ \frac{\sigma_n}{q'_0\ldots q'_{n-1}}\leq q_n.$$
		But we have
		$$\sigma_n\leq C'q_n h_{n-1}\leq q_n\left (q_{n-1}-(1+p_{n-1})\right ),$$
		and the right hand side is bounded above by $q_nq'_{n-1}$ (by Lemma~\ref{asympt}), so the result follows from the inequality $q'_{n-1}\leq q'_0\ldots q'_{n-1}$.
	\end{proof}
	
	\subsection{Equality of the orbits, universal odometer and quantitative control of the cocycles}\label{eqorbit}
	
	Recall the notations for the construction of $T$ by cutting and stacking, $(q_n)_n$ and $(\sigma_{n,i})_{n,i}$ are respectively the cutting and spacing parameters. The tower $\mathcal{R}_n$ is the $n$-th $T$-Rokhlin tower, its height is $h_n$, it covers the subset $X_n$ of $X$, $\varepsilon_n$ is the measure of its complement, $Z_n$ is the maximum of the spacing parameters over the first $n$ steps and $M_0$ is the measure of the unique $0$-level $B_0$.\par
	We use similar notations $q'_n$, $h'_n$ and $\mathcal{R}'_n$ for $S$. We also set
	$$H'_n\coloneq h'_1+\ldots +h'_n$$
	for all $n\geq 1$, and $H'_0\coloneq 0$.\par
	The construction is assumed to be well-defined, considering a cutting-and-stacking definition of $T$ with parameters satisfying the criterion~\eqref{crit1} (see Lemma~\ref{welldefined}). Since $S$ is piecewise given by powers of $T$, the $S$-orbits are included in the $T$-orbits. It remains to show the reverse inclusion, to prove that $(\mathcal{R}'_n)_{n\geq 0}$ is increasing to the $\sigma$-algebra $\A$ and to quantify the cocycles.\par
	As in \cite{kerrEntropyVirtualAbelianness2024}, we set
	$$E_{n,m}\coloneq \bigsqcup_{i=0}^{h'_{n}-1}{S^i}\left (\bigsqcup_{n\leq M\leq m}{B'_{n-1,0}(M)}\right )=\bigsqcup_{\substack{0\leq i_0\leq q'_0-1\\ \vdots\\0\leq i_{n-1}\leq q'_{n-1}-1}}{\sn_{1}^{i_0}\ldots \sn_{n}^{i_{n-1}}}\left (\bigsqcup_{n\leq M\leq m}{B'_{n-1,0}(M)}\right )$$
	and
	$$K_n\coloneq \bigsqcup_{\substack{0< i\leq h_n-1\\T^{i-1}(B_n)\sqcup T^{i}(B_n)\subseteq E_{n,n}}}{T^i(B_n)}.$$
	Since $B'_{n-1,0}$ is exactly the base $B'_n$ of $\mathcal{R}'_n$, the subsets $S^i(B'_{n-1,0})$, for $0\leq i\leq h'_{n}-1$, are exactly the levels of $\mathcal{R}'_n$ which is a partition of $X$. So the motivation behind the definition of $E_{n,m}$ is first to approximate $B'_{n-1,0}$ by its $M$-bricks for $n\leq M\leq m$, and then the set $E_{n,m}$ is actually the union of the $M$-bricks, for $n\leq M\leq m$, of step $n$ of the outer recursion, and their translates by $S$ in the other levels in $\mathcal{R}'_n$ (the sets $E_{1,1}$, $E_{1,2}$, $E_{1,3}$, $E_{2,1}$ and $E_{2,2}$ are illustrated in Figures~\ref{fignequal1} and~\ref{fign}). We get a better approximation of $X$ as $m$ increases and notice that $E_{n,m}$ is a subset of $X_m$ since every $M$-brick, for $n\leq M\leq m$, is a union of $m$-levels. Finally the sets $K_n$, for $n\geq 1$, are introduced in order to show that the system $S$ captures the $T$-orbits (recall Remark~\ref{capture}).
	
	\begin{lemma}\label{Enm}
		The following holds:
		$$\mu\left (X_m\setminus E_{n,m}\right )\leq\left\{\begin{array}{ll}
			\displaystyle\frac{H'_n}{h_m}&\text{for }n<m\\
			&\\
			\displaystyle\frac{H'_{n-1}+p_{n-1}h'_{n-1}}{h_n}&\text{for }n=m
		\end{array}\right..$$
	\end{lemma}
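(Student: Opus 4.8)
The plan is to reduce the estimate to a counting argument on the levels of $\mathcal{R}_m$ (the \emph{$m$-levels}). The first point is that $E_{n,m}$ is a union of $m$-levels contained in $X_m$. Indeed $\bigsqcup_{n\le M\le m}B'_{n-1,0}(M)$ is a union of $M$-levels with $M\le m$; each such $M$-level lies in $X_M\subseteq X_m$ and, by Lemma~\ref{etagerg1bis} and the nestedness of the towers, is a disjoint union of $q_M\cdots q_{m-1}$ $m$-levels. Moreover, on $B'_n$ each $S^i$ ($0\le i\le h'_n-1$) equals $\sn_1^{i_0}\cdots\sn_n^{i_{n-1}}$ (Lemma~\ref{explicit}), and every $\sn_k(M)$ is a power of $T$ on each of its defining $M$-levels (Lemma~\ref{cocycle}), so $S^i$ restricted to any single $m$-level of $\bigsqcup_{n\le M\le m}B'_{n-1,0}(M)$ is a power of $T$ --- and a power of $T$ carries an $m$-level to an $m$-level as soon as both endpoints stay in $X_m$, which is the case here since the $T$-iterates of an $m$-level sitting inside a fixed $M$-level never leave that $M$-level. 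Because $\mathcal{R}'_n$ partitions $X$ (Lemma~\ref{partition}), the sets $S^i(\,\cdot\,)$, $0\le i\le h'_n-1$, are pairwise disjoint; hence $E_{n,m}$ consists of exactly $h'_n\ell_{n,m}$ $m$-levels, where $\ell_{n,m}$ denotes the number of $m$-levels in $\bigsqcup_{n\le M\le m}B'_{n-1,0}(M)$. Writing $N_{n,m}\coloneq h_m-h'_n\ell_{n,m}$ for the number of $m$-levels in $X_m\setminus E_{n,m}$ and noting $h_m\mu(B_m)=\mu(X_m)\le 1$, it suffices to prove $N_{n,m}\le H'_n$ for $n<m$ and $N_{n,m}\le H'_{n-1}+p_{n-1}h'_{n-1}$ for $n=m$.

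The second point is a recursion for $N_{n,m}$ coming from the combinatorial description of the construction (Lemma~\ref{relrec}, Lemma~\ref{relbrick}). Counting $m$-levels in the defining identities \eqref{Wnn}--\eqref{Wnm}: the set $\bigsqcup_{n-1\le M\le m}B'_{n-2,0}(M)$ carries $\ell_{n-1,m}$ $m$-levels, and the $M$-bricks removed at steps $(n,M)$ for $n\le M\le m-1$ carry $q'_{n-1}\bigl(\ell_{n,m}-t_{n,m}\bigr)$ of them, so $r_{n,m}=\ell_{n-1,m}-q'_{n-1}(\ell_{n,m}-t_{n,m})$ for all $m\ge n\ge 2$. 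Put $s_{n,m}\coloneq r_{n,m}-q'_{n-1}t_{n,m}$; the definition of $t_{n,m}$ (and of $q'_{n-1}$ when $m=n$) gives $1\le s_{n,m}\le q'_{n-1}$ when $m>n$, and $1\le s_{n,m}\le p_{n-1}$ when $m=n$. Substituting back yields $q'_{n-1}\ell_{n,m}=\ell_{n-1,m}-s_{n,m}$, whence, using $h'_n=h'_{n-1}q'_{n-1}$,
\[
N_{n,m}=h_m-h'_{n-1}q'_{n-1}\ell_{n,m}=h_m-h'_{n-1}\ell_{n-1,m}+h'_{n-1}s_{n,m}=N_{n-1,m}+h'_{n-1}s_{n,m}.
\]
The case $n=1$ is the base of the induction and is handled directly: $E_{1,m}=\bigsqcup_{0\le i\le q'_0-1}\bigsqcup_{1\le M\le m}B'_{0,i}(M)$ is exactly the complement of $W_{1,m+1}$, so for $m\ge 2$ the set $X_m\setminus E_{1,m}=X_m\cap W_{1,m+1}$ contains $r_{1,m}-q'_0t_{1,m}=s_{1,m}\le q'_0=h'_1$ $m$-levels, while for $m=1$ one checks directly that $N_{1,1}=h_1-q'_0\le p_0$.

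It remains to unwind. For $n<m$ (so that $m>k$ for every $1\le k\le n$) iterating the recursion down to $k=1$ gives
$N_{n,m}=s_{1,m}+\sum_{k=2}^{n}h'_{k-1}s_{k,m}\le q'_0+\sum_{k=2}^{n}h'_{k-1}q'_{k-1}=\sum_{k=1}^{n}h'_k=H'_n$.
For $n=m$, a single application of the recursion gives $N_{n,n}=N_{n-1,n}+h'_{n-1}s_{n,n}$, and since $n-1<n$ the first term is at most $H'_{n-1}$ by the case just proved, while $s_{n,n}\le p_{n-1}$; hence $N_{n,n}\le H'_{n-1}+p_{n-1}h'_{n-1}$. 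Multiplying the two bounds by $\mu(B_m)\le 1/h_m$ yields the lemma.

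The step I expect to be the real work is the passage, in the second paragraph, from the set-theoretic identities \eqref{Wnn}--\eqref{Wnm} to equalities between \emph{numbers} of $m$-levels: one must track precisely how an $m$-level, the $M$-brick (for some $n-1\le M\le m$) of the previous step containing it, and the current-step sets $B'_{n-1,i}(M)$ are nested (Lemma~\ref{relbrick}), and be sure that applying $S^i$ neither merges nor splits $m$-levels --- which is exactly where the ``piecewise power of $T$'' structure of $S$, the partition property of $\mathcal{R}'_n$, and the observation behind Remark~\ref{capture} are used. Once the recursion $N_{n,m}=N_{n-1,m}+h'_{n-1}s_{n,m}$ is in hand, the base case $n=1$ and the diagonal case $n=m$ are quick.
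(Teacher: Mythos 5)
Your proof is correct and is essentially the paper's argument in different bookkeeping: the recursion $N_{n,m}=N_{n-1,m}+h'_{n-1}s_{n,m}$, with $s_{n,m}\le q'_{n-1}$ for $m>n$ and $s_{n,n}\le p_{n-1}$, is exactly the paper's telescoping $X_m\setminus E_{n,m}=(X_m\setminus E_{1,m})\sqcup\bigsqcup_{2\le k\le n}(E_{k-1,m}\setminus E_{k,m})$, where $E_{k-1,m}\setminus E_{k,m}$ is the union of the $h'_{k-1}$ translates of the leftover set of step $(k,m)$, made of $r_{k,m}-q'_{k-1}t_{k,m}$ (resp.\ $r_{k,k}-q'_{k-1}$) $m$-levels. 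The one substantive difference is that your counting formulation needs $E_{n,m}$ to be an exact disjoint union of $m$-levels, i.e.\ that each $S^i$ carries $m$-levels of $\bigsqcup_{n\le M\le m}B'_{n-1,0}(M)$ to $m$-levels, whereas the paper only needs the untranslated leftover sets to be unions of $m$-levels and then bounds the translates by $T$-invariance of $\mu$. Your justification of that fact is loosely phrased: it is not that the $T$-iterates of an $m$-level ``never leave'' the ambient $M$-level, but that each $\sn_k(M')$ acts on each brick as a power of $T$ sending an $M'$-level onto an $M'$-level of $\mathcal{R}_{M'}$ without crossing its roof or base, and by nestedness such a power of $T$ sends the constituent $m$-levels onto $m$-levels at the same relative heights --- the observation used in Step~1 of the proof of Lemma~\ref{Kn}; as literally stated your sentence is false, but the conclusion is correct and repairable this way. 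Also note that your base case $N_{1,1}=h_1-q'_0\le p_0$ does not charge the $\sigma_0$ spacer levels of $\mathcal{R}_1$ (one has $h_1-q'_0=q_0-q'_0+\sigma_0$); the paper's count $r_{1,1}-q'_0$ at the same spot treats them the same way, so this is not a discrepancy with the paper, only a point where the diagonal case $n=m=1$ is special. The rest --- the identity $r_{n,m}=\ell_{n-1,m}-q'_{n-1}(\ell_{n,m}-t_{n,m})$ and the unwinding to $H'_n$ and $H'_{n-1}+p_{n-1}h'_{n-1}$ --- matches the paper's estimates exactly.
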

	
	\begin{proof}[Proof of Lemma~\ref{Enm}]
		We prove the inclusions
		$$E_{n,m}\subset E_{n-1,m}\subset\ldots\subset E_{2,m}\subset E_{1,m}\subset X_m$$
		and we bound the measures of $X_m\setminus E_{1,m}$ and each set $E_{k,m}\setminus E_{k-1,m}$. The result follows from the decomposition
		\begin{equation}\label{decompo}
			X_m\setminus E_{n,m}=\left (X_m\setminus E_{1,m}\right )\sqcup\bigsqcup_{2\leq k\leq n}{\left (E_{k-1,m}\setminus E_{k,m}\right )}
		\end{equation}
		and $\sigma$-additivity of $\mu$.\par
		The set $E_{1,m}$ is composed of $m$-levels, so it is contained in $X_m$. If $m=1$, then $X_m\setminus E_{1,m}$ is the disjoint union of $r_{1,1}-q'_0$ $1$-levels (see step $(1,1)$ of the construction). If $m>1$, then $X_m\setminus E_{1,m}$ is the disjoint union of $r_{1,m}-q'_0t_{1,m}$ $m$-levels (see step $(1,m)$ of the construction). By definition of $q'_0$ (if $m=1$) or $t_{1,m}$ (if $m>1$), we thus have
		$$\mu\left (X_m\setminus E_{1,m}\right )\leq\left\{\begin{array}{cl}
			\displaystyle\frac{p_0}{h_m}&\text{if }m=1\\
			&\\
			\displaystyle\frac{h'_1}{h_m}&\text{if }m>1
		\end{array}\right.$$
		(recall that $h'_1=q'_0$).\par    
		Let $k\in\llbracket 2,n\rrbracket$. The function $\sn_k$ has been built in order to map each $M$-brick ($M\geq k$) at step $k$ to another. But such a brick is contained in an $M'$-brick ($k-1\leq M'\leq M$) from the previous step $k-1$ (see Lemma~\ref{relbrick}). We then have
		$$\bigsqcup_{0\leq i_{k-1}\leq q'_{k-1}-1}{\sn_{k}^{i_{k-1}}\left (\bigsqcup_{k\leq M\leq m}{B'_{k-1,0}(M)}\right )}\subseteq \bigsqcup_{k-1\leq M\leq m}{B'_{k-2,0}(M)}.$$
		Applying $\sn_{1}^{i_0}\ldots \sn_{k-1}^{i_{k-2}}$ and considering the union over $i_0,\ldots,i_{k-2}$, we get the inclusion $E_{k,m}\subseteq E_{k-1,m}$ and the equality
		$$\begin{array}{l}
			\displaystyle E_{k-1,m}\setminus E_{k,m}=\\
			\displaystyle \bigsqcup_{\substack{0\leq i_0\leq q'_0-1\\ \vdots\\0\leq i_{k-2}\leq q'_{k-2}-1}}{\sn_{1}^{i_0}\ldots \sn_{k-1}^{i_{k-2}}\left (\underbrace{\left (\bigsqcup_{k-1\leq M\leq m}{B'_{k-2,0}(M)}\right )\setminus\left (\bigsqcup_{k\leq M\leq m}{\bigsqcup_{0\leq i_{k-1}\leq q'_{k-1}-1}{B'_{k-1,i_k}(M)}}\right )}_{=:\ [\ast]}\right )}.
		\end{array}$$
		So the measure of $E_{k-1,m}\setminus E_{k,m}$ is $q'_0\ldots q'_{k-2}\mu\left ([\ast]\right )=h'_{k-1}\mu\left ([\ast]\right )$ by $T$-invariance. The set $[\ast]$ is obtained from $W_{k,m}$ (see~\eqref{Wnn} and~\eqref{Wnm}) by removing the $m$-bricks that have been chosen at step $(k,m)$. If $m=k$, then $[\ast]$ is the disjoint union of $r_{k,k}-q'_{k-1}$ $m$-levels (see step $(k,k)$ of the construction). If $m>k$, then $[\ast]$ is the disjoint union of $r_{k,m}-q'_{k-1}t_{k,m}$ $m$-levels (see step $(k,m)$ of the construction). By definition of $q'_{k-1}$ (if $m=k$) or $t_{k,m}$ (if $m>k$), we thus have
		$$\mu\left ([\ast]\right )\leq\left\{\begin{array}{cl}
			\displaystyle\frac{p_{k-1}}{h_m}&\text{if }m=k\\
			&\\
			\displaystyle\frac{q'_{k-1}}{h_m}&\text{if }m>k
		\end{array}\right.$$
		and
		$$\mu\left (E_{k-1,m}\setminus E_{k,m}\right )\leq\left\{\begin{array}{cl}
			\displaystyle\frac{h'_{k-1}p_{k-1}}{h_m}&\text{if }m=k\\
			&\\
			\displaystyle\frac{h'_{k}}{h_m}&\text{if }m>k
		\end{array}\right..$$
		Using~\eqref{decompo} and $\sigma$-additivity of $\mu$, we get the following inequalities. If $m>n$, we get
		$$\mu\left (X_m\setminus E_{n,m}\right )=\mu\left (X_m\setminus E_{1,m}\right )+\sum_{2\leq k\leq n}{\mu\left (E_{k-1,m}\setminus E_{k,m}\right )}\leq\sum_{1\leq k\leq n}{\frac{h'_k}{h_m}}=\frac{H'_n}{h_m}.$$
		If $m=n$, we get
		$$\begin{array}{lcl}
			\displaystyle\mu\left (X_m\setminus E_{n,m}\right )&=&\displaystyle\left (\mu\left (X_m\setminus E_{1,m}\right )+\sum_{2\leq k\leq m-1}{\mu\left (E_{k-1,m}\setminus E_{k,m}\right )}\right )+\mu\left (E_{n-1,n}\setminus E_{n,n}\right )\\
			&\leq&\displaystyle\sum_{1\leq k\leq m-1}{\frac{h'_k}{h_m}}+\frac{p_{n-1}h'_{n-1}}{h_n}\\
			&=&\displaystyle\frac{H'_{n-1}}{h_n}+\frac{p_{n-1}h'_{n-1}}{h_m}
		\end{array}$$
		and we are done.
	\end{proof}
	
	The quantity $H'_{n-1}+p_{n-1}h'_{n-1}$ only depends on $q'_1,\ldots,q'_{n-2}$ which only depend on $(q_i,(\sigma_{i,j})_{0\leq j\leq q_i})_{0\leq i\leq n-2}$ (see Lemma~\ref{dependence}), and $h_n$ is larger than $q_1\ldots q_{n-1}/M_0$ with $q_{n-1}$ appearing at step $n-1$. Then the strategy will be to recursively choose the cutting parameters $q_{n-1}$ so that
	\begin{equation}\label{crit3}
		\frac{H'_{n-1}+p_{n-1}h'_{n-1}}{h_n}\underset{n\to +\infty}{\to}0.
	\end{equation}
	As $\mu(X_n)\underset{n\to +\infty}{\to}1$, this gives $\mu(E_{n,n})\underset{n\to +\infty}{\to}1$ by Lemma~\ref{Enm}.
	
	\begin{corollary}\label{odouniv}
		If $\mu(E_{n,n})\underset{n\to +\infty}{\to}1$, then $S$ is the universal odometer.
	\end{corollary}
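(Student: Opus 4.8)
The plan is to reduce the corollary to a single measure-theoretic assertion: that the $\sigma$-algebra $\mathcal{B}\coloneq\bigvee_{n\geq 1}\sigma(\mathcal{R}'_n)$ equals $\A$ up to null sets. Granting this, Lemma~\ref{partition} says each $\mathcal{R}'_n$ is a partition of $X$, so the $\mathcal{R}'_n$ form a nested sequence of Rokhlin towers for $S$ with no spacers increasing to $\A$; hence $S$ is a rank-one system without spacers, i.e.\ an odometer whose cutting parameters are the $q'_n$. Since by construction $p_n$ divides $q'_n$ for every $n$ and every prime number occurs infinitely often among the $p_n$, every prime has infinite multiplicity in $\{q'_n\mid n\geq 0\}$, so $S$ is the universal odometer.

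The first step is the observation that, for every $n\geq 1$ and every $i\in\llbracket 0,h'_n-1\rrbracket$, the set $S^i(B'_{n-1,0}(n))$ is a single level of $\mathcal{R}_n$. To prove it I would write $S^i=\sn_1^{i_0}\cdots\sn_n^{i_{n-1}}$ on $B'_n$ (Lemma~\ref{explicit}) and follow the $n$-level $B'_{n-1,0}(n)=T^{j_1^{(n,n)}}(B_n)$ through $\sn_n,\ldots,\sn_1$ in turn: $\sn_n$ sends the $n$-brick $B'_{n-1,0}(n)$ onto the $n$-brick $B'_{n-1,i_{n-1}}(n)$ (this uses $t_{n,n}=1$), and by Lemma~\ref{relbrick} each subsequent $\sn_k$ coincides with a power of $T$ on the step-$k$ brick containing the current $n$-level, hence maps it onto another level of $\mathcal{R}_n$. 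As $B'_{n-1,0}(n)\subseteq B'_{n-1,0}=B'_n$, this level lies inside the $i$-th level $S^i(B'_n)$ of $\mathcal{R}'_n$; since $\mathcal{R}'_n$ is a partition of $X$, we get $E_{n,n}\cap S^i(B'_n)=S^i(B'_{n-1,0}(n))$ for each $i$, and $E_{n,n}=\bigsqcup_{i}S^i(B'_{n-1,0}(n))$ — i.e.\ $E_{n,n}$ is the union of exactly one level of $\mathcal{R}_n$ lying inside each level of $\mathcal{R}'_n$.

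The main step is to approximate an arbitrary level $L=T^j(B_m)$ of $\mathcal{R}_m$ by a set in $\mathcal{B}$. For $n\geq m$ I would put
$$A_n\coloneq\bigsqcup_{\substack{0\leq i\leq h'_n-1\\ S^i(B'_{n-1,0}(n))\subseteq L}}S^i(B'_n)\in\sigma(\mathcal{R}'_n)\subseteq\mathcal{B},$$
where the defining condition makes sense because a level of $\mathcal{R}_n$ is, for $n\geq m$, either contained in $L$ or disjoint from it. If $x\in L\cap E_{n,n}$ then $x$ belongs to the unique level of $\mathcal{R}_n$ lying inside its $\mathcal{R}'_n$-level, and that level, meeting $L$, is contained in $L$; hence $x\in A_n$, so $L\setminus A_n\subseteq X\setminus E_{n,n}$. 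If $x\in A_n\setminus L$, then $x$ lies in some $S^i(B'_n)$ with $S^i(B'_{n-1,0}(n))\subseteq L$ but $x\notin S^i(B'_{n-1,0}(n))$, whence $x\in X\setminus E_{n,n}$. So $\mu(L\,\Delta\,A_n)\leq\mu(X\setminus E_{n,n})=1-\mu(E_{n,n})\to 0$, which shows $L\in\mathcal{B}$ up to a null set. Since $T$ is rank one, the levels of the towers $\mathcal{R}_m$ generate $\A$ up to null sets, so $\mathcal{B}=\A$ up to null sets, and the corollary follows.

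I expect the only delicate point to be this first step — verifying that $S^i(B'_{n-1,0}(n))$ is genuinely one level of $\mathcal{R}_n$, not a union of several — because it requires unwinding the definition of $S$ through the partially defined maps $\sn_k$ and exploiting the nesting of bricks given by Lemma~\ref{relbrick}; everything after that is elementary measure bookkeeping powered by the hypothesis $\mu(E_{n,n})\to 1$.
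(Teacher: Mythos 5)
Your proof is correct, and it reaches the conclusion by a somewhat different mechanism than the paper. Both arguments hinge on the same structural fact, which you isolate as your first step: because $t_{n,n}=1$, each set $S^i(B'_{n-1,0}(n))$ is a single level of $\mathcal{R}_n$, so $E_{n,n}$ meets every level of $\mathcal{R}'_n$ in exactly one $n$-level (the paper asserts this in one line inside its proof, and your justification via Lemma~\ref{explicit} and Lemma~\ref{relbrick} is essentially Step~1 of the proof of Lemma~\ref{Kn}; the only compressed point is the claim that a power of $T$ carrying an $M$-brick onto another $M$-brick carries $n$-sublevels onto genuine $n$-levels, which is the paper's remark about ``going up or down floors without leaving the tower'' and is true because each column copy of $\mathcal{R}_M$ occupies $h_M$ consecutive levels of $\mathcal{R}_n$). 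Where you diverge is in how you conclude that $(\mathcal{R}'_n)$ increases to $\A$: the paper passes to a subsequence with $\sum_k\mu((E_{n_k,n_k})^c)<\infty$, applies Borel--Cantelli, and shows that two points of a full-measure set lying in the same level of $\mathcal{R}'_n$ for all large $n$ lie in the same $n$-level for infinitely many $n$, then invokes the fact (recorded in Definition~\ref{defr1} via standardness) that separating points mod null is equivalent to generating $\A$; you instead approximate each level $L=T^j(B_m)$ directly, in measure, by the $\sigma(\mathcal{R}'_n)$-measurable set $A_n$ with $L\,\Delta\,A_n\subseteq X\setminus E_{n,n}$, and conclude mod-$0$ equality of $\sigma$-algebras from $\mu(E_{n,n})\to 1$ alone, with no subsequence, no Borel--Cantelli, and no appeal to the point-separation criterion. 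Your route is slightly more self-contained measure-theoretically; the paper's is shorter given that the separation criterion was already built into its definition of rank one. The universality part (every prime divides infinitely many $q'_n$ since $p_n\mid q'_n$) is identical in both.
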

	
	\begin{proof}[Proof of Corollary~\ref{odouniv}]
		By the definition of $q'_n$ at step $(n,n)$ and by choice of the sequence $(p_n)$, every prime number appears infinitely many time as a prime factor among the integers $q'_0, q'_1, q'_2 ,\ldots$. If $S$ is an odometer, then it is clearly universal. It remains to show that $(\mathcal{R}'_n)_{n\in\N}$ increases to the $\sigma$-algebra $\A$. Then $S$ is a rank-one system with zero spacing parameters by Lemma~\ref{partition}, so this is an odometer.\par
		Consider a subsequence $(n_k)_{k\geq 0}$ such that the series $\sum_{k\geq 0}{\mu((E_{n_k,n_k})^c)}$ is convergent. By the Borel-Cantelli lemma, the set $X_0\coloneq \bigcup_{j\geq 0}{\bigcap_{k\geq j}{E_{n_k,n_k}}}$ is of full measure. Let $x,y\in X_0$. Assume that they belong to the same level of $\mathcal{R}'_n$ for every $n$ larger that some threshold $N_0$. The goal is to show that $x$ and $y$ are equal, so that $(\mathcal{R}'_n)_{n\in\N}$ separates the points of a set of full measure and hence it increases to $\A$.\par
		By the definition of $X_0$, there exists an infinite subset $I$ of $\N$, bounded below by $N_0$, such that $E_{n,n}$ contains $x$ and $y$ for every $n\in I$. Let us fix an integer $n\in I$. By the definition of $E_{n,n}$, $x$ is in some $S^i(B'_{n-1,0}(n))$ and $y$ in some $S^j(B'_{n-1,0}(n))$, for $0\leq i,j\leq q'_0\ldots q'_{n-1}-1$. But $x$ and $y$ are in the same level of $\mathcal{R}'_n$, furthermore $S^i(B'_{n-1,0}(n))$ is included in the level $S^i(B'_{n})$ and $S^j(B'_{n-1,0}(n))$ in the level $S^j(B'_{n})$, so we have $i=j$. Moreover, since we have $t_{n,n}=1$, all the sets $S^k(B'_{n-1,0}(n))$ are $n$-levels, i.e.~levels of the $n$-th $T$-Rokhlin tower $\mathcal{R}_n$, so $x$ and $y$ are in the same $n$-level. This holds for every $n\in I$, so for infinitely many $n$. Moreover $(\mathcal{R}_n)_{n\in\N}$ separates the points up to a null set, since $T$ is rank-one, hence the result.
	\end{proof}
	
	\begin{lemma}\label{measureKn}
		For every $n\in\N$, we have
		$$\mu(K_n)\geq \mu(X_n)-\mu\left (B_{n}\right )-2\mu\left (X_n\setminus E_{n,n}\right ).$$
		Moreover, $\mu\left (K_n\right )\underset{n\to +\infty}{\to}1$ if $\mu(E_{n,n})\underset{n\to +\infty}{\to}1$.
	\end{lemma}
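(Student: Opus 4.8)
The plan is to compare $K_n$ with $E_{n,n}$, since $K_n$ is built precisely from those $n$-levels $T^i(B_n)$ for which both $T^{i-1}(B_n)$ and $T^i(B_n)$ lie inside $E_{n,n}$. First I would recall that, by the very definition of $E_{n,n}$ and the fact that $t_{n,n}=1$, the set $E_{n,n}$ is a disjoint union of $n$-levels, i.e.~a union of sets of the form $T^i(B_n)$ for $i$ ranging over some subset $I_n$ of $\llbracket 0,h_n-1\rrbracket$. An $n$-level $T^i(B_n)$ with $i\geq 1$ fails to be in $K_n$ exactly when $i\notin I_n$, or $i\in I_n$ but $i-1\notin I_n$; and $T^0(B_n)=B_n$ is never in $K_n$ by the index constraint $0<i$. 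So the $n$-levels missing from $K_n$ are: the level $B_n$ itself; the $n$-levels of $X_n\setminus E_{n,n}$ (there are $h_n-|I_n|$ of them, each of measure $\mu(B_n)$); and, for each such missing level $T^i(B_n)\notin E_{n,n}$, possibly the single level $T^{i+1}(B_n)$ directly above it. This last contribution is at most one extra $n$-level per missing level, hence at most another $\mu(X_n\setminus E_{n,n})$ in total.

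Putting these together gives
$$
\mu(X_n\setminus K_n)\ \leq\ \mu(B_n)\ +\ \mu(X_n\setminus E_{n,n})\ +\ \mu(X_n\setminus E_{n,n}),
$$
which rearranges to
$$
\mu(K_n)\ \geq\ \mu(X_n)-\mu(B_n)-2\mu(X_n\setminus E_{n,n}),
$$
as claimed. (One must be slightly careful that $K_n\subseteq X_n$, which is clear since $K_n$ is a union of $n$-levels, and that the "level above a missing level" may itself coincide with another missing level or fall outside $X_n$ when $i+1=h_n$; in all such cases the bound only improves, so the inequality stands.)

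For the limiting statement, I would use the already-available facts: $\mu(X_n)\to 1$ (noted right after Definition~\ref{defr1}, since $\varepsilon_n\to 0$), and $\mu(B_n)=M_0/(q_0\ldots q_{n-1})\to 0$ by Lemma~\ref{etagerg1bis}(1) together with $q_k\geq 2$. If moreover $\mu(E_{n,n})\to 1$, then $\mu(X_n\setminus E_{n,n})\leq\mu(X_n)-\mu(E_{n,n})\to 0$ (using $E_{n,n}\subseteq X_n$), so all three subtracted terms vanish and $\mu(K_n)\to 1$.

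The only mildly delicate point is the combinatorial bookkeeping in the first paragraph — correctly counting which $n$-levels are excluded from $K_n$ and checking that the "roof" level sitting immediately above an excluded level contributes at most one additional $n$-level each time — but since everything is phrased in terms of disjoint unions of $n$-levels all of equal measure $\mu(B_n)$, this is a direct counting argument with no real obstacle.
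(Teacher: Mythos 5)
Your proof is correct and is essentially the paper's argument in complement form: the paper encodes the same level-by-level bookkeeping as the set identity $K_n=(E_{n,n}\setminus B_n)\setminus T(X_n\setminus E_{n,n})$ and then applies $T$-invariance, which is exactly your count of the excluded levels ($B_n$, the levels outside $E_{n,n}$, and the levels directly above those). The limiting statement is handled the same way in both, via $\mu(X_n)\to 1$, $\mu(B_n)\to 0$ and $E_{n,n}\subseteq X_n$.
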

	
	\begin{proof}[Proof of Lemma~\ref{measureKn}]
		The set $K_n$ is equal to $\left (E_{n,n}\setminus B_n\right )\ \setminus\ T\left (X_n\setminus E_{n,n}\right )$, so we get
		$$\begin{array}{lcl}
			\mu\left (K_n\right )&\geq&\mu\left (E_{n,n}\setminus B_n\right )-\mu\left (T\left (X_n\setminus E_{n,n}\right )\right )\\
			&\geq&\mu\left (E_{n,n}\right )-\mu\left (B_{n}\right )-\mu\left (X_n\setminus E_{n,n}\right )\\
			&=&\mu(X_n)-\mu\left (B_{n}\right )-2\mu\left (X_n\setminus E_{n,n}\right ).
		\end{array}$$
		The second result follows from the fact that $\mu(X_{n})\underset{n\to +\infty}{\to}1$ and $\mu(B_{n})\underset{n\to +\infty}{\to}0$.
	\end{proof}
	
	\begin{lemma}\label{Kn}
		For every $x\in K_n$, there exists $k\in\Z$ such that
		$$|k|\leq 4(h_{n-1}+Z_{n-1})(h'_{n-1})^2$$
		and $T^{-1}x=S^kx$.
	\end{lemma}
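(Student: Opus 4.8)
The plan is to unwind the definitions, place both \(x\) and \(T^{-1}x\) inside the explicit decomposition of \(E_{n,n}\) into levels of \(\mathcal{R}_n\), and then read off \(k\) by comparing the two towers \(\mathcal{R}_n\) and \(\mathcal{R}'_n\). If \(x\in K_n\) then \(x\in T^i(B_n)\) for some \(1\le i\le h_n-1\) with \(T^{i-1}(B_n)\sqcup T^i(B_n)\subseteq E_{n,n}\), so in particular \(T^{-1}x\in T^{i-1}(B_n)\subseteq E_{n,n}\). Now \(E_{n,n}=\bigsqcup_{l=0}^{h'_n-1}S^l\big(B'_{n-1,0}(n)\big)\), and because \(t_{n,n}=1\) the set \(L_0:=B'_{n-1,0}(n)\) is a single level of \(\mathcal{R}_n\), namely \(T^{j_1^{(n,n)}}(B_n)\). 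Writing \(L_l:=S^l(L_0)\), I would first check that each \(L_l\) is again a single level of \(\mathcal{R}_n\); granting this, \(E_{n,n}\) is a disjoint union of levels of \(\mathcal{R}_n\), so there are \(a,b\in\{0,\dots,h'_n-1\}\) with \(T^{i-1}(B_n)=L_a\) and \(T^i(B_n)=L_b\), and the claim will be that \(k:=a-b\) works.

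The structural input is that \(S^l\) restricted to \(L_0\) is a single power \(T^{\gamma_l}\) of \(T\), with \(\gamma_l\) equal to the \(\mathcal{R}_n\)-height of \(L_l\) minus \(j_1^{(n,n)}\). I would obtain this by tracking \(L_0\) through the factorisation \(S^l=\sn_1^{l_0}\cdots\sn_n^{l_{n-1}}\) of Lemma~\ref{explicit}: \(L_0\) is an \(n\)-brick at step \(n\), so \(\sn_n^{l_{n-1}}(L_0)=B'_{n-1,l_{n-1}}(n)\) is still one \(n\)-level and \(\sn_n^{l_{n-1}}\) acts there as a power of \(T\); iterating Lemma~\ref{relbrick}, at every subsequent stage the current image is a single \(n\)-level contained in one \(M\)-brick with \(M\le n\) of the relevant step, so each \(\sn_k\) acts on it as \(\sn_k(M)\), an honest power of \(T\) whose cocycle lies in \([1,h_{M-1}+Z_{M-1}]\subseteq[1,h_{n-1}+Z_{n-1}]\) by Lemma~\ref{cocycle}; since these bricks stay inside \(X_n\) the composite never leaves \(\mathcal{R}_n\), and — the accumulated cocycle being too small to wrap the tower around — \(\gamma_l\) records exactly the \(\mathcal{R}_n\)-height shift. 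Because \(L_a=T^{i-1}(B_n)\) and \(L_b=T^i(B_n)\) are consecutive levels of \(\mathcal{R}_n\), this gives \(\gamma_b-\gamma_a=1\); hence \(S^{a-b}\) agrees with \(T^{\gamma_a-\gamma_b}=T^{-1}\) on \(L_b\), so \(S^kx=T^{-1}x\).

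To bound \(|k|=|a-b|\) I would use the super-level decomposition \(E_{n,n}=\bigsqcup_{p=0}^{q'_{n-1}-1}\mathcal{S}_p\), with \(\mathcal{S}_p:=\bigsqcup_{c=0}^{h'_{n-1}-1}S^c\big(B'_{n-1,p}(n)\big)\) and \(B'_{n-1,p}(n)=T^{j_{p+1}^{(n,n)}}(B_n)\). Applying the computation of the previous paragraph inside a single \(\mathcal{S}_p\) (now only \(\sn_1,\dots,\sn_{n-1}\) occur, in at most \(\sum_{r=0}^{n-2}(q'_r-1)\le h'_{n-1}\) elementary moves, each of positive cocycle \(\le h_{n-1}+Z_{n-1}\)), every level of \(\mathcal{S}_p\) has \(\mathcal{R}_n\)-height in the interval \([\,j_{p+1}^{(n,n)},\,j_{p+1}^{(n,n)}+R\,]\) with \(R:=h'_{n-1}(h_{n-1}+Z_{n-1})\). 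Write \(a=a'h'_{n-1}+a''\), \(b=b'h'_{n-1}+b''\) with \(0\le a'',b''\le h'_{n-1}-1\). Since \(L_a\) and \(L_b\) have \(\mathcal{R}_n\)-heights \(i-1\) and \(i\), the two enclosing intervals force \(|j_{a'+1}^{(n,n)}-j_{b'+1}^{(n,n)}|\le R+1\), and as the \(j_p^{(n,n)}\) are distinct integers this yields \(|a'-b'|\le R+1\); therefore \(|k|\le|a'-b'|h'_{n-1}+(h'_{n-1}-1)\le(R+2)h'_{n-1}\), and since \((R+2)h'_{n-1}=(h'_{n-1})^2(h_{n-1}+Z_{n-1})+2h'_{n-1}\), one concludes
\[|k|\le 4(h_{n-1}+Z_{n-1})(h'_{n-1})^2 .\]

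The delicate point — and the place where the definition of \(K_n\) together with the choices \(t_{n,n}=1\) and \(q'_{n-1}t_{n,m}<r_{n,m}\) really enter — is the structural claim of the second paragraph: that \(S\) restricted to \(E_{n,n}\) is faithfully a subtower of \(\mathcal{R}_n\), i.e. the \(L_l\) are exact levels of \(\mathcal{R}_n\) (not unions of them) and the \(\sn\)-cocycles never carry such a level past the roof of \(\mathcal{R}_n\), so that cocycles genuinely record \(\mathcal{R}_n\)-height shifts. Once this faithfulness is established, the comparison of the two tower orderings and the arithmetic with \(R\) are routine, and I expect the bookkeeping behind it to be the main work.
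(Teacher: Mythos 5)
Your proposal is correct and follows essentially the same route as the paper's proof: the same decomposition of $E_{n,n}$ into $S$-translates of the $n$-bricks $B'_{n-1,p}(n)$, the same structural fact (via Lemmas~\ref{relbrick} and~\ref{cocycle}) that the maps $\sn_k(M)$ with $M\leq n$ move $n$-levels up or down inside $\mathcal{R}_n$ by positive amounts at most $h_{n-1}+Z_{n-1}$, and the same assembly of the bound; your indices $a,b$ and heights $\gamma_l$ are just a repackaging of the paper's $k_0,k_1,k_2,k_3$. One cosmetic remark: the aside about the accumulated cocycle being \emph{too small to wrap the tower around} is not needed (and not justified a priori) --- no wrap-around can occur simply because each elementary move sends an $n$-level to an $n$-level of $\mathcal{R}_n$, which is exactly the faithfulness you establish.
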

	
	\begin{proof}[Proof of Lemma~\ref{Kn}]
		Let $x\in K_n$. By the definition of $K_n$, the points $x$ and $T^{-1}x$ are in $E_{n,n}$ and there exists $1\leq i\leq h_n-1$ such that $x\in T^i(B_n)$. Writing $E_{n,n}$ this way:
		$$E_{n,n}=\bigsqcup_{\substack{0\leq i\leq h'_{n-1}-1\\0\leq i_{n-1}\leq q'_{n-1}-1}}{S^i\sn_n^{i_{n-1}}\left (B'_{n-1,0}(n)\right )}=\bigsqcup_{\substack{0\leq i\leq h'_{n-1}-1\\0\leq i_{n-1}\leq q'_{n-1}-1}}{S^i\left (B'_{n-1,i_{n-1}}(n)\right )},$$
		it is clear that there exist $0\leq k_0,k_1\leq h'_{n-1}-1$ such that $y\coloneq S^{-k_0}x$ and $z\coloneq S^{-k_1}T^{-1}x$ are in $\bigsqcup_{0\leq i_{n-1}\leq q'_{n-1}-1}{B'_{n-1,i_{n-1}}(n)}$.\par
		We first show that we can write $y=\sn_n^{k_2}z$ for some $k_2$, using the fact that $\sn_n$ connects the $n$-bricks of step $(n,n)$ of the construction (since $t_{n,n}=1$). Secondly $\sn_n$ can be written as a power of $S$ and the equality $y=S^{k_3}z$ holds for some $k_3$ that we will be able to bound by Lemma~\ref{cocycle}. Finally the result follows from the bound for each integer $k_0,k_1,k_3$.
		\paragraph{Step 1: Finding $k_2$ such that $y=\sn_n^{k_2}z$.} Using Lemma~\ref{explicit}, we can write
		$$x=\sn_{1}^{i_0}\ldots \sn_{n-1}^{i_{n-2}}y\text{ and }T^{-1}x=\sn_{1}^{j_0}\ldots \sn_{n-1}^{j_{n-2}}z$$
		for some integers $0\leq i_0,j_0\leq q'_0-1, \ldots, 0\leq i_{n-2},j_{n-2}\leq q'_{n-2}-1$, and there exist $0\leq i_{n-1},j_{n-1}\leq q'_{n-1}-1$ such that
		$$y\in B'_{n-1,i_{n-1}}(n)\text{ and }z\in B'_{n-1,j_{n-1}}(n).$$
		More precisely, by Lemma~\ref{relbrick} and the fact that $y$ and $z$ are in $n$-bricks at step $(n,n)$, we have
		$$x=\sn_{1}(M_1)^{i_0}\ldots \sn_{n-1}(M_{n-1})^{i_{n-2}}y\text{ and }T^{-1}x=\sn_{1}(L_1)^{j_0}\ldots \sn_{n-1}(L_{n-1})^{j_{n-2}}z$$
		with $k\leq L_k,M_k\leq n$ for every $1\leq k\leq n-1$. By construction, $T$ and the maps $\sn_k(m)$, for $1\leq k\leq n-1$ and $k\leq m\leq n$, satisfy the following property: for every $n$-level $T^k(B_n)$, with $0\leq k\leq h_n-1$, contained in the domain of the map, if it is mapped to another $n$-level $T^{k+\ell}(B_n)$, with $0\leq k+\ell\leq h_n-1$, then the application coincides with $T^{\ell}$ on $T^k(B_n)$. In other word it consists in going up or down $|\ell|$ floors in the tower $\mathcal{R}_n$, without going above its roof or below its base. Therefore, from $B'_{n-1,i_{n-1}}(n)$ to $B'_{n-1,j_{n-1}}(n)$, the map
		$$\tilde{S}\coloneq \left (\sn_{1}(L_1)^{j_0}\ldots \sn_{n-1}(L_{n-1})^{j_{n-2}}\right )^{-1}T^{-1}\sn_{1}(M_1)^{i_0}\ldots \sn_{n-1}(M_{n-1})^{i_{n-2}}$$
		consists in successively going up or down in the tower, so this is a power of $T$ given by the difference between the floor of $B'_{n-1,i_{n-1}}(n)$ and the one of $B'_{n-1,j_{n-1}}(n)$. The map $\sn_n^{j_{n-1}-i_{n-1}}$ also satisfies this property, thus $\sn_n^{j_{n-1}-i_{n-1}}$ and $\tilde{S}$ coincide on $B'_{n-1,i_{n-1}}(n)$ and $y=\sn_n^{k_2}z$ with $k_2\coloneq j_{n-1}-i_{n-1}$.
		\paragraph{Step 2: Finding $k_3$ such that $y=S^{k_3}z$.} Using the Lemma~\ref{explicit} and the equality $\sn_n^i(B'_n)=B'_{n-1,i}$, we have $S^{h'_{n-1}\left (j_{n-1}-i_{n-1}\right )}y=z$, we set $k_3\coloneq h'_{n-1}\left (j_{n-1}-i_{n-1}\right )$ and it remains to find a bound for $j_{n-1}-i_{n-1}$. We need to get more information on the power of $T$, denoted by $T^{\ell}$, which coincides with $\tilde{S}$ on $B'_{n-1,i_{n-1}}(n)$. By Lemma~\ref{cocycle} and the definition of $\tilde{S}$, we get
		$$\begin{array}{lcl}
			|\ell |&\leq &(h_{n-1}+Z_{n-1})(i_0+\ldots +i_{n-2}) +1 +(h_{n-1}+Z_{n-1})(j_0+\ldots +j_{n-2})\\
			&\leq &2(h_{n-1}+Z_{n-1})(q'_0+\ldots +q'_{n-2})+1\\
			&\leq &3(h_{n-1}+Z_{n-1})(q'_0+\ldots +q'_{n-2})
		\end{array}$$
		where "$+1$" comes from "$T^{-1}$" in the expression of $\tilde{S}$ and has been bounded by $(h_{n-1}+Z_{n-1})(q'_0+\ldots +q'_{n-2})$. The sum $q'_0+\ldots +q'_{n-2}$ is less than the product $q'_0\ldots q'_{n-2}=h'_{n-1}$, this gives
		$$|\ell|\leq 3(h_{n-1}+Z_{n-1})h'_{n-1}.$$
		Since $\sn_n$ has a positive cocycle (by Lemma~\ref{cocycle}), the equality $\sn_n^{\left (j_{n-1}-i_{n-1}\right )}=T^{\ell}$ implies $|\ell|\geq |j_{n-1}-i_{n-1}|$. Therefore we find the bound
		$$|k_3|\leq 3(h_{n-1}+Z_{n-1})(h'_{n-1})^2.$$
		\paragraph{Step 3: Bounding the integer $k$ such that $T^{-1}x=S^{k}x$.} By the definition of $k_0$, $k_1$ and $k_3$, $T^{-1}x$ is equal to $S^kx$ with $k\coloneq k_1-k_3-k_0$ which is thus bounded as follows:
		$$\begin{array}{lcl}
			\displaystyle |k|&\leq &\displaystyle |k_0|+|k_1|+|k_3|\\
			&\leq &\displaystyle 2(h'_{n-1}-1)+3(h_{n-1}+Z_{n-1})(h'_{n-1})^2\\
			&\leq &\displaystyle 4(h_{n-1}+Z_{n-1})(h'_{n-1})^2,
		\end{array}$$
		hence the result.
	\end{proof}
	
	\begin{corollary}\label{orbiteq}
		If $\mu(E_{n,n})\underset{n\to +\infty}{\to}1$, then $T$ and $S$ have the same orbits.
	\end{corollary}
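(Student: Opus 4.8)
The plan is to combine the two inclusions between orbits. One direction, $\orb_S(x)\subseteq\orb_T(x)$, holds for \emph{every} $x$ essentially by construction, since $S$ is assembled from the maps $\zeta_n$ and each $\zeta_n(m)$ is piecewise a power of $T$ (as already noted in Remark~\ref{capture}). So the entire content is to establish the reverse inclusion $\orb_T(x)\subseteq\orb_S(x)$ on a set of full measure, and the engine for this is Lemma~\ref{Kn} --- which says precisely that $T^{-1}x\in\orb_S(x)$ whenever $x\in K_n$ --- together with Lemma~\ref{measureKn}, which under the hypothesis $\mu(E_{n,n})\to 1$ gives $\mu(K_n)\to 1$.

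First I would upgrade ``$\mu(K_n)\to 1$'' to ``almost every $x$ lies in $K_{n_k}$ for all large $k$'', for a suitable subsequence: choose $(n_k)_{k\geq 0}$ with $\sum_{k}\mu\big((K_{n_k})^c\big)<+\infty$ and apply the Borel--Cantelli lemma, exactly as in the proof of Corollary~\ref{odouniv}. For $x$ in the resulting conull set there is a $k$ with $x\in K_{n_k}$, so Lemma~\ref{Kn} yields an integer $m$ with $T^{-1}x=S^mx$; hence $A_0\coloneq\{x\in X\mid T^{-1}x\in\orb_S(x)\}$ is conull. Applying this to $Tx$ in place of $x$ (legitimate because $T$ is measure-preserving) shows that $\{x\mid Tx\in\orb_S(x)\}=T^{-1}(A_0)$ is also conull, so $A\coloneq A_0\cap T^{-1}(A_0)$ is conull.

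The final step is the standard saturation argument: the set $N\coloneq\bigcup_{n\in\Z}T^n(X\setminus A)$ is null, and for $x\notin N$ the whole $T$-orbit of $x$ lies in $A$; an induction on $|n|$ --- using $T^{\pm 1}y\in\orb_S(y)$ for each $y\in A$ together with the fact that $\orb_S$ defines an equivalence relation --- then gives $T^nx\in\orb_S(x)$ for every $n\in\Z$, i.e.\ $\orb_T(x)\subseteq\orb_S(x)$. Combined with the trivial inclusion this yields $\orb_T(x)=\orb_S(x)$ off a null set, which is the assertion. I do not expect a genuine obstacle here: all the real work sits in Lemmas~\ref{Kn} and~\ref{measureKn}, and the only point requiring a little care is the passage from the pointwise statement to the orbit equality, which is exactly why one passes to the $T$-saturation $N$ of $X\setminus A$ rather than working with $A$ directly. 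Note that the quantitative bound on $|m|$ furnished by Lemma~\ref{Kn} plays no role in this corollary; it is recorded only for the later estimates on the cocycles.
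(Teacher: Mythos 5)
Your proposal is correct and follows essentially the same route as the paper: the trivial inclusion of $S$-orbits in $T$-orbits, then Lemma~\ref{measureKn} to make $\bigcup_n K_n$ conull and Lemma~\ref{Kn} to place $T^{-1}x$ in the $S$-orbit, the paper merely leaving the saturation step (and the full-measure argument, for which Borel--Cantelli is not even needed since $\mu(K_n)\to 1$ already makes the union conull) implicit.
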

	
	\begin{proof}[Proof of Corollary~\ref{orbiteq}]
		It is clear that the $S$-orbits are contained in the $T$-orbits. By Lemma~\ref{measureKn}, $\bigcup_{n\in\N}{K_n}$ is of full measure, so the reverse inclusion follows from Lemma~\ref{Kn}.
	\end{proof}
	
	\begin{remark}\label{oe}
		Corollary~\ref{orbiteq} holds for every rank-one system $T$. Indeed skipping steps in the cutting-and-stacking process of $T$ recursively increases the cutting parameters $q_n$, it enables us to get criteria~\eqref{crit1} and \eqref{crit3} (the first one implies that the construction in Section~\ref{theconstruction} is well-defined, the second one that $\mu(E_{n,n})\to 1$).\par
		However the quantification of the cocycles will not necessarily hold for all the rank-one systems, since we will need to control the quantities $Z_n$ depending on the spacing parameters (see Section~\ref{proofth}).\par
		Note that by Dye's theorem, it was already known that every rank-one system is orbit equivalent to the universal odometer, but the proof of this theorem does not provide an explicit orbit equivalence, thus preventing us from quantifying the cocycles.
	\end{remark}
	Now the goal is to control the cocycle $c_S$. The equalities~\eqref{explicit2} in Section~\ref{firstprop} and the decomposition of $B_{n-1,i}$ in bricks motivate the following definition:
	\begin{equation}\label{exprDnm}
		\begin{array}{lll}
			\forall m\geq n\geq 1, &D_n(m)&\coloneq \sn_1^{q'_0-1}\ldots \sn_{n-1}^{q'_{n-2}-1}\left (\bigsqcup_{0\leq i_n\leq q'_{n-1}-2}{B'_{n-1,i_n}(m)}\right )\\
			&&=\sn_1^{q'_0-1}\ldots \sn_{n-1}^{q'_{n-2}-1}\left (\bigsqcup_{0\leq i_n\leq q'_{n-1}-2}{\sn_n^{i_n}(m)(B'_{n-1,0})}\right ).
		\end{array}
	\end{equation}
	It is the union of all the translates of the $m$-bricks at step $(n,m)$ composing $D_n$. Note that $S$ coincides with $\sn_n(m)\sn_{n-1}^{-(q'_{n-2}-1)}\ldots \sn_1^{-(q'_0-1)}$ on $D_n(m)$ (since it coincides with $\sn_n\sn_{n-1}^{-(q'_{n-2}-1)}\ldots \sn_1^{-(q'_0-1)}$ and $\sn_n$ coincides with $\sn_n(m)$ on the $m$-bricks at step $n$). The partition of $D_n$ into such subsets $D_n(m)$, for $m\geq n$, gives a fine control of the cocycle $c_S$.
	
	\begin{lemma}\label{Dnm}
		For $1\leq n<m$, $D_{n}(m)$ is contained in $X_m\setminus E_{n,m-1}$ and we have
		$$\mu(D_{n}(m))\leq \left\{\begin{array}{ll}
			\displaystyle\varepsilon_{m-1}-\varepsilon_m+\frac{H'_n}{h_{m-1}}&\text{if }m>n+1\\
			\displaystyle\varepsilon_{m-1}-\varepsilon_{m}+\frac{H'_{n-1}+p_{n-1}h'_{n-1}}{h_{n}}&\text{if }m=n+1
		\end{array}\right..$$
		For all $n\geq 1$, we have
		$$\mu(D_{n}(n))\leq\frac{q'_{n-1}}{h_n}.$$
		Moreover for every $x\in D_{n}(m)$,
		$$|c_S(x)|\leq (h_{m-1}+Z_{m-1})h'_{n-1}.$$
	\end{lemma}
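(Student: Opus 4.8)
The plan is to treat the three assertions separately, all of them flowing from the combinatorial description of $D_n(m)$ given in~\eqref{exprDnm} together with the measure estimates of Lemma~\ref{Enm} and the cocycle bound of Lemma~\ref{cocycle}.

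First I would establish the inclusion $D_n(m)\subseteq X_m\setminus E_{n,m-1}$ for $n<m$. By~\eqref{exprDnm}, $D_n(m)$ is a disjoint union of $S$-translates (equivalently, $\sn_k$-translates) of the $m$-bricks $B'_{n-1,i_n}(m)$ chosen at step $(n,m)$ of the construction, for $0\le i_n\le q'_{n-1}-2$. Each such $m$-brick is a union of $m$-levels of $\mathcal{R}_m$, hence contained in $X_m$; moreover, by Lemma~\ref{relbrick} the $\sn_k$-images of $m$-levels ($k\le m$) are again $m$-levels, so $D_n(m)\subseteq X_m$. For the complement of $E_{n,m-1}$: the set $E_{n,m-1}$ is, by definition, the union over $0\le i\le h'_n-1$ of the levels $S^i\!\big(\bigsqcup_{n\le M\le m-1}B'_{n-1,0}(M)\big)$, i.e.\ it collects precisely the $M$-bricks for $M\le m-1$ (and their $S$-translates within $\mathcal{R}'_n$), whereas $D_n(m)$ is built from $m$-bricks only — which are by construction chosen among $m$-levels in $W_{n,m}$, a set explicitly disjoint from all $B'_{n-1,i}(M)$ with $M\le m-1$ (see~\eqref{Wnm}). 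Hence $D_n(m)$ and $E_{n,m-1}$ are disjoint, giving $D_n(m)\subseteq X_m\setminus E_{n,m-1}$. The measure bound then follows: $X_m\setminus E_{n,m-1}=(X_m\setminus X_{m-1})\sqcup(X_{m-1}\setminus E_{n,m-1})$, the first piece has measure $\varepsilon_{m-1}-\varepsilon_m$, and the second is bounded by Lemma~\ref{Enm} — by $H'_n/h_{m-1}$ when $m-1>n$, and by $(H'_{n-1}+p_{n-1}h'_{n-1})/h_n$ when $m-1=n$. This yields the two displayed cases.

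For the estimate $\mu(D_n(n))\le q'_{n-1}/h_n$: at step $(n,n)$ we have $t_{n,n}=1$, so $D_n(n)$ is the union of exactly $q'_{n-1}-1$ $S$-translates of single $n$-levels (one for each $i_n\in\{0,\dots,q'_{n-1}-2\}$), each of measure $\mu(B_n)=M_0/(q_0\cdots q_{n-1})$, which by Lemma~\ref{etagerg1bis} is at most $1/h_n$. Summing, $\mu(D_n(n))\le (q'_{n-1}-1)/h_n\le q'_{n-1}/h_n$.

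For the cocycle bound, I would use the remark after~\eqref{exprDnm}: on $D_n(m)$, $S$ coincides with $\sn_n(m)\,\sn_{n-1}^{-(q'_{n-2}-1)}\cdots\sn_1^{-(q'_0-1)}$. Writing this as a composition, the cocycle $c_S(x)$ is a sum of the cocycle of $\sn_n(m)$ and (at most $q'_k-1$ applications of) the cocycles of $\sn_k$, $1\le k\le n-1$, each of the latter being a cocycle of some $\sn_k(M)$ with $k\le M\le m$. By Lemma~\ref{cocycle}, the cocycle of any $\sn_k(M)$ is bounded in absolute value by $h_{M-1}+Z_{M-1}\le h_{m-1}+Z_{m-1}$. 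Thus $|c_S(x)|$ is at most $(h_{m-1}+Z_{m-1})$ times $\big(1+\sum_{k=1}^{n-1}(q'_k-1)\big)$, and since $1+\sum_{k=1}^{n-1}(q'_k-1)\le q'_0q'_1\cdots q'_{n-1}/q'_0\le h'_{n-1}$ — here using $q'_0\ge 2$ and the standard bound $\sum(q'_k-1)<\prod q'_k$ — we obtain $|c_S(x)|\le (h_{m-1}+Z_{m-1})h'_{n-1}$.

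The step I expect to require the most care is the inclusion $D_n(m)\subseteq X_m\setminus E_{n,m-1}$, since it rests on carefully unwinding the nested definitions of $E_{n,m-1}$, $W_{n,m}$ and the bricks, and checking that the $\sn_k$-translates really do not leak into the $M$-bricks for $M<m$; the measure bounds and the cocycle estimate are then fairly mechanical applications of Lemmas~\ref{Enm} and~\ref{cocycle}.
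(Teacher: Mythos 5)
Your proof follows the paper's argument essentially step for step: the same inclusion $D_n(m)\subseteq X_m\setminus E_{n,m-1}$ obtained by noting that $D_n(m)$ consists of translates of the $m$-bricks of step $(n,m)$, the same decomposition $X_m\setminus E_{n,m-1}=(X_m\setminus X_{m-1})\sqcup(X_{m-1}\setminus E_{n,m-1})$ combined with Lemma~\ref{Enm}, the same count $(q'_{n-1}-1)\,\mu(B'_{n-1,0}(n))$ for $\mu(D_n(n))$, and the same use of Lemma~\ref{relbrick} and Lemma~\ref{cocycle} for the cocycle bound. The only blemish is an index slip in the cocycle count: on $D_n(m)$ the map $S$ coincides with $\sn_n(m)\,\sn_{n-1}^{-(q'_{n-2}-1)}\cdots\sn_1^{-(q'_0-1)}$, so $\sn_k$ is applied $q'_{k-1}-1$ times (not $q'_k-1$ times), and the relevant total is $1+\sum_{k=0}^{n-2}(q'_k-1)\leq q'_0\cdots q'_{n-2}=h'_{n-1}$; your displayed chain $1+\sum_{k=1}^{n-1}(q'_k-1)\leq q'_1\cdots q'_{n-1}\leq h'_{n-1}$ is false in general, since $q'_{n-1}$ may far exceed $h'_{n-1}=q'_0\cdots q'_{n-2}$. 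With the exponents corrected, your bound is exactly the paper's and the argument is complete.
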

	
	\begin{proof}[Proof of Lemma~\ref{Dnm}]
		For $1\leq n <m$, $D_{n}(m)$ is composed of translates of the $m$-bricks used at step $(n,m)$, so it is disjoint from the translates of the $M$-bricks used at step $(n,M)$ for $n\leq M\leq m-1$, hence the inclusion $D_{n,m}\subseteq X_m\setminus E_{n,m-1}$. The bound for $\mu(D_{n}(m))$ follows from the decomposition $X_m\setminus E_{n,m-1}=(X_m\setminus X_{m-1})\sqcup (X_{m-1}\setminus E_{n,m-1})$ and Lemma~\ref{Enm}.\par
		For $n\geq 1$, by the definition of $D_n(n)$ and the $\sn_i$-invariance of the measure, we get
		$$\mu(D_{n}(n))=(q'_{n-1}-1)\mu\left (B'_{n-1,0}(n)\right )\leq q'_{n-1}\mu\left (B'_{n-1,0}(n)\right ),$$
		hence the result, since $B'_{n-1,0}(n)$ is an $n$-level, so it has measure less than $1/h_n$.\par
		For the cocycle $c_S$, we first decompose $D_n(m)$ in the following way:
		$$D_n(m)=\bigsqcup_{\ell}{\underbrace{\sn_1^{q'_0-1}\ldots \sn_{n-1}^{q'_{n-2}-1}(\beta_{\ell}))}_{=:D^{\ell}}}$$
		where $(\beta_{\ell})_{\ell}$ is the family of $m$-bricks, at step $(n,m)$, which constitute the subset $\bigsqcup_{0\leq i_n\leq q'_{n-1}-2}{B'_{n-1,i_n}(m)}$. For a fixed $\ell$, by Lemma~\ref{relbrick} there exist $1\leq L_1\leq m,\ldots ,$ $n-1\leq L_{n-1}\leq m$ such that
		$$D^{\ell}=\sn_1^{q'_0-1}(L_1)\ldots \sn_{n-1}^{q'_{n-2}-1}(L_{n-1})\left (\beta_{\ell}\right )$$
		and, on this subset, $S$ coincides with $\sn_n(m)\sn_{n-1}^{-(q'_{n-2}-1)}(L_{n-1})\ldots \sn_1^{-(q'_0-1)}(L_1)$. Then using Lemma~\ref{cocycle}, we get
		$$\begin{array}{lcl}
			|(c_S)_{|D^{\ell}}|&\leq &(h_{m-1}+Z_{m-1})((q'_{0}-1)+\ldots +(q'_{n-2}-1)+1)\\
			&\leq &(h_{m-1}+Z_{m-1})q'_{0}\ldots q'_{n-2}\\
			&=&(h_{m-1}+Z_{m-1})h'_{n-1},
		\end{array}$$
		hence the result.
	\end{proof}
	
	\subsection{Proof of Theorem~\ref{thfc}}\label{proofth}
	
	Let $T$ be a rank-one system whose parameters satisfy the criteria~\eqref{crit1} and \eqref{crit3}. The first one ensures that the construction is well-defined (Lemma~\ref{welldefined}), the second one implies $\mu(E_{n,n})\to 1$ (Lemma~\ref{Enm}), so we have an orbit equivalence between $T$ and $S$ (Lemma~\ref{orbiteq}). We can then define the cocycles $c_T,c_S\colon X\to\Z$ by
	$$\forall x\in X,\ Tx=S^{c_T(x)}x\text{ and }Sx=T^{c_S(x)}x.$$
	In Lemmas~\ref{Kn} and~\ref{Dnm}, we obtained bounds for the cocycles on precise subsets covering $X$: $(K_n)_n$ for $c_T$, $(D_n(m))_{n,m}$ for $c_S$. This will provide a bound for the $\varphi$-integral of each cocycle. But first, we need to change $\varphi$ via the following lemma inspired by Lemma~2.12 in \cite{carderiBelinskayaTheoremOptimal2023}. Without loss of generality, $\varphi$ has the properties given by the lemma and this will simplify the bound for each $\varphi$-integral.
	
	\begin{lemma}\label{metriccompatible}
		Let $0<\alpha\leq 1$ and $\varphi\colon\R_+\to\R_+$ satisfying $\varphi(t)=o(t^{\alpha})$. Then there exists $\Phi\colon\R_+\to\R_+$ with the following properties:
		\begin{itemize}
			\item $\Phi$ is increasing;
			\item $\Phi$ is subadditive: $\forall t,s\in\R_+,\ \Phi(t+s)\leq\Phi(t)+\Phi(s)$;
			\item $\Phi(t)=o\left (t^{\alpha}\right )$;
			\item $\varphi(t)=O\left (\Phi(t)\right )$.
		\end{itemize}
	\end{lemma}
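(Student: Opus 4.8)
The plan is to manufacture $\Phi$ from $\varphi$ in three moves: pass to the increasing envelope, then write $\Phi$ as $t^{\alpha}$ times a cleverly chosen non-increasing factor so as to get subadditivity and the $o(t^{\alpha})$ bound simultaneously, and finally take one more increasing envelope to restore monotonicity. The workhorse is the elementary observation that if $f\colon\R_+\to\R_+$ satisfies $f(0)=0$ and $t\mapsto f(t)/t$ is non-increasing on $(0,+\infty)$, then $f$ is subadditive: for $s,t>0$,
$$f(s+t)=s\cdot\frac{f(s+t)}{s+t}+t\cdot\frac{f(s+t)}{s+t}\leq s\cdot\frac{f(s)}{s}+t\cdot\frac{f(t)}{t}=f(s)+f(t),$$
using $\frac{f(s+t)}{s+t}\leq\frac{f(s)}{s}$ and $\frac{f(s+t)}{s+t}\leq\frac{f(t)}{t}$, and the cases involving $0$ are immediate. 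As a preliminary normalization I would replace $\varphi$ by $\psi(t)\coloneq\sup_{0\leq s\leq t}\varphi(s)$ — finite since only the behaviour at $+\infty$ is relevant and one may as well assume $\varphi$ locally bounded — which is non-decreasing, dominates $\varphi$, and still satisfies $\psi(t)=o(t^{\alpha})$ (given $\varepsilon>0$, choose $M$ with $\varphi(s)\leq\varepsilon s^{\alpha}$ for $s\geq M$; then for $t\geq M$ one has $\psi(t)\leq\max\bigl(\sup_{[0,M]}\varphi,\ \varepsilon t^{\alpha}\bigr)$, which is $\leq\varepsilon t^{\alpha}$ for $t$ large). So from now on I may assume $\varphi$ itself is non-decreasing.

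Next I would set $g(t)\coloneq\sup_{s\geq t}\varphi(s)/s^{\alpha}$ for $t>0$. This is finite (because $\varphi(s)/s^{\alpha}\to 0$ and $\varphi$, being non-decreasing, is locally bounded), it is non-increasing in $t$, and $g(t)\to 0$ as $t\to+\infty$; by construction $\varphi(t)\leq t^{\alpha}g(t)$ for all $t>0$. Define $\Phi_1(t)\coloneq t^{\alpha}g(t)$ for $t>0$ and $\Phi_1(0)\coloneq 0$. Then $\Phi_1\geq\varphi$ on $(0,+\infty)$, and $\Phi_1(t)/t^{\alpha}=g(t)\to 0$ gives $\Phi_1(t)=o(t^{\alpha})$. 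Moreover $\Phi_1(t)/t=t^{\alpha-1}g(t)$ is the product of two non-negative non-increasing functions — here $t^{\alpha-1}$ is non-increasing precisely because $\alpha\leq 1$ — hence is non-increasing, so by the fact above $\Phi_1$ is subadditive.

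The only property still missing is monotonicity, so I would finally put $\Phi(t)\coloneq\sup_{0\leq s\leq t}\Phi_1(s)$. This is non-decreasing and dominates $\Phi_1$, hence $\varphi$. It is $o(t^{\alpha})$ by the same envelope computation as in the first paragraph, using that $\Phi_1$ is locally bounded (a short computation from $\varphi$ non-decreasing shows $\Phi_1$ is bounded on $(0,1]$) and $o(t^{\alpha})$. And $\Phi$ is still subadditive: for $u\leq s+t$, either $u\leq s$, in which case $\Phi_1(u)\leq\Phi(s)\leq\Phi(s)+\Phi(t)$, or $u>s$, in which case, writing $u=s+(u-s)$ with $0\leq u-s\leq t$, subadditivity of $\Phi_1$ yields $\Phi_1(u)\leq\Phi_1(s)+\Phi_1(u-s)\leq\Phi(s)+\Phi(t)$; taking the supremum over $u\leq s+t$ gives $\Phi(s+t)\leq\Phi(s)+\Phi(t)$. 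Since $\varphi\leq\Phi$ on $(0,+\infty)$ — which is all that matters for $\varphi$-integrability of $\Z$-valued cocycles, whose relevant values are the nonzero integers — we get $\varphi(t)=O(\Phi(t))$, and $\Phi$ has all the required properties.

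The one genuinely non-routine point is the choice of the multiplier $g(t)=\sup_{s\geq t}\varphi(s)/s^{\alpha}$: it is exactly what forces $\Phi_1(t)/t$ to be non-increasing (hence $\Phi_1$ subadditive, via the product-of-non-increasing-functions remark together with $\alpha\leq 1$) while keeping $\Phi_1(t)=o(t^{\alpha})$. Everything afterwards — the two envelope arguments and the verification that the increasing envelope of a subadditive function is again subadditive — is bookkeeping, and the mild regularity nuisance near $0$ is harmless because it does not affect $\varphi$-integrability of integer-valued functions.
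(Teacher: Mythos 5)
Your proposal is correct, and it reaches the conclusion by a different mechanism than the paper. The paper sets $\theta(t)=\min\left(1,\ \sup_{s\geq t}\frac{\varphi(s)+1}{s}\right)$, which is positive and non-increasing, and takes $\Phi(t)=\int_0^t\theta(s)\,\mathrm{d}s$: monotonicity (in fact strict increase) and subadditivity are then automatic for the integral of a non-increasing density, the domination $\varphi\leq\Phi$ comes from $\Phi(t)\geq t\theta(t)\geq\varphi(t)$ for large $t$, and $\Phi(t)=o(t^{\alpha})$ follows from the explicit estimate $\int_{t_0}^{t}\left(\varepsilon s^{\alpha-1}+s^{-1}\right)\mathrm{d}s=\frac{\varepsilon}{\alpha}t^{\alpha}+\ln t-\mathrm{const}$; the truncation by $1$ plays the role of your preliminary local-boundedness normalization. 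You instead correct $t^{\alpha}$ multiplicatively by the tail supremum $g(t)=\sup_{s\geq t}\varphi(s)/s^{\alpha}$ and obtain subadditivity from the slope criterion ($f(0)=0$ and $f(t)/t$ non-increasing imply $f$ subadditive), which is where you use $\alpha\leq 1$, whereas the paper uses it only in the final integral estimate. Both arguments are sound; yours buys a pointwise inequality $\varphi\leq\Phi$ with constant $1$ and avoids any integration, while the paper's buys strict monotonicity for free — your $\Phi$ is only non-decreasing (if $\varphi$ is eventually constant, so is your $\Phi_1$), which is harmless since the lemma is only used through bounds of the form $\varphi(u)\leq\varphi(v)$ for $u\leq v$, and could anyway be repaired by adding $t\mapsto t/(1+t)$. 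Two minor remarks: your reduction to locally bounded $\varphi$ is legitimate because all four conclusions are asymptotic, and your final envelope step is in fact redundant, since once $\varphi$ is non-decreasing the function $\Phi_1(t)=\sup_{s\geq t}\varphi(s)(t/s)^{\alpha}$ is already non-decreasing (for $t_1<t_2$, the part of the supremum over $s\in[t_1,t_2]$ is at most $\varphi(t_2)\leq\Phi_1(t_2)$ and the part over $s\geq t_2$ is at most $\Phi_1(t_2)$).
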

	
	\begin{proof}[Proof of Lemma~\ref{metriccompatible}]
		Set
		$$\begin{array}{llccl}
			\displaystyle \theta&\colon &\displaystyle \R_+^*&\displaystyle \to&\displaystyle \R_+\\
			&&\displaystyle t&\displaystyle \mapsto &\displaystyle \min{\left (1,\ \sup_{s\geq t}{\frac{\varphi(s)+1}{s}}\right )}
		\end{array}$$
		and
		$$\begin{array}{llccl}
			\displaystyle \Phi&\colon&\displaystyle \R_+&\displaystyle \to&\displaystyle \R_+\\
			&&\displaystyle t&\displaystyle \mapsto &\displaystyle \int_0^t{\theta(s)\mathrm{d}s}
		\end{array}.$$
		The map $\theta$ is positive-valued and non-increasing, so $\Phi$ is an increasing and subadditive function satisfying $\Phi(t)\geq t\theta(t)$ for every $t\in\R_+$. The assumption $\varphi(t)=o(t^{\alpha})$ implies that \mbox{$\theta(t)=\sup_{s\geq t}{\frac{\varphi(s)+1}{s}}$} for $t>0$ large enough, so we have
		$$\Phi(t)\geq t\theta(t)\geq t\sup_{s\geq t}{\frac{\varphi(s)+1}{s}}\geq\varphi(t).$$
		Finally, for a fixed $\varepsilon>0$, there exists $t_0>0$ such that $\varphi(s)\leq \varepsilon s^{\alpha}$ for every $s\geq t_0$. For every $t\geq t_0$, this gives 
		$$\sup_{s\geq t}{\frac{\varphi(s)+1}{s}}\leq\sup_{s\geq t}{\left (\frac{\varepsilon}{s^{1-\alpha}}+\frac{1}{s}\right )}=\frac{\varepsilon}{t^{1-\alpha}}+\frac{1}{t}$$
		and for every $t\geq t_0$, we have $$\int_{t_0}^t{\theta(s)\mathrm{d}s}\leq\int_{t_0}^t{\left (\frac{\varepsilon}{s^{1-\alpha}}+\frac{1}{s}\right )\mathrm{d}s}=\frac{\varepsilon}{\alpha}t^{\alpha}+\ln{t} - \frac{\varepsilon}{\alpha}t_0^{\alpha}-\ln{t_0},$$
		hence $\Phi(t)=o(t^\alpha)$.
	\end{proof}
	
	\begin{lemma}\label{quantcT}
		Assume that criteria~\eqref{crit1} (in Lemma~\ref{welldefined}) and \eqref{crit3} (after Lemma~\ref{Enm}) are satisfied. Let $\varphi\colon\R_+\to\R_+$ be an increasing and subadditive map. Then, setting
		$$\begin{array}{l}
			\displaystyle\Delta(n)\coloneq \left (1+2(H'_{n}+p_{n}h'_{n})\right )(h'_{n})^2\left (\frac{\varphi(h_{n+1}^3)}{h_{n+1}}+\frac{\varphi(Z_{n+1}h_{n+1}^2)}{h_{n+1}}\right );\\
			\displaystyle\Delta_{\varepsilon}(n)\coloneq \varepsilon_{n+1}(h'_{n})^2\left (\varphi(h_{n+1}^3)+\varphi(Z_{n+1}h_{n+1}^2)\right ),
		\end{array}$$
		we have the following bound:
		\begin{equation}\label{boundint1}
			\int_{X}{\varphi(|c_T(x)|)\mathrm{d}\mu}\leq\varphi(4(h_0+Z_0)(h'_0)^2)+4\sum_{n=0}^{+\infty}{\Delta(n)}+4\sum_{n=0}^{+\infty}{\Delta_{\varepsilon}(n)}.
		\end{equation}
	\end{lemma}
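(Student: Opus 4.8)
The plan is to pull the bound on $c_T$ back to the sets $K_n$ supplied by Lemma~\ref{Kn}. If $x\in K_n$ then $T^{-1}x=S^kx$ for some $k$ with $|k|\le R_n\coloneq 4(h_{n-1}+Z_{n-1})(h'_{n-1})^2$; applying $T$ and using aperiodicity this gives $c_T(T^{-1}x)=-k$, so $|c_T\circ T^{-1}|\le R_n$ on $K_n$, and since $T$ preserves $\mu$ we have $\int_X\varphi(|c_T|)\,d\mu=\int_X\varphi(|c_T\circ T^{-1}|)\,d\mu$. Criterion~\eqref{crit3} forces $\mu(E_{n,n})\to1$ (Lemma~\ref{Enm}), hence $\mu(K_n)\to1$ (Lemma~\ref{measureKn}), so $\bigcup_nK_n$ has full measure. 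As the $K_n$ need not be nested, I would work with the first-entry sets $\tilde K_n\coloneq K_n\setminus(K_1\cup\dots\cup K_{n-1})$: these partition $X$ modulo a null set, $\tilde K_n\subseteq K_n$, and $\tilde K_n\subseteq X\setminus K_{n-1}$ for $n\ge2$, so $\mu(\tilde K_n)\le\mu(X\setminus K_{n-1})$. Summing the constant bound for $\varphi(|c_T\circ T^{-1}|)$ over this partition yields
\[
\int_X\varphi(|c_T|)\,d\mu\ \le\ \varphi(R_1)\,\mu(K_1)+\sum_{n\ge1}\varphi(R_{n+1})\,\mu(X\setminus K_n)\ \le\ \varphi\big(4(h_0+Z_0)(h'_0)^2\big)+\sum_{n\ge1}\varphi(R_{n+1})\,\mu(X\setminus K_n),
\]
the first summand being exactly the leading term of \eqref{boundint1}.

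Next I would estimate the two factors in the series. For the measure factor, Lemma~\ref{measureKn} gives $\mu(X\setminus K_n)\le\varepsilon_n+\mu(B_n)+2\mu(X_n\setminus E_{n,n})$; here $\mu(B_n)\le1/h_n$ because a level of $\mathcal R_n$ has measure $M_0/(q_0\cdots q_{n-1})$ while $h_n\le q_0\cdots q_{n-1}/M_0$ (Lemma~\ref{etagerg1bis}), and $\mu(X_n\setminus E_{n,n})\le(H'_{n-1}+p_{n-1}h'_{n-1})/h_n$ (Lemma~\ref{Enm}), whence $\mu(X\setminus K_n)\le\varepsilon_n+\big(1+2(H'_{n-1}+p_{n-1}h'_{n-1})\big)/h_n$. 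For the cocycle factor I would reshape $\varphi(R_{n+1})=\varphi\big(4(h_n+Z_n)(h'_n)^2\big)$: writing $h'_n=h'_{n-1}q'_{n-1}$ and using subadditivity of $\varphi$ (so $\varphi(jt)\le j\varphi(t)$ for $j\in\N$) pulls the factor $(h'_{n-1})^2$ out, and the crude bound $q'_{n-1}\le h_n$ — valid because $q'_0\le q_0-1<h_1$, and for $n\ge2$ because $q'_{n-1}\le3q_{n-1}+\sigma_{n-1}/h'_{n-1}\le3q_{n-1}+\sigma_{n-1}$ (Lemma~\ref{preq'_n/q_nleq}) while $h_n=q_{n-1}h_{n-1}+\sigma_{n-1}\ge3q_{n-1}+\sigma_{n-1}$ since $h_{n-1}\ge h_1\ge q_0\ge3$ (Remark~\ref{q0}) — then gives $4(h_n+Z_n)(q'_{n-1})^2\le4h_n^3+4Z_nh_n^2$, so by monotonicity and subadditivity
\[
\varphi(R_{n+1})\ \le\ (h'_{n-1})^2\,\varphi\big(4(h_n+Z_n)(q'_{n-1})^2\big)\ \le\ 4(h'_{n-1})^2\big(\varphi(h_n^3)+\varphi(Z_nh_n^2)\big).
\]

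Multiplying the two estimates and re-indexing the sum by $n\mapsto n+1$ identifies the general term of $\sum_{n\ge1}\varphi(R_{n+1})\mu(X\setminus K_n)$ with $4\Delta(n)+4\Delta_\varepsilon(n)$ (for $n\ge0$), and \eqref{boundint1} follows. I expect the real work here to be bookkeeping rather than conceptual: one must align the pointwise bound living on $K_{n+1}$ (which carries $h_n$, $Z_n$, $h'_n$) with the measure bound for $X\setminus K_n$ (which carries $h_n$, $H'_{n-1}$, $h'_{n-1}$) after the index shift, and calibrate the crude inequality $q'_{n-1}\le h_n$ together with the subadditivity expansions so that precisely the constant $4$ appears in front of each series; beyond that the argument is a direct assembly of Lemmas~\ref{Kn}, \ref{measureKn} and~\ref{Enm}.
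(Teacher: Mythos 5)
Your proposal is correct and follows essentially the same route as the paper's proof: the first-entry partition $\tilde K_n$ is exactly the paper's $K'_n$, the pullback of $c_T$ by $T^{-1}$ is the paper's cocycle $c_{T^{-1}}$, and the measure and cocycle estimates (Lemmas~\ref{measureKn}, \ref{Enm}, \ref{Kn}) are combined with subadditivity and the index shift in the same way. The only cosmetic difference is your justification of $q'_{n-1}\leq h_n$ via Lemma~\ref{preq'_n/q_nleq} and $h_{n-1}\geq 3$, where the paper simply uses $q'_{n-1}\leq r_{n,n}\leq h_n$ from the construction.
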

	
	\begin{proof}[Proof of Lemma~\ref{quantcT}]
		Motivated by Lemma~\ref{Kn}, we will rather quantify the cocycle $c_{T^{-1}}$ defined on $X$ (up to a null set) by
		$$T^{-1}x=S^{c_{T^{-1}}(x)}x.$$
		It is equivalent to quantifying $c_T$ since we have
		$$\forall x\in X,\ c_{T^{-1}}(x)=-c_T(T^{-1}x)$$
		and $\mu$ is $T$-invariant.\par
		Let $(K'_n)_{n>0}$ be the partition of $X$ inductively defined by
		$$\left\{\begin{array}{l}
			K'_1\coloneq K_1,\\
			\forall n>0,\ K'_{n+1}\coloneq K_{n+1}\setminus (K_1\cup\ldots\cup K_n).
		\end{array}\right.$$
		
		The subsets $K'_n$ are pairwise disjoint and cover the whole space since we have
		$$K'_1\cup\ldots\cup K'_n=K_1\cup\ldots\cup K_n$$
		and \mbox{$\mu(K_{n})\to 1$} (using Lemma~\ref{measureKn}). By the fact that $K_n$ is included in $X_n$, and by Lemmas~\ref{measureKn} and~\ref{Enm}, we have
		$$\begin{array}{lcl}
			\displaystyle\mu(K'_{n+1})&\leq&\displaystyle\mu(X\setminus K_n)\\
			&=&\displaystyle\mu(X\setminus X_n)+\mu(X_n\setminus K_n)\\
			&\leq&\displaystyle\varepsilon_n+\mu\left (B_{n}\right )+2\mu\left (X_n\setminus E_{n,n}\right )\\
			&\leq&\displaystyle\varepsilon_n+\frac{1+2(H'_{n-1}+p_{n-1}h'_{n-1})}{h_n}.
		\end{array}$$
		Since $K'_{n+1}$ is contained in $K_{n+1}$, Lemma~\ref{Kn} implies
		$$\forall x\in K'_{n+1},\ |c_{T^{-1}}(x)|\leq 4(h_n +Z_n)(h'_n)^2.$$
		We then get
		$$\begin{array}{lcl}
			\displaystyle \int_{X}{\varphi(|c_T(x)|)\mathrm{d}\mu}&=&\displaystyle \int_{X}{\varphi(|c_{T^{-1}}(x)|)\mathrm{d}\mu}
			\\&=&\displaystyle \sum_{n=0}^{+\infty}{\int_{K'_{n+1}}{\varphi(|c_{T^{-1}}(x)|)\mathrm{d}\mu}}\\
			&\leq &\displaystyle \sum_{n=0}^{+\infty}{\mu(K'_{n+1})\varphi(4(h_n+Z_n)(h'_n)^2)}\\
			&\leq &\displaystyle \varphi(4(h_0+Z_0)(h'_0)^2)\\
			&&\displaystyle +\sum_{n=1}^{+\infty}{\left (\varepsilon_n+\frac{1+2(H'_{n-1}+p_{n-1}h'_{n-1})}{h_n}\right )\varphi(4(h_n +Z_n)(h'_n)^2)}.
		\end{array}$$
		
		Now we use the assumptions on $\varphi$ to simplify the previous bound. We have $h'_n=h'_{n-1}q'_{n-1}\leq h'_{n-1}h_n$ (by construction we have $q'_{n-1}\leq r_{n,n}\leq h_n$). By monotonicity and subadditivity, this yields
		$$\begin{array}{lcl}
			\displaystyle \varphi(4(h_n+Z_n)(h'_n)^2)&\leq &\displaystyle \varphi(4(h_n+Z_n)(h'_{n-1}h_n)^2)\\
			&\leq &\displaystyle 4(h'_{n-1})^2\left (\varphi(h_n^3)+\varphi(Z_nh_n^2)\right )
		\end{array}$$
		and we get the bound~\eqref{boundint1}.
	\end{proof}
	
	\begin{lemma}\label{quantcS}
		Assume that criteria~\eqref{crit1} (in Lemma~\ref{welldefined}) and \eqref{crit3} (after Lemma~\ref{Enm}) are satisfied and that the following holds:
		$$\forall n\geq 0,\ \frac{q'_n}{q_n}\leq 4$$
		(this is an assumption that we will be able to get by Lemma~\ref{boundusingcondfc}, using flexible classes). Let $\varphi\colon\R_+\to\R_+$ be an increasing and subadditive map. Then, setting
		$$\begin{array}{l}
			\displaystyle\Gamma_1(n)\coloneq \displaystyle 4h'_{n}\left (\frac{\varphi(h_{n+1}^2)}{h_{n+1}}+\frac{\varphi(Z_{n+1}h_{n+1})}{h_{n+1}}\right );\\
			\displaystyle\Gamma_2(n)\coloneq \displaystyle\left (H'_{n}+p_{n}h'_{n}\right )h'_{n}\left (\frac{\varphi(h_{n+1})}{h_{n+1}}+\frac{\varphi(Z_{n+1})}{h_{n+1}}\right );\\
			\displaystyle\Gamma_3(n,m)\coloneq \displaystyle H'_nh'_{n-1}\left (\frac{\varphi(h_{m})}{h_{m}}+\frac{\varphi(Z_{m})}{h_{m}}\right );\\
			\displaystyle\Gamma_{\varepsilon}(n,m)\coloneq \displaystyle \varepsilon_{m} h'_{n}(\varphi(h_{m})+\varphi(Z_{m})),\\
		\end{array}$$
		we have the following bound:
		
		\begin{equation}\label{boundint2}
			\begin{array}{lcl}
				\displaystyle \int_X{\varphi(|c_S|)\mathrm{d}\mu}
				&\leq&\displaystyle \mu(D_1(1))\varphi((h_0+Z_0)h'_0)\\
				&&\displaystyle +\sum_{n\geq 0}{\Gamma_1(n)}+\sum_{n\geq 0}{\Gamma_2(n)}+\sum_{n\geq 1}{\sum_{m\geq n+1}{\Gamma_3(n,m)}}\\
				&&\displaystyle +\sum_{n\geq 0}{\sum_{m\geq n+1}{\Gamma_{\varepsilon}(n,m)}}
			\end{array}.
		\end{equation}
	\end{lemma}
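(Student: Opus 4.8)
The plan is to integrate $\varphi(|c_S|)$ piece by piece over the sets $D_n(m)$, $m\ge n\ge 1$. By construction these form a partition of $X$ up to a null set: $S$ is defined on $\bigsqcup_{n\ge1}D_n$, which is of full measure, and by~\eqref{exprDnm}, the decomposition $B'_{n-1,i}=\bigsqcup_{m\ge n}B'_{n-1,i}(m)$ and Lemma~\ref{partition}, each $D_n$ is the disjoint union of the $D_n(m)$, $m\ge n$. Under the present hypotheses $c_S$ is well-defined (Lemma~\ref{welldefined} and Corollary~\ref{orbiteq}), and Lemma~\ref{Dnm} gives $|c_S|\le (h_{m-1}+Z_{m-1})h'_{n-1}$ on $D_n(m)$, so by monotonicity of $\varphi$,
\[
\int_X\varphi(|c_S|)\,\mathrm{d}\mu\ \le\ \sum_{1\le n\le m}\mu\big(D_n(m)\big)\,\varphi\big((h_{m-1}+Z_{m-1})h'_{n-1}\big).
\]
I would isolate the $n=m=1$ term, which is exactly the leading term $\mu(D_1(1))\varphi((h_0+Z_0)h'_0)$ of~\eqref{boundint2}, and then treat the three remaining families $\{D_n(n):n\ge2\}$, $\{D_n(n+1):n\ge1\}$ and $\{D_n(m):m>n+1,\ n\ge1\}$ using the measure estimates of Lemma~\ref{Dnm}.

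First I would record the elementary facts used repeatedly: $h'_n=h'_{n-1}q'_{n-1}$ with $q'_{n-1}<r_{n,n}\le h_n$ (by the definition of $q'_{n-1}$ and $r_{n,n}$ in the construction), $h_{n+1}\ge q_nh_n$, the hypothesis $q'_n\le 4q_n$, and the two forms of subadditivity $\varphi(a+b)\le\varphi(a)+\varphi(b)$ and $\varphi(ka)\le k\varphi(a)$ for $k\in\N$. For the diagonal pieces $D_n(n)$, $n\ge2$: $\mu(D_n(n))\le q'_{n-1}/h_n\le 4/h_{n-1}$, and $(h_{n-1}+Z_{n-1})h'_{n-1}=(h_{n-1}+Z_{n-1})h'_{n-2}q'_{n-2}\le h'_{n-2}(h_{n-1}^2+Z_{n-1}h_{n-1})$ since $q'_{n-2}\le h_{n-1}$; applying $\varphi$ and subadditivity bounds the summand by $\Gamma_1(n-2)$. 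For $D_n(n+1)$, $n\ge1$: split $\mu(D_n(n+1))\le(\varepsilon_n-\varepsilon_{n+1})+(H'_{n-1}+p_{n-1}h'_{n-1})/h_n$ and use $\varphi((h_n+Z_n)h'_{n-1})\le h'_{n-1}(\varphi(h_n)+\varphi(Z_n))$; the first half contributes at most $\Gamma_\varepsilon(n-1,n)$, the second at most $\Gamma_2(n-1)$. For $D_n(m)$, $m>n+1$: split $\mu(D_n(m))\le(\varepsilon_{m-1}-\varepsilon_m)+H'_n/h_{m-1}$ and use $\varphi((h_{m-1}+Z_{m-1})h'_{n-1})\le h'_{n-1}(\varphi(h_{m-1})+\varphi(Z_{m-1}))$, which give $\Gamma_\varepsilon(n-1,m-1)$ and $\Gamma_3(n,m-1)$ respectively.

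Summing over the four families and reindexing ($n\mapsto n-2$ for $\Gamma_1$, $n\mapsto n-1$ for $\Gamma_2$, $(n,m)\mapsto(n,m-1)$ for $\Gamma_3$, and $(n,m)\mapsto(n-1,m-1)$ for $\Gamma_\varepsilon$) then produces exactly the right-hand side of~\eqref{boundint2}. There is no analytic obstacle beyond what Lemmas~\ref{Dnm},~\ref{cocycle} and~\ref{Enm} already supply; the only delicate point is the bookkeeping — matching each of the finitely many types of summand to the correct $\Gamma_i$ after the index shift, and consistently applying subadditivity so that the integer factors $h'_\bullet$ get pulled out of $\varphi$ and the "$h$"- and "$Z$"-contributions get peeled apart.
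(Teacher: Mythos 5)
Your proposal is correct and follows essentially the same route as the paper: the same decomposition of $X$ into the sets $D_n(m)$, the same cocycle and measure bounds from Lemma~\ref{Dnm}, the same use of $q'_{n-2}\leq h_{n-1}$, $q'_{n-1}/h_n\leq 4/h_{n-1}$ and subadditivity/monotonicity of $\varphi$, and the same reindexing to match the terms $\Gamma_1$, $\Gamma_2$, $\Gamma_3$, $\Gamma_{\varepsilon}$. The only cosmetic difference is that you retain $\varepsilon_{m-1}-\varepsilon_m$ where the paper simply bounds it by $\varepsilon_{m-1}$, which changes nothing.
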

	
	\begin{proof}[Proof of Lemma~\ref{quantcS}]
		By Lemma~\ref{Dnm}, for each subset $D_n(m)$, we found a bound for the cocycle $c_S$ on it, we then get
		
		$$\begin{array}{lcl}
			
			\displaystyle \int_X{\varphi(|c_S|)\mathrm{d}\mu}&=&\displaystyle \sum_{m\geq n\geq 1}{\int_{D_{n}(m)}{\varphi(|c_S|)\mathrm{d}\mu}}\\
			&\leq &\displaystyle \sum_{m\geq n\geq 1}{\mu(D_{n}(m))\varphi((h_{m-1}+Z_{m-1})h'_{n-1})}\\
			&\leq
			&\displaystyle \mu(D_1(1))\varphi((h_0+Z_0)h'_0)\\
			&&\displaystyle +\sum_{n\geq 2}{\gamma_1(n)}+\sum_{n\geq 1}{\gamma_2(n)}+\sum_{n\geq 1}{\sum_{m\geq n+2}{\gamma_3(n,m)}}
		\end{array}$$
		where
		$$\begin{array}{l}
			\displaystyle\gamma_1(n)\coloneq \mu(D_{n}(n))\varphi((h_{n-1}+Z_{n-1})h'_{n-1}),\\
			\\
			\displaystyle\gamma_2(n)\coloneq \mu(D_{n}(n+1))\varphi((h_{n}+Z_n)h'_{n-1}),\\
			\\
			\displaystyle\gamma_3(n,m)\coloneq \mu(D_{n}(m))\varphi((h_{m-1}+Z_{m-1})h'_{n-1}).
		\end{array}$$
		Lemma~\ref{Dnm} also yields a bound for the measure of each set $D_n(m)$, this implies:
		$$\begin{array}{l}
			\displaystyle\gamma_1(n)\leq\frac{q'_{n-1}}{h_n}\varphi((h_{n-1}+Z_{n-1})h'_{n-1}),\\
			\\
			\displaystyle\gamma_2(n)\leq\left (\varepsilon_{n}+\frac{H'_{n-1}+p_{n-1}h'_{n-1}}{h_{n}}\right )\varphi((h_{n}+Z_n)h'_{n-1}),\\
			\\
			\displaystyle\gamma_3(n,m)\leq\left (\varepsilon_{m-1}+\frac{H'_n}{h_{m-1}}\right )\varphi((h_{m-1}+Z_{m-1})h'_{n-1}).
		\end{array}$$
		For all $n\geq 2$, note that we have
		$$\begin{array}{lcl}
			\varphi((h_{n-1}+Z_{n-1})h'_{n-1})&\leq &\varphi((h_{n-1}+Z_{n-1})h'_{n-2}h_{n-1})\\
			&\leq&h'_{n-2}\left (\varphi(h_{n-1}^2)+\varphi(Z_{n-1}h_{n-1})\right )
		\end{array}$$
		and
		$$\frac{q'_{n-1}}{h_n}\leq \frac{q'_{n-1}}{h_{n-1}q_{n-1}}\leq\frac{4}{h_{n-1}},$$
		so wet get
		$$\gamma_1(n)\leq 4h'_{n-2}\left (\frac{\varphi(h_{n-1}^2)}{h_{n-1}}+\frac{\varphi(Z_{n-1}h_{n-1})}{h_{n-1}}\right )=\Gamma_1(n-2).$$
		For $\gamma_2(n)$ and $\gamma_3(n,m)$, note that we have
		$$\forall n\geq 1,\forall m\geq n+1,\ \varphi((h_{m-1}+Z_{m-1})h'_{n-1})\leq h'_{n-1}(\varphi(h_{m-1})+\varphi(Z_{m-1})),$$
		so we get
		$$\begin{array}{lcl}
			\displaystyle \gamma_2(n)&\leq&\displaystyle \left (\varepsilon_{n}+\frac{H'_{n-1}+p_{n-1}h'_{n-1}}{h_{n}}\right ) h'_{n-1}(\varphi(h_{n})+\varphi(Z_{n}))\\
			&=&\displaystyle \varepsilon_{n} h'_{n-1}(\varphi(h_{n})+\varphi(Z_{n}))+\left (H'_{n-1}+p_{n-1}h'_{n-1}\right ) h'_{n-1}\left (\frac{\varphi(h_{n})}{h_{n}}+\frac{\varphi(Z_{n})}{h_{n}}\right )\\
			&=&\Gamma_{\varepsilon}(n-1,n)+\Gamma_2(n-1)
		\end{array}$$
		and
		$$\begin{array}{lcl}
			\displaystyle \gamma_3(n,m)&\leq&\displaystyle \left (\varepsilon_{m-1}+\frac{H'_n}{h_{m-1}}\right ) h'_{n-1}(\varphi(h_{m-1})+\varphi(Z_{m-1}))\\
			&=&\displaystyle \varepsilon_{m-1} h'_{n-1}(\varphi(h_{m-1})+\varphi(Z_{m-1}))+H'_nh'_{n-1}\left (\frac{\varphi(h_{m-1})}{h_{m-1}}+\frac{\varphi(Z_{m-1})}{h_{m-1}}\right )\\
			&=&\Gamma_{\varepsilon}(n-1,m-1)+\Gamma_3(n,m-1)
		\end{array}.$$
		The bound~\eqref{boundint2} now follows immediately.
	\end{proof}

	\begin{proof}[Proof of Theorem~\ref{thfc}]
		Let $\mathcal{C}$ be a flexible class and $\varphi\colon\R_+\to\R_+$ a map satisfying \mbox{$\varphi(t)\underset{t\to +\infty}{=}o\left (t^{1/3}\right )$}. If $\Phi\colon\R_+\to\R_+$ is another map satisfying $\varphi(t)=O\left (\Phi(t)\right )$, then $\Phi$-integrability implies $\varphi$-integrability. Therefore, without loss of generality, we assume that $\varphi$ satisfies the assumptions of Lemma~\ref{metriccompatible}, i.e.~$\varphi$ is increasing and subadditive.\par
		Using the definition of a flexible class, we will build $T$ with large enough and inductively chosen cutting parameters $q_n$. Let $\fc$ be an associated set of parameters, and fix the associated constants $C$ and $C'$ given in Definition~\ref{deffc}. First choose any cutting and spacing parameter $(q_0,(\sigma_{0,0},\ldots,\sigma_{0,q_0}))$ in $\fc$ such that $q_0\geq 3$. Without loss of generality, we assume $p_0=2$ and we get $q_0>p_0$, as required in the assumption of Lemma~\ref{welldefined} for $n=0$. For a fixed $n\geq 1$, assume that $(q_k,(\sigma_{k,0},\ldots,\sigma_{k,q_k}))_{0\leq k\leq n-1}$ has already been determined in $\fc$, this immediately gives $q'_0,\ldots,q'_{n-1}$ (see Lemma~\ref{dependence}). The goal is to find the next parameters with $q_n$ large enough. Consider $\kappa_n>0$ such that for every $t\geq h_{n}\kappa_n$ the following hold:
		\begin{equation}\label{critpr1}
			\kappa_n>\max{(p_n,q'_0,\ldots,q'_{n-1})};
		\end{equation}
		\begin{equation}\label{critpr2}
			\frac{H'_{n}+p_{n}h'_{n}}{t}\leq\frac{1}{n}.
		\end{equation}
		
		The assumption $\varphi(t)=o(t^{1/3})$ also implies the following inequations for a large enough $\kappa_n$:
		
		\begin{equation}\label{critpr4}
			\left(1+2(H'_{n}+p_{n}h'_{n})\right )(h'_{n})^2\left (\frac{\varphi(t^3)}{t}+\frac{\varphi(Ct^3)}{t}\right )\leq\frac{1}{2^n};
		\end{equation}
		\begin{equation}\label{critpr4bis}
			(h'_{n})^2\left (\varphi(t^3)+\varphi(Ct^3)\right )\leq\frac{t}{2^nq_0\ldots q_{n-1}};
		\end{equation}
		\begin{equation}\label{critpr5}
			4h'_{n}\left (\frac{\varphi(t^2)}{t}+\frac{\varphi(Ct^2)}{t}\right )\leq\frac{1}{2^n};
		\end{equation}
		\begin{equation}\label{critpr6}
			\left (H'_{n}+p_{n}h'_{n}\right )h'_{n}\left (\frac{\varphi(t)}{t}+\frac{\varphi(Ct)}{t}\right )\leq\frac{1}{2^{n+1}};
		\end{equation}
		\begin{equation}\label{critpr7}
			\forall 1\leq\ell\leq n,\ H'_{\ell}h'_{\ell-1}\left (\frac{\varphi(t)}{t}+\frac{\varphi(Ct)}{t}\right )\leq\frac{1}{2^{n+2}};
		\end{equation}
		\begin{equation}\label{critpr8}
			\forall 0\leq\ell\leq n,\ h'_{\ell}\left (\varphi(t)+\varphi(Ct)\right )\leq\frac{t}{2^nq_0\ldots q_{n-1}},
		\end{equation}
		for every $t\geq h_n\kappa_n$. With Inequations~\eqref{critpr4}, ~\eqref{critpr4bis}, ~\eqref{critpr5}, ~\eqref{critpr6}, ~\eqref{critpr7} and~\eqref{critpr8}, we will respectively find bounds for the quantities $\Delta(n)$, $\Delta_{\varepsilon}(n)$, $\Gamma_1(n)$, $\Gamma_2(n)$, $\Gamma_3(n,m)$ and $\Gamma_{\varepsilon}(n,m)$ (see Lemmas~\ref{quantcT} and~\ref{quantcS}).\par
		We then set a new cutting parameter $q_n\geq \kappa_n$ large enough with associated spacing parameters $\sigma_{n,0},\ldots ,\sigma_{n,q_n}$ so that $(q_k,(\sigma_{k,0},\ldots ,\sigma_{k,q_k}))_{0\leq k\leq n}\in\fc$, $\sigma_n\leq C'q_nh_{n-1}$ and the following additional assumptions are satisfied:
		\begin{equation}\label{critpr3}
			q_n-(1+p_n)\geq C'h_n
		\end{equation}
		and
		\begin{equation}\label{critpr10}
			\forall 0\leq k\leq n-1,\ q_n\geq C'2^{n-k}q_k.
		\end{equation}
		
		Let $(h_n)$, $(\sigma_n)$ and $(Z_n)$ be the sequences associated to $\bm{p}\coloneq (q_n,(\sigma_{n,0},\ldots,\sigma_{n,q_n}))_{n\geq 0}\in\mathcal{P}^{\N}$ (as described in Definition~\ref{defparam}), $(h'_n)$ the height sequence of the cutting sequence $(q'_n)_{n\geq 0}$ for the universal odometer that we build. We first check that the underlying system is finite measure-preserving, i.e.~the condition~\eqref{finite} in Definition~\ref{defr1} is satisfied. But we have
		$$\frac{\sigma_n}{h_{n+1}}\leq\frac{C'q_nh_{n-1}}{q_nq_{n-1}h_{n-1}}=\frac{C'}{q_{n-1}},$$
		so the summability easily follows from Inequality~\eqref{critpr10}. The underlying system preserves a probability measure, so it is rank-one. Moreover it belongs to $\mathcal{C}$ by the definition of a flexible class.\par
		Inequality~\eqref{critpr1} ensures that the criterion~\eqref{crit1} holds and that the construction in Section~\ref{theconstruction} is well-defined (see Lemma~\ref{welldefined}). Using $h_{n+1}\geq h_nq_n$, the limit in \eqref{crit3} is a consequence of Inequality~\eqref{critpr2} and implies $\mu(E_{n,n})\to 1$. Inequality~\eqref{critpr3} implies
		$$\forall n\in\N,\ \frac{q'_n}{q_n}\leq 4$$
		(see Lemma~\ref{boundusingcondfc}).\par
		Then Lemmas~\ref{quantcT} and~\ref{quantcS} imply that the bounds \eqref{boundint1} for the $\varphi$-integral of $c_T$ and \eqref{boundint2} for the $\varphi$-integral of $c_S$ hold. It remains to prove that these bounds are finite, namely that the series
		$$\sum_{n\geq 0}{\Delta(n)},\ \sum_{n\geq 0}{\Delta_{\varepsilon}(n)},\ \sum_{n\geq 0}{\Gamma_1(n)},\ \sum_{n\geq 0}{\Gamma_2(n)},\ \sum_{n\geq 1}{\sum_{m\geq n+1}{\Gamma_3(n,m)}}\text{ and } \sum_{n\geq 0}{\sum_{m\geq n+1}{\Gamma_{\varepsilon}(n,m)}}$$
		converge.\par
		Using the monotonicity of $\varphi$ and the inequalities $Z_{n+1}\leq Ch_{n+1}$ and~\eqref{critpr4} for $t=h_{n+1}$ (which is greater or equal to $h_n\kappa_n$), we get $\Delta(n)\leq\frac{1}{2^n}$, so the series $\sum_{n\geq 0}{\Delta(n)}$ converges. It is also straightforward to see that the series $\sum_{n\geq 0}{\Gamma_1(n)}$ and $\sum_{n\geq 0}{\Gamma_2(n)}$ are convergent, using Inequalities~\eqref{critpr5} and~\eqref{critpr6}. Inequality~\eqref{critpr7} implies $\Gamma_{3}(n,m)\leq\frac{1}{2^{m+1}}$, so we get
		$$\sum_{m\geq n+1}{\Gamma_3(n,m)}\leq\frac{1}{2^{n+1}}$$
		for every $n\geq 0$, and the series $\sum_{n\geq 1}{\sum_{m\geq n+1}{\Gamma_3(n,m)}}$ converges.\par
		For the other series $\sum_{n\geq 0}{\Delta_{\varepsilon}(n)}$ and $\sum_{n\geq 0}{\sum_{m\geq n+1}{\Gamma_{\varepsilon}(n,m)}}$, we have to control the sequence $(\varepsilon_n)$ (recall that $\varepsilon_n\coloneq \mu((X_n)^c)$). Denote by $M_0$ the measure of $B_0$ (the unique level of the $T$-Rokhlin tower $\mathcal{R}_0$). For every $n\geq 1$, we have
		$$\varepsilon_n=\sum_{k\geq n}{\frac{M_0}{q_0\ldots q_k}\sigma_k}\leq \sum_{k\geq n}{\frac{M_0C'h_{k-1}}{q_0\ldots q_{k-1}}}\leq\sum_{k\geq n}{\frac{C'}{q_{k-1}}}\leq \frac{1}{q_{n-1}}\sum_{k\geq n}{\frac{1}{2^{k-n}}}\leq\frac{2}{q_{n-1}},$$
		using Lemma~\ref{etagerg1bis} and Inequation~\eqref{critpr10}.\par
		Given $n\geq 0$, Inequation~\eqref{critpr8} and Lemma~\ref{etagerg1bis} imply
		$$(h'_{n})^2(\varphi(h_{n+1}^3)+\varphi(Z_{n+1}h_{n+1}^2))\leq\frac{h_{n+1}}{2^{n}q_0\ldots q_{n-1}}\leq\frac{q_{n}}{2^{n}M_0}.$$
		Combining this with the inequality $\varepsilon_{n+1}\leq 2/q_{n}$, we then get
		$$\Delta_{\varepsilon}(n)=\varepsilon_{n+1}(h'_{n})^2(\varphi(h_{n+1}^3)+\varphi(Z_{n+1}h_{n+1}^2))\leq\frac{1}{2^{n-1}M_0},$$
		so the series $\sum_{n\geq 0}{\Delta_{\varepsilon}(n)}$ converges.\par
		For fixed integers $n\geq 0$ and $m\geq n+1$, Inequation~\eqref{critpr8} and Lemma~\ref{etagerg1bis} imply
		$$h'_{n}(\varphi(h_{m})+\varphi(Z_{m}))\leq\frac{h_{m}}{2^{m-1}q_0\ldots q_{m-2}}\leq\frac{q_{m-1}}{2^{m-1}M_0}.$$
		Combining this with the inequality $\varepsilon_m\leq 2/q_{m-1}$, we then get
		$$\Gamma_{\varepsilon}(n,m)=\varepsilon_{m}h'_{n}(\varphi(h_{m})+\varphi(Z_{m}))\leq\frac{1}{2^{m-2}M_0}.$$
		This gives
		$$\sum_{m\geq n+1}{\Gamma_{\varepsilon}(n,m)}\leq\frac{1}{2^{n-2}M_0}$$
		for every $n\geq 0$, so the series $\sum_{n\geq 0}{\sum_{m\geq n+1}{\Gamma_{\varepsilon}(n,m)}}$ converges.\par
		Therefore the cocycles $c_T$ and $c_S$ are $\varphi$-integrable as wanted, which concludes the proof.
	\end{proof}
	
	\begin{remark}
		For $\varphi$-integrability of $c_S$, we only need to control quantities of the form $\varphi(u^2)/u$ and $\varphi(u)/u$ ($\varphi(u^3)/u$ does not appear). Therefore Theorem~\ref{thfc} can be stated with a stronger quantification on the cocycle $c_S$, namely $\psi$-integrability with $\psi(t)=o(t^{1/2})$ (it suffices to replace $t^{1/3}$ by $t^{1/2}$ in Inequation~\eqref{critpr8}).
	\end{remark}
	
	We are now able to prove Theorem~\ref{uncountable}.
	
	\begin{proof}[Proof of Theorem~\ref{uncountable}]
		Let $\varphi\colon\R_+\to\R_+$ be a map satisfying $\varphi(t)\underset{t\to +\infty}{=}o\left (t^{1/3}\right )$. By Lemma~\ref{metriccompatible}, we may and do assume that $\varphi$ is increasing and subadditive.\par
		Given a flexible class $\mathcal{C}$, an associated set of parameters $\fc$ and constants $C$ and $C'$, the last proof shows that we can choose the parameters in the following way. First, we choose any cutting and spacing parameter $(q_0,(\sigma_{0,0},\ldots,\sigma_{0,q_0}))$ in $\fc$, with $q_0\geq 3$. Then, if $\bm{p_n}\coloneq (q_k,(\sigma_{k,0},\ldots,\sigma_{0,q_k}))_{0\leq k\leq n-1}$ has been set, there exists a constant depending on $\varphi$, $\fc$, $C$, $C'$ and $\bm{p_n}$, denoted by $\mathrm{K}_{\varphi}(\fc, C, C',\bm{p_n})$, such that Conditions~\eqref{critpr1}, \eqref{critpr2}, \eqref{critpr4}, \eqref{critpr4bis}, \eqref{critpr5}, \eqref{critpr6}, \eqref{critpr7}, \eqref{critpr8}, \eqref{critpr3} and~\eqref{critpr10} hold for every $q_n\geq \mathrm{K}_{\varphi}(\fc, C, C',\bm{p_n})$, and it remains to find such an integer $q_n$ and spacing parameters $\sigma_{n,0},\ldots,\sigma_{n,q_n}$ such that $\bm{p_{n+1}}\coloneq (q_k,(\sigma_{k,0},\ldots,\sigma_{0,q_k}))_{0\leq k\leq n}$ is in $\fc$ and the inequality $\sigma_n\leq C'q_nh_{n-1}$ holds.\par
		Let $\bm{Q}\coloneq (Q_{-1},\ldots,Q_{n_0})$ be a sequence of integers, where $n_0,Q_0,\ldots,Q_{n_0}$ are positive and $Q_0\ldots Q_{n_0}\geq 3$, and let us consider the set of parameters $\mathcal{F}(\bm{Q})$ built in Section~\ref{proofirra}, and the associated constants $C_{\bm{Q}}$ and $C'_{\bm{Q}}$. In this case, the spacing parameters $\sigma_{k,i}$ at step $k$ are equal to $0$ or $h_{k-1}$, so they are determined by the previous cutting parameters. Moreover, the first cutting parameter $q_0$ is equal to $Q_0\ldots Q_{n_0}$. Therefore, for every finite sequence $\bm{p_n}\coloneq (q_k,(\sigma_{k,0},\ldots,\sigma_{0,q_k}))_{0\leq k\leq n-1}$ in $\mathcal{F}(\bm{Q})$, we write $\mathrm{K}_{\varphi}(\bm{Q},q_1,\ldots,q_{n-1})$ instead of $\mathrm{K}_{\varphi}(\mathcal{F}(\bm{Q}), C_{\bm{Q}}, C'_{\bm{Q}},\bm{p_n})$.\par
		Recall that $A$ denotes the set of sequences $(q_{i})_{i\geq -1}$ of integers such that $q_0,q_1,\ldots$ are positive. To every sequence $\bm{\varepsilon}=(\varepsilon_i)_{i\geq 0}\in\{0,1\}^{\N}$, we associate a sequence $q(\bm{\varepsilon})\in A$ inductively defined by:
		$$\begin{array}{rl}
			&q(\bm{\varepsilon})_0=q_0,\\
			\forall i\geq 0,&q(\bm{\varepsilon})_{i+1}=\mathrm{K}_{\varphi}(\bm{Q},q(\bm{\varepsilon})_1,\ldots,q(\bm{\varepsilon})_{i})+\varepsilon_{i}
		\end{array}.$$
		Every sequence $\bm{\varepsilon}=(\varepsilon_i)_{i\geq 0}\in\{0,1\}^{\N}$ provides a sequence of parameters in $\mathcal{F}(\bm{Q})$, whose cutting parameters are $q(\bm{\varepsilon})_0,q(\bm{\varepsilon})_1,\ldots$, and which gives rise to the irrational rotation of angle $\theta(\bm{\varepsilon})\coloneq [Q_{-1},\ldots,Q_{n_0},q(\bm{\varepsilon})_1,q(\bm{\varepsilon})_2,\ldots]$.\par
		Let us now consider a nonempty open subset $\mathcal{V}$ of $\R$ and a finite sequence $\bm{Q}$ so that $\theta(\bm{\varepsilon})$ is in $\mathcal{V}$ for every $\bm{\varepsilon}\in\{0,1\}^{\N}$. We get that the set of irrational numbers $\theta$ in $\mathcal{V}$ such that the irrational rotation of angle $\theta$ is $\varphi$-integrably orbit equivalent to the universal odometer contains the set $\{q(\bm{\varepsilon})\mid\bm{\varepsilon}\in\{0,1\}^{\N}\}$, so it is uncountable using the facts that the map $\bm{\varepsilon}\in\{0,1\}^{\N}\mapsto q(\bm{\varepsilon})\in A$ is injective and the continued fraction expansion is unique for every irrational number.
	\end{proof}
	
	\bibliographystyle{alphaurl}
	\bibliography{biblio}

\newcommand{\etalchar}[1]{$^{#1}$}
\begin{thebibliography}{CJLMT23}

\bibitem[Aus16]{austinBehaviourEntropyBounded2016a}
Tim Austin.
\newblock Behaviour of {{Entropy Under Bounded}} and {{Integrable Orbit
  Equivalence}}.
\newblock {\em Geometric and Functional Analysis}, 26(6):1483--1525, 2016.
\newblock \href {https://doi.org/10.1007/s00039-016-0392-5}
  {\path{doi:10.1007/s00039-016-0392-5}}.

\bibitem[Bel69]{belinskayaPartitionsLebesgueSpace1969}
R.~M. Belinskaya.
\newblock Partitions of {{Lebesgue}} space in trajectories defined by ergodic
  automorphisms.
\newblock {\em Functional Analysis and Its Applications}, 2(3):190--199, 1969.
\newblock \href {https://doi.org/10.1007/BF01076120}
  {\path{doi:10.1007/BF01076120}}.

\bibitem[Cha69]{chaconWeaklyMixingTransformations1969a}
R.~V. Chacon.
\newblock Weakly mixing transformations which are not strongly mixing.
\newblock {\em Proceedings of the American Mathematical Society},
  22(3):559--562, 1969.
\newblock \href {https://doi.org/10.1090/S0002-9939-1969-0247028-5}
  {\path{doi:10.1090/S0002-9939-1969-0247028-5}}.

\bibitem[CJLMT23]{carderiBelinskayaTheoremOptimal2023}
Alessandro Carderi, Matthieu Joseph, Fran{\c c}ois Le~Maître, and Romain
  Tessera.
\newblock Belinskaya's theorem is optimal.
\newblock {\em Fundamenta Mathematicae}, 263(1):51--90, 2023.
\newblock \href {https://doi.org/10.4064/fm266-4-2023}
  {\path{doi:10.4064/fm266-4-2023}}.

\bibitem[DEJ{\etalchar{+}}23]{drillickNonrigidRankoneInfinite2023}
Hindy Drillick, Alonso {Espinosa-Dominguez}, Jennifer~N. {Jones-Baro}, James
  Leng, Yelena Mandelshtam, and Cesar~E. Silva.
\newblock Non-rigid rank-one infinite measures on the circle.
\newblock {\em Dynamical Systems}, 38(2):275--300, 2023.
\newblock \href {https://doi.org/10.1080/14689367.2023.2174412}
  {\path{doi:10.1080/14689367.2023.2174412}}.

\bibitem[Dow11]{downarowiczEntropyDynamicalSystems2011}
T.~Downarowicz.
\newblock {\em Entropy in {{Dynamical Systems}}}.
\newblock Number~18 in New {{Mathematical Monographs}}. Cambridge University
  Press, Cambridge ; New York, 2011.

\bibitem[DV23]{danilenkoExplicitRank1Constructions2023}
Alexandre~I. Danilenko and Mykyta~I. Vieprik.
\newblock Explicit rank-1 constructions for irrational rotations.
\newblock {\em Studia Mathematica}, 270(2):121--144, 2023.
\newblock \href {https://doi.org/10.4064/sm220214-13-10}
  {\path{doi:10.4064/sm220214-13-10}}.

\bibitem[Dye59]{dyeGroupsMeasurePreserving1959}
H.~A. Dye.
\newblock On {{Groups}} of {{Measure Preserving Transformations}}. {{I}}.
\newblock {\em American Journal of Mathematics}, 81(1):119--159, 1959.
\newblock \href {https://doi.org/10.2307/2372852} {\path{doi:10.2307/2372852}}.

\bibitem[EW11]{einsiedlerErgodicTheoryView2011}
Manfred Einsiedler and Thomas Ward.
\newblock {\em Ergodic {{Theory}}: With a View towards {{Number Theory}}}.
\newblock Springer London, London, 2011.
\newblock \href {https://doi.org/10.1007/978-0-85729-021-2}
  {\path{doi:10.1007/978-0-85729-021-2}}.

\bibitem[Fer97]{ferencziSystemsFiniteRank1997}
S{\'e}bastien Ferenczi.
\newblock Systems of finite rank.
\newblock {\em Colloquium Mathematicum}, 73(1):35--65, 1997.
\newblock \href {https://doi.org/10.4064/cm-73-1-35-65}
  {\path{doi:10.4064/cm-73-1-35-65}}.

\bibitem[FRW11]{foremanConjugacyProblemErgodic2011}
Matthew Foreman, Daniel Rudolph, and Benjamin Weiss.
\newblock The conjugacy problem in ergodic theory.
\newblock {\em Annals of Mathematics}, 173(3):1529--1586, 2011.
\newblock \href {https://doi.org/10.4007/annals.2011.173.3.7}
  {\path{doi:10.4007/annals.2011.173.3.7}}.

\bibitem[GZ19]{gaoTopologicalMixingProperties2019a}
Su~Gao and Caleb Ziegler.
\newblock Topological mixing properties of rank-one subshifts.
\newblock {\em Transactions of the London Mathematical Society}, 6(1):1--21,
  2019.
\newblock \href {https://doi.org/10.1112/tlm3.12016}
  {\path{doi:10.1112/tlm3.12016}}.

\bibitem[HVN42]{halmosOperatorMethodsClassical1942}
Paul~R. Halmos and John Von~Neumann.
\newblock Operator {{Methods}} in {{Classical Mechanics}}, {{II}}.
\newblock {\em The Annals of Mathematics}, 43(2):332--350, 1942.
\newblock \href {https://doi.org/10.2307/1968872} {\path{doi:10.2307/1968872}}.

\bibitem[Jun76]{juncoTransformationsDiscreteSpectrum1976}
Andr{\'e}s~Del Junco.
\newblock Transformations {{With Discrete Spectrum}} are {{Stacking
  Transformations}}.
\newblock {\em Canadian Journal of Mathematics}, 28(4):836--839, 1976.
\newblock \href {https://doi.org/10.4153/CJM-1976-080-3}
  {\path{doi:10.4153/CJM-1976-080-3}}.

\bibitem[KL16]{kerrErgodicTheoryIndependence2016}
David Kerr and Hanfeng Li.
\newblock {\em Ergodic {{Theory}}: {{Independence}} and {{Dichotomies}}}.
\newblock Springer {{Monographs}} in {{Mathematics}}. Springer International
  Publishing, Cham, 2016.
\newblock \href {https://doi.org/10.1007/978-3-319-49847-8}
  {\path{doi:10.1007/978-3-319-49847-8}}.

\bibitem[KL21]{kerrEntropyShannonOrbit2021}
David Kerr and Hanfeng Li.
\newblock Entropy, {{Shannon}} orbit equivalence, and sparse connectivity.
\newblock {\em Mathematische Annalen}, 380(3):1497--1562, 2021.
\newblock \href {https://doi.org/10.1007/s00208-021-02190-x}
  {\path{doi:10.1007/s00208-021-02190-x}}.

\bibitem[KL24]{kerrEntropyVirtualAbelianness2024}
David Kerr and Hanfeng Li.
\newblock Entropy, virtual {{Abelianness}} and {{Shannon}} orbit equivalence.
\newblock {\em Ergodic Theory and Dynamical Systems}, pages 1--20, 2024.
\newblock \href {https://doi.org/10.1017/etds.2024.26}
  {\path{doi:10.1017/etds.2024.26}}.

\bibitem[Orn70]{ornsteinBernoulliShiftsSame1970}
Donald Ornstein.
\newblock Bernoulli shifts with the same entropy are isomorphic.
\newblock {\em Advances in Mathematics}, 4(3):337--352, 1970.
\newblock \href {https://doi.org/10.1016/0001-8708(70)90029-0}
  {\path{doi:10.1016/0001-8708(70)90029-0}}.

\bibitem[Orn72]{ornsteinRootProblemErgodic1972}
Donald~S. Ornstein.
\newblock On the root problem in ergodic theory.
\newblock In {\em Proceedings of the {{Sixth Berkeley Symposium}} on
  {{Mathematical Statistics}} and {{Probability}}, {{Volume}} 2: {{Probability
  Theory}}}, volume 6.2 of {\em Berkeley {{Symp}}. on {{Math}}. {{Statist}}.
  and {{Prob}}.}, pages 347--357, Berkeley, 1972. University of California
  Press.

\bibitem[ORW82]{ornsteinEquivalenceMeasurePreserving1982}
Donald~S. Ornstein, Daniel~J. Rudolph, and Benjamin Weiss.
\newblock Equivalence of measure preserving transformations.
\newblock {\em Memoirs of the American Mathematical Society}, 37(262):xii+116
  pp, 1982.
\newblock \href {https://doi.org/10.1090/memo/0262}
  {\path{doi:10.1090/memo/0262}}.

\bibitem[OW87]{ornsteinEntropyIsomorphismTheorems1987}
Donald~S. Ornstein and Benjamin Weiss.
\newblock Entropy and isomorphism theorems for actions of amenable groups.
\newblock {\em Journal d'Analyse Math{\'e}matique}, 48(1):1--141, 1987.
\newblock \href {https://doi.org/10.1007/BF02790325}
  {\path{doi:10.1007/BF02790325}}.

\bibitem[VO16]{vianaFoundationsErgodicTheory2016}
Marcelo Viana and Krerley Oliveira.
\newblock {\em Foundations of {{Ergodic Theory}}}.
\newblock Cambridge {{Studies}} in {{Advanced Mathematics}}. Cambridge
  University Press, Cambridge, 2016.
\newblock \href {https://doi.org/10.1017/CBO9781316422601}
  {\path{doi:10.1017/CBO9781316422601}}.

\end{thebibliography}

\end{document}